\newcommand{\R}{\mathbb{R}}
\newcommand{\D}{\mathcal{D}}
\newcommand{\HD}{\mathsf{HD}}
\newcommand{\LD}{\mathsf{LD}}
\newcommand{\BA}{\mathsf{BA}}
\newcommand{\BS}{\mathsf{BS}}
\newcommand{\F}{\mathsf{F}}
\newcommand{\Stop}{\mathsf{Stop}}
\newcommand{\Tree}{\mathsf{Tree}}
\newcommand{\RFar}{R_{\mathsf{Far}}}
\newcommand{\one}{\mathds{1}}
\DeclareMathOperator{\supp}{supp}
\DeclareMathOperator{\graph}{graph}
\newcommand\restr[2]{{
		\left.\kern-\nulldelimiterspace 
		#1 
		\right|_{#2} 
}}
\newcommand{\Hn}[1]{\restr{\mathcal{H}^n}{#1}}
\DeclareMathOperator{\diam}{diam}
\DeclareMathOperator{\dist}{dist}
\DeclareMathOperator{\lip}{\text{Lip}}
\newtheorem{theoremalph}{Theorem}
\newtheorem{theorem}{Theorem}[section]
\newtheorem{lemma}[theorem]{Lemma}
\newtheorem{cor}[theorem]{Corollary}
\theoremstyle{definition}
\newtheorem{remark}[theorem]{Remark}
\numberwithin{equation}{section}
\newcounter{AbsConstants}
\title{Sufficient condition for rectifiability involving Wasserstein distance $W_2$}
\author{Damian D\k{a}browski}
\address{Damian D\k{a}browski\newline\indent Departament de Matem\`atiques, Universitat Aut\`onoma de Barcelona; Barcelona Graduate School of Mathematics (BGSMath)\newline\indent Edifici C Facultat de Ci\`encies, 08193 Bellaterra (Barcelona, Catalonia) }
\email{ddabrowski ``at'' mat.uab.cat}
\begin{document}
	\begin{abstract}
		A Radon measure $\mu$ is $n$-rectifiable if it is absolutely continuous with respect to $\mathcal{H}^n$ and $\mu$-almost all of $\supp\mu$ can be covered by Lipschitz images of $\R^n$. In this paper we give two sufficient conditions for rectifiability, both in terms of square functions of flatness-quantifying coefficients. The first condition involves the so-called $\alpha$ and $\beta_2$ numbers. The second one involves $\alpha_2$ numbers -- coefficients quantifying flatness via Wasserstein distance $W_2$. Both conditions are necessary for rectifiability, too -- the first one was shown to be necessary by Tolsa, while the necessity of the $\alpha_2$ condition is established in our recent paper. Thus, we get two new characterizations of rectifiability.
	\end{abstract}
	\maketitle
	
	\section{Introduction}
	Let $1\le n\le d$ be integers. We say that a Radon measure $\mu$ on $\R^d$ is $n$-rectifiable if there exist countably many Lipschitz maps $f_i:\R^n\rightarrow\R^d$ such that 
	\begin{equation}\label{eq:measure concentrated on images}
	\mu(\R^d\setminus\bigcup_i f_i(\R^n))=0,
	\end{equation}
	and moreover $\mu$ is absolutely continuous with respect to $n$-dimensional Hausdorff measure $\mathcal{H}^n$. A set $E\subset \R^d$ is $n$-rectifiable if the measure $\Hn{E}$ is $n$-rectifiable. We will often omit $n$ and just write ``rectifiable''.
	
	Rectifiable sets and measures have been studied for many decades now, and the first results in this area are due to Besicovitch. See \cite[Chapters 15--18]{mattila1999geometry} for some classical characterizations of rectifiability involving densities, tangent measures, and projections. The aim of this paper is to prove a sufficient condition for rectifiability involving the so-called $\alpha_2$ coefficients. In fact, we will show a bit more: a sufficient condition for rectifiability involving $\alpha$ and $\beta_2$ numbers. We introduce the whole menagerie of flatness-quantifying coefficients below.
	
	\subsection{\texorpdfstring{$\beta$ and $\alpha$}{beta and alpha} coefficients}
	$\beta$ numbers were introduced by Jones in \cite{jones1990rectifiable}, where they were used to characterize subsets of rectifiable curves. They were further developed by David and Semmes in \cite{david1991singular,david1993analysis}. For $1\le p <\infty$ and a Radon measure $\mu$ on $\R^d$ we define
	\begin{equation}\label{eq:homogeneous betas}
	\beta^h_{\mu,p}(x,r)=\inf_L\left(\frac{1}{r^n}\int_{B(x,r)}\left(\frac{\dist(y,L)}{r}\right)^p\ d\mu(y)\right)^{1/p},
	\end{equation}
	where the infimum runs over all $n$-planes $L$ intersecting $B(x,r)$. The letter $h$ in the superscript stands for \emph{homogeneous} and refers to the normalizing factor $r^{n}$. In our setting it will be more convenient to normalize by $\mu(B(x,3r))$ instead, and so we define
	\begin{equation*}
	\beta_{\mu,p}(x,r)=\inf_L\left(\frac{1}{\mu(B(x,3r))}\int_{B(x,r)}\left(\frac{\dist(y,L)}{r}\right)^p\ d\mu(y)\right)^{1/p}.
	\end{equation*}
	For a ball $B=B(x,r)$ we will sometimes write $\beta_{\mu,p}(B)$ instead of $\beta_{\mu,p}(x,r)$, and we will do the same with all the other coefficients introduced below.
	
	Another way to quantify flatness of measures is offered by $\alpha$ numbers, introduced by Tolsa in \cite{tolsa2008uniform}. 
	To define them, we need a distance on the space of measures. Given Radon measures $\mu$ and $\nu$, and an open ball $B$, we set
	\begin{equation*}
	F_B(\mu, \nu)=\sup\left\{\left\lvert\int\phi\ d\mu - \int\phi\ d\nu \right\rvert\ :\ \phi\in\text{Lip}_1(B)\right\},
	\end{equation*}
	where
	\begin{equation*}
	\lip_1(B)=\{\phi\ :\ \lip(\phi)\le 1,\ \supp \phi\subset B \}.
	\end{equation*}	
	The coefficient $\alpha$ of a measure $\mu$ in $B$ is defined as
	\begin{equation*}
	\alpha_{\mu}(B)=\inf_{c,L}\frac{1}{r(B)\mu(3B)} F_B(\mu,c\Hn{L}),
	\end{equation*}
	where the infimum runs over all $c\ge 0$ and all $n$-planes $L$.
	
	We prove the following sufficient condition for rectifiability in terms of $\alpha$ and $\beta_2$ square functions.
	\begin{theorem}\label{thm:sufficient condition}
		Let $\mu$ be a Radon measure on $\R^d$. Suppose that
		\begin{equation}\label{eq:main theorem assumption on alphas}
		\int_0^{1}\alpha_{\mu}(x,r)^2\ \frac{dr}{r}<\infty \qquad\text{for $\mu$-a.e. $x\in\R^d,$}
		\end{equation}
		and
		\begin{equation}\label{eq:main theorem assumption on betas}
		\int_0^{1}\beta_{\mu,2}(x,r)^2\ \frac{dr}{r}<\infty \qquad\text{for $\mu$-a.e. $x\in\R^d.$}
		\end{equation}
		Then $\mu$ is $n$-rectifiable.
	\end{theorem}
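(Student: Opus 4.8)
The plan is to deduce the theorem from a corona decomposition, using the $\alpha$ coefficients to supply the density information that the $\beta_2$ coefficients alone do not detect.

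\smallskip
\noindent\textbf{Step 1 (density control and localization).} Since a countable union of $n$-rectifiable sets is $n$-rectifiable and restrictions of rectifiable measures are rectifiable, it is enough to produce Borel sets $F_j$ with $\mu(\R^d\setminus\bigcup_j F_j)=0$ such that each $\mu|_{F_j}$ is $n$-rectifiable. The first point is that \eqref{eq:main theorem assumption on alphas} already forces absolute continuity together with density bounds: it is known (Tolsa) that the optimal densities in the flat approximations underlying $\alpha_\mu(x,r)$ differ from $\theta^n_\mu(x,r):=\mu(B(x,r))/(\omega_n r^n)$ by $O(\alpha_\mu(x,r))$ and oscillate \emph{quadratically} in the $\alpha$'s — roughly $|\theta^n_\mu(x,r)-\theta^n_\mu(x,R)|^2\lesssim\int_r^R\alpha_\mu(x,s)^2\,\tfrac{ds}{s}$ up to lower order terms — so that \eqref{eq:main theorem assumption on alphas} makes $\theta^n(\mu,x):=\lim_{r\to0}\theta^n_\mu(x,r)$ exist in $(0,\infty)$ for $\mu$-a.e.\ $x$; in particular $\mu\ll\mathcal H^n$. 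Granting this, a routine use of Egorov's and Chebyshev's theorems lets me fix a small $\varepsilon>0$ and, for each $j$, a compact set $F=F_j$ with $\mu(F)>0$, a radius $r_0>0$, and constants $0<c_0\le C_0$ with $c_0 r^n\le\mu(B(x,r))\le C_0 r^n$ for all $x\in F$, $0<r<r_0$, and $\int_0^{r_0}\big(\alpha_\mu(x,r)^2+\beta_{\mu,2}(x,r)^2\big)\,\tfrac{dr}{r}\le\varepsilon$ for all $x\in F$. It then remains to show each such $\mu|_F$ is $n$-rectifiable.

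\smallskip
\noindent\textbf{Step 2 (corona decomposition).} I would fix a dyadic lattice $\D$ of cubes adapted to $\mu$ (David--Mattila cubes) and, by the standard stopping-time construction, partition the cubes of $\D$ meeting $F$ into a family of trees (coherent subfamilies $\Tree\subset\D$, each with a top cube $R$) and a family $\Stop$ of stopping cubes, placing $Q$ into $\Stop$ the first moment, descending from a top cube $R$, that (i) the accumulated flatness $\sum_{Q\subset Q'\subset R}\beta_{\mu,2}(Q')^2$ exceeds a small threshold, or (ii) $\theta^n_\mu$ at the scale of $Q$ has drifted too far from its value at $R$, or (iii) the best-fitting $n$-plane of $Q$ has rotated too much relative to that of $R$, or (iv) $Q$ leaves a fixed neighbourhood of $F$. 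The crux is the Carleson packing bound $\sum_{Q\in\Stop,\,Q\subset R}\mu(Q)\le(\tfrac12+C\varepsilon)\,\mu(R)$ for every top cube $R$: the mass of cubes stopped via (i)--(iii) is controlled by $\int_0^{r_0}(\alpha_\mu^2+\beta_{\mu,2}^2)\,\tfrac{dr}{r}\le\varepsilon$ through a Carleson-embedding argument on $F$ — for (ii) using the quadratic density bound of Step 1, and for (iii) a geometric lemma bounding the angle between optimal planes at consecutive scales by the intervening $\beta_{\mu,2}$'s — while cubes of type (iv) contribute little since $\mu(F)$ is large.

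\smallskip
\noindent\textbf{Step 3 (Lipschitz graphs and conclusion).} On a single tree $\Tree$, conditions (i) and (iii) guarantee that at every scale between $R$ and the stopping cubes $\supp\mu$ is $\beta_{\mu,2}$-flat and the approximating planes stay within a small angle of the base plane $L_R$; with the density almost constant, this is exactly the input of the David--Semmes/Azzam--Tolsa graph construction, which yields a Lipschitz graph $\Gamma_\Tree$ over $L_R$, of small constant, containing $\mu$-almost every point of $\supp\mu\cap R$ lying outside all stopping cubes of $\Tree$. Summing over all trees, $\mu|_F$-a.e.\ point lies either on one of the countably many graphs $\Gamma_\Tree$ or inside an infinite descending chain of stopping cubes; the latter set is $\mu$-null because, by the packing bound, the total mass of the cubes surviving $k$ stopping generations is at most $(\tfrac12+C\varepsilon)^k\mu(F)\to0$. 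Hence $\mu|_F$ is concentrated up to a null set on countably many Lipschitz $n$-graphs, and since $\mu|_F\ll\mathcal H^n$ by Step 1 it is $n$-rectifiable; exhausting $\mu$ by such $F$ completes the argument.

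\smallskip
\noindent\textbf{Where the difficulty lies.} I expect the hard part to be the Carleson packing estimate of Step 2 — in particular, showing that the square-function hypotheses, after the reduction to small $\varepsilon$, genuinely pay for \emph{every} stopping cube, including those stopped by a density jump (type (ii)), which is the one place where \eqref{eq:main theorem assumption on alphas} is used essentially rather than \eqref{eq:main theorem assumption on betas} alone. A secondary technical point is the faithful transfer of the continuous coefficients $\alpha_\mu(x,r)$ and $\beta_{\mu,2}(x,r)$ to the discrete cube/plane setting (comparing best planes of cubes with the $\beta_{\mu,2}$'s at comparable scales, and handling the non-doubling of $\mu$ away from $F$). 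Once the corona decomposition with the correct stopping rules is in place, the Lipschitz-graph construction of Step 3 follows known lines.
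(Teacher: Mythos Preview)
Your Step 1 contains a genuine error that undermines the whole scheme. You claim that \eqref{eq:main theorem assumption on alphas} alone forces $\theta^n(\mu,x)$ to exist in $(0,\infty)$ for $\mu$-a.e.\ $x$, via a quadratic oscillation bound $|\theta^n_\mu(x,r)-\theta^n_\mu(x,R)|^2\lesssim\int_r^R\alpha_\mu(x,s)^2\,\tfrac{ds}{s}$. This cannot be true: as noted in the paper's introduction, Azzam--Tolsa--Toro construct a purely $1$-unrectifiable measure on $\R^2$ satisfying \eqref{eq:main theorem assumption on alphas}. If your claim held, Besicovitch/Preiss would make that measure rectifiable. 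The point is that $\alpha_\mu$ here is normalized by $\mu(3B)$, not by $r^n$, so it is blind to how the density $\mu(B(x,r))/r^n$ drifts between scales; your oscillation inequality is essentially what one gets \emph{after} assuming pointwise doubling (which is the extra hypothesis in the ATT characterization), not from $\alpha_\mu$ alone.

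Because Step 1 fails, Step 2 collapses: you have no a priori two-sided density bound on $F$, and your packing of the type-(ii) stopping cubes (density drift) has nothing to appeal to. The paper does \emph{not} obtain density control first. Instead, inside a single strongly doubling top cube $R_0$ where the combined square function is small, it runs a stopping time with \emph{absolute} high-density ($\HD$) and low-density ($\LD$) thresholds, builds the Lipschitz graph $\Gamma$ from the tree, and only then bounds the stopping mass using $\Gamma$ itself: $\LD$ is controlled by $\beta_{\mu,2}$ via the fact that tree cubes lie close to $\Gamma$, while $\HD$ is controlled by $\alpha_\mu$ through an auxiliary measure $\nu$ supported on $\Gamma$, an estimate $\|f-c_0\|_{L^2}^2\lesssim\varepsilon_0^{1/8}\mu(R_0)$ for its density $f$, and a maximal-function argument. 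This $\HD$ estimate --- not a pointwise density-oscillation bound --- is precisely the place where \eqref{eq:main theorem assumption on alphas} does the essential work you are looking for, and it is the technical core of the proof.
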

	
	Since Tolsa has shown in \cite{tolsa2015characterization} that \eqref{eq:main theorem assumption on alphas} and \eqref{eq:main theorem assumption on betas} are also necessary conditions for rectifiability, we immediately get the following characterization.
	
	\begin{cor}
		Let $\mu$ be a Radon measure on $\R^d$. Then, $\mu$ is $n$-rectifiable if and only if \eqref{eq:main theorem assumption on alphas} and \eqref{eq:main theorem assumption on betas} hold for $\mu$-a.e. $x\in\R^d$.
	\end{cor}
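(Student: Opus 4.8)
The \emph{``only if''} direction is Tolsa's theorem from \cite{tolsa2015characterization}, which asserts that \eqref{eq:main theorem assumption on alphas} and \eqref{eq:main theorem assumption on betas} hold $\mu$-a.e.\ whenever $\mu$ is $n$-rectifiable. The \emph{``if''} direction is exactly Theorem~\ref{thm:sufficient condition}. So the corollary is immediate once Theorem~\ref{thm:sufficient condition} is established, and there is no further obstacle at the level of the corollary itself; the real content is Theorem~\ref{thm:sufficient condition}, and I record here the plan for it. The plan is to run a stopping-time/corona decomposition for $\mu$, in the spirit of David--Semmes and of the work of Azzam--Tolsa and Tolsa on Jones' square function, using the $\alpha$ numbers to supply the density control those theorems take as a hypothesis.

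\textbf{Reductions and density control.} Since a countable union of $n$-rectifiable sets is $n$-rectifiable, it suffices to fix a ball $B_0$ and $M<\infty$ and prove that $\mu\mres F$ is $n$-rectifiable when $\mu(F)>0$, where
\[
F=\Big\{x\in B_0:\ \int_0^{1}\alpha_{\mu}(x,r)^2\,\tfrac{dr}{r}+\int_0^{1}\beta_{\mu,2}(x,r)^2\,\tfrac{dr}{r}\le M\Big\}.
\]
Passing to $\mu\mres F$ is harmless: at $\mu\mres F$-a.e.\ point of $F$ this restriction is a Lebesgue density point, so its $\alpha$ and $\beta_{\mu,2}$ coefficients are controlled at small scales by those of $\mu$ and the square-function hypotheses are inherited; thus we may assume $\mu$ finite and compactly supported. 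The $\alpha$ numbers then furnish density bounds: testing $F_B(\mu,c\,\Hn{L})$ against a Lipschitz bump supported in $B=B(x,r)$ gives $|\mu(B(x,r/2))/(r/2)^n-c|\lesssim\alpha_{\mu}(x,r)\,\mu(3B)/r^n$, and chaining this over dyadic scales together with $\sum_k\alpha_{\mu}(x,2^{-k})^2<\infty$ (and Cauchy--Schwarz over dyadic blocks of scales) yields $0<\Theta^{n}_{*}(x,\mu)\le\Theta^{n,*}(x,\mu)<\infty$ for $\mu$-a.e.\ $x\in F$; in particular $\mu\ll\mathcal H^n$. After one more countable decomposition of $F$ we may assume $c_0\le\mu(B(x,r))/r^n\le C_0$ for $x\in F$ and $0<r\le r_0$. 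Finally, comparing the continuous square functions with their dyadic analogues over a David--Semmes lattice $\mathcal D_\mu$ for $\mu$ and using that the dyadic sums are \emph{finite} $\mu\mres F$-a.e.\ (not merely $\le M$), a stopping argument over scales reduces the theorem to the following: there are a cube $R\in\mathcal D_\mu$ with $\diam R\le r_0$ and a set $G\subset F\cap R$ with $\mu(G)\ge\tfrac12\mu(R)$ such that $\sum_{Q:\,x\in Q\subseteq R}(\alpha_{\mu}(3B_Q)^2+\beta_{\mu,2}(3B_Q)^2)\le\varepsilon$ for all $x\in G$, with $\varepsilon$ a small absolute constant; one must then show a fixed fraction of $\mu(G)$ lies on an $n$-dimensional Lipschitz graph.

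\textbf{Corona decomposition and Lipschitz approximation.} Assign to each cube $Q\subseteq R$ a best approximating plane $L_Q$ from the definition of $\beta_{\mu,2}(3B_Q)$ and run a stopping-time argument from $R$: a cube $Q$ is a stopping cube if it is maximal subject to one of $(\HD)$ $\mu(Q)/\ell(Q)^n$ too large or $(\LD)$ too small; $(\BA)$ $\angle(L_Q,L_R)$ larger than a small threshold $\theta$; $(\BS)$ $\sum_{Q\subseteq Q'\subseteq R}(\alpha_{\mu}(3B_{Q'})^2+\beta_{\mu,2}(3B_{Q'})^2)$ larger than $\varepsilon$. Let $\Tree$ be the cubes contained in no stopping cube. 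On $\Tree$ the density is comparable to $1$, the planes $L_Q$ stay $\theta$-close to $L:=L_R$, and $\sum_{Q\in\Tree}\beta_{\mu,2}(3B_Q)^2\le\varepsilon$; feeding this into the standard construction of a Lipschitz parametrization from the stopping-time regions — the David--Semmes graph construction, in the form valid for general Radon measures used in the Azzam--Tolsa/Tolsa framework — produces an $n$-dimensional Lipschitz graph $\Gamma$ over $L$ containing $Z:=R\setminus\bigcup_{Q\in\Stop}Q$, so $\mu\mres Z$ is $n$-rectifiable and $\mu(G\cap\Gamma)\ge\mu(G)-\sum_{Q\in\Stop}\mu(Q)$.

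\textbf{Packing of stopping cubes and conclusion.} For $\varepsilon$ small and the density thresholds generous, no $(\HD)$-, $(\LD)$- or $(\BS)$-cube can intersect $G$: the density-oscillation estimate from the first step bounds the variation of $\mu(Q)/\ell(Q)^n$ along any chain of cubes through a point of $G$ by $\lesssim\sqrt\varepsilon$ times the density, and the $(\BS)$ condition fails along such chains by the very definition of $G$. Hence only $(\BA)$-cubes meet $G$, and a Carleson-type packing estimate comparing the accumulated plane tilt with the $\beta_{\mu,2}$ square function gives $\sum_{Q\in\Stop_{\BA}}\mu(Q)\lesssim\theta^{-2}\varepsilon\,\mu(R)$; choosing $\varepsilon\ll\theta^{2}$ yields $\mu(G\cap\Gamma)\ge\tfrac12\mu(G)>0$, so a fixed fraction of $\mu(G)$ lies on a Lipschitz graph. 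To upgrade this to full rectifiability, let $\mathcal R$ be the $n$-rectifiable part of $\mu$; if $\mu(\R^d\setminus\mathcal R)>0$ then $\mu\mres(\R^d\setminus\mathcal R)$ still satisfies the hypotheses (again by the density-point argument), so the above produces a positive-measure rectifiable subset of $\R^d\setminus\mathcal R$, a contradiction, and hence $\mu(\R^d\setminus\mathcal R)=0$. The principal obstacle throughout is the absence of any Ahlfors--David regularity: the dyadic lattice, the Carleson packing, and the Lipschitz parametrization must all be carried out in a variable-density regime, and it is precisely the $\alpha$ numbers that keep $\Theta^n(\cdot,\mu)$ trapped between two constants along the corona, which is what makes the $\beta_2$-based graph construction of the second step — the single heaviest ingredient — applicable.
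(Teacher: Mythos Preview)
Your reduction of the corollary to Theorem~\ref{thm:sufficient condition} plus Tolsa's necessity result \cite{tolsa2015characterization} is correct and is exactly how the paper derives it. The substantive issue is in the sketch you append for Theorem~\ref{thm:sufficient condition}: the claim that chaining $|\mu(B(x,r/2))/(r/2)^n-c|\lesssim\alpha_{\mu}(x,r)\,\mu(3B)/r^n$ over dyadic scales yields $0<\Theta_{*}^{n}(x,\mu)\le\Theta^{n,*}(x,\mu)<\infty$ is false. The right-hand side carries the factor $\Theta_\mu(3B)$, so the chaining only controls \emph{multiplicative} increments of the density, and square-summability of $(\alpha_\mu(x,2^{-k}))_k$ does not prevent $\sum_k\alpha_\mu(x,2^{-k})=\infty$; the density can drift to $0$ or $\infty$. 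This is not a technical slip: \cite{azzam2018characterization} constructs a purely $1$-unrectifiable measure on $\R^2$ satisfying \eqref{eq:main theorem assumption on alphas}, and were your density bound valid for it, the measure would be pointwise doubling and hence rectifiable by Theorem~\ref{thm:alpha doubling characterization}, a contradiction. Since your treatment of the $\HD$ and $\LD$ stopping families (``no such cube can intersect $G$'') rests entirely on this density-oscillation estimate, the corona argument does not close.

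The paper does not attempt to extract a priori density bounds. Instead it bounds the \emph{total measure} of $\HD$ and $\LD$ only after the Lipschitz graph $\Gamma$ has been built: $\LD$ is handled because $\Gamma$ passes through the low-density balls (this uses the $\beta_{2}$-controlled set $\RFar$ and is where \eqref{eq:main theorem assumption on betas} is essential); $\HD$ requires constructing an auxiliary measure $\nu$ on $\Gamma$ close to $\mu$ in the $F_B$ metric, and then bounding the $L^2$ oscillation of the density of the pushforward of $\nu$ to $L_0$ by a Littlewood--Paley square-function argument driven by \eqref{eq:main theorem assumption on alphas}. That is where the $\alpha$ hypothesis does its real work, and it is considerably heavier than the chaining you propose. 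A secondary concern: the reduction to $\mu\mres F$ is not innocent for $\alpha$ numbers either, since restriction can spoil flatness even at density points and the resulting error term is not obviously square-integrable against $dr/r$; the paper sidesteps this by never restricting $\mu$, encoding the good set only through a stopping condition.
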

	
	A number of similar characterizations has been shown in recent years. First, recall that upper and lower $n$-dimensional densities of a Radon measure $\mu$ at $x\in\R^d$ are
	\begin{equation*}
	\Theta^{n,*}(x,\mu)=\limsup_{r\to 0^+}\frac{\mu(B(x,r))}{r^n},\qquad \Theta_{*}^n(x,\mu)=\liminf_{r\to 0^+}\frac{\mu(B(x,r))}{r^n},
	\end{equation*}
	respectively. If they are equal, we set $\Theta^{n}(x,\mu) = \Theta^{n,*}(x,\mu) = \Theta_{*}^n(x,\mu)$ and we call it $n$-dimensional density of $\mu$ at $x$.
	
	In \cite{tolsa2015characterization} it was shown that a rectifiable measure $\mu$ satisfies
	\begin{equation}\label{eq:Jones square function finite}
	\int_0^1 \beta_{\mu,2}^h(x,r)^2\ \frac{dr}{r}<\infty\qquad\text{for $\mu$-a.e. $x\in\R^d.$}\tag{$\ref*{eq:main theorem assumption on betas}^h$}
	\end{equation}
	On the other hand, Azzam and Tolsa proved in \cite{azzam2015characterization} that if a Radon measure $\mu$ satisfies \eqref{eq:Jones square function finite} and 
	$0<\Theta^{n,*}(x,\mu)<\infty$ for $\mu$-a.e. $x\in\R^d$,
	then $\mu$ is $n$-rectifiable. More recently, Edelen, Naber and Valtorta \cite{edelen2016quantitative} managed to weaken the assumption on densities to
	\begin{equation}\label{eq:weakened upper density bdd}
	\Theta^{n,*}(x,\mu)>0\quad \text{and} \quad \Theta_*^{n}(x,\mu)<\infty\qquad\text{for $\mu$-a.e. $x\in\R^d$.}
	\end{equation}
	An alternative proof showing that \eqref{eq:Jones square function finite} and \eqref{eq:weakened upper density bdd} are sufficient for rectifiability was later given in \cite{tolsa2017rectifiability}.
	
	\begin{theoremalph}[\cite{tolsa2015characterization,azzam2015characterization,edelen2016quantitative}]\label{thm:beta characterization}
		Let $\mu$ be a Radon measure on $\R^d$. Then, $\mu$ is $n$-rectifiable if and only if \eqref{eq:Jones square function finite} and \eqref{eq:weakened upper density bdd} hold for $\mu$-a.e. $x\in\R^d$.
	\end{theoremalph}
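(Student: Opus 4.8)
The plan is to treat the two implications of this characterization separately, and to indicate the mechanism behind each; the statement is assembled from \cite{tolsa2015characterization}, \cite{azzam2015characterization} and \cite{edelen2016quantitative}, so a proof means recalling how those pieces fit together.

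\emph{Necessity.} Assume $\mu$ is $n$-rectifiable. Absolute continuity $\mu\ll\mathcal H^n$ is part of the definition, and combined with the classical fact that a rectifiable measure admits, $\mu$-a.e., a density $\Theta^n(x,\mu)$ which is finite and \emph{positive} (cf.\ \cite{mattila1999geometry}: on a rectifiable set of finite $\mathcal H^n$-measure the density equals $1$ $\mathcal H^n$-a.e., and $\mu=\theta\,\mathcal H^n\mres E$ with $\theta>0$), this gives \eqref{eq:weakened upper density bdd} with room to spare. For the square function \eqref{eq:Jones square function finite} one reduces, using \eqref{eq:measure concentrated on images} and a.e.\ differentiability of Lipschitz maps, to the model case $\mu=\Theta\,\mathcal H^n\mres\graph(A)$ with $A\colon\R^n\to\R^{d-n}$ Lipschitz and $\Theta$ bounded above and below; there a Dorronsoro / Littlewood--Paley estimate bounds $\int\!\!\int\beta^h_{\mu,2}(x,r)^2\,\frac{dr}{r}\,d\mu(x)$ by $\lesssim(1+\|\nabla A\|_\infty^2)\,\mu(\R^d)<\infty$, so the inner integral is finite $\mu$-a.e. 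This is the content of \cite{tolsa2015characterization}.

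\emph{Sufficiency.} Assume \eqref{eq:Jones square function finite} and \eqref{eq:weakened upper density bdd} hold $\mu$-a.e. Since rectifiability is countably stable and $\mu$ is $\sigma$-finite, it suffices to show that every bounded Borel set $G$ with $\mu(G)>0$ contains a subset of positive $\mu$-measure lying in a single Lipschitz graph. By Egorov- and Lusin-type arguments one first passes to a bounded $F\subset G$ with $\mu(F)>0$ on which $\Theta^{n,*}(x,\mu)\ge c_0>0$, $\Theta^n_*(x,\mu)\le C_0<\infty$ and $\int_0^1\beta^h_{\mu,2}(x,r)^2\,\frac{dr}{r}\le M$ hold uniformly, recording also that at $\mu$-a.e.\ point of $F$ the coefficients of $\mu$ and of $\mu\mres F$ are comparable at most scales, so the hypotheses survive the restriction. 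One then runs a corona/stopping-time decomposition of a dyadic-type lattice adapted to $\mu\mres F$: a cube is stopped once the localized Jones function $\beta^h_{\mu,2}$ summed over its subscales is large or the density ratio has drifted too far. The Carleson-type finiteness of the $\beta^h_{\mu,2}$ square function together with the density control forces the tops of the maximal stopping regions to satisfy a Carleson packing estimate, while inside a single stopping region flatness at every scale with controlled densities produces, via a Reifenberg-type construction, a bi-Lipschitz parametrization of a large portion of $\supp(\mu\mres F)$ by a subset of $\R^n$, i.e.\ a Lipschitz graph over an $n$-plane on which $\mu$ is comparable to $\mathcal H^n$. Summing over the countably many stopping regions shows $\mu$-a.a.\ of $F$ is covered by Lipschitz graphs (giving \eqref{eq:measure concentrated on images} after a countable exhaustion), and the comparability $\mu\approx\mathcal H^n$ on each graph yields $\mu\ll\mathcal H^n$. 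This is the scheme of \cite{azzam2015characterization}, refined in \cite{edelen2016quantitative}.

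\emph{Main obstacle.} The delicate point — and the reason \cite{edelen2016quantitative} is needed beyond \cite{azzam2015characterization} — is carrying out the Reifenberg/parametrization step with only \emph{$L^2$} flatness ($\beta_2$, not $\beta_\infty$) and under the weakened density hypothesis \eqref{eq:weakened upper density bdd}, where the lower density may vanish and the measure need not be even locally Ahlfors lower-regular on $F$. Controlling the distortion of the parametrization by the Jones-type square function of $\beta_2$, rather than by $L^\infty$ flatness or by Ahlfors regularity, is the hard analytic core, handled by the discrete Reifenberg machinery of Naber--Valtorta adapted in \cite{edelen2016quantitative}; by contrast the necessity direction is essentially a Dorronsoro estimate together with classical density theory.
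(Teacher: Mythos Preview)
The paper does not give its own proof of this statement: Theorem~A is quoted as a known result assembled from \cite{tolsa2015characterization}, \cite{azzam2015characterization} and \cite{edelen2016quantitative}, and is used only as background context in the introduction. Your proposal is therefore not in competition with any argument in the paper; it is a sketch of what those cited references do, and as such it is accurate. You correctly attribute the necessity of \eqref{eq:Jones square function finite} and \eqref{eq:weakened upper density bdd} to classical density theory plus the Dorronsoro--Littlewood--Paley estimate of \cite{tolsa2015characterization}, and the sufficiency to the stopping-time/corona argument with a Reifenberg-type parametrization from \cite{azzam2015characterization}, with the weakening of the density assumption to \eqref{eq:weakened upper density bdd} handled by the discrete-Reifenberg machinery of \cite{edelen2016quantitative}. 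Your identification of the main obstacle --- running the parametrization with only $L^2$ flatness and without lower Ahlfors regularity --- is also on point. As a high-level roadmap this is fine; of course it is not a self-contained proof, but neither does the paper claim to provide one.
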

	
	There is a reason why the theorems above are stated for $\beta_2$ numbers (as opposed to $\beta_p$ numbers with $p\neq 2$). The conditions \eqref{eq:main theorem assumption on betas} and \eqref{eq:Jones square function finite} with $\beta_{2}$ numbers replaced by $\beta_p$ are necessary for rectifiability only for $1\le p\le 2$. On the other hand, conditions \eqref{eq:Jones square function finite} and $0<\Theta^{n,*}(\mu,x)<\infty$ $\mu$-almost everywhere are sufficient for rectifiability only if $p\ge 2$. Relevant counterexamples were given in \cite{tolsa2017rectifiability}. However, if we assume more on densities (namely that $\Theta_*^{n}(\mu,x)>0$ and $\Theta^{n,*}(\mu,x)<\infty$ $\mu$-almost everywhere), then the finiteness of $\beta_p$ square function for certain $p<2$ becomes sufficient for rectifiability, see \cite{pajot1997conditions,badger2016two}.
	
	It is also worth mentioning that appropriate versions of $\beta$ numbers give rise to various necessary and/or sufficient conditions for the so called \emph{Federer rectifiability} of measures. We say that a measure is $n$-rectifiable in the sense of Federer if it satisfies \eqref{eq:measure concentrated on images}, and no absolute continuity with respect to $\mathcal{H}^n$ is required. This notion is more difficult to characterize than the one we work with, as illustrated by the surprising example of Garnett, Killip and Schul \cite{garnett2010doubling}. In the case $n=1$ significant progress has been achieved in \cite{lerman2003quantifying,badger2015multiscale,badger2016two,azzam2016characterization,badger2017multiscale,martikainen2018boundedness}. An excellent overview of the problem is given in the survey \cite{badger2018generalized}.
	
	Concerning $\alpha$ numbers, as we already mentioned, Tolsa showed in \cite{tolsa2015characterization} that \eqref{eq:main theorem assumption on alphas} is necessary for rectifiability. 
	Is it also sufficient? Azzam, David, and Toro proved in \cite{azzam2016wasserstein} that if $\mu$ is doubling, then some condition related to \eqref{eq:main theorem assumption on alphas} is sufficient for rectifiability. In \cite{orponen2018absolute} Orponen showed that for $n=d=1$ a variant of \eqref{eq:main theorem assumption on alphas} is sufficient for rectifiability (which in this case is equivalent to absolute continuity with respect to $\mathcal{H}^1$).
	Finally, Azzam, Tolsa and Toro \cite{azzam2018characterization} proved that a measure $\mu$ satisfying \eqref{eq:main theorem assumption on alphas} which is also pointwise doubling, i.e. such that
	\begin{equation}\label{eq:pointwise doubling}
	\limsup_{r\to 0^+}\frac{\mu(B(x,2r))}{\mu(B(x,r))}<\infty\qquad\text{for $\mu$-a.e. $x\in\R^d$,}
	\end{equation}
	is rectifiable. 
	\begin{theoremalph}[{\cite{tolsa2015characterization,azzam2018characterization}}]\label{thm:alpha doubling characterization}
		Let $\mu$ be a Radon measure on $\R^d$. Then, $\mu$ is $n$-rectifiable if and only if \eqref{eq:main theorem assumption on alphas} and \eqref{eq:pointwise doubling} hold for $\mu$-a.e. $x\in\R^d$.
	\end{theoremalph}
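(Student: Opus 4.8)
The plan is to prove necessity by soft arguments and to reduce sufficiency to Theorem~\ref{thm:beta characterization}, the gain being to upgrade the $\alpha$ hypothesis to the $\beta_2$ hypothesis. For necessity, \eqref{eq:main theorem assumption on alphas} is precisely Tolsa's theorem \cite{tolsa2015characterization}; for \eqref{eq:pointwise doubling} one observes that an $n$-rectifiable measure is absolutely continuous with respect to $\mathcal{H}^n$ and carried by countably many Lipschitz graphs, so by the differentiation theory on rectifiable sets (cf.\ \cite[Chapters 15--17]{mattila1999geometry}) the density $\Theta^n(x,\mu)$ exists in $(0,\infty)$ for $\mu$-a.e.\ $x$; hence $\mu(B(x,2r))/\mu(B(x,r))\to 2^n$ as $r\to 0$ for $\mu$-a.e.\ $x$, which is far stronger than \eqref{eq:pointwise doubling}.

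For sufficiency I would first reduce to a uniform situation: by Egorov's theorem and inner regularity it suffices to exhibit, inside an arbitrary compact set $F$ of positive measure on which the hypotheses hold quantitatively -- say $\mu(B(x,2r))\le C_0\,\mu(B(x,r))$ and $\int_0^{R_0}\alpha_\mu(x,r)^2\,\frac{dr}{r}\le\varepsilon$ for all $x\in F$ and $0<r\le R_0$, with $\varepsilon$ small -- an $n$-rectifiable set carrying a fixed fraction of $\mu|_F$, and then exhaust. On such an $F$ one checks by elementary covering arguments that $0<\Theta^{n,*}(x,\mu)<\infty$ for $\mu$-a.e.\ $x$ (the upper bound is where the finiteness of the $\alpha$ square function is used; the lower bound follows from it together with the uniform doubling), so \eqref{eq:weakened upper density bdd} is in force; it then remains to establish \eqref{eq:Jones square function finite} for $\mu$-a.e.\ $x\in F$, after which Theorem~\ref{thm:beta characterization} concludes.

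The core step is a Carleson-type comparison $\beta_{\mu,2}\lesssim\alpha_\mu$ ``on average'': the sum of $\beta_{\mu,2}^2$ over scales and locations inside a cube is controlled by the analogous sum of $\alpha_\mu^2$, modulo a small error proportional to the measure of the top cube. I would prove this by a stopping-time (corona) decomposition in the spirit of David--Semmes and Tolsa \cite{david1991singular,tolsa2008uniform}: work in a dyadic lattice adapted to $\mu$ so that the selected cubes are doubling (this is the point where only \emph{pointwise} doubling, not global doubling, is available, and one must allow an extra stopping rule for long chains of non-doubling cubes); from a top cube $R$ with near-optimal plane $L_R$ descend, keeping a cube $Q$ in the current tree as long as its near-optimal plane stays within a small angle of $L_R$ and its approximating density stays comparable to that of $R$. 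Since the flat distance underlying $\alpha_\mu$ detects both the approximating plane and the density constant, the change of angle and of density from a cube to its child is dominated by the corresponding $\alpha_\mu(Q)$, and telescoping along the tree bounds the total measure of the stopping cubes by $\sum_Q\alpha_\mu(Q)^2\mu(Q)\le C\varepsilon\,\mu(R)$. On each tree $\mu$ lies in a narrow cone about a single Lipschitz graph over $L_R$; summing the geometric series of error measures produced by the stopping cubes yields both the packing bound for $\beta_{\mu,2}$ and, equivalently, a Lipschitz image of $\R^n$ carrying a definite portion of $\mu|_F$.

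The hard part will be exactly this packing estimate: extracting from $\alpha_\mu(Q)$ quantitative control on the displacement of the best plane and the best density between a cube and its child, and summing these increments along the tree \emph{without losing the square} -- a crude triangle inequality would only give an $\ell^1$ sum of $\alpha_\mu$'s, which is not controlled by \eqref{eq:main theorem assumption on alphas}, so the telescoping must be run through an $L^2$/orthogonality argument. A secondary difficulty is organizing the adapted lattice so that the passage between $\beta_{\mu,2}$ and its homogeneous version $\beta_{\mu,2}^h$ and the treatment of non-doubling cubes do not destroy the Carleson bookkeeping. The remaining ingredients -- the Egorov reduction, the density dichotomy, and the patching of the Lipschitz pieces -- are by now routine in this circle of ideas; the complete argument is carried out in \cite{azzam2018characterization}.
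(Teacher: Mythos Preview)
The paper does not prove Theorem~\ref{thm:alpha doubling characterization}; it is stated as a background result attributed to \cite{tolsa2015characterization,azzam2018characterization}, so there is no ``paper's own proof'' to compare against. Your necessity argument is correct and standard.

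For sufficiency, your outline contains a genuine gap in the framing. You propose to reduce to Theorem~\ref{thm:beta characterization} by first establishing the density bounds \eqref{eq:weakened upper density bdd} and then the $\beta_2$ square function bound \eqref{eq:Jones square function finite}. The density step is not justified: pointwise doubling with constant $C_0$ only gives $\mu(B(x,r))\ge C_0^{-k}\mu(B(x,2^kr))$, which does \emph{not} yield $\Theta^{n,*}(x,\mu)>0$ unless $C_0<2^n$, and nothing in the hypotheses forces this; likewise, finiteness of the $\alpha$ square function does not by itself bound the upper density, since $\alpha_\mu$ is normalized by $\mu(3B)$ and is insensitive to the absolute size of the density. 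So the detour through Theorem~\ref{thm:beta characterization} does not close.

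What \cite{azzam2018characterization} actually does---and what your penultimate paragraph in fact sketches---is the \emph{direct} route: run a corona decomposition with $\HD$/$\LD$/angle stopping, build the Lipschitz graph, and control the measure of the stopping regions using the $\alpha$ hypothesis (the $\HD$ bound via an $L^2$ estimate on the density of an approximating measure, the $\LD$ bound via the graph geometry). The packing/telescoping difficulty you flag is real and is exactly where the work lies; but it is resolved by constructing the graph and estimating stopping cubes directly, not by first manufacturing $\beta_2$ control and then quoting Theorem~\ref{thm:beta characterization}. Incidentally, the present paper's Sections~\ref{sec:approximating measure}--\ref{sec:HD} adapt precisely this machinery from \cite{azzam2018characterization}.
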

	Also in \cite{azzam2018characterization}, the authors construct a purely $1$-unrectifiable measure on $\R^2$ satisfying \eqref{eq:main theorem assumption on alphas}. This shows that, for general $n$ and $d$, \eqref{eq:main theorem assumption on alphas} on its own is \emph{not} a sufficient condition for rectifiability.
	
	Finally, let us mention that in \cite{tolsa2014rectifiability,tolsa2017rectifiable} rectifiable sets and measures are characterized using yet another kind of square functions. They involve the so-called $\Delta$ numbers, defined as $\Delta_{\mu}(x,r)=|\frac{\mu(B(x,r))}{r^n} - \frac{\mu(B(x,2r))}{(2r)^n}|$. The results from \cite{tolsa2014rectifiability}, valid for arbitrary $n$, require $0<\Theta^n_*(\mu,x)\le\Theta^{n,*}(\mu,x)<\infty$ for $\mu$-a.e $x\in\R^d$. On the other hand, in \cite{tolsa2017rectifiable} it was shown that for $n=1$ analogous results hold under weaker assumption $0<\Theta^{1,*}(x,\mu)<\infty$ for $\mu$-a.e. $x\in\R^d$.
%
%
	\subsection{\texorpdfstring{$\alpha_p$}{alpha p} coefficients}
	Coefficients $\alpha_p$ were introduced by Tolsa in \cite{tolsa2012mass}. They can be thought of as a generalization of $\alpha$ numbers -- in fact, under relatively mild assumptions, one has $\alpha_{\mu}(B)\approx \alpha_{\mu,1}(B)$, see \lemref{lem:alpha2 controls alpha and beta2} and \cite[Lemma 5.1]{tolsa2012mass}. As in the case of $\alpha$ coefficients, in order to define $\alpha_p$ numbers we need a metric on the space of measures.
	
	Let $1\le p <\infty$, and let $\mu, \nu$ be two probability Borel measures on $\R^d$ satisfying $\int |x|^p\ d\mu<\infty,\ \int |x|^p\ d\nu<\infty$. The Wasserstein distance $W_p$ between $\mu$ and $\nu$ is defined as 
	\begin{equation*}
	W_p(\mu,\nu)=\left(\inf_{\pi}\int_{\R^d\times\R^d} |x-y|^p\ d\pi(x,y)\right)^{1/p},
	\end{equation*}
	where the infimum is taken over all transport plans between $\mu$ and $\nu$, i.e. Borel probability measures $\pi$ on $\R^d\times\R^d$ satisfying $\pi(A\times\R^d)=\mu(A)$ and $\pi(\R^d\times A)=\nu(A)$ for all measurable $A\subset\R^d$. The same definition makes sense if instead of probability measures we consider $\mu,\ \nu,$ and $\pi$ of the same total mass. For more information on Wasserstein distance see for example \cite[Chapter 7]{villani2003topics} or \cite[Chapter 6]{villani2008optimal}.
	
	Similarly as $\alpha$ numbers, $\alpha_p$ numbers quantify how far is a given measure from being a flat measure, that is, from being of the form $c\Hn{L}$ for some constant $c>0$ and some $n$-plane $L$. In order to measure it locally (say, in a ball $B$), we introduce the following auxiliary function. 
	
	Let $\varphi:\R^d\rightarrow [0,1]$ be a radial Lipschitz function satisfying $\varphi \equiv 1$ in $B(0,2)$, $\supp\varphi \subset B(0,3)$, and for all $x\in B(0,3)$
	\begin{gather*}
	c^{-1}\dist(x,\partial B(0,3))^2\le \varphi(x)\le c\dist(x,\partial B(0,3))^2,\\
	|\nabla\varphi(x)|\le c \dist(x,\partial B(0,3)),
	\end{gather*}
	for some constant $c>0$. Given a ball $B = B(x,r)\subset\R^d$ we set
	\begin{equation*}
	\varphi_{{B}}(y) = \varphi\left(\frac{y-x}{r}\right).
	\end{equation*}
	$\varphi_B$ should be thought as a regularized characteristic function of $B$. For $1\le p <\infty$, a Radon measure $\mu$ on $\R^d$, and a ball $B\subset\R^d,$ we define
	\begin{equation*}
	\alpha_{\mu,p}(B) = \inf_L \frac{1}{r(B)\mu(3B)^{1/p}}W_p(\varphi_B\mu,a_{B,L}\varphi_B\Hn{L}),
	\end{equation*}
	where the infimum is taken over all $n$-planes $L$ intersecting $B$, and
	\begin{equation*}
	a_{{B,L}} = \frac{\int \varphi_B\ d\mu}{\int \varphi_B\ d\Hn{L}}.
	\end{equation*}
	
	Coefficients $\alpha_p$ were introduced in \cite{tolsa2012mass} with the aim of characterizing \emph{uniformly} rectifiable measures. Uniform rectifiability, introduced by David and Semmes in \cite{david1991singular,david1993analysis}, is a stronger, more quantitative version of rectifiability. 
%
	Can $\alpha_{p}$ numbers be used to characterize rectifiability also in the non-uniform case? Driven by this question, our main motivation for proving \thmref{thm:sufficient condition} was to get the following sufficient condition for rectifiability.
	\begin{theorem}\label{thm:sufficient condition with alpha2}
		Let $\mu$ be a Radon measure on $\R^d$. Suppose that
		\begin{equation}\label{eq:alpha2 square function}
		\int_0^{1}\alpha_{\mu,2}(x,r)^2\ \frac{dr}{r}<\infty \qquad\text{for $\mu$-a.e. $x\in\R^d.$}
		\end{equation}
		Then $\mu$ is $n$-rectifiable.
	\end{theorem}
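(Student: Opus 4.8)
The plan is to deduce Theorem~\ref{thm:sufficient condition with alpha2} from Theorem~\ref{thm:sufficient condition}. The key point, which I would isolate as a lemma (it is essentially the content of \lemref{lem:alpha2 controls alpha and beta2}), is that $\alpha_{\mu,2}$ dominates pointwise, with absolute constants, both $\alpha_\mu$ and $\beta_{\mu,2}$: for every ball $B=B(x,r)\subset\R^d$ one has
\begin{equation*}
\alpha_{\mu}(B)\le\alpha_{\mu,2}(B)\qquad\text{and}\qquad\beta_{\mu,2}(B)\le\alpha_{\mu,2}(B).
\end{equation*}
Granting these, the square function bound \eqref{eq:alpha2 square function} instantly forces \eqref{eq:main theorem assumption on alphas} and \eqref{eq:main theorem assumption on betas} at $\mu$-a.e.\ point, and an application of Theorem~\ref{thm:sufficient condition} gives the $n$-rectifiability of $\mu$. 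Thus the whole argument reduces to these two comparison inequalities, which are elementary properties of the Wasserstein distance; I outline proofs below.

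For the $\beta_2$ inequality, fix $B=B(x,r)$, take an $n$-plane $L$ with $L\cap B\neq\emptyset$, and let $\pi$ be any transport plan between $\varphi_B\mu$ and $a_{B,L}\varphi_B\Hn{L}$. Its second marginal is carried by $L$, so $\dist(y,L)\le|y-z|$ for $\pi$-a.e.\ $(y,z)$; since moreover $\varphi_B\ge0$ and $\varphi_B\equiv1$ on $B(x,2r)\supset B(x,r)$,
\begin{equation*}
\int_{B(x,r)}\dist(y,L)^2\,d\mu\le\int\dist(y,L)^2\varphi_B\,d\mu=\int\dist(y,L)^2\,d\pi\le\int|y-z|^2\,d\pi,
\end{equation*}
where the $\pi$-integrals are over $\R^d\times\R^d$. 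Taking the infimum over $\pi$, dividing by $r^2\mu(3B)$, and then taking the infimum over $L$ — the admissible planes for $\beta_{\mu,2}(B)$ and $\alpha_{\mu,2}(B)$ being exactly the same — yields $\beta_{\mu,2}(B)^2\le\alpha_{\mu,2}(B)^2$.

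For the $\alpha$ inequality I would go through $\alpha_{\mu,1}$. If $\phi\in\lip_1(B)$ then $\phi\varphi_B=\phi$ identically (again because $\varphi_B\equiv1$ on $B(x,2r)\supset\supp\phi$), so choosing the admissible competitor $c=a_{B,L}$ in the definition of $\alpha_\mu$ gives
\begin{equation*}
\int\phi\,d\mu-a_{B,L}\int\phi\,d\Hn{L}=\int\phi\,d(\varphi_B\mu)-\int\phi\,d(a_{B,L}\varphi_B\Hn{L}),
\end{equation*}
and, $\varphi_B\mu$ and $a_{B,L}\varphi_B\Hn{L}$ having the same total mass and $\phi$ being $1$-Lipschitz, the right-hand side is at most $W_1(\varphi_B\mu,a_{B,L}\varphi_B\Hn{L})$ in absolute value (the easy half of Kantorovich--Rubinstein duality). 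Taking the supremum over $\phi$ and the infimum over $L$ gives $\alpha_\mu(B)\le\alpha_{\mu,1}(B)$. Finally, Hölder's inequality applied to an arbitrary transport plan, together with $\int\varphi_B\,d\mu\le\mu(3B)$, gives $W_1(\varphi_B\mu,a_{B,L}\varphi_B\Hn{L})\le\mu(3B)^{1/2}\,W_2(\varphi_B\mu,a_{B,L}\varphi_B\Hn{L})$; dividing by $r(B)\mu(3B)$ and taking the infimum over $L$ yields $\alpha_{\mu,1}(B)\le\alpha_{\mu,2}(B)$, and combining, $\alpha_\mu(B)\le\alpha_{\mu,2}(B)$.

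I do not anticipate a genuine difficulty in this proof: all the real work sits in Theorem~\ref{thm:sufficient condition}, and the statement here follows from it by the two soft comparisons above. The only things requiring care are routine bookkeeping — that the normalizing factors $r(B)\mu(3B)^{1/p}$ match across the three coefficients and that the infima run over the same family of $n$-planes meeting $B$; that $a_{B,L}$ is well defined, which holds since $L\cap B\neq\emptyset$ forces $\mathcal{H}^n(L\cap B(x,2r))>0$; and the degenerate case $\mu(3B)=0$, which pertains only to points outside $\supp\mu$ and is therefore $\mu$-negligible.
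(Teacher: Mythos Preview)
Your proposal is correct and follows exactly the paper's approach: the paper states that Theorem~\ref{thm:sufficient condition with alpha2} follows immediately from Theorem~\ref{thm:sufficient condition} via \lemref{lem:alpha2 controls alpha and beta2}, and your outlined proofs of the two comparison inequalities $\beta_{\mu,2}(B)\le\alpha_{\mu,2}(B)$ and $\alpha_\mu(B)\le\alpha_{\mu,1}(B)\le\alpha_{\mu,2}(B)$ match the paper's proof of that lemma essentially line for line.
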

	\thmref{thm:sufficient condition with alpha2} follows immediately from \thmref{thm:sufficient condition} because, as shown in \lemref{lem:alpha2 controls alpha and beta2}, $\alpha_2$ numbers bound from above both $\alpha$ and $\beta_2$ numbers.	
	In \cite{dabrowski2019necessary} we show that \eqref{eq:alpha2 square function} is also a necessary condition for rectifiability, and so we get the following characterization.
	\begin{cor}\label{thm:thm equivalence}
		Let $\mu$ be a Radon measure on $\R^d$. Then, $\mu$ is $n$-rectifiable if and only if for $\mu$-a.e. $x\in\R^d$ we have
		\begin{equation*}
		\int_0^{1}\alpha_{\mu,2}(x,r)^2\ \frac{dr}{r}<\infty.
		\end{equation*}
	\end{cor}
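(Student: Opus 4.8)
The plan is to obtain the equivalence by combining the sufficiency statement of this paper with the necessity statement of \cite{dabrowski2019necessary}. For the ``if'' direction, suppose \eqref{eq:alpha2 square function} holds for $\mu$-a.e.\ $x\in\R^d$. By \lemref{lem:alpha2 controls alpha and beta2} there is a constant $C = C(n,d) > 0$ such that
\[
\alpha_\mu(x,r) \le C\,\alpha_{\mu,2}(x,r) \qquad\text{and}\qquad \beta_{\mu,2}(x,r) \le C\,\alpha_{\mu,2}(x,r)
\]
for every $x\in\R^d$ and every $r>0$. Squaring these inequalities, multiplying by $dr/r$ and integrating over $r\in(0,1)$, we see that \eqref{eq:alpha2 square function} forces both \eqref{eq:main theorem assumption on alphas} and \eqref{eq:main theorem assumption on betas} to hold at $\mu$-a.e.\ point. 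Hence \thmref{thm:sufficient condition} applies and $\mu$ is $n$-rectifiable; this is precisely the content of \thmref{thm:sufficient condition with alpha2}.

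For the ``only if'' direction I would simply invoke \cite{dabrowski2019necessary}, where it is shown that every $n$-rectifiable Radon measure satisfies \eqref{eq:alpha2 square function}. Combining the two implications yields the asserted characterization.

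The only substantial mathematics lies in \thmref{thm:sufficient condition}, which I take as already proved; relative to it the corollary is a short deduction, so I expect no obstacle at this level. The comparison in \lemref{lem:alpha2 controls alpha and beta2} is elementary: since $W_2\ge W_1$, Jensen's inequality lets one bound $\beta_{\mu,2}(B)$ by transporting the mass of $\varphi_B\mu$ onto the competing plane $L$ along a near-optimal $W_2$ plan, while testing that same plan against $1$-Lipschitz functions controls $F_B(\varphi_B\mu, a_{B,L}\varphi_B\Hn{L})$, and hence $\alpha_\mu(B)$, once the regularized cut-off $\varphi_B$ has been accounted for. The genuine difficulty — producing a corona/stopping-time decomposition together with the Lipschitz graphs covering $\mu$-almost all of $\supp\mu$, with no a priori density or doubling hypothesis — is entirely contained in the proof of \thmref{thm:sufficient condition}.
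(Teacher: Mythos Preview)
Your proposal is correct and follows exactly the route the paper takes: the ``if'' direction is \thmref{thm:sufficient condition with alpha2}, obtained from \thmref{thm:sufficient condition} via the pointwise bounds of \lemref{lem:alpha2 controls alpha and beta2}, and the ``only if'' direction is the citation to \cite{dabrowski2019necessary}. The paper does not give a separate proof of the corollary beyond this, so your write-up matches it essentially verbatim.
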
	
	
	We would like to stress that, compared to \thmref{thm:beta characterization} and \thmref{thm:alpha doubling characterization}, the characterization above does not make any additional assumptions on densities or on doubling properties of the measure.
	
	The organization of the paper, as well as the general strategy of the proof, are outlined in Section \ref{sec:sketch of the proof}. For now, let us just say that \lemref{lem:main lemma}, our main lemma, can be seen as a technical, more quantitative version of \thmref{thm:sufficient condition}. If one prefers working with homogeneous coefficients $\beta_{\mu,2}^h$ and $\alpha_{\mu}^h$ (where $\alpha_{\mu}^h(x,r)=\frac{\mu(B(x,3r))}{r^n}\alpha_{\mu}(x,r)$), then a possible ``homogenized'' modification of \lemref{lem:main lemma} is discussed in \remref{rem:homogeneous version of main lemma}. However, it is clear that 
	``homogenized'' (i.e. with $\alpha$ and $\beta_2$ numbers replaced by their homogeneous counterparts) versions of \thmref{thm:sufficient condition} and \thmref{thm:sufficient condition with alpha2} are not true (unless we assume more about densities) -- think of Lebesgue measure on $\R^d$.
	
	\subsection*{Acknowledgements} The author would like to express his deep gratitude to Xavier Tolsa for all his help and guidance. He acknowledges the support from the Spanish Ministry of Economy and Competitiveness, through the María de Maeztu Programme for Units of Excellence in R\&D (MDM-2014-0445), and also partial support from the Catalan Agency for Management of University and Research Grants (2017-SGR-0395), and from the Spanish Ministry of Science, Innovation and Universities (MTM-2016-77635-P). 
\section{Sketch of the proof}\label{sec:sketch of the proof}
The proof of \thmref{thm:sufficient condition} is organized as follows. In Section \ref{sec:estimates of alpha beta coeffs} we provide basic estimates of $\alpha$ and $\beta$ coefficients, while in Section \ref{sec:DM cubes} we recall the definition and some properties of the David-Mattila lattice, which will be used further on. In Section \ref{sec:main lemma} we formulate the main lemma. Given an appropriate David-Mattila cube $R_0$, the main lemma provides us with a Lipschitz graph $\Gamma$ such that we have $\mu\ll \Hn{\Gamma}$ on a large chunk of $\Gamma\cap R_0$, and $\mu(\Gamma\cap R_0)\ge \frac{1}{2}\mu(R_0)$. In the same section we show how to use the main lemma to prove \thmref{thm:sufficient condition}. Everything that follows is dedicated to proving the main lemma.

In Section \ref{sec:stopping cubes} we perform the usual stopping time argument. We define the family of stopping cubes $\Stop$, comprising high density cubes $\HD$, low density cubes $\LD$, big angle cubes $\BA$ (cubes whose best approximating planes form a big angle with $L_0$, the best approximating plane of $R_0$), big square function cubes $\BS$ (cubes with a big portion of points for which the square functions are larger than a certain threshold), and far cubes $\F$ (cubes with a big portion of $\RFar$, points that are far from certain best approximating planes). Cubes not contained in any of the stopping cubes form the $\Tree$. Next, we show various good properties of cubes from the $\Tree$, as well as estimate the measure of cubes from $\BS$ and $\F$ (it is easy).

Section \ref{sec:construction of graph} is devoted to constructing the Lipschitz graph $\Gamma$. One possible way to do it would be to use the tools from \cite{david2012reifenberg} -- this was done for example in \cite{azzam2015characterization,azzam2018characterization}. In this paper we decided to use another well-known method, dating back at least to \cite{david1991singular} and \cite{leger1999menger}. We follow the way it was applied in \cite{chunaev2018family} and \cite{tolsa2014analytic}. It consists of showing that $R_0\setminus\bigcup_{Q\in\Stop}Q$ forms a graph of a Lipschitz map $F$ defined on a subset of $L_0$, and then carefully extending $F$ to the whole $L_0$. The remaining part of the paper is dedicated to showing that the measure of stopping cubes is small.

In Section \ref{sec:LD} we first show that cubes from $\Tree$ lie close to $\Gamma$ (the graph of $F$), and then use this property to estimate the measure of low density cubes. Roughly speaking, we may cover (almost all) $\LD$ cubes with a family of (almost) disjoint balls satisfying $B\cap\Gamma\approx r(B)^n$, and such that the densities $\Theta_{\mu}(B)$ are low. Small measure of $\LD$ easily follows. It is crucial that we have finiteness of the $\beta_2$ square function \eqref{eq:main theorem assumption on betas}, as it lets us estimate the size of $\RFar$ (see  \lemref{lem:small measure of RFar}). This approach to bounding the measure of low density cubes comes from \cite{azzam2015characterization}.

In Section \ref{sec:approximating measure} we define a measure $\nu$ supported on $\Gamma$. We show that $\nu$ is very close to $\mu$ in the sense of distance $F_B(\mu,\nu)$, so that the $\alpha_{\nu}$ numbers are close to $\alpha_{\mu}$. The measure $\nu$ is then used in Section \ref{sec:HD} to estimate the size of high density set. The general idea is to consider $f$ -- the density of $\nu$ with respect to $\Hn{\Gamma}$, and then to bound the $L^2$ norm of $|f-c_0|$, where $c_0$ is a certain constant. We do it using the smallness of $\alpha_{\mu}$ square function \eqref{eq:main theorem assumption on alphas}, the fact that $\nu$ approximates $\mu$ well, and an appropriate type of Paley-Littlewood result (see \eqref{eq:classical harmonic analysis result}). Estimating $\lVert f- c_0\rVert_{L^2}$ requires a lot of work, but once we have it, it is not very difficult to bound the measure of $\HD$ cubes. Roughly speaking, high density cubes correspond to big values of $f$, and those we can control since $\lVert f- c_0\rVert_{L^2}$ is small. This method of estimating $\HD$ is due to \cite{azzam2018characterization}, where a similar approach from \cite{tolsa2017rectifiable} was refined and simplified.

Finally, in Section \ref{sec:BS} we bound the size of big angle cubes $\BA$. First, we show that this amounts to estimating $\lVert \nabla F\rVert_{L^2}$ (recall that $F$ is the Lipschitz map whose graph is $\Gamma$). Using Dorronsoro's theorem, this reduces to estimating the $\beta_{\sigma,1}$ square function, where $\sigma$ is the surface measure on $\Gamma$. This could be done using the smallness of either $\beta_{\mu,2}$ or $\alpha_{\mu}$ square functions. For us it was easier to deal with $\alpha_{\mu}$, due to all the estimates from Section \ref{sec:approximating measure}.

Thus, having estimated the measure of the stopping region, the proof of the main lemma is finished.
	\subsection*{Notation}
Throughout the paper we will write $A\lesssim B$ whenever $A\le CB$ for some constant $C$. All such implicit constants may depend on dimensions $n, d$, and on constants $A_0, C_0$, which will be fixed in Section \ref{sec:DM cubes}. If the implicit constant depends also on some other parameter $t$, we will write $A\lesssim_{t} B$. The notation $A\approx B$ means $A\lesssim B\lesssim B$, and $A\approx_t B$ means $A\lesssim_t B\lesssim_t B$.

We denote by $B(z,r)\subset\R^d$ an open ball with center at $z\in\R^d$ and radius $r>0$. Given a ball $B$, its center and radius are denoted by $z(B)$ and $r(B)$, respectively. If $\lambda>0$, then $\lambda B$ is defined as a ball centered at $z(B)$ of radius $\lambda r(B)$.

For a ball $B$ and measure $\mu$, we define the $n$-dimensional density of $\mu$ at $B$ as
\begin{equation*}
\Theta_{\mu}(B) = \frac{\mu(B)}{r(B)^n}.
\end{equation*}

Given two $n$-planes $L_1, L_2$, let $L_1'$ and $L_2'$ be the respective parallel $n$-planes passing through $0$. Then,
\begin{equation*}
\measuredangle(L_1,L_2)=\dist_H(L_1'\cap B(0,1),\ L_2'\cap B(0,1)),
\end{equation*}
where $\dist_H$ stands for Hausdorff distance between two sets. $\measuredangle(L_1,L_2)$ can be seen as an angle between $L_1$ and $L_2$.

Given an affine subspace $L\subset \R^d$, we will denote the orthogonal projection onto $L$ by $\Pi_L$. The orthogonal projection onto $L^{\perp}$ will be denoted by $\Pi^{\perp}_L$.

Given a set $A\subset\R^d$, we denote by $\one_A:\R^d\to\{0,1\}$ a characteristic function of $A$.

Finally, for sets $A,B\subset\R^d$ we define
\begin{equation*}
	\dist(A,B) = \inf_{a\in A}\inf_{b\in B} |a-b|.
\end{equation*}
\section{Estimates of \texorpdfstring{$\alpha$}{alpha} and \texorpdfstring{$\beta$}{beta} coefficients}\label{sec:estimates of alpha beta coeffs}
In this section we provide some basic estimates of $\alpha$ and $\beta$ coefficients.
We begin by showing the relationship between $\alpha, \beta_2$, and $\alpha_2$ coefficients.
\begin{lemma}\label{lem:alpha2 controls alpha and beta2}
	Suppose that $\mu$ is a Radon measure, and $B$ is a ball intersecting $\supp\mu$. Then
	\begin{equation*}
	\beta_{\mu,2}(B)\le \alpha_{\mu,2}(B),
	\end{equation*}
	and
	\begin{equation*}
	\alpha_{\mu}(B)\le\alpha_{\mu,1}(B)\le\alpha_{\mu,2}(B).
	\end{equation*}
\end{lemma}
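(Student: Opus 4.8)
The plan is to prove the two inequalities separately. For the first one, $\beta_{\mu,2}(B)\le\alpha_{\mu,2}(B)$, the idea is that $\beta_{\mu,2}(B)$ measures the $L^2$-distance of $\mu$ to an $n$-plane, while $\alpha_{\mu,2}(B)$ measures the $W_2$-distance of (a regularized) $\mu$ to a flat measure $c\Hn{L}$; a transport plan that moves mass of $\varphi_B\mu$ onto $a_{B,L}\varphi_B\Hn{L}$ cannot move a point $y$ by less than $\dist(y,L)$ (since all the target mass sits on $L$), so $W_2(\varphi_B\mu,a_{B,L}\varphi_B\Hn{L})^2\ge\int\dist(y,L)^2\,d(\varphi_B\mu)(y)$. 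Since $\varphi_B\equiv 1$ on $2B\supset B$ and $\varphi_B\le\one_{3B}$, this integral dominates $\int_B\dist(y,L)^2\,d\mu(y)$. Dividing by $r(B)^2\mu(3B)$, taking infimum over planes $L$ intersecting $B$, and taking square roots yields exactly $\beta_{\mu,2}(B)\le\alpha_{\mu,2}(B)$ (the normalizations match up: both coefficients divide by $\mu(3B)$ and by the appropriate power of $r(B)$, and the ambient ``$3r$'' in the definition of $\beta_{\mu,2}$ is the same $3B$).

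For the second chain of inequalities, the middle one $\alpha_{\mu,1}(B)\le\alpha_{\mu,2}(B)$ is the monotonicity of Wasserstein distances in the exponent: for measures of equal finite mass $m$ on a bounded set one has $W_1(\eta_1,\eta_2)\le m^{1/2}\, W_2(\eta_1,\eta_2)/m^{1/2}$, or more directly, by Jensen's inequality applied to a transport plan $\pi$ of total mass $m=\int\varphi_B\,d\mu$, $\int|x-y|\,d\pi\le m^{1/2}(\int|x-y|^2\,d\pi)^{1/2}$, so $W_1(\varphi_B\mu,a_{B,L}\varphi_B\Hn{L})\le m^{1/2}W_2(\varphi_B\mu,a_{B,L}\varphi_B\Hn{L})$. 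Taking infima over $L$ and dividing by $r(B)\mu(3B)$ versus $r(B)\mu(3B)^{1/2}$ gives the factor $m^{1/2}\le\mu(3B)^{1/2}$, hence $\alpha_{\mu,1}(B)\le\alpha_{\mu,2}(B)$.

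The remaining inequality $\alpha_\mu(B)\le\alpha_{\mu,1}(B)$ relates the ``flat $F_B$-distance'' definition of $\alpha_\mu$ to the $W_1$-definition of $\alpha_{\mu,1}$. Here one uses the Kantorovich–Rubinstein duality: $W_1(\eta_1,\eta_2)=\sup\{\int\phi\,d\eta_1-\int\phi\,d\eta_2:\ \lip(\phi)\le 1\}$ for equal-mass measures. The difference is that $F_B$ restricts to $\phi$ supported in $B$, which only makes the supremum smaller, so $F_B(\eta_1,\eta_2)\le W_1(\eta_1,\eta_2)$. Applying this with $\eta_1=\varphi_B\mu$ and $\eta_2=a_{B,L}\varphi_B\Hn{L}$, and then comparing $F_B(\mu,c\Hn{L})$ with $F_B(\varphi_B\mu,a_{B,L}\varphi_B\Hn{L})$ — the point being that for $\phi\in\lip_1(B)$ one has $\int\phi\,d\mu=\int\phi\,d(\varphi_B\mu)$ since $\varphi_B\equiv 1$ on $2B\supset\supp\phi$, and similarly $\int\phi\,d(c\Hn{L})=\int\phi\,d(a_{B,L}\varphi_B\Hn{L})$ once we pick $c=a_{B,L}$ — we get $F_B(\mu,a_{B,L}\Hn{L})=F_B(\varphi_B\mu,a_{B,L}\varphi_B\Hn{L})\le W_1(\varphi_B\mu,a_{B,L}\varphi_B\Hn{L})$. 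Taking the infimum over $L$ (and noting $\inf_{c,L}$ in the definition of $\alpha_\mu$ is at most the value at $c=a_{B,L}$) and dividing by $r(B)\mu(3B)$ yields $\alpha_\mu(B)\le\alpha_{\mu,1}(B)$.

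The main obstacle I anticipate is purely bookkeeping: making sure all the normalizing factors ($r(B)$ powers, $\mu(3B)$ powers, the masses $\int\varphi_B\,d\mu$ versus $\mu(3B)$, and the replacement of $\one_B$ by $\varphi_B$ and of $\one_{3B}$ by $\varphi_B$) line up correctly so that no spurious constants appear and the inequalities are genuinely with constant $1$. A small technical point is that $\varphi_B$ is not exactly $\one_{B}$ nor $\one_{3B}$, but since $\one_{2B}\le\varphi_B\le\one_{3B}$ and the $\beta$ integral is over $B\subset 2B$ while its normalization uses $\mu(3B)$, these sandwich bounds suffice; one must also use that $a_{B,L}\varphi_B\Hn{L}$ and $\varphi_B\mu$ indeed have the same total mass, which is immediate from the definition of $a_{B,L}$, so that the equal-mass Wasserstein theory and Kantorovich duality apply.
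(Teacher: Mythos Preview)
Your proposal is correct and follows essentially the same approach as the paper: the first inequality via the observation that any transport plan onto $L$ moves each point at least $\dist(\cdot,L)$ combined with $\varphi_B\equiv 1$ on $B$, the inequality $\alpha_\mu(B)\le\alpha_{\mu,1}(B)$ via Kantorovich--Rubinstein duality plus $\varphi_B\equiv 1$ on $\supp\phi\subset B$, and $\alpha_{\mu,1}(B)\le\alpha_{\mu,2}(B)$ via Cauchy--Schwarz on the transport plan together with $\int\varphi_B\,d\mu\le\mu(3B)$. The bookkeeping concerns you raise are exactly the ones the paper handles, and your resolution of them is the same.
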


\begin{proof}
	To see $\beta_{\mu,2}(B)\le\alpha_{\mu,2}(B)$, let $L$ be a minimizing plane for $ \alpha_{\mu,2}(B)$ and $\pi$ be a minimizing transport plan between $\varphi_{{B}}\mu$ and $a_{B,L}\varphi_{{B}}\Hn{L}$, where $a_{B,L} = ({\int \varphi_B\ d\mu})/({\int \varphi_B\ d\Hn{L}})$ is as in the definition of $\alpha_{\mu,2}(B)$. Then, by the definition of a transport plan, and the fact that $\varphi_B\equiv 1$ on $B$, 
	\begin{equation*}
	\alpha_{\mu,2}(B)^2r(B)^2\mu(3B) = \int |x-y|^2\ d\pi(x,y)\ge \int_B \dist(x,L)^2\ d\mu \ge \beta_{\mu,2}(B)^2\mu(3B)r(B)^{2}.
	\end{equation*}
	
	For the estimate involving $\alpha$ numbers we will use the so-called Kantorovich duality for $W_1$ Wasserstein distance. It states that
	\begin{equation*}
	W_1(\mu,\nu) = \sup_{\lip(f)\le 1} \left\lvert \int f\ d\mu - \int f\ d\nu\right\rvert,
	\end{equation*}
	see \cite[Remark 6.5]{villani2008optimal} for more information. 
	
	Let $L$ be a minimizing plane for $\alpha_{\mu,1}(B)$, and let $a_{B,L}$ be as in the definition of $\alpha_{\mu,1}(B)$. Since $\varphi_B\equiv 1$ in $B$, it follows from the definition of $\alpha_{\mu}$ that
	\begin{multline*}
	\alpha_{\mu}(B)r(B)\mu(3B) \le F_B(\mu, a_{B,L}\Hn{L})=\sup_{\substack{\lip(f)\le 1\\\supp(f)\subset B}}
	\left\lvert \int f\ d\mu - \int f\ a_{B,L}d\Hn{L}\right\rvert
	\\= \sup_{\substack{\lip(f)\le 1\\\supp(f)\subset B}} \left\lvert \int f\varphi_B\ d\mu - \int f\varphi_B\ a_{B,L}d\Hn{L}\right\rvert
	\le \sup_{\lip(f)\le 1} \left\lvert \int f\varphi_B\ d\mu - \int f\varphi_B\ a_{B,L}d\Hn{L}\right\rvert
	\\= W_1(\varphi_B\mu, a_{B,L}\varphi_B\Hn{L}) = \alpha_{\mu,1}(B)r(B)\mu(3B).
	\end{multline*}
	The estimate $\alpha_{\mu,1}(B)\le\alpha_{\mu,2}(B)$ follows immediately by the Cauchy-Schwarz inequality and the fact that $\int\varphi_{{B}}\ d\mu \le \mu(3B)$.
\end{proof}

\begin{lemma}\label{lem:betas are smaller than alphas}
	Suppose that $\mu$ is a Radon measure on $\R^d$, and that $B\subset\R^d$ is a ball satisfying $\mu(3B)\approx\mu(6B)$. Then
	\begin{equation}\label{eq:beta1 beta2 estimate}
	\beta_{\mu,1}(B)\le\beta_{\mu,2}(B),
	\end{equation}
	and
	\begin{equation}\label{eq:beta1 alpha estimate}
	\beta_{\mu,1}(B)\lesssim\alpha_{\mu}(2B).
	\end{equation}
	Moreover, given balls $B_1\subset B_2$ such that $r(B_1)\approx r(B_2)$ and $\mu(3B_1)\approx\mu(3B_2)$ we have
	\begin{align}
	\beta_{\mu,2}(B_1)&\lesssim\beta_{\mu,2}(B_2),\label{eq:beta2 scaling estimate}\\
	\alpha_{\mu}(B_1)&\lesssim\alpha_{\mu}(B_2).\label{eq:alpha scaling estimate}
	\end{align}
\end{lemma}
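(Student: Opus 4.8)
The plan is to prove the four estimates one at a time, each by a direct comparison argument, using the hypotheses $\mu(3B)\approx\mu(6B)$ (or $\mu(3B_1)\approx\mu(3B_2)$) only to control the ratios of the various normalizing factors that appear in the definitions. First, \eqref{eq:beta1 beta2 estimate} is immediate: take any $n$-plane $L$ realizing (or nearly realizing) $\beta_{\mu,2}(B)$, and apply the Cauchy--Schwarz inequality to $\int_{B}\bigl(\dist(y,L)/r(B)\bigr)\,d\mu(y)\le \mu(B)^{1/2}\bigl(\int_B(\dist(y,L)/r(B))^2\,d\mu\bigr)^{1/2}$; dividing by $\mu(3B)$ and using $\mu(B)\le\mu(3B)$ gives $\beta_{\mu,1}(B)\le\beta_{\mu,2}(B)$. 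This step uses no doubling-type assumption at all, so I would remark on that.

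For \eqref{eq:beta1 alpha estimate}, I would relate the linear $\beta_1$ coefficient to the flat-measure distance $F_{2B}(\mu,c\Hn{L})$. Fix an $n$-plane $L$ and a constant $c\ge 0$ nearly optimal for $\alpha_{\mu}(2B)$. The idea is to use the function $\phi(y)=\dist(y,L)\,\eta(y)$, where $\eta$ is a cutoff supported on $2B$ with $\eta\equiv1$ on $B$ and $\lip(\eta)\lesssim r(B)^{-1}$; then $\lip(\phi)\lesssim 1$ after rescaling, and $\int\phi\,d(c\Hn{L})=0$ since $\dist(\cdot,L)$ vanishes on $L$, so $\int_B\dist(y,L)\,d\mu(y)\le\int\phi\,d\mu=\bigl|\int\phi\,d\mu-\int\phi\,d(c\Hn{L})\bigr|\le C r(B)\,F_{2B}(\mu,c\Hn{L})$. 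Dividing by $r(B)\mu(3B)$, using $\mu(3B)\approx\mu(6B)=\mu(3\cdot 2B)$, and taking the infimum over $c,L$ yields $\beta_{\mu,1}(B)\lesssim\alpha_{\mu}(2B)$. (If $L$ does not intersect $B$, the $\beta$ infimum is over planes hitting $B$, so one should note that when $L\cap B=\emptyset$ the estimate is trivial because then the best flat approximation is essentially $0$ and $\alpha_\mu(2B)\gtrsim$ the density, or simply pick the parallel plane through a point of $B$ and absorb the error.)

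The scaling estimates \eqref{eq:beta2 scaling estimate} and \eqref{eq:alpha scaling estimate} are monotonicity-type statements: enlarging the ball from $B_1$ to $B_2$ (comparable radius, comparable measure) can only increase these coefficients up to a constant. For \eqref{eq:beta2 scaling estimate}, take $L$ optimal for $\beta_{\mu,2}(B_2)$; since $B_1\subset B_2$ we have $\int_{B_1}(\dist(y,L)/r(B_1))^2\,d\mu\le (r(B_2)/r(B_1))^2\int_{B_2}(\dist(y,L)/r(B_2))^2\,d\mu$, and dividing by $\mu(3B_1)\approx\mu(3B_2)$ gives the claim, with the constant depending on $r(B_1)\approx r(B_2)$ and $\mu(3B_1)\approx\mu(3B_2)$. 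The case of $\alpha_\mu$ in \eqref{eq:alpha scaling estimate} is the same in spirit but done through $F_B$: for $\phi\in\lip_1(B_1)\subset\lip_1(B_2)$ (since $B_1\subset B_2$ and $\supp\phi\subset B_1\subset B_2$) we have $|\int\phi\,d\mu-\int\phi\,d(c\Hn{L})|\le F_{B_2}(\mu,c\Hn{L})$, hence $F_{B_1}(\mu,c\Hn{L})\le F_{B_2}(\mu,c\Hn{L})$; then dividing by $r(B_1)\mu(3B_1)\approx r(B_2)\mu(3B_2)$ and taking the infimum over $c,L$ gives $\alpha_{\mu}(B_1)\lesssim\alpha_{\mu}(B_2)$.

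The routine bookkeeping — constructing the Lipschitz cutoff $\eta$ with the right support and gradient bound, checking the rescaled Lipschitz constant of $\phi$, and keeping track of which $3$-dilate appears in which normalization — is the only place where care is needed; I do not expect any genuine obstacle, and the hypothesis $\mu(3B)\approx\mu(6B)$ is used precisely to convert between the $\mu(3B)$ that normalizes $\beta_{\mu,1}(B)$ and the $\mu(3\cdot 2B)=\mu(6B)$ that normalizes $\alpha_{\mu}(2B)$. The mildly delicate point worth flagging is the constraint in the $\beta$ definition that the competitor plane must intersect the ball in question: when passing from $\alpha_\mu(2B)$ to $\beta_{\mu,1}(B)$ one must ensure the optimal plane for $\alpha$ can be taken (or replaced by a comparable one) to meet $B$, which is harmless but should be addressed explicitly.
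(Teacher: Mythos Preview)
Your proposal is correct and follows essentially the same approach as the paper: Cauchy--Schwarz for \eqref{eq:beta1 beta2 estimate}, the test function $\phi=\psi\cdot\dist(\cdot,L)$ with a cutoff $\psi$ for \eqref{eq:beta1 alpha estimate}, and direct monotonicity from the definitions for \eqref{eq:beta2 scaling estimate}--\eqref{eq:alpha scaling estimate}. One minor difference: in \eqref{eq:beta1 alpha estimate} you take $L$ to be the $\alpha_\mu(2B)$-minimizer (so that $\int\phi\,d(c\Hn{L})=0$ for free), whereas the paper takes $L=L_B$ the $\beta_{\mu,1}(B)$-minimizer; your choice is arguably cleaner and avoids the extra step of arguing why the $\beta$-optimal plane is admissible on the $\alpha$ side.
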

\begin{proof}
	%
	The first estimate is a direct consequence of the Cauchy-Schwarz inequality.
	
	To prove the second estimate, let $L_B$ be the minimizing plane for $\beta_{\mu,1}(B)$. The estimate follows if we consider the 1-Lipschitz function $\phi = \psi \dist(x,L_B),$ where $\psi$ is $r(B)^{-1}$-Lipschitz, $\psi\equiv 1$ on $B$, and $\supp(\psi)\subset 2B$.
	
	The last two inequalities follow immediately from the definitions of $\beta_{\mu,2}$ and $\alpha_{\mu}$.
\end{proof}

\begin{lemma}\label{lem:c comparable to density}
	Suppose that $\mu$ is a Radon measure, $B$ is a ball with $\mu(B)>0$, $L$ an $n$-plane intersecting $0.9B$, and assume that $c$ minimizes $F_B(\mu,\, c\Hn{L})$. Then
	\begin{equation}\label{eq:c smaller then mu over Hn}
	c\lesssim \frac{\mu(B)}{r(B)^n}.
	\end{equation}
	Furthermore, there exists $\varepsilon>0$ such that if $\mu(0.9B)\approx\mu(3B)$, and   $F_B(\mu,\, c\Hn{L})\le\varepsilon\mu(3B)r(B),$ then 
	\begin{equation}\label{eq:c greater then mu over Hn}
	c\gtrsim \frac{\mu(3B)}{r(B)^n}.
	\end{equation}
\end{lemma}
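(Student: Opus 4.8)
The plan is to test the distance $F_B(\mu, c\Hn{L})$ against carefully chosen Lipschitz functions supported in $B$. For the upper bound \eqref{eq:c smaller then mu over Hn}, I would proceed by contradiction (or rather, directly). Since $c$ minimizes $F_B(\mu, c\Hn{L})$, comparing to $c=0$ gives $F_B(\mu, c\Hn{L})\le F_B(\mu, 0) = \sup\{\int\phi\,d\mu : \phi\in\lip_1(B)\}\le r(B)\mu(B)$, using that any $\phi\in\lip_1(B)$ satisfies $|\phi|\le r(B)$. On the other hand, I want a lower bound on $F_B(\mu, c\Hn{L})$ that grows linearly in $c$. Pick $\phi$ to be a bump function that equals $\dist(\cdot,\partial B)$ near the part of $L$ inside $B$ — more precisely, take $\phi$ that is $1$-Lipschitz, supported in $B$, and equal to (a constant multiple of) $\dist(x,\partial B)$ on, say, $0.95B$; since $L$ meets $0.9B$, the plane $L$ carries definite $\Hn{}$-mass (of order $r(B)^n$) on the region where $\phi\gtrsim r(B)$. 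Then $\int\phi\, c\,d\Hn{L}\gtrsim c\, r(B)^{n+1}$, while $\int \phi\, d\mu \le r(B)\mu(B)$, so $F_B(\mu, c\Hn{L})\ge \int\phi\, c\, d\Hn{L} - \int\phi\, d\mu \gtrsim c\, r(B)^{n+1} - r(B)\mu(B)$. Combining with the upper bound $F_B \le r(B)\mu(B)$ yields $c\, r(B)^{n+1}\lesssim r(B)\mu(B)$, i.e. \eqref{eq:c smaller then mu over Hn}.

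For the lower bound \eqref{eq:c greater then mu over Hn}, I would again test against a good Lipschitz function, this time one that is large on a substantial portion of the $\mu$-mass. Take $\psi$ to be $1$-Lipschitz, supported in $B$, with $\psi \gtrsim r(B)$ on $0.9B$ (for instance $\psi(x) = \max(0, r(B)/10 - \dist(x, 0.9B))$ suitably rescaled, or simply $\psi = c_0\dist(x,\partial B)$ truncated). Then
\begin{equation*}
F_B(\mu, c\Hn{L}) \ge \int \psi\, d\mu - c\int\psi\, d\Hn{L} \gtrsim r(B)\mu(0.9B) - c\, r(B)^{n+1}.
\end{equation*}
By hypothesis $F_B(\mu, c\Hn{L})\le \varepsilon\, \mu(3B)\, r(B)$ and $\mu(0.9B)\approx\mu(3B)$, so $r(B)\mu(3B) \lesssim \varepsilon\, \mu(3B)\, r(B) + c\, r(B)^{n+1}$. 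Choosing $\varepsilon$ small enough that the first term on the right absorbs into the left (this fixes the value of $\varepsilon$ in terms of the implicit constants and the comparability constant in $\mu(0.9B)\approx\mu(3B)$), we get $r(B)\mu(3B)\lesssim c\, r(B)^{n+1}$, which is \eqref{eq:c greater then mu over Hn}.

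The only mildly delicate points are: (i) making sure the chosen test functions are genuinely $1$-Lipschitz and supported in $B$ while still being of size $\approx r(B)$ on a definite sub-ball — this is routine since $\dist(\cdot,\partial B)$ is $1$-Lipschitz and bounded by $r(B)$; and (ii) the quantitative lower bound $\int\psi\, d\Hn{L}\gtrsim r(B)^{n+1}$ and upper bound $\int\psi\, d\Hn{L}\lesssim r(B)^{n+1}$, both of which follow from $\Hn{L}(B\cap L)\approx r(B)^n$ whenever $L$ meets, say, $0.9B$. I expect the main (very minor) obstacle to be bookkeeping the dependence of $\varepsilon$ on the comparability constant hidden in $\mu(0.9B)\approx\mu(3B)$: one must choose $\varepsilon$ after that constant is fixed, which is fine since in applications the comparability will come with an absolute constant. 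No compactness or deeper machinery is needed; this is a direct duality-testing argument.
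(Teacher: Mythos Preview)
Your proposal is correct and essentially identical to the paper's proof: both test $F_B$ against the $1$-Lipschitz function $\Phi(x)=(r(B)-|x-z(B)|)_+=\dist(x,\partial B)_+$ (which is large on a definite sub-ball where $L$ has mass $\approx r(B)^n$), compare with the trivial competitor $c=0$ for the upper bound, and absorb an $\varepsilon\mu(3B)r(B)$ error term for the lower bound. The only cosmetic difference is that the paper phrases both parts as proofs by contradiction, while you combine the inequalities directly.
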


\begin{proof}
	Let $r=r(B)$ and consider $\Phi(x) = (r-|x-z(B)|)_{+}\in\lip_1(B)$. It is not difficult to see that on a significant portion (say, a half) of the $n$-dimensional ball $L\cap B$ we have $\Phi(x)\approx r$, and so
	\begin{equation*}
	c\int \Phi(x)\ d\Hn{L}(x) \approx cr^{n+1}.
	\end{equation*}
	If we had $c\gg \mu(B)r^{-n}$, then
	\begin{equation*}
	F_B(\mu,c\Hn{L})\ge c\int \Phi(x)\ d\Hn{L}(x) - \int \Phi(x)\ d\mu(x)\ge Ccr^{n+1}-\mu(B)r\gg\mu(B)r.
	\end{equation*}
	But in that case the constant $\tilde{c}=0$ would be better than $c$, since we always have $F_B(\mu,0)\le \mu(B)r,$ and thus we reach a contradiction with optimality of $c$.
	
	Now, assume further that $F_B(\mu,\, c\Hn{L})\le\varepsilon\mu(3B)r,$ and $\mu(0.9B)\approx\mu(3B),$ so that $\int \Phi(x)\ d\mu(x)\approx\mu(3B)r$. If we had $c\ll \mu(3B)r^{-n}$, then
	\begin{multline*}
	F_B(\mu,c\Hn{L})\ge \int \Phi(x)\ d\mu(x) - c\int \Phi(x)\ d\Hn{L}(x) \ge C\mu(3B)r - \widetilde{C}cr^{n+1}\\
	\ge C\mu(3B)r - \frac{C}{2}\mu(3B)r = \frac{C}{2}\mu(3B)r.
	\end{multline*}
	Thus, we reach a contradiction with $F_B(\mu,\, c\Hn{L})\le\varepsilon\mu(3B)r.$
\end{proof}

\begin{lemma}\label{lem:plane from beta good for alpha}
	Suppose that $\mu$ is a Radon measure on $\R^d$, and that $B_1, B_2\subset\R^d$ are concentric balls satisfying $B_1\subset0.9B_2,\ \mu(B_1)\approx\mu(3B_2)\approx r(B_1)^n\approx r(B_2)^n$. Let $L_{\beta}$ be the $n$-plane minimizing $\beta_{\mu,2}(B_2)$, and $L_{\alpha},\ c>0,$ be the $n$-plane and constant minimizing $\alpha_{\mu}(B_2)$. Suppose further that $L_{\alpha}, L_{\beta}$ intersect $0.9B_1$. Then
	\begin{equation}\label{eq:Lbeta good for alpha}
	\frac{1}{\mu(B_1)r(B_1)}F_{B_1}(\mu, c\Hn{L_{\beta}})\lesssim \beta_{\mu,2}(B_2)+\alpha_{\mu}(B_2).
	\end{equation}
\end{lemma}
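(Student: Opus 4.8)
The plan is to interpose the flat measure $c\Hn{L_\alpha}$ between $\mu$ and $c\Hn{L_\beta}$. By the triangle inequality for $F_{B_1}$ and the positive homogeneity of $F$,
\[
F_{B_1}(\mu, c\Hn{L_\beta}) \le F_{B_1}(\mu, c\Hn{L_\alpha}) + c\, F_{B_1}(\Hn{L_\alpha}, \Hn{L_\beta}).
\]
The first summand is easy: since $B_1 \subset 0.9 B_2 \subset B_2$ we have $\lip_1(B_1) \subset \lip_1(B_2)$, so $F_{B_1}(\mu, c\Hn{L_\alpha}) \le F_{B_2}(\mu, c\Hn{L_\alpha}) = \alpha_\mu(B_2)\, r(B_2)\, \mu(3B_2)$ by the choice of $(L_\alpha, c)$, and by the hypotheses $r(B_2)\approx r(B_1)$, $\mu(3B_2)\approx\mu(B_1)\approx r(B_1)^n$ this is $\lesssim \alpha_\mu(B_2)\, r(B_1)\, \mu(B_1)$. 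Since also $r(B_1)^{n+1}\approx r(B_1)\mu(B_1)$, the lemma reduces to proving
\[
c\, F_{B_1}(\Hn{L_\alpha}, \Hn{L_\beta}) \lesssim \big(\alpha_\mu(B_2) + \beta_{\mu,2}(B_2)\big)\, r(B_1)^{n+1}.
\]

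I would prove this in two steps. First, a geometric step: since $L_\alpha$ and $L_\beta$ are $n$-planes both meeting $0.9B_1$, one has the elementary comparison
\[
F_{B_1}(\Hn{L_\alpha}, \Hn{L_\beta}) \lesssim \int_{L_\alpha\cap B_1}\dist(y, L_\beta)\, d\Hn{L_\alpha}(y).
\]
Indeed, $F_{B_1}$ only sees mass in $B_1$; transporting $\Hn{L_\alpha\cap B_1}$ onto its image under the orthogonal projection $\Pi_{L_\beta}$ along straight segments costs the right-hand side, so by Kantorovich duality (which gives $F_{B_1}\le W_1$ for measures of equal mass) that pushforward is within the right-hand side of $\Hn{L_\alpha\cap B_1}$; and the pushforward differs from $\Hn{L_\beta\cap B_1}$ only by a Jacobian factor $1+O(\measuredangle(L_\alpha, L_\beta)^2)$ on the overlap and by a boundary layer of $\mathcal{H}^n$-measure $O\big((\rho + \measuredangle(L_\alpha,L_\beta)\,r(B_1))\,r(B_1)^{n-1}\big)$, where $\rho := \dist(L_\alpha\cap 0.9B_1, L_\beta)$ — and, since $L_\alpha$ crosses $0.9B_1$, the integral on the right is already $\gtrsim (\rho + \measuredangle(L_\alpha,L_\beta)\,r(B_1))\,r(B_1)^n$, so it absorbs these errors. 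Second, an analytic step bounding $c$ times that integral: take a cutoff $\psi$ with $\psi\equiv 1$ on $0.9 B_2$, $\supp\psi\subset B_2$, $\lip(\psi)\lesssim r(B_2)^{-1}$; since $L_\beta$ meets $B_2$, $\dist(\cdot, L_\beta)\lesssim r(B_2)$ on $B_2$, so $\phi := c_0^{-1}\,\psi\,\dist(\cdot, L_\beta)$ lies in $\lip_1(B_2)$ for a suitable dimensional $c_0$. Testing $F_{B_2}(\mu, c\Hn{L_\alpha})$ against $\phi$, and using $\one_{B_1}\le\psi$ and $\dist(\cdot, L_\beta)\equiv 0$ on $L_\beta$, we get
\begin{multline*}
c\int_{L_\alpha\cap B_1}\dist(y, L_\beta)\, d\Hn{L_\alpha}(y) \le c\int\psi\,\dist(\cdot, L_\beta)\, d\Hn{L_\alpha} \\ \le \int\psi\,\dist(\cdot, L_\beta)\, d\mu + c_0\, F_{B_2}(\mu, c\Hn{L_\alpha}).
\end{multline*}
By Cauchy--Schwarz and the choice of $L_\beta$ the first term is $\le\big(\int_{B_2}\dist(y, L_\beta)^2\, d\mu\big)^{1/2}\mu(B_2)^{1/2}\le\beta_{\mu,2}(B_2)\, r(B_2)\, \mu(3B_2)$, while $c_0\, F_{B_2}(\mu, c\Hn{L_\alpha}) = c_0\,\alpha_\mu(B_2)\, r(B_2)\, \mu(3B_2)$; by the size hypotheses both are $\lesssim\big(\alpha_\mu(B_2)+\beta_{\mu,2}(B_2)\big)\, r(B_1)^{n+1}$.

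Combining the two steps gives the displayed bound on $c\, F_{B_1}(\Hn{L_\alpha}, \Hn{L_\beta})$, and then the triangle inequality together with $r(B_1)^{n+1}\approx r(B_1)\mu(B_1)$ yields \eqref{eq:Lbeta good for alpha} after dividing by $\mu(B_1)\, r(B_1)$. The main obstacle is the purely geometric comparison $F_{B_1}(\Hn{L_\alpha}, \Hn{L_\beta})\lesssim\int_{L_\alpha\cap B_1}\dist(y, L_\beta)\, d\Hn{L_\alpha}$ and the matching lower bound on that integral: one has to keep careful track of how $\Pi_{L_\beta}|_{L_\alpha}$ distorts $\mathcal{H}^n$ and displaces $\partial B_1$, and check that the resulting boundary-layer and Jacobian errors are reabsorbed (using that $L_\alpha$ crosses $0.9B_1$). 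Everything else is routine bookkeeping with the definitions of $\alpha_\mu$ and $\beta_{\mu,2}$ and the comparabilities $\mu(B_1)\approx\mu(3B_2)\approx r(B_1)^n\approx r(B_2)^n$.
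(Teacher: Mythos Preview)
Your argument is correct and is essentially the paper's proof: the same triangle inequality through $c\Hn{L_\alpha}$, the same test function $\psi\,\dist(\cdot,L_\beta)$ to bound $c\int_{L_\alpha\cap B_1}\dist(\cdot,L_\beta)\,d\mathcal{H}^n$ by $(\alpha_\mu+\beta_{\mu,2})(B_2)\,r^{n+1}$, and the same projection/Jacobian comparison of $\Hn{L_\alpha}$ and $\Hn{L_\beta}$ inside $B_1$. The only point the paper makes explicit that you leave to the reader is the preliminary reduction to $\alpha_\mu(B_2)+\beta_{\mu,2}(B_2)<\varepsilon$ (otherwise the estimate is trivial since $c\lesssim 1$ by \lemref{lem:c comparable to density}), which then forces $\measuredangle(L_\alpha,L_\beta)\lesssim\varepsilon$ and is what makes your Jacobian factor $1+O(\measuredangle^2)$ and the injectivity of $\Pi_{L_\beta}|_{L_\alpha}$ legitimate.
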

\begin{proof}
	Set $r=r(B_1)$. It follows easily by \eqref{eq:c smaller then mu over Hn} that $c\lesssim\mu(B_2)r(B_2)^{-n}\approx 1$, and so $F_{B_1}(\mu, c\Hn{L_{\beta}})\lesssim r\mu(B_1)$. Thus, without loss of generality, we may assume that $\beta_{\mu,2}(B_2)+\alpha_{\mu}(B_2)<\varepsilon$ for some small $\varepsilon>0$.
	
	By the triangle inequality, we have
	\begin{multline*}
	F_{B_1}(\mu, c\Hn{L_{\beta}}) \le F_{B_1}(\mu, c\Hn{L_{\alpha}}) +  F_{B_1}(c\Hn{L_{\alpha}}, c\Hn{L_{\beta}})\\
	\le F_{B_2}(\mu, c\Hn{L_{\alpha}}) +  F_{B_1}(c\Hn{L_{\alpha}}, c\Hn{L_{\beta}}),
	\end{multline*}
	and so our aim is to estimate the second term from the right hand side.
	
	Let $x_{\alpha}\in L_{\alpha}\cap \bar{B_1}$ and $x_{\beta}\in L_{\beta}$ be such that
	\begin{equation*}
	|x_{\alpha}-x_{\beta}| = \dist(x_{\alpha}, L_{\beta})=\inf_{x\in L_{\alpha}\cap B_1}\dist(x, L_{\beta}).
	\end{equation*}
	Without loss of generality we may assume that $x_{\alpha}=0$, so that $L_{\alpha}$ is a linear subspace. Denote $L'_{\beta} = L_{\beta}-x_{\beta}$. It follows by basic linear algebra that for $x\in L_{\alpha}\cap B_1$
	\begin{equation}\label{eq:1}
	\dist(x, L_{\beta})  = |x-\Pi_{L_{\beta}}(x)| = |x-x_{\beta}-\Pi_{L'_{\beta}}(x-x_{\beta})| = |\Pi^{\perp}_{L'_{\beta}}(x-x_{\beta})| = |\Pi^{\perp}_{L'_{\beta}}(x) - x_{\beta}|.
	\end{equation}
	Note that by the above and the triangle inequality
	\begin{equation*}
		\dist(x, L_{\beta}) = |\Pi^{\perp}_{L'_{\beta}}(x) - x_{\beta}|\le |x_{\beta}| + |\Pi^{\perp}_{L'_{\beta}}(x)|.
	\end{equation*}	
	On the other hand, by our choice of $x_{\beta}$, $|x_{\beta}|\le \dist(x, L_{\beta})$ for all $x\in L_{\alpha}\cap B_1$. Together with the triangle inequality and the identity \eqref{eq:1}  this gives
	\begin{equation*}
		|x_{\beta}| + |\Pi^{\perp}_{L'_{\beta}}(x)| \le 2|x_{\beta}| + |\Pi^{\perp}_{L'_{\beta}}(x) - x_{\beta}| \le 3\dist(x, L_{\beta}).
	\end{equation*}
	We put the two estimates above together to get
	\begin{equation}\label{eq:dist x to L beta}
		|x_{\beta}| + |\Pi^{\perp}_{L'_{\beta}}(x)|\approx \dist(x, L_{\beta}).
	\end{equation}

	Now, observe that, by the definition of $\measuredangle(L_{\alpha}, L_{\beta})$, for every $x\in L_{\alpha}$ we have $|\Pi^{\perp}_{L'_{\beta}}(x)| \le|x|\measuredangle(L_{\alpha},L_{\beta})$. Moreover, there exists a subspace $\ell\subset L_{\alpha}$ on which the equality is achieved, i.e. for all $x\in\ell$ we have $|\Pi^{\perp}_{L'_{\beta}}(x)| =|x|\measuredangle(L_{\alpha},L_{\beta})$.
	Consider a cone around $\ell$:
	\begin{equation*}
	K = \bigg\{ x\in\R^d\ :\ |\Pi_{\ell}(x)|\ge\frac{4}{5}|x| \bigg\}.
	\end{equation*}
	Since $0\in B_1\cap K\cap L_{\alpha}$, it is easy to see that $\mathcal{H}^n(B_1\cap K\cap L_{\alpha})\gtrsim r^n$, which in turn implies that for some small constant $0<\delta\ll 1$ (depending on the implicit constant in the previous inequality and dimension) we have 
	\begin{equation}\label{eq:a lot of K in B1}
	\mathcal{H}^n(B_1\cap K\cap L_{\alpha}\setminus B(0,\delta r))\gtrsim r^n.
	\end{equation}
	Moreover, for $x\in B_1\cap K\cap L_{\alpha}\setminus B(0,\delta r)$ we have
	\begin{multline*}
	|\Pi^{\perp}_{L'_{\beta}}(x)| = |\Pi^{\perp}_{L'_{\beta}}(\Pi_{\ell}(x))+\Pi^{\perp}_{L'_{\beta}}(\Pi_{\ell}^{\perp}(x))|\ge |\Pi^{\perp}_{L'_{\beta}}(\Pi_{\ell}(x))| - |\Pi^{\perp}_{L'_{\beta}}(\Pi_{\ell}^{\perp}(x))|\\
	\ge |\Pi_{\ell}(x)|\measuredangle(L_{\alpha},L_{\beta}) - |\Pi^{\perp}_{\ell}(x)|\measuredangle(L_{\alpha},L_{\beta})
	\overset{x\in K}{\ge}\frac{4}{5}|x|\measuredangle(L_{\alpha},L_{\beta}) - \frac{3}{5}|x|\measuredangle(L_{\alpha},L_{\beta})\\
	=\frac{1}{5} |x|\measuredangle(L_{\alpha},L_{\beta})\approx r \measuredangle(L_{\alpha},L_{\beta}).
	\end{multline*}	
	Hence, using the above, \eqref{eq:a lot of K in B1}, and \eqref{eq:dist x to L beta} yields	
	\begin{equation}\label{eq:dist of xalpha to xbeta}
	|x_{\beta}|r^{n-1} +  r^{n}\measuredangle(L_{\alpha},L_{\beta})\lesssim 
	\int_{B_1}\frac{\dist(x,L_{\beta})}{r}\ d\Hn{L_{\alpha}}(x)\overset{\eqref{eq:c greater then mu over Hn}}{\lesssim}c \int_{B_1}\frac{\dist(x,L_{\beta})}{r}\ d\Hn{L_{\alpha}}(x).
	\end{equation}
	
	Now, consider $\phi\in\lip_1(B_2)$ such that $\phi(x)\approx \dist(x,L_{\beta})$ in $B_1$, and $\phi(x)\lesssim \dist(x,L_{\beta})$ in $B_2$. Then,
	\begin{multline}\label{eq:dist of xalpha to xbeta 2}
	c\int_{B_1}\frac{\dist(x,L_{\beta})}{r}\ d\Hn{L_{\alpha}}(x)\lesssim c\int_{B_2}\frac{\phi(x)}{r}\ d\Hn{L_{\alpha}}(x)\\
	\lesssim  \int_{B_2}\frac{\phi(x)}{r}\ d\mu(x) + r^{-1}F_{B_2}(\mu,c\Hn{L_{\alpha}})
	\lesssim (\beta_{\mu,2}(B_2)+\alpha_{\mu}(B_2))\mu(3B_2).
	\end{multline}
	\eqref{eq:dist of xalpha to xbeta} and the calculation above give $\measuredangle(L_{\alpha},L_{\beta})\lesssim\beta_{\mu,2}(B_2)+\alpha_{\mu}(B_2)<\varepsilon$. Let $\Pi: L_{\alpha}\to \R^d$ be the orthogonal projection onto $L'_{\beta}$, and $i:L_{\alpha}\to{\R^d}$ an embedding. We have
	\begin{equation}\label{eq:Pi close to Id}
	\lVert \Pi - i\rVert_{op}=\lVert \Pi - i \rVert_{L^{\infty}(L_{\alpha}\cap B(0,1))}\lesssim \beta_{\mu,2}(B_2)+\alpha_{\mu}(B_2)<\varepsilon.
	\end{equation} 
	Thus, $\Pi$ is a linear isomorphism onto $L'_{\beta}$, with a bound on Jacobian
	\begin{equation}\label{eq:bound on jacobian}
	\left|1-|J\Pi|\right|\lesssim\beta_{\mu,2}(B_2)+\alpha_{\mu}(B_2).
	\end{equation}		
	It follows that for any $f\in\lip_1(B_1)$ we have
	\begin{multline*}
	\left|\int f(x)\ d\Hn{L_{\alpha}}(x) - \int f(y)\ d\Hn{L_{\beta}}(y)\right|\\ = \left|\int f(x)\ d\Hn{L_{\alpha}}(x) - \int f(x_{\beta}+\Pi(x))|J\Pi(x)|\ d\Hn{L_{\alpha}}(x)\right|\\
	\le \int |f(x) - f(x_{\beta}+\Pi(x))|\ d\Hn{L_{\alpha}}(x) + \int |f(x_{\beta}+\Pi(x))|\left|1-|J\Pi(x)|\right|\ d\Hn{L_{\alpha}}(x)\\
	\le \int_{B_1\cup \Pi^{-1}(B_1-x_{\beta})} |x_{\beta}| + |x-\Pi(x)|\ d\Hn{L_{\alpha}}(x)\\ + \int_{\Pi^{-1}(B_1-x_{\beta})} \lVert f\rVert_{L^{\infty}}\left|1-|J\Pi(x)|\right|\ d\Hn{L_{\alpha}}(x)\\
	\overset{\eqref{eq:Pi close to Id},\eqref{eq:bound on jacobian}}{\lesssim} |x_{\beta}|r^n + (\beta_{\mu,2}(B_2)+\alpha_{\mu}(B_2))r^{n+1}.
	\end{multline*}
	Taking supremum over all $f\in\lip_1(B_1)$, dividing by $r^{n+1}$, using \eqref{eq:dist of xalpha to xbeta}, \eqref{eq:dist of xalpha to xbeta 2}, the fact that $\mu(B_1)\approx r^n$, and that $c\lesssim 1$, yields the desired inequality:
	\begin{equation*}
	\frac{1}{\mu(B_1)r}F_{B_1}(c\Hn{L_{\alpha}}, c\Hn{L_{\beta}})\lesssim \beta_{\mu,2}(B_2)+\alpha_{\mu}(B_2).
	\end{equation*}
\end{proof}
%

\section{The David-Mattila cubes}\label{sec:DM cubes}
In the proof of \thmref{thm:sufficient condition} we will use the lattice of ``dyadic cubes'' constructed by David and Mattila \cite{david2000removable}.
\begin{lemma}[{\cite[Theorem 3.2, Lemma 5.28]{david2000removable}}]\label{lem:DM lattice}
	Let $\mu$ be a Radon measure on $\R^d$, $E=\supp\mu.$ For any constants $C_0>1$, $A_0>5000C_0$ there exists a sequence of partitions of $E$ into Borel subsets $Q,\ Q\in\D_k$, with the following properties:
	\begin{itemize}
		\item[(a)] For each integer $k\ge 0$, $E$ is the disjoint union of the ``cubes'' $Q$, $Q\in \D_k$, and if $k<l$, $Q\in\D_l$, and $R\in\D_k$, then either $Q\cap R = \varnothing$ or else $R\subset Q$.
		\item[(b)] The general position of the cubes $Q$ can be described as follows. For each $k\ge0$ and each cube $Q\in\D_k$, there is a ball $B(Q)=B(z_Q, r(Q)),$ such that 
		\begin{gather*}
		z_Q\in Q,\quad A_0^{-k}\le r(Q)\le C_0 A_0^{-k},\\
		E\cap B(Q)\subset Q\subset E\cap 28 B(Q) = E\cap B(z_Q,28 r(Q)),
		\end{gather*}
		and the balls $5B(Q), Q\in \D_k,$ are disjoint.
	\end{itemize}		
\end{lemma}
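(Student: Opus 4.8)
Since Lemma~\ref{lem:DM lattice} is quoted verbatim from David and Mattila \cite{david2000removable}, the only ``proof'' strictly needed is a pointer to \cite[Theorem 3.2, Lemma 5.28]{david2000removable}; for orientation, let me nonetheless describe how I would reconstruct such a lattice. The scheme goes back to the dyadic decompositions of David \cite{david1991singular} and of Christ, the feature specific to the David--Mattila variant being the two-sided control of the cubes by the balls $B(Q)$ together with the disjointness of their fivefold dilates. First, for each $k\ge 0$ I would fix a maximal subset $Z_k=\{z_j^k\}_j$ of $E=\supp\mu$ whose points are pairwise at distance at least $A_0^{-k}$, chosen inductively in $k$ so that $Z_k\subset Z_{k+1}$. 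Maximality forces the balls $B(z_j^k,A_0^{-k})$ to cover $E$, and the separation makes the balls $B(z_j^k,A_0^{-k}/2)$ pairwise disjoint. Rather than assigning points of $E$ to the nearest centre at each scale (an assignment which is not coherent from one scale to the next), I would organise the centres into a tree: the ``parent'' of a level-$(k+1)$ centre is declared to be the closest centre of $Z_k$, with ties broken by a fixed enumeration of $E$; by maximality of $Z_k$ this parent lies within $A_0^{-k}$ of the child.

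Given the tree, I would set $Q=Q(z_j^k)\in\D_k$ to be the intersection of $E$ with the closure of the collection of all descendants of $z_j^k$ at levels $\ge k$. Property~(a) then reduces to bookkeeping on the tree: distinct centres of $Z_k$ have disjoint descendant-sets, every $x\in E$ is the limit of a unique coherent chain of centres (one per level, each the parent of the next) and so lies in exactly one $Q\in\D_k$, and a descendant of a level-$l$ centre is a descendant of each of its level-$k$ ancestors with $k<l$, which gives the nesting in~(a). For property~(b) put $B(Q)=B(z_Q,r(Q))$ with $z_Q=z_j^k$; iterating ``parent within $A_0^{-k}$'' and summing the geometric series $\sum_{m\ge k}A_0^{-m}$ shows that every descendant of $z_j^k$, hence every point of $Q$, lies within $\tfrac{A_0}{A_0-1}A_0^{-k}\le 2A_0^{-k}$ of $z_Q$, so with any admissible choice $r(Q)\in[A_0^{-k},C_0A_0^{-k}]$ one gets $Q\subset E\cap 28B(Q)$ with room to spare. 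The reverse containment $E\cap B(Q)\subset Q$ and the disjointness of the balls $5B(Q)$, $Q\in\D_k$, are the delicate points: one must guarantee that a whole ball's worth of $E$ around $z_Q$ is captured by $Q(z_Q)$ rather than stolen by a neighbouring cube, and that the centres of $\D_k$ are spread out enough relative to $C_0A_0^{-k}$; it is here that the flexibility in the radius $r(Q)$ and the choice $A_0>5000C_0$ are exploited in \cite{david2000removable}.

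The step I expect to be the genuine obstacle is exactly that last pair of requirements. A naive maximal-net construction yields only $E\cap B(z_Q,cA_0^{-k})\subset Q$ for a constant $c$ that degrades once one insists on the dilation factor $5$ being disjoint, because the ``tree wobble'' of descendants is itself of order $A_0^{-k}$, the same order as the net spacing, and so cannot simply be absorbed. Reconciling the inner containment $E\cap B(Q)\subset Q$ with $r(Q)\gtrsim A_0^{-k}$ and the disjointness of the $5B(Q)$ is what forces the two-scale bookkeeping $A_0\gg C_0\gg 1$, and this is precisely the content of \cite[Theorem 3.2]{david2000removable}. Since in the sequel we only ever invoke the properties in the form stated, I would simply cite that theorem.
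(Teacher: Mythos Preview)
Your proposal is correct and matches the paper's approach: the paper gives no proof of this lemma, simply citing \cite[Theorem~3.2, Lemma~5.28]{david2000removable}, and you correctly identify that a citation is all that is required. The informal sketch you add for orientation is accurate and helpful, and your identification of the delicate point (reconciling $E\cap B(Q)\subset Q$ with the disjointness of the $5B(Q)$) is exactly where the nontrivial work in \cite{david2000removable} lies.
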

\begin{remark}
	The cubes of David and Mattila have many other useful properties, most notably the so-called \emph{small boundaries}. We will not need them, however.
\end{remark}

For any $Q\in\D:= \bigcup_{k\ge 0}\D_k$ we denote by $\D(Q)$ the family of $P\in\D$ such that $P\subset Q$. Given $Q\in \D_k$ we set $J(Q) = k$ and $\ell(Q)=56 C_0 A_0^{-k}$. Note that $r(Q)\approx\ell(Q).$
We define also $B_Q = 28B(Q) = B(z_Q, 28\, r(Q)),$ so that
\begin{equation*}
E\cap \tfrac{1}{28}B_Q\subset Q\subset B_Q.
\end{equation*} 
Denote by $\D^{db}$ the family of doubling cubes, i.e. $Q\in\D$ satisfying
\begin{equation}\label{eq:doubling_balls}
\mu(100 B(Q))\le C_0 \mu(B(Q)).
\end{equation}

If the constants $C_0,\, A_0$ in \lemref{lem:DM lattice} are chosen of the form $A_0=C(C_0)^K,\ K\ge 3n+5$, and $C_0=C_0(K,n)$ is large enough, then the following lemma holds.
\begin{lemma}[{\cite[Lemma 5.31]{david2000removable}}]
	 Let $R\in\D$ and $Q\subset R$ be cubes such that all the intermediate cubes $S,\ Q\subsetneq S\subsetneq R,$ are non-doubling, i.e. $S\in\D\setminus\D^{db}$. Then,
	\begin{equation}\label{eq:density drops for nondoubling cubes}
	\mu(100B(Q))\le A_0^{-2d(J(Q)-J(R)-1)}\mu(100B(R)).
	\end{equation}
\end{lemma}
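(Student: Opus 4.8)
The statement to be proved is: if $R\in\D$ and $Q\subset R$ are David--Mattila cubes such that every intermediate cube $S$ with $Q\subsetneq S\subsetneq R$ is non-doubling, then $\mu(100B(Q))\le A_0^{-2d(J(Q)-J(R)-1)}\mu(100B(R))$. Since this is quoted as \cite[Lemma 5.31]{david2000removable}, the ``proof'' here is really a sketch of why it holds; the plan is to iterate the failure of the doubling condition along the chain of ancestors.

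First I would enumerate the chain of cubes $R = S_0 \supsetneq S_1 \supsetneq \dots \supsetneq S_m = Q$ with $J(S_j) > J(S_{j-1})$, where $m = J(Q)-J(R)$ if the lattice has one cube per generation along the chain (in general $m \ge J(Q)-J(R)$, which only helps). By hypothesis $S_1,\dots,S_{m-1}$ are non-doubling, i.e. each fails \eqref{eq:doubling_balls}: $\mu(100B(S_j)) > C_0\,\mu(B(S_j))$ for $1 \le j \le m-1$. The second ingredient is a geometric containment: because $S_j \subset S_{j-1}$, $z_{S_j}\in S_j \subset B_{S_{j-1}}$, and the radii satisfy $r(S_j) \le C_0 A_0^{-J(S_j)} \le C_0 A_0^{-J(S_{j-1})-1}$ while $r(S_{j-1}) \ge A_0^{-J(S_{j-1})}$, one gets $100 B(S_j) \subset 100 B(S_{j-1})$ once $A_0$ is large enough relative to $C_0$ (this is exactly the regime $A_0 = C(C_0)^K$ with $C_0$ large, as stipulated before the lemma). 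Hence $\mu(100B(S_j)) \le \mu(B(S_{j-1}))$ whenever — wait, more precisely one shows $100B(S_j)\subset B(S_{j-1})$, so $\mu(100B(S_j))\le\mu(B(S_{j-1}))$.

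Combining the two: for each $1\le j\le m-1$,
\begin{equation*}
\mu(100B(S_j)) \le \mu(B(S_{j-1})) < C_0^{-1}\,\mu(100B(S_{j-1})),
\end{equation*}
using non-doubling of $S_{j-1}$ in the last step (valid for $j-1 \ge 1$, i.e. $j \ge 2$); for $j=1$ one simply uses $\mu(100B(S_1))\le\mu(B(S_0)) \le \mu(100B(R))$. Chaining these from $j=m-1$ down to $j=1$ gives $\mu(100B(S_{m-1})) \le C_0^{-(m-2)}\mu(100B(R))$, and one more containment step $100B(Q) = 100B(S_m)\subset B(S_{m-1})$ yields $\mu(100B(Q)) \le C_0^{-(m-2)}\mu(B(S_{m-1}))\le C_0^{-(m-2)}\mu(100B(R))$. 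Finally, since $A_0 = C(C_0)^K$ with $K\ge 3n+5 \ge 2d$ (as $d\le$ something, or simply choosing $C_0$ large so that $C_0 \ge A_0^{2d}$... actually one wants $C_0^{-1}\le A_0^{-2d}$, i.e. $C_0 \ge A_0^{2d}$, which is arranged by the relation $A_0 = C(C_0)^K$ solved for $C_0$ large), we have $C_0^{-(m-2)} \le A_0^{-2d(m-2)} \le A_0^{-2d(J(Q)-J(R)-1)}$ after absorbing the off-by-one, which is the claimed bound.

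The main obstacle — and the only genuinely delicate point — is the bookkeeping on the constants: one must check that the choice $A_0 = C(C_0)^K$, $K \ge 3n+5$, $C_0 = C_0(K,n)$ large is exactly what makes both (i) the nested-ball inclusions $100B(S_j)\subset B(S_{j-1})$ hold, using $r(Q)\in[A_0^{-k},C_0A_0^{-k}]$ and $z_Q\in Q\subset 28B(Q)$, and (ii) the per-step gain $C_0^{-1}$ dominate the required $A_0^{-2d}$. Both are routine once the exponents are tracked carefully, and this is why the lemma is simply cited from \cite{david2000removable} rather than reproved in detail here.
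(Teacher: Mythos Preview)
The paper does not give a proof of this lemma; it is simply quoted from \cite[Lemma~5.31]{david2000removable}. Your sketch tries to reconstruct that argument, and while the overall plan (iterate along the chain of ancestors) is the right shape, two steps fail.

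First, the containment $100B(S_j)\subset B(S_{j-1})$ is false. From the David--Mattila axioms you only know $z_{S_j}\in S_j\subset S_{j-1}\subset 28B(S_{j-1})$, so $|z_{S_j}-z_{S_{j-1}}|$ can be as large as $28\,r(S_{j-1})$. Thus $100B(S_j)\subset 29B(S_{j-1})$ at best, never inside $B(S_{j-1})$, regardless of how large $A_0/C_0$ is. The obstruction is the \emph{position of the center}, not the radius, and no choice of constants repairs it.

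Second, even granting a per-step factor of $C_0^{-1}$, the constants do not close. You need $C_0^{-1}\le A_0^{-2d}$, i.e.\ $C_0\ge A_0^{2d}$. But the stipulated relation $A_0=C\,(C_0)^K$ with $K\ge 3n+5$ (and already $A_0>5000\,C_0$) means $A_0$ is a \emph{large} power of $C_0$, so $A_0^{2d}\gg C_0$. Your parenthetical that this ``is arranged by the relation $A_0=C(C_0)^K$ solved for $C_0$ large'' has the direction reversed: enlarging $C_0$ makes $A_0^{2d}$ grow far faster than $C_0$. A gain of $C_0^{-1}$ per generation is therefore much too weak to produce the stated $A_0^{-2d}$ decay; the actual computation in \cite{david2000removable} (their (5.30), as the remark following the lemma indicates) obtains a stronger factor per step by a different mechanism.
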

\begin{remark}
	The constant $2d$ in \eqref{eq:density drops for nondoubling cubes} can be replaced by any positive constant if $C_0,\, A_0$ are chosen suitably. See \cite[(5.30)]{david2000removable} for details.
\end{remark}

It will be convenient for us to work with cubes satisfying a doubling condition stronger than \eqref{eq:doubling_balls}. To introduce them we need a version of \cite[Lemma 2.8]{tolsa2014analytic}. For reader's convenience, we provide the proof below.
\begin{lemma}\label{lem:lemma 2.8 from the green book}
	Let $\mu$ be a Radon measure on $\R^d$ and $\alpha>1$ be some constant. Then, for $\mu$-a.e. $x\in\R^d$ there exists a sequence $r_j\to 0$ such that for every $j$ we have
	\begin{equation}\label{eq:2}
	\mu(B(x,\alpha\, r_j))\le 2\,\alpha^d\mu(B(x,r_j)).
	\end{equation}
\end{lemma}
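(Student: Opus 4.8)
The plan is to prove this by a covering argument, establishing that the set of ``bad'' points where \eqref{eq:2} fails for all small $r$ has $\mu$-measure zero. Fix $\alpha > 1$ and let $B$ be a fixed large ball; it suffices to work inside $B$ since $\R^d$ is a countable union of such balls. Define, for each $m \in \mathbb{N}$, the set
\[
A_m = \Big\{ x \in B \ :\ \mu(B(x,\alpha r)) > 2\alpha^d \mu(B(x,r)) \ \text{ for all } 0 < r < 1/m \Big\}.
\]
The ``bad'' set is $\bigcup_m A_m$, so it is enough to show $\mu(A_m) = 0$ for each $m$. Morally, the point is that if the measure grows by a factor strictly larger than $\alpha^d$ at \emph{every} scale below a threshold, then iterating from scale $r$ down through scales $r, r/\alpha, r/\alpha^2, \dots$ forces the density $\mu(B(x,r))/r^d$ to blow up as $r \to 0$, which cannot happen $\mu$-almost everywhere.

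First I would fix $m$ and a small radius $\rho < 1/m$, and cover $A_m$ efficiently. Here I would like to invoke a Vitali-type covering lemma: for each $x \in A_m$ consider the ball $B(x,\rho)$; by the $5r$-covering lemma there is a countable subfamily $\{B(x_i,\rho)\}_i$ with the $B(x_i,\rho)$ pairwise disjoint and $A_m \subset \bigcup_i B(x_i, 5\rho)$. Then, using the defining property of $A_m$ applied at $x_i$ with radius $r$ ranging over $\rho, \rho/\alpha, \dots$, one gets, after $N$ steps, $\mu(B(x_i, \alpha\rho)) > (2\alpha^d)^N \mu(B(x_i, \alpha^{-N+1}\rho))$; but since $x_i \in \supp\mu$ (one should intersect everything with $\supp\mu$, or just note $\mu(B(x_i,s))>0$ for all $s>0$ on a full-measure subset), the right-hand side stays positive, forcing $\mu(B(x_i,\alpha\rho)) = \infty$ unless... — actually this crude iteration overshoots. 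The cleaner route is the standard one: compare the total mass captured to $\mu$ of a slightly larger ball. Disjointness of $\{B(x_i,\rho)\}_i$ gives $\sum_i \mu(B(x_i,\rho)) \le \mu(B')$ where $B' = B(z(B), r(B)+\rho)$, while the doubling-failure at scale $\rho$ gives $\mu(B(x_i,\alpha\rho)) > 2\alpha^d \mu(B(x_i,\rho))$; the factor $2$ (rather than just $>1$) is what will be exploited.

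The key step is therefore a bounded-overlap count: for fixed $\rho$, the enlarged balls $\{B(x_i,\alpha\rho)\}_i$ have overlap bounded by a dimensional constant $C_d$ times... — not quite, since the $x_i$ are only $\rho$-separated, not $\alpha\rho$-separated, so $B(x_i,\alpha\rho)$ may overlap heavily. The honest fix is to pass to a $\rho$-net inside $A_m$ and split it into $C_d(\alpha)$ subfamilies each of which \emph{is} $2\alpha\rho$-separated (a standard coloring argument: a point can be $\rho$-close to at most $C_d(\alpha)$ points of a set all of whose mutual distances... — equivalently, greedily partition). On each such subfamily $\mathcal{F}_j$ the balls $B(x_i, \alpha\rho)$, $x_i \in \mathcal{F}_j$, are pairwise disjoint, hence
\[
\sum_{x_i \in \mathcal{F}_j} \mu(B(x_i,\alpha\rho)) \le \mu(B')
\quad\text{while}\quad
\sum_{x_i \in \mathcal{F}_j} \mu(B(x_i,\alpha\rho)) > 2\alpha^d \sum_{x_i \in \mathcal{F}_j}\mu(B(x_i,\rho)).
\]
Summing over the $C_d(\alpha)$ colors and recalling $A_m \subset \bigcup_i B(x_i,5\rho)$ with the $B(x_i,\rho)$ disjoint (so $\mu(A_m) \le \sum_i \mu(B(x_i,5\rho)) \lesssim \sum_i \mu(B(x_i,\rho))$ by a further such coloring for the $5\rho$-balls, using boundedness of overlap of $5\rho$-balls with $\rho$-separated centers), we obtain $\mu(A_m) \le C_d(\alpha) \cdot \frac{1}{2\alpha^d}\mu(B')$ — and this is \emph{not} small, so the one-step argument alone is insufficient. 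The genuine main obstacle is thus that a single scale gives no contraction; one must iterate the scale halving (or rather $\alpha$-fold shrinking) $N$ times to accumulate a factor $(2\alpha^d)^{-N} \to 0$ or equivalently $2^{-N}$ after dividing out the $\alpha^d$ against the natural density growth, and then let $N \to \infty$ with $\rho$ fixed, or let $\rho \to 0$ along the way. Concretely: iterating the disjointified estimate down $N$ dyadic-in-$\alpha$ scales from $\rho$ yields $\mu(A_m) \lesssim_{\alpha,d} 2^{-N}\mu(B')$ for every $N$ (the $\alpha^{Nd}$ from the doubling-failure cancels against the $\alpha^{-Nd}$ volume loss in the covering count), whence $\mu(A_m) = 0$. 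I expect the bookkeeping of this iteration — correctly tracking the $\alpha$-separation coloring constants at each of the $N$ scales so that they do not compound, and making sure the ``$\times 2$'' genuinely survives — to be the delicate point; everything else is the routine $5r$-covering and density-point machinery.
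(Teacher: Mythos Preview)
Your core intuition is correct: iterate the doubling-failure inequality $k$ times to accumulate a factor $(2\alpha^d)^{-k}$, then play this against a covering count that grows only like $\alpha^{kd}$, leaving $2^{-k}\to 0$. But your execution gets tangled in the $5r$-covering and coloring arguments, and you yourself flag the worry that the $\alpha$-dependent coloring constants might compound across the $N$ iterations. That worry is real for the route you chose, and your sketch does not resolve it.

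The paper sidesteps all of this with two clean choices. First, it uses the \emph{Besicovitch} covering theorem rather than the $5r$-lemma: cover $Z_j\cap B_0$ (where $B_0$ has radius $2^{-j}$) by balls $B_x=B(x,\alpha^{-k}2^{-j})$ with bounded overlap $\sum\one_{B_{x_m}}\le C_d$. Second---and this is the trick you are missing---it counts the number $N$ of such balls using \emph{Lebesgue} measure, not $\mu$: bounded overlap gives
\[
N\,\omega_d(\alpha^{-k}2^{-j})^d \le C_d\,\mathcal{H}^d(2B_0),
\quad\text{hence}\quad N\le C_d\,2^d\,\alpha^{kd}.
\]
Meanwhile, iterating the defining inequality of $Z_j$ exactly $k$ times from radius $\alpha^{-k}2^{-j}$ up to $2^{-j}$ gives $\mu(B_{x_m})\le (2\alpha^d)^{-k}\mu(2B_0)$. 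Multiplying,
\[
\mu(Z_j\cap B_0)\le \sum_m\mu(B_{x_m})\le N(2\alpha^d)^{-k}\mu(2B_0)\le C_d\,2^{d-k}\mu(2B_0),
\]
and letting $k\to\infty$ finishes. The point is that the $\alpha^{kd}$ in the ball count is \emph{exactly} cancelled by the $\alpha^{-kd}$ in the measure decay, with no extraneous constants accumulating---precisely because the ball count came from volume, not from any $\mu$-based or separation-based coloring.
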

\begin{proof}
	Consider the set $Z\subset\supp\mu$ of points such that for $x\in Z$ there does not exist a sequence of radii $r_j\to 0$ satisfying \eqref{eq:2}. We want to show that $\mu(Z)=0$. Let
	\begin{equation*}
		Z_j = \{x\in\supp\mu\, :\, \mu(B(x,\alpha\, r))> 2\,\alpha^d\mu(B(x,r))\ \text{for all}\ r\le 2^{-j} \}.
	\end{equation*}
	Clearly $Z=\bigcup_j Z_j$, and so it suffices to prove $\mu(Z_j)=0$ for all $j\ge 0$.
	
	Let $B_0$ be an arbitrary ball of radius $2^{-j}$ centered at $Z_j$, and choose some integer $k\ge d$. For each $x\in Z_j\cap B_0$ we set $B_x = B(x,\alpha^{-k} 2^{-j})$. Observe that, by the definition of $Z_j$, for $h=0,\dots, k-1$ we have
	\begin{equation*}
		\mu(\alpha^{h+1} B_x)> 2\,\alpha^d\mu(\alpha^{h}B_x).
	\end{equation*}
	Thus,
	\begin{multline}\label{eq:3}
		\mu(B_x)< (2\,\alpha^d)^{-1}\mu(\alpha B_x)<\dots< (2\,\alpha^d)^{-k}\mu(\alpha^{k} B_x) = (2\,\alpha^d)^{-k} \mu(B(x,2^{-j}))\\
		\le (2\,\alpha^d)^{-k}\mu(2B_0).
	\end{multline}

	Now, we use Besicovitch covering theorem to choose points $\{x_m\}\subset Z_j\cap B_0$ such that $\bigcup_m B_{x_m}$ covers $Z_j\cap B_0$, and moreover $\sum_m \one_{B_{x_m}}\le C_d$. The bounded intersection property implies that $N:=\#\{x_m\}<\infty$, and more precisely
	\begin{equation*}
		N\omega_d(\alpha^{-k} 2^{-j})^d = \sum_{m} \mathcal{H}^d(B_{x_m})\le C_d \mathcal{H}^d(2B_0) = C_d2^d\omega_d2^{-jd},
	\end{equation*}
	where $\omega_d$ stands for the volume of a $d$-dimensional ball. Hence,
	\begin{equation*}
		N\le C_d2^d\alpha^{kd}.
	\end{equation*}
	Consequently, we may use \eqref{eq:3} and the fact that $\bigcup_m B_{x_m}\supset Z_j\cap B_0$ to obtain
	\begin{equation*}
		\mu(Z_j\cap B_0)\le \sum_m \mu(B_{x_m}) \le N (2\,\alpha^d)^{-k}\mu(2B_0)\le C_d2^{d-k}\mu(2B_0).
	\end{equation*}
	Since $k$ can be chosen as large as we wish, this gives $\mu(Z_j\cap B_0)=0$. But $B_0$ was an arbitrary ball, and so $\mu(Z_j)=0$.
\end{proof}

We may use the lemma above to show the following.

\begin{lemma}\label{lem:sequence of dcrsing db cubes}
	There exists a constant $C=C(d,C_0,A_0)$ such that for $\mu$-a.e. $x\in\R^d$ there exists a sequence of cubes $Q_j\in\D^{db}$ satisfying $x\in Q_j$, $\ell(Q_j)\to 0$, and
	\begin{equation}\label{eq:very doubling cubes}
	\mu(100B_{Q_j})\le C\, \mu(B(Q_j)).
	\end{equation}
\end{lemma}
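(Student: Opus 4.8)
The plan is to combine \lemref{lem:lemma 2.8 from the green book} with the structure of the David--Mattila lattice. First I would fix a point $x$ outside the exceptional sets of both \lemref{lem:lemma 2.8 from the green book} (applied with a suitable large constant $\alpha$, to be chosen below) and the full-measure set where the conclusion of \lemref{lem:DM lattice} holds; these are $\mu$-null, so it suffices to work with such $x$. The idea is that the sequence $r_j\to 0$ from \lemref{lem:lemma 2.8 from the green book} yields good doubling on balls $B(x,r_j)$, and one then ``upgrades'' this to a genuine cube $Q_j\in\D^{db}$ containing $x$ with $\ell(Q_j)\approx r_j$.

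The key steps in order: \textbf{(1)} For each $j$, let $k_j$ be the integer with $A_0^{-k_j}\le r_j< A_0^{-k_j+1}$, and let $Q_j\in\D_{k_j}$ be the unique cube of that generation containing $x$. By property (b) of \lemref{lem:DM lattice}, $E\cap B(Q_j)\subset Q_j\subset E\cap 28B(Q_j)$ with $A_0^{-k_j}\le r(Q_j)\le C_0A_0^{-k_j}$, so $B(Q_j)\subset B(x, 56\,r(Q_j))\subset B(x, 56C_0A_0\, r_j)$ and, conversely, since $x\in Q_j\subset 28B(Q_j)$ we get $B(x, r_j)\subset B(x, A_0^{-k_j+1})\subset B(z_{Q_j}, (28+A_0)r(Q_j)) =: \lambda_1 B(Q_j)$ for an absolute constant $\lambda_1$. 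Thus both $B(Q_j)$ and $100B_{Q_j}$ are comparable (with constants depending only on $A_0,C_0,d$) to $B(x, c\,r_j)$ for appropriate $c$'s, all of which are dilates of $B(x,r_j)$ by factors that are fixed once $A_0,C_0$ are fixed. \textbf{(2)} Choose $\alpha$ in \lemref{lem:lemma 2.8 from the green book} large enough that $B(x, \alpha r_j)$ contains $100B_{Q_j}$ and is contained in $\alpha B(x, r_j)$ — concretely $\alpha = \max(100\cdot 28\cdot 56 C_0 A_0,\, \lambda_1)$ or any larger absolute constant works — so that \eqref{eq:2} gives $\mu(100B_{Q_j})\le\mu(B(x,\alpha r_j))\le 2\alpha^d\mu(B(x,r_j))\le 2\alpha^d\mu(B(Q_j))$, which is exactly \eqref{eq:very doubling cubes} with $C = 2\alpha^d$, an admissible constant depending only on $d, C_0, A_0$. \textbf{(3)} It remains to check $Q_j\in\D^{db}$, i.e. $\mu(100B(Q_j))\le C_0\mu(B(Q_j))$; but $100B(Q_j)\subset 100B_Q = 100B_{Q_j}$ (indeed $B_Q = 28B(Q)$, so $100B(Q)\subset 100B(Q)\subset 100\cdot 28 B(Q) = 100 B_Q$), hence $\mu(100B(Q_j))\le\mu(100B_{Q_j})\le 2\alpha^d\mu(B(Q_j))$; this shows \eqref{eq:doubling_balls} holds provided $C_0\ge 2\alpha^d$, which we may assume by choosing $C_0$ large at the outset (recall $C_0$ is only required to be sufficiently large, and $\alpha$ is determined by $A_0, C_0, d$ — one checks there is no circularity, since we may first fix the relation $A_0 = C(C_0)^K$ and then take $C_0$ large enough relative to $d$ and to the geometric dilation constants, all of which are bounded independently of $C_0$). \textbf{(4)} Finally $\ell(Q_j) = 56C_0A_0^{-k_j}\to 0$ since $k_j\to\infty$ as $r_j\to 0$, and $x\in Q_j$ by construction, completing the proof.

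The main obstacle is the potential circularity in step (3): $\alpha$ depends on $C_0$ and $A_0$, while we need $C_0\ge 2\alpha^d$. To handle this cleanly I would note that the geometric dilation factors relating $B(x,r_j)$ to $B(Q_j)$ and to $100B_{Q_j}$ — namely things like $28$, $100$, $56$, and powers of $A_0$ — are all bounded by a fixed polynomial in $A_0$ once $A_0$ is fixed. Since \lemref{lem:DM lattice} and the subsequent lemmas already require $A_0 = C(C_0)^K$ with $C_0 = C_0(K,n)$ large, one verifies that for $C_0$ large enough (depending only on $n$, $d$, and $K$) the inequality $C_0\ge 2\alpha^d$ is automatically satisfied, since $\alpha$ grows only polynomially in $A_0 = C(C_0)^K$ and hence polynomially in $C_0$, but we are free to take $C_0$ as large as needed and to replace the lemma's constant $C$ in \eqref{eq:very doubling cubes} by the resulting $2\alpha^d$. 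Alternatively — and more robustly — one avoids the issue entirely by not insisting $100B_{Q_j}$ be comparable to $B(Q_j)$ via $C_0$: instead apply \lemref{lem:lemma 2.8 from the green book} with $\alpha$ large enough to cover $100 B_{Q_j}$ by $B(x,\alpha r_j)$, deduce $\mu(100B_{Q_j})\lesssim\mu(B(x,r_j))\lesssim\mu(B(Q_j))$ directly (with implicit constants depending only on $d, C_0, A_0$), which simultaneously gives both \eqref{eq:very doubling cubes} and, since $\mu(100B(Q_j))\le\mu(100B_{Q_j})$, the doubling condition \eqref{eq:doubling_balls} after absorbing the implicit constant into the requirement that $C_0$ be large — which is a standing hypothesis. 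This second route is the one I would write up, as it keeps all dependencies transparent.
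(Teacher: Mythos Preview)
There is a genuine gap in step (2). The chain
\[
\mu(100B_{Q_j})\le \mu(B(x,\alpha r_j))\le 2\alpha^d\,\mu(B(x,r_j))\le 2\alpha^d\,\mu(B(Q_j))
\]
breaks at the last inequality: you have not shown, and in general cannot show, that $B(x,r_j)\subset B(Q_j)$. Indeed, you only know $x\in Q_j\subset 28B(Q_j)$, so $|x-z_{Q_j}|$ may be as large as $28\,r(Q_j)$, and thus $x$ need not even lie in $B(Q_j)$. Your own computation in step (1) gives only $B(x,r_j)\subset (A_0+28)B(Q_j)$, and passing from $\mu\big((A_0+28)B(Q_j)\big)$ to $\mu(B(Q_j))$ is exactly the doubling statement you are trying to prove. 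The ``alternative route'' at the end has the same circularity: the claimed inequality $\mu(B(x,r_j))\lesssim\mu(B(Q_j))$ has no justification.

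The paper's argument avoids this by not trying to compare $B(x,r_j)$ with $B(Q)$ directly. Instead it takes the smallest cube $Q\ni x$ with $B(x,r_j)\subset 100B(Q)$, lets $R$ be the $k$-th ancestor of $Q$ (with $k$ fixed, large), and chooses $\alpha$ so that $B(x,\alpha r_j)\supset 100B(R)$. If every intermediate cube $Q\subsetneq S\subsetneq R$ were non-doubling, the density-drop estimate \eqref{eq:density drops for nondoubling cubes} would force $\mu(100B(Q))\le A_0^{-2d(k-1)}\mu(100B(R))$, which for $k$ large contradicts $\mu(B(x,\alpha r_j))\le 2\alpha^d\mu(B(x,r_j))$. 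Hence some intermediate $S$ \emph{is} doubling, and for that $S$ one has $\mu(100B_S)\le\mu(100B(R))\le\mu(B(x,\alpha r_j))\le 2\alpha^d\mu(B(x,r_j))\le 2\alpha^d\mu(100B(Q))\le 2\alpha^d\mu(100B(S))\le 2C_0\alpha^d\mu(B(S))$, where the last step uses the doubling of $S$. The point is that the doubling property of $S$ is what lets one pass from a large multiple of $B(S)$ down to $B(S)$; your argument never secures such a cube.
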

\begin{proof}
	Let $\alpha=2\,C_0^2\,A_0^{k+1}$, where $k$ is a constant that will be fixed later on. Consider a sequence of balls $B(x,r_j)$ given by \lemref{lem:lemma 2.8 from the green book}. Fix some $j$. Let $Q$ be the smallest cube satisfying $x\in Q$ and $B(x,r_j)\subset 100B(Q).$ We have
	\begin{equation*}
		72\,A_0^{-1}C_0^{-1}\,r(Q)\le r_j\le 100\,r(Q).
	\end{equation*}
	It is easy to check that, with the choice of $\alpha$ we made at the beginning, we have
	\begin{equation*}
	B(x,\alpha\,r_j)\supset 100B(R),
	\end{equation*}
	where $R$ is the $k$-th ancestor of $Q$, i.e. $Q\subset R$ and $J(Q)-J(R)=k$.
	
	Now, if all the intermediate cubes $S,\ Q\subsetneq S\subsetneq R,$ were non-doubling, then by \eqref{eq:density drops for nondoubling cubes} and \lemref{lem:lemma 2.8 from the green book} we would have
	\begin{multline*}
	\mu(B(x,r_j))\le \mu(100B(Q))\overset{\eqref{eq:density drops for nondoubling cubes}}{\le} A_0^{-2d(k-1)}\mu(100B(R))\le A_0^{-2d(k-1)}\mu(B(x,\alpha\, r_j))\\
	\le	 A_0^{-2d(k-1)} 2\, (2\,C_0^2\,A_0^{k+1})^d\mu(B(x,r_j)) = 2^{d+1}C_0^{2d}A_0^{-dk + 3d}\mu(B(x,r_j)).
	\end{multline*}
	For $k=k(d,C_0,A_0)$ big enough the constant on the right hand side is smaller than $1$, and so we reach a contradiction. It follows that one of the intermediate cubes $S$ is doubling. Thus,
	\begin{multline*}
	\mu(100B_S)\le\mu(100B(R))\le\mu(B(x,\alpha\, r_j))\le 2\,\alpha^d\mu(B(x,r_j))\\
	\le 2\,\alpha^d\mu(100B(Q))\le 2\,\alpha^d\mu(100B(S))\le 2\, C_0\, \alpha^d\mu(B(S)).
	\end{multline*}
	Setting $Q_j = S$ finishes the proof.
\end{proof}
We will call the cubes satisfying \eqref{eq:very doubling cubes} \emph{strongly doubling}, and the family of all such cubes will be denoted by $\D^{sdb}$. We fix constants $C_0$ and $A_0$ so that all of the above holds, and from now on we will treat them as absolute constants. We will not mention dependence on them in our estimates.

Finally, for each $Q\in\D$ we define $L_Q$ to be the $n$-plane minimizing $\beta_{\mu,2}(3B_Q)$, and $c_Q\ge 0$ to be the constant minimizing $\alpha_{\mu}(3B_Q)$.

\section{Main Lemma}\label{sec:main lemma}
Given $\varepsilon>0$ and $r>0$ let us define the set of ``good points'':
\begin{equation}\label{eq:def of G}
G^{\varepsilon}_{r} = \left\{x\in\supp\mu\ :\ \int_0^{1000r} \left(\alpha_{\mu}(x,s)^2 + \beta_{\mu,2}(x,s)^2 \right)\ \frac{ds}{s}<\varepsilon^2 \right\}.
\end{equation}
Using this notation we may formulate our main lemma.
\begin{lemma}\label{lem:main lemma}
	Let $\mu$ be a finite Radon measure on $\R^d$. There exists a small dimensional constant $\varepsilon_0>0$ such that the following holds: suppose that $R_0\in\D^{sdb}$ satisfies
	\begin{equation}\label{eq:set Gr is small}
	\mu\left(R_0\setminus G^{\varepsilon_0}_{r(R_0)}\right)\le \varepsilon_0\,\mu(3B_{R_0}).
	\end{equation}
	Then, there exists a set $R_G\subset R_0$, and a Lipschitz map $F:L_{R_0}\rightarrow L_{R_0}^{\perp} $ (recall that $L_{R_0}$ denotes the $n$-dimensional plane minimizing $\beta_{\mu,2}(3B_{R_0})$), such that for 
	\begin{equation*}
	\Gamma = \big\{(x,F(x))\ :\ x\in L_{R_0} \big\}
	\end{equation*}
	we have $R_G\subset \Gamma$,
	\begin{equation}\label{eq:RG bigger than half of R0}
	\mu(R_G)\ge \frac{\mu(R_0)}{2},
	\end{equation}
	and $\restr{\mu}{R_G}$ is absolutely continuous with respect to $\mathcal{H}^n$.
\end{lemma}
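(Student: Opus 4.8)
The plan is to carry out a stopping-time argument over the David--Mattila lattice below $R_0$, producing a Lipschitz graph $\Gamma$ and showing the total mass of the stopping region is small.

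First, I would set up the stopping cubes. Starting from $R_0 \in \mathcal{D}^{sdb}$ with best-approximating plane $L_0 = L_{R_0}$, I stop at a cube $Q \subset R_0$ as soon as one of the following happens: (i) the density $\Theta_\mu(B_Q)$ becomes much larger than $\Theta_\mu(B_{R_0})$ (high density cubes $\HD$); (ii) the density drops much below $\Theta_\mu(B_{R_0})$ (low density cubes $\LD$); (iii) the angle $\measuredangle(L_Q, L_0)$ exceeds a small threshold (big angle cubes $\BA$); (iv) a fixed fraction of the mass of $Q$ consists of points $x$ with $\int_0^{\ell(Q)}(\alpha_\mu(x,s)^2+\beta_{\mu,2}(x,s)^2)\,\tfrac{ds}{s}$ large, i.e. $x \notin G^{\varepsilon_0}_{\ell(Q)}$ (big square-function cubes $\BS$); or (v) a fixed fraction of $Q$ lies far from the planes $L_S$ of its ancestors (far cubes $\F$). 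The cubes of $\Tree$ are those with no stopping ancestor. Using Lemmas \ref{lem:betas are smaller than alphas}--\ref{lem:plane from beta good for alpha} one shows all $\Tree$ cubes are strongly doubling, have densities comparable to $\Theta_\mu(B_{R_0})$, small angle with $L_0$, and that consecutive best planes are close; this is what makes the Lipschitz construction go through.

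Next I would construct $\Gamma$: the set $R_0 \setminus \bigcup_{Q\in\Stop} Q$ (the points that never get stopped) is contained in the graph over $L_0$ of a Lipschitz function defined on $\Pi_{L_0}$ of that set, by the standard argument (going back to \cite{david1991singular,leger1999menger}, as used in \cite{chunaev2018family,tolsa2014analytic}): if two such points were too steeply separated, one would find an intermediate cube whose plane $L_Q$ makes a big angle with $L_0$, contradicting that it's in $\Tree$. Then extend $F$ to all of $L_0$ keeping the Lipschitz constant controlled, set $\Gamma = \graph(F)$, and let $\nu$ be a measure on $\Gamma$ approximating $\restr{\mu}{R_0}$ (as in Sections \ref{sec:approximating measure}). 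On the non-stopped set, $\restr{\mu}{\cdot}$ is supported on $\Gamma$, and one gets $\restr{\mu}{R_G} \ll \mathcal{H}^n$ on $R_G := \{x \in R_0 : \text{no stopping ancestor}\}$ (plus possibly some $\HD$-free portion) essentially because the density $\Theta_\mu$ stays bounded along $\Tree$ cubes through $x$.

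Finally, to get \eqref{eq:RG bigger than half of R0} it suffices to show $\mu\big(\bigcup_{Q\in\Stop} Q\big) \le \tfrac12 \mu(R_0)$, which reduces to bounding each family $\HD, \LD, \BA, \BS, \F$ by $\tfrac{1}{10}\mu(R_0)$, say. The families $\BS$ and $\F$ are easy: $\BS$ is controlled by Chebyshev and the hypothesis \eqref{eq:set Gr is small} together with a Fubini/Carleson argument on $\int(\alpha^2+\beta_2^2)\tfrac{ds}{s}$, and $\F$ by Chebyshev plus the smallness of $\RFar$ which follows from finiteness of the $\beta_{\mu,2}$ square function (the analogue of \lemref{lem:small measure of RFar}). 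The family $\LD$ is handled by covering (almost all) low-density cubes with a bounded-overlap family of balls $B$ with $\mathcal{H}^n(B\cap\Gamma)\approx r(B)^n$ and $\Theta_\mu(B)$ small, so $\sum \mu(B) \ll \sum \mathcal{H}^n(B\cap\Gamma) \lesssim \mathcal{H}^n(\Gamma \cap 2B_{R_0}) \lesssim \mu(R_0)$ (the method of \cite{azzam2015characterization}). The two genuinely hard families are $\HD$ and $\BA$, and \textbf{these are the main obstacle}: for $\HD$ one studies $f = d\nu/d(\restr{\mathcal{H}^n}{\Gamma})$ and bounds $\|f - c_0\|_{L^2}$ using the smallness of the $\alpha_\mu$ square function, the closeness of $\nu$ to $\mu$, and a Littlewood--Paley-type estimate (the refinement of \cite{tolsa2017rectifiable} carried out in \cite{azzam2018characterization}); then $\HD$ corresponds to large values of $f$ and is controlled by Chebyshev. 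For $\BA$, one reduces bounding $\sum_{Q\in\BA}\mu(Q)$ to bounding $\|\nabla F\|_{L^2}^2$, then invokes Dorronsoro's theorem to control $\|\nabla F\|_{L^2}$ by the $\beta_{\sigma,1}$ square function of the surface measure $\sigma$ on $\Gamma$, and finally estimates the latter by the $\alpha_\mu$ (or $\beta_{\mu,2}$) square function via the approximation estimates for $\nu$. Putting all five bounds together gives $\mu(\bigcup_{Q\in\Stop}Q) \le \tfrac12\mu(R_0)$, hence \eqref{eq:RG bigger than half of R0}, and the lemma follows.
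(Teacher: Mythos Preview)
Your proposal is correct and follows essentially the same approach as the paper: the same five stopping families ($\HD$, $\LD$, $\BS$, $\BA$, $\F$), the same Lipschitz-graph construction via the David--Semmes/L\'eger method, the same approximating measure $\nu$ on $\Gamma$, and the same techniques for each family (covering for $\LD$, $L^2$-estimate of the density $f-c_0$ via Littlewood--Paley for $\HD$, Dorronsoro for $\BA$). One small correction: $\Tree$ cubes are not strongly doubling in general---they merely satisfy the two-sided density bounds coming from the $\HD$/$\LD$ stopping conditions (see \eqref{eq:notLD}, \eqref{eq:notHD})---and $\BS$ in the paper is defined relative to the fixed set $G = G^{\varepsilon_0}_{r(R_0)}$ rather than to $G^{\varepsilon_0}_{\ell(Q)}$, which makes the $\BS$ estimate immediate from \eqref{eq:set Gr is small} without a Carleson argument.
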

Several remarks are in order.
\begin{remark}
	Assumption \eqref{eq:set Gr is small} is implied by a somewhat more natural condition
	\begin{equation*}
	\int_{R_0}\int_0^{1000r(R_0)} \left( \alpha_{\mu}(x,s)^2 + \beta_{\mu,2}(x,s)^2\right)\ \frac{ds}{s}d\mu(x)<\varepsilon_0^3\,\mu(3B_{R_0}).
	\end{equation*}
\end{remark}
\begin{remark}
	The constant $\frac{1}{2}$ in \eqref{eq:RG bigger than half of R0} can be replaced by any $\delta\in (0,1)$, as long as we allow $\varepsilon_0$ to depend on $\delta$. Naturally, $\varepsilon_0(\delta)\to 0$ as $\delta\to 1$.
\end{remark}
\begin{remark}\label{rem:homogeneous version of main lemma}
	Recall that we defined homogeneous $\beta$ numbers $\beta_{\mu,2}^h$  in \eqref{eq:homogeneous betas}. One could similarly define $\alpha_{\mu}^h(x,r)=\frac{\mu(B(x,3r))}{r^n}\alpha_{\mu}(x,r).$ Careful inspection of the proof of \lemref{lem:main lemma} (see \remref{rem:homogeneous reduction}) shows the following. If instead of \eqref{eq:def of G} we define for $Q\in\D$
	\begin{multline*}
	G^{\varepsilon}_{Q} = \bigg\{x\in Q\ :\ \int_0^{1000r(Q)} \alpha_{\mu}^h(x,s)^2\  \frac{ds}{s}<\varepsilon^2\,\Theta_{\mu}(3B_Q)^2\quad\text{and}\\
	\int_0^{1000r(Q)} \beta_{\mu,2}^h(x,s)^2\ \frac{ds}{s}<\varepsilon^2\,\Theta_{\mu}(3B_Q) \bigg\},
	\end{multline*}
	and we replace the assumption \eqref{eq:set Gr is small} by $\mu\left(R_0\setminus G^{\varepsilon_0}_{R_0}\right)\le \varepsilon_0\,\mu(3B_{R_0}),$ then the conclusion of \lemref{lem:main lemma} still holds. In other words, if the homogeneous square functions in some initial cube $R_0$ are small \emph{relative to density of $\mu$ in the initial cube}, then $\mu$ is rectifiable on a large chunk of $R_0$.
\end{remark}
Let us show how \lemref{lem:main lemma} may be used to prove \thmref{thm:sufficient condition}.
\begin{proof}[Proof of \thmref{thm:sufficient condition} using \lemref{lem:main lemma}]
	To show that $\mu$ is $n$-rectifiable it suffices to show that for any $E\subset\R^d$ satisfying $\mu(E)>0$ there exists $F\subset E$ with $\mu(F)>0$ and such that $\restr{\mu}{F}$ is rectifiable. Let us fix $E\subset\R^d$ with $\mu(E)>0$.
	
	Let $\varepsilon_0>0$ be so small that \lemref{lem:main lemma} holds. Note that by the assumption on the finiteness of $\alpha$ and $\beta$ square functions \eqref{eq:main theorem assumption on alphas}, \eqref{eq:main theorem assumption on betas}, we have 
	\begin{equation*}
	\mu(\R^d\setminus G^{\varepsilon_0}_r)\xrightarrow{r\rightarrow 0} 0.
	\end{equation*}
	In particular, $\mu$-almost all of $E$ is contained in $\bigcup_{r>0}G^{\varepsilon_0}_r$. By Lebesgue differentiation theorem, for $\mu$-almost every $x\in E\cap G^{\varepsilon_0}_r$ 
	\begin{equation*}
	\frac{\mu(B(x,s)\cap E\cap G^{\varepsilon_0}_r)}{\mu(B(x,s))}\xrightarrow{s\rightarrow 0} 1.
	\end{equation*}
	Taking into account that for $s<r$ we have $G^{\varepsilon_0}_s\supset G^{\varepsilon_0}_r$, it follows that for $\mu$-almost every $x\in E$
	\begin{equation*}
	\frac{\mu(B(x,r)\cap E\cap G^{\varepsilon_0}_r)}{\mu(B(x,r))}\xrightarrow{r\rightarrow 0} 1.
	\end{equation*}
	
	Choose some $x\in E$ such that the above and \lemref{lem:sequence of dcrsing db cubes} hold. Let $r_0>0$ be so small that $\mu(B(x,r)\cap E\cap G^{\varepsilon_0}_{r})>(1-\varepsilon_0)\mu(B(x,r))$ for all $r<r_0$.
	
	Using \lemref{lem:sequence of dcrsing db cubes} we may choose $R_0\in\D^{sdb}$ such that $x\in R_0$ and $\tilde{r}:=2r(B_{R_0})<r_0$. We have $R_0\subset B(x,\tilde{r})\subset 3B_{R_0}$, and so
	\begin{equation*} 
	\mu(R_0\setminus G^{\varepsilon_0}_{r(R_0)})\le\mu(R_0\setminus G^{\varepsilon_0}_{\tilde{r}})\le \mu(B(x,\tilde{r})\setminus G^{\varepsilon_0}_{\tilde{r}})\le \varepsilon_0 \mu(B(x,\tilde{r}))\le \varepsilon_0\mu(3B_{R_0}).
	\end{equation*}		
	Hence, $R_0$ satisfies the assumptions of \lemref{lem:main lemma}. We obtain a Lipschitz graph $\Gamma$ and a set $R_G\subset R_0\cap\Gamma$ such that $\mu(R_G)\ge 0.5\mu(R_0),$ and $\restr{\mu}{R_G}$ is absolutely continuous with respect to $\mathcal{H}^n$. On the other hand, arguing as above, and using the fact that $R_0$ is doubling, we see that $\mu(R_0\setminus E)\le\varepsilon_0\mu(3B_{R_0})\le C_0\varepsilon_0\mu(R_0)<0.5\mu(R_0)$, assuming $\varepsilon_0<0.5\, C_0^{-1}.$
	
	It follows that $\mu(R_G\cap E)\ge\mu(R_G) -\mu(R_0\setminus E)>0,$ and $\restr{\mu}{R_G\cap E}$ is $n$-rectifiable. Setting $F = R_G\cap E$ concludes the proof.

\end{proof}
The rest of the paper is dedicated to proving \lemref{lem:main lemma}. We fix $R_0\in\D^{sdb}$ satisfying \eqref{eq:set Gr is small}. The constant $\varepsilon_0$ will be chosen later on. 
To simplify notation, we set $G=G^{\varepsilon_0}_{r(R_0)},\ B_{0}=B_{R_0},\ r_0=r(B_0),\ z_0=z_{R_0},\ c_0=c_{R_0}$ (where $c_{R_0}$ is a constant minimizing $\alpha_{\mu}(3B_0)$), $L_0 = L_{R_0},$  (where $L_{R_0}$ is an $n$-plane minimizing $\beta_{\mu,2}(3B_0)$), and $\Pi_0=\Pi_{L_0}$.

\begin{remark}\label{rem:WLOG density of BR0 is 1}
	Without loss of generality we may (and will) assume that
	\begin{equation*}
	\Theta_{\mu}(3B_0)=1,
	\end{equation*}
	so that (using the strong doubling property of $R_0$ \eqref{eq:very doubling cubes})
	\begin{equation}
	\mu(100B_0)\approx\mu(R_0)\approx r_0^n\approx \ell(R_0)^n.
	\end{equation}
	Indeed, if we consider the normalized measure $\nu = \mu/\Theta_{\mu}(3B_0)$, then: $\Theta_{\nu}(3B_0)=1$; for any ball $B$ with $\mu(B)>0$ we have $\alpha_{\mu}(B) = \alpha_{\nu}(B),\ \beta_{\mu,2}(B) = \beta_{\nu,2}(B)$; and if the assumptions of \lemref{lem:main lemma} were satisfied for $\mu$, then they are also satisfied  for $\nu$. Sets $\Gamma$ and $R_G$ constructed for $\nu$ will also have all the desired properties when applied to $\mu$.
\end{remark}
\begin{remark}\label{rem:homogeneous reduction}
	The reduction to case $\Theta_{\mu}(3B_0)=1$ performed above is one of the main reasons why we decided to work with non-homogeneous (i.e. normalized by $\mu(3B)$) $\alpha$ and $\beta$ coefficients. If we assumed \textit{a priori} that $\Theta_{\mu}(3B_0)=1$, then we could replace $\alpha_{\mu}$ and $\beta_{\mu,2}$ numbers in \eqref{eq:def of G} by $\alpha^h_{\mu}$ and $\beta^h_{\mu,2}$, and then carry on with the proof without making \emph{any} changes. Roughly speaking, throughout most of the proof we work with cubes $Q$ satisfying $\mu(3B_Q)\approx \ell(Q)^n\Theta_{\mu}(3B_0), $ so that $\alpha_{\mu}^h(3B_Q)\approx \alpha_{\mu}(3B_Q)\Theta_{\mu}(3B_0)$ and $\beta_{\mu,2}^h(3B_Q)\approx \beta_{\mu,2}(3B_Q)\Theta_{\mu}(3B_0)^{1/2}$ -- see \remref{rem:homogeneous comparable on tree}.
	
	Now, the claim we made in \remref{rem:homogeneous version of main lemma} follows because the modified assumption (involving $G^{\varepsilon}_Q$) allows us to make the reduction $\Theta_{\mu}(3B_0)=1$.
\end{remark}
%
%

\section{Stopping cubes}\label{sec:stopping cubes}
This section is dedicated to performing the stopping time argument. We will show basic properties of the resulting tree of cubes, and estimate the size of two families of stopping cubes.

The stopping conditions involve parameters $A\gg 1,\ \tau\ll 1,\ \theta\ll 1$, which depend on dimension and which will be fixed later on. The constant $\varepsilon_0$ is fixed at the very end of the proof, and depends on $A,\ \tau,\ \theta$.

We define the following subfamilies of $\D(R_0)$:
\begin{itemize}
	\item $\HD_0$ (``high density''), which contains cubes $Q\in \D(R_0)$ satisfying 
	\begin{equation*}
	\mu(3B_Q)>A\ell(Q)^n,
	\end{equation*}
	\item $\LD_0$ (``low density''), which contains cubes $Q\in \D(R_0)$ satisfying 
	\begin{equation*}
	\mu(1.5B_Q)<\tau\ell(Q)^n,
	\end{equation*}
	\item $\BS_0$ (``big square functions''), which contains cubes $Q\in \D(R_0)\setminus (\LD_0\cup\HD_0)$ satisfying 
	\begin{equation}\label{eq:BA}
	\mu(Q\setminus G) > \frac{1}{2} \mu(Q).
	\end{equation}
\end{itemize}
Let $\Stop_0$ be the family of maximal (and thus disjoint) cubes from $\HD_0\cup\LD_0\cup\BS_0$, and let $\Tree_0\subset\D(R_0)$ be the family of cubes that are not contained in any $Q\in\Stop_0$. In particular, $\Stop_0\not\subset\Tree_0$. 

Recall that $L_Q$ is an $n$-plane minimizing $\beta_{\mu,2}(3B_Q)$. We define
\begin{equation*}
\RFar = \{x\in 3B_0\ :\ \dist(x, L_Q)\ge\, \sqrt{\varepsilon_0}\ell(Q)\quad \text{for some $Q\in\Tree_0$ s.t. $x\in 3B_Q$} \}.
\end{equation*}
We introduce two more families of stopping cubes:
\begin{itemize}
	\item $\BA_0$ (``big angles''), which contains cubes $Q\in \D(R_0)\setminus \Stop_0$ satisfying 
	\begin{equation}\label{eq:def of BS}
	\measuredangle(L_Q,L_0)>\theta,
	\end{equation}
	\item $\F_0$ (``far''), which consists of $Q\in \D(R_0)\setminus (\Stop_0\cup\BA_0)$ satisfying 
	\begin{equation}\label{eq:def of F}
	\mu(3B_Q\cap \RFar)>\varepsilon_0^{1/4}\mu(3B_Q).
	\end{equation}
\end{itemize}
Let $\Stop\subset\D(R_0)$ be the family of maximal (and thus disjoint) cubes from $\Stop_0\cup \BA_0\cup\F_0$. Set $\HD = \HD_0\cap\Stop,\ \LD = \LD_0\cap\Stop,\ \BS = \BS_0\cap\Stop,\ \BA = \BA_0\cap\Stop,\ \F = \F_0\cap\Stop.$ 
We define $\Tree\subset\Tree_0$ as the family of cubes that are not contained in any $Q\in\Stop$. Note that $\Stop\not\subset\Tree$.
For $P\in\D$ we set $\Tree_0(P)=\Tree_0\cap\D(P),\ \Tree(P)=\Tree\cap\D(P)$. 
\subsection{Properties of cubes in \texorpdfstring{$\Tree$}{Tree}}

\begin{lemma}
	The following estimates hold:
	\begin{align}
	&\mu(1.5B_Q)\ge \tau\ell(Q)^n   &\forall\ Q\in\Tree_0\cup\Stop_0\setminus\LD_0,\label{eq:notLD}\\
	&{\mu}(100B_Q)\lesssim A\ell(Q)^n &\forall\ Q\in\Tree_0\cup\Stop_0,\label{eq:notHD}\\
	&\mu(Q\setminus G) \le \frac{1}{2} \mu(Q)& \forall\ Q\in\Tree_0,\label{eq:notBA}\\
	&\measuredangle(L_Q,L_0)\le \theta& \forall\ Q\in\Tree,\label{eq:notBS}\\
	&\mu(3B_Q\cap \RFar)\le\varepsilon_0^{1/4}\, \mu(3B_Q) & \forall\ Q\in\Tree\label{eq:notF}.
	\end{align}
\end{lemma}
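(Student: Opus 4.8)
The plan is to derive all five estimates by unwinding the definitions of the stopping families and of the trees; only \eqref{eq:notHD} requires a (short) geometric input, and this — together with keeping the book-keeping exactly right — is the only place where any care is needed. The starting point is two emptiness facts. Since $\Stop_0$ is the family of \emph{maximal} cubes of $\HD_0\cup\LD_0\cup\BS_0$, every cube belonging to one of these three families is contained in a cube of $\Stop_0$; hence $\Tree_0$, which consists of the cubes of $\D(R_0)$ contained in no cube of $\Stop_0$, satisfies $\Tree_0\cap(\HD_0\cup\LD_0\cup\BS_0)=\varnothing$. Likewise, since $\Stop$ is the family of maximal cubes of $\Stop_0\cup\BA_0\cup\F_0$ and $\Tree\subset\Tree_0$ consists of the cubes contained in no cube of $\Stop$, we get $\Tree\cap(\Stop_0\cup\BA_0\cup\F_0)=\varnothing$.

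Granting these, the estimates \eqref{eq:notLD}, \eqref{eq:notBA}, \eqref{eq:notBS}, \eqref{eq:notF} follow by negating the corresponding stopping condition. For \eqref{eq:notLD}: a cube $Q\in(\Tree_0\cup\Stop_0)\setminus\LD_0$ lies in $\D(R_0)$ but not in $\LD_0$, so it fails the defining inequality of $\LD_0$, i.e. $\mu(1.5B_Q)\ge\tau\ell(Q)^n$. For \eqref{eq:notBA}: if $Q\in\Tree_0$ then $Q\notin\HD_0\cup\LD_0$, so $Q$ lies in $\D(R_0)\setminus(\LD_0\cup\HD_0)$, the domain on which $\BS_0$ is defined; since $Q\notin\BS_0$, condition \eqref{eq:BA} fails, so $\mu(Q\setminus G)\le\tfrac12\mu(Q)$. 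For \eqref{eq:notBS}: if $Q\in\Tree$ then $Q\in\Tree_0\subset\D(R_0)\setminus\Stop_0$, the domain of $\BA_0$; since $Q\notin\BA_0$, condition \eqref{eq:def of BS} fails, so $\measuredangle(L_Q,L_0)\le\theta$. For \eqref{eq:notF}: if $Q\in\Tree$ then $Q\in\D(R_0)\setminus(\Stop_0\cup\BA_0)$, the domain of $\F_0$; since $Q\notin\F_0$, condition \eqref{eq:def of F} fails, so $\mu(3B_Q\cap\RFar)\le\varepsilon_0^{1/4}\mu(3B_Q)$.

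Finally, \eqref{eq:notHD}. For $Q\in\Tree_0$ we have $Q\notin\HD_0$, so $\mu(3B_Q)\le A\ell(Q)^n$, and the one remaining point is to pass from the ball $3B_Q$ to the larger ball $100B_Q$. If $Q=R_0$, then \remref{rem:WLOG density of BR0 is 1} (via the strong doubling property \eqref{eq:very doubling cubes}) gives $\mu(100B_{R_0})\approx\ell(R_0)^n\lesssim A\ell(R_0)^n$. If $Q\subsetneq R_0$, let $\widehat Q$ be the David--Mattila parent of $Q$; since $Q\subsetneq\widehat Q$ and $Q$ is contained in no cube of $\Stop_0$, neither is $\widehat Q$, so $\widehat Q\in\Tree_0$, whence $\widehat Q\notin\HD_0$ and $\mu(3B_{\widehat Q})\le A\ell(\widehat Q)^n$. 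From $z_Q\in Q\subset\widehat Q\subset B_{\widehat Q}$, $r(Q)\le C_0A_0^{-J(Q)}\le C_0A_0^{-1}r(\widehat Q)$ and $A_0>5000C_0$ one checks $100B_Q\subset 3B_{\widehat Q}$, and since $\ell(\widehat Q)=A_0\ell(Q)\approx\ell(Q)$ this yields $\mu(100B_Q)\le\mu(3B_{\widehat Q})\le A\ell(\widehat Q)^n\lesssim A\ell(Q)^n$. The same reasoning covers $Q\in\Stop_0\setminus\{R_0\}$, since the parent of a maximal stopping cube also lies in $\Tree_0$. The main (and really the only) obstacle is keeping track of which stopping family is defined on which subdomain of $\D(R_0)$ and carrying out this last elementary comparison of $100B_Q$ with $3B_{\widehat Q}$ in the David--Mattila lattice; no serious computation is involved.
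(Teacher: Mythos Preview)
Your proof is correct and follows essentially the same approach as the paper's: the first, third, fourth, and fifth estimates are obtained by negating the defining stopping conditions, while \eqref{eq:notHD} is handled via the parent cube (which lies in $\Tree_0$) together with the inclusion $100B_Q\subset 3B_{\widehat Q}$, and separately for $R_0$ via strong doubling. Your write-up is more explicit about the book-keeping (which cubes lie in which domain), but the argument is identical in substance.
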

\begin{proof}
	All estimates except for \eqref{eq:notHD} follow immediately from the stopping time conditions. \eqref{eq:notHD} holds for $R_0$ because $R_0\in\D^{sdb}$. To see it for $Q\in\Tree_0\cup\Stop_0,\ Q\neq R_0$, note that the parent of $Q$, denoted by $R$, satisfies $R\in\Tree_0$, and so $\mu(100B_Q)\le\mu(3B_R)\le A\ell(R)^n\approx A\ell(Q)^n$.
\end{proof}
\begin{remark}\label{rem:homogeneous comparable on tree}
	Note that, by \eqref{eq:notLD} and \eqref{eq:notHD}, for $Q\in\Tree_0\cup\Stop_0\setminus\LD_0$ we have $\beta_{\mu,2}(3B_Q)\approx_{A,\tau} \beta^h_{\mu,2}(3B_Q)$ and $\alpha_{\mu}(3B_Q)\approx_{A,\tau} \alpha^h_{\mu}(3B_Q)$.
\end{remark}


\begin{lemma}\label{lem:beta and alpha estimates}
	Let $R\in\Tree_0$. Then
	\begin{align}
	\sum_{Q\in\Tree_0(R)}\alpha_{\mu}(3B_Q)^2\ell(Q)^n&\lesssim_{A,\tau}\varepsilon_0^2\ell(R)^n,\label{eq:alpha sum estimate}\\
	\sum_{Q\in\Tree_0(R)}\beta_{\mu,2}(3B_Q)^2\ell(Q)^n&\lesssim_{A,\tau}\varepsilon_0^2\ell(R)^n.\label{eq:beta sum estimate}
	\end{align}
	Moreover, for any $x\in 3B_0$
	\begin{align}
	\sum_{\substack{Q\in\Tree_0\\ x\in 3B_Q}}\alpha_{\mu}(3B_Q)^2&\lesssim_{A,\tau}\varepsilon_0^2,\label{eq:sum of alphas containing x estimate}\\
	\sum_{\substack{Q\in\Tree_0\\ x\in 3B_Q}}\beta_{\mu,2}(3B_Q)^2&\lesssim_{A,\tau}\varepsilon_0^2.\label{eq:sum of betas containing x estimate}
	\end{align}
\end{lemma}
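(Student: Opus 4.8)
The plan is to deduce all four inequalities from the definition of the good set $G = G^{\varepsilon_0}_{r(R_0)}$ together with the fact that cubes in $\Tree_0$ are neither low density nor big-square-function cubes. I will prove the two ``containing $x$'' estimates \eqref{eq:sum of alphas containing x estimate}--\eqref{eq:sum of betas containing x estimate} first, since the summed estimates \eqref{eq:alpha sum estimate}--\eqref{eq:beta sum estimate} then follow by integrating against $\mu$. The key point is that for $Q\in\Tree_0$ with $x\in 3B_Q$, the coefficient $\alpha_{\mu}(3B_Q)^2$ (resp.\ $\beta_{\mu,2}(3B_Q)^2$) is comparable, up to the constant loss depending on $A,\tau$ coming from \eqref{eq:notLD} and \eqref{eq:notHD}, to $\alpha_{\mu}(y,s)^2$ integrated over an appropriate range of scales $s\approx \ell(Q)$ and centers $y$ near $x$; summing over $Q$ reconstructs the integral $\int_0^{1000 r(R_0)}(\alpha_\mu(\cdot,s)^2+\beta_{\mu,2}(\cdot,s)^2)\,\tfrac{ds}{s}$, which is $<\varepsilon_0^2$ on $G$.

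In more detail, I would proceed as follows. First, fix $x\in 3B_0$ and $Q\in\Tree_0$ with $x\in 3B_Q$. Since $Q\notin\LD_0\cup\HD_0$ (as $Q\in\Tree_0$), by \eqref{eq:notLD} and \eqref{eq:notHD} we have $\mu(3B_Q)\approx_{A,\tau}\ell(Q)^n\approx r(B_Q)^n$; moreover $\mu(Q\setminus G)\le\frac12\mu(Q)$ by \eqref{eq:notBA}, so $\mu(Q\cap G)>0$ and in particular there is a point $x_Q\in Q\cap G\subset\supp\mu$. The strategy is then to compare $\alpha_{\mu}(3B_Q)$ with $\alpha_\mu(x_Q,s)$ for $s$ ranging over a dyadic block of scales around $\ell(Q)$. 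Concretely, for $s$ with, say, $10\,r(B_Q)\le s\le 20\,r(B_Q)$ one has $B(x_Q,s)\supset 3B_Q$ and $B(x_Q,3s)\subset C\,B_Q$ for an absolute $C$, and $\mu(B(x_Q,3s))\approx_{A,\tau}\mu(3B_Q)$ (again by the density bounds and doubling-type control on the relevant balls—here I would invoke that $100B_Q$ is controlled, \eqref{eq:notHD}, to pass between the balls). A plane $L$ nearly optimal for $\alpha_\mu(x_Q,s)$ then gives, after truncating by $\varphi_{3B_Q}$ and using the $W_1$–$F_B$ comparison already established in Lemma~\ref{lem:alpha2 controls alpha and beta2} (or directly the definition of $\alpha_\mu$ with $F_B$), the bound $\alpha_\mu(3B_Q)\lesssim_{A,\tau}\alpha_\mu(x_Q,s)$, and symmetrically for $\beta_{\mu,2}$ using that $\beta_{\mu,2}(3B_Q)^2\mu(3B_Q)r(B_Q)^2\le\int_{3B_Q}\dist(y,L)^2\,d\mu(y)\le\int_{B(x_Q,s)}\dist(y,L)^2\,d\mu(y)$. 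Averaging this over $s\in[10r(B_Q),20r(B_Q)]$ with respect to $ds/s$ gives
\begin{equation*}
\alpha_\mu(3B_Q)^2\lesssim_{A,\tau}\int_{10 r(B_Q)}^{20 r(B_Q)}\alpha_\mu(x_Q,s)^2\,\frac{ds}{s},\qquad
\beta_{\mu,2}(3B_Q)^2\lesssim_{A,\tau}\int_{10 r(B_Q)}^{20 r(B_Q)}\beta_{\mu,2}(x_Q,s)^2\,\frac{ds}{s}.
\end{equation*}
To sum over $Q\in\Tree_0$ with $x\in 3B_Q$ I would instead anchor at $x$ rather than $x_Q$: since $x\in 3B_Q$, the scales $s\approx\ell(Q)$ and the comparison of densities $\mu(B(x,Cs))\approx_{A,\tau}\mu(3B_Q)$ still hold (using $x,x_Q\in 28B(Q)$ and \eqref{eq:notHD}), so one gets the same bound with $x$ in place of $x_Q$. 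Now for each scale level $k$, $\D_k$ partitions $E$, so at most a bounded number (depending only on $d$, since the balls $5B(Q)$ are disjoint and have comparable radii within a level, and $x$ can lie in $3B_Q$ for only boundedly many $Q\in\D_k$) of cubes $Q\in\D_k$ satisfy $x\in 3B_Q$; and the scale-intervals $[10r(B_Q),20r(B_Q)]$ attached to cubes of a fixed level $\D_k$ are all comparable, while intervals from levels $k$ and $k'$ with $|k-k'|$ large are disjoint. Hence summing over $Q\in\Tree_0$ with $x\in 3B_Q$ produces at most a bounded-overlap (overlap $\lesssim_d 1$) cover of a subinterval of $(0,1000\,r(R_0))$ by these scale-intervals, giving
\begin{equation*}
\sum_{\substack{Q\in\Tree_0\\ x\in 3B_Q}}\bigl(\alpha_\mu(3B_Q)^2+\beta_{\mu,2}(3B_Q)^2\bigr)\lesssim_{A,\tau}\int_0^{1000\,r(R_0)}\bigl(\alpha_\mu(x,s)^2+\beta_{\mu,2}(x,s)^2\bigr)\,\frac{ds}{s}.
\end{equation*}
This is the crucial inequality. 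To finish \eqref{eq:sum of alphas containing x estimate}--\eqref{eq:sum of betas containing x estimate} I would note that although $x\in 3B_0$ need not lie in $G$, one can replace $x$ by a nearby point of $G$: for every $Q$ with $x\in 3B_Q$ and $Q\in\Tree_0$ there is $x_Q\in Q\cap G$, and running the argument with these $x_Q$ and using that $G$-points satisfy the square-function bound $<\varepsilon_0^2$ at scale $1000\,r(R_0)\ge 1000\,\ell(Q)/(56C_0)$ gives the bound $\lesssim_{A,\tau}\varepsilon_0^2$; the mild technical point that different cubes may pick different $x_Q$ is handled by observing that all these points lie in $3B_0$ and the relevant scale-integrals are taken up to $1000\,r(R_0)$, the same upper limit appearing in the definition of $G$, so each contributes at most $\varepsilon_0^2$ and the bounded-overlap bookkeeping is unaffected. [Alternatively, and more cleanly: one shows directly that for $Q\in\Tree_0$ with $x_Q\in Q\cap G$ the comparison $\alpha_\mu(3B_Q)^2\lesssim_{A,\tau}\int_{c\ell(Q)}^{C\ell(Q)}\alpha_\mu(x_Q,s)^2\frac{ds}{s}$ holds, sums the right-hand sides grouping by dyadic level so the overlap in $s$ is $\lesssim 1$ \emph{per level} and $\lesssim_d 1$ overall since $x_Q\in 28B(Q)\subset CB_0$, and bounds the result by $\int_0^{1000 r(R_0)}\alpha_\mu(x_Q,s)^2\frac{ds}{s}<\varepsilon_0^2$.]

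For the summed estimates \eqref{eq:alpha sum estimate}--\eqref{eq:beta sum estimate}, fix $R\in\Tree_0$. For each $Q\in\Tree_0(R)$ we have $\mu(Q\cap G)\ge\frac12\mu(Q)$ by \eqref{eq:notBA}, so $\ell(Q)^n\approx_{A,\tau}\mu(3B_Q)\approx_{A,\tau}\mu(Q)\le 2\mu(Q\cap G)$. Therefore
\begin{equation*}
\sum_{Q\in\Tree_0(R)}\alpha_\mu(3B_Q)^2\ell(Q)^n
\lesssim_{A,\tau}\sum_{Q\in\Tree_0(R)}\alpha_\mu(3B_Q)^2\,\mu(Q\cap G)
=\int_{G\cap R}\sum_{\substack{Q\in\Tree_0(R)\\ x\in Q}}\alpha_\mu(3B_Q)^2\,d\mu(x),
\end{equation*}
using that the cubes of a fixed level are disjoint so $\sum_{Q\in\Tree_0(R)}\mathbf{1}_{Q\cap G}(x)\,\alpha_\mu(3B_Q)^2$ is exactly the inner sum. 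Since $x\in Q$ implies $x\in 3B_Q$, the inner sum is dominated by the ``containing $x$'' sum already estimated, which for $x\in G$ is $\lesssim_{A,\tau}\varepsilon_0^2$; hence the whole expression is $\lesssim_{A,\tau}\varepsilon_0^2\,\mu(G\cap R)\le\varepsilon_0^2\,\mu(R)\lesssim\varepsilon_0^2\,\ell(R)^n$, using $\mu(R)\lesssim_{A,\tau}\ell(R)^n$ from \eqref{eq:notHD}. This gives \eqref{eq:alpha sum estimate}, and \eqref{eq:beta sum estimate} follows identically with $\beta_{\mu,2}$ in place of $\alpha_\mu$.

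The main obstacle, as usual with this kind of Carleson-packing argument, is the bookkeeping that controls the overlap of the scale-intervals $[c\ell(Q),C\ell(Q)]$ when one passes from a single cube to the whole subfamily $\{Q\in\Tree_0:x\in 3B_Q\}$: one must check that within each David--Mattila level only $\lesssim_d 1$ cubes contain a given $x$ in $3B_Q$ (this uses disjointness of $5B(Q)$, $Q\in\D_k$, and the comparability $A_0^{-k}\le r(Q)\le C_0 A_0^{-k}$), and that the finitely many scale-intervals per level, stacked over all levels, have bounded total multiplicity as subsets of $(0,1000 r(R_0))$—this is where the factor $1000$ in the definition of $G$, chosen precisely so that $1000 r(R_0)$ dominates the largest scale $\approx\ell(R_0)$ ever needed, is used. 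Once this combinatorial overlap bound is in place, everything reduces to the single-cube comparisons $\alpha_\mu(3B_Q)\lesssim_{A,\tau}\alpha_\mu(x_Q,s)$ and the analogous one for $\beta_{\mu,2}$, which are routine given the density bounds \eqref{eq:notLD}--\eqref{eq:notHD} and the monotonicity/localization properties of $\alpha_\mu$, $\beta_{\mu,2}$ recorded in Section~\ref{sec:estimates of alpha beta coeffs}.
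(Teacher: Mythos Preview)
Your overall strategy matches the paper's, but the handling of the pointwise estimates \eqref{eq:sum of alphas containing x estimate}--\eqref{eq:sum of betas containing x estimate} has a real gap at exactly the spot you flag as a ``mild technical point.'' Once you allow the center $x_Q\in Q\cap G$ to vary with $Q$, the bounds
$\alpha_\mu(3B_Q)^2\lesssim_{A,\tau}\int_{c\ell(Q)}^{C\ell(Q)}\alpha_\mu(x_Q,s)^2\,\frac{ds}{s}$
cannot be summed into a single square-function integral: the scale intervals $[c\ell(Q),C\ell(Q)]$ do have bounded overlap, but the integrands $\alpha_\mu(x_Q,\cdot)^2$ sit at \emph{different} centers, so there is no common function on $(0,1000\,r(R_0))$ whose integral dominates the sum. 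Your sentence ``each contributes at most $\varepsilon_0^2$ and the bounded-overlap bookkeeping is unaffected'' is precisely where this fails --- that reasoning only yields (number of cubes)$\,\times\,\varepsilon_0^2$, which is useless since the number of scales is unbounded. The paper's fix is to anchor the \emph{entire} sum at one $G$-point: given $x\in 3B_0$, choose any $P\in\Tree_0$ with $x\in 3B_P$, pick $z\in P\cap G$ (exists by \eqref{eq:notBA}), and observe that for every $Q\in\Tree_0$ with $x\in 3B_Q$ and $\ell(Q)>\ell(P)$ one has $z\in 4B_Q$. Thus the comparison $\alpha_\mu(3B_Q)\lesssim_{A,\tau}\alpha_\mu(z,s)$ (for $s\approx\ell(Q)$) holds with the \emph{same} $z$ throughout, the sum collapses into $\int_0^{1000\,r(R_0)}\alpha_\mu(z,s)^2\frac{ds}{s}<\varepsilon_0^2$, and letting $P$ shrink gives the full sum.

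There is also a smaller slip in your derivation of \eqref{eq:alpha sum estimate}: you assert $\ell(Q)^n\approx_{A,\tau}\mu(Q)$, but \eqref{eq:notLD} only gives $\mu(1.5B_Q)\gtrsim_\tau\ell(Q)^n$, not $\mu(Q)\gtrsim_\tau\ell(Q)^n$, and cubes in $\Tree_0$ need not be doubling. The paper sidesteps this by integrating \eqref{eq:sum of alphas containing x estimate} over $x\in 3B_R$ with respect to $\mu$ (rather than over $G\cap R$), obtaining $\sum_Q\alpha_\mu(3B_Q)^2\,\mu(3B_Q\cap 3B_R)$ and using $\mu(3B_Q\cap 3B_R)\ge\mu(1.5B_Q)\gtrsim_\tau\ell(Q)^n$ for $Q\in\Tree_0(R)$.
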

\begin{proof}
%
	Let $Q\in \Tree_0(R)$. By the definition of $G$, for any $z\in 4B_Q\cap G$ we have
	\begin{equation}\label{eq:def of G revisited}
	\int_0^{1000r(R_0)} \alpha_{\mu}(z,r)^2\ \frac{dr}{r}<\varepsilon_0^2.
	\end{equation}
	It is easy to see that 
	for $300 r(Q)\le r\le 400 r(Q)$ we have $3B_Q\subset B(z,r)\subset 25B_Q$, and that $\mu(9B_Q)\approx_{A,\tau}\mu(B(z,3r))\approx_{A,\tau} \mu(100B_Q)$. Using \eqref{eq:alpha scaling estimate} with $B_1 = 3B_Q$ and $B_2=B(z,r)$ yields
	\begin{equation*}
	\alpha_{\mu}(3B_Q)\lesssim_{A,\tau} \alpha_{\mu}(B(z,r)).
	\end{equation*}
	Integrating with respect to $r$ gives us for every $z\in 4B_Q\cap G$
	\begin{equation}\label{eq:estimate alpha of a ball by a bigger ball}
	\int_{300r(Q)}^{400r(Q)} \alpha_{\mu}(z,r)^2\ \frac{dr}{r}\gtrsim_{A,\tau}\alpha_{\mu}(3B_Q)^2.
	\end{equation}		
	To see \eqref{eq:sum of alphas containing x estimate}, let $x\in 3B_0$ and choose some $P\in \Tree_0$ satisfying $x\in 3B_P$. By \eqref{eq:notBA} we may pick $z\in P\cap G$. It is clear that for all cubes $Q\in \Tree_0$ such that $\ell(Q)>\ell(P)$ and $x\in 3B_Q$ we have $z\in 4B_Q\cap G$. Thus, summing \eqref{eq:estimate alpha of a ball by a bigger ball} over all such $Q\subset R_0$, and noticing that for any fixed sidelength $\ell(Q_0)>\ell(P)$ there are only boundedly many $Q$ with $\ell(Q)=\ell(Q_0)$ and $3B_Q\ni x$, yields
	\begin{equation*}
	\sum_{\substack{Q\in \Tree_0\\x\in 3B_Q,\ \ell(Q)>\ell(P)}} \alpha_{\mu}(3B_Q)^2
	\lesssim_{A,\tau} \int_0^{1000r(R_0)}\alpha_{\mu,2}(z,r)^2\ \frac{dr}{r}\lesssim \varepsilon_0^2.
	\end{equation*}
	Since the estimate holds for arbitrary $P\in \Tree_0$  with  $x\in 3B_P$, \eqref{eq:sum of alphas containing x estimate} follows.
	
	To see \eqref{eq:alpha sum estimate}, we integrate \eqref{eq:sum of alphas containing x estimate} over $x\in 3B_R$ to get
	\begin{multline*}
	\varepsilon_0^2\ell(R)^n\gtrsim_{A,\tau} \int_{3B_R}\sum_{Q\in \Tree_0} \alpha_{\mu}(3B_Q)^2\, \one_{3B_{Q}}(x)\ d\mu(x)\\
	= \sum_{Q\in \Tree_0} \alpha_{\mu}(3B_Q)^2 \mu(3B_Q\cap 3B_{R})\gtrsim_{A,\tau} \sum_{Q\in \Tree_0(R)} \alpha_{\mu}(3B_Q)^2\ell(Q)^n.
	\end{multline*}
	%
	
	The estimates for $\beta_{\mu,2}(3B_Q)$ can be shown in the same way.
\end{proof}

\begin{cor}
	We have
	\begin{align}
	\sum_{Q\in\Tree_0(R)}F_{2.5B_Q}(\mu, c_Q\Hn{L_Q})^2\ell(Q)^{-(n+2)}&\lesssim_{A,\tau}\varepsilon_0^2\ell(R)^n,\label{eq:F sum estimate}\\
	\sum_{\substack{Q\in\Tree_0\\ x\in 3B_Q}}F_{2.5B_Q}(\mu, c_Q\Hn{L_Q})^2\ell(Q)^{-(2n+2)}&\lesssim_{A,\tau}\varepsilon_0^2.\label{eq:sum of F containing x estimate}
	\end{align}
\end{cor}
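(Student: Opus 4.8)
The plan is to deduce the Corollary from \lemref{lem:beta and alpha estimates} once we have the pointwise bound
\begin{equation}\label{eq:pointwise F bound}
F_{2.5B_Q}(\mu, c_Q\Hn{L_Q})\lesssim_{A,\tau}\big(\alpha_{\mu}(3B_Q)+\beta_{\mu,2}(3B_Q)\big)\,\ell(Q)^{n+1}\qquad\text{for every }Q\in\Tree_0.
\end{equation}
Granting \eqref{eq:pointwise F bound}, the rest is bookkeeping with the exponents. For the first inequality,
\begin{equation*}
\sum_{Q\in\Tree_0(R)}F_{2.5B_Q}(\mu, c_Q\Hn{L_Q})^2\ell(Q)^{-(n+2)}\lesssim_{A,\tau}\sum_{Q\in\Tree_0(R)}\big(\alpha_{\mu}(3B_Q)^2+\beta_{\mu,2}(3B_Q)^2\big)\,\ell(Q)^{2(n+1)-(n+2)},
\end{equation*}
and since $2(n+1)-(n+2)=n$, \eqref{eq:alpha sum estimate} and \eqref{eq:beta sum estimate} bound the right-hand side by $\varepsilon_0^2\ell(R)^n$. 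For the second inequality one has $2(n+1)-(2n+2)=0$, so the sum over $Q\in\Tree_0$ with $x\in 3B_Q$ is controlled by $\sum\big(\alpha_{\mu}(3B_Q)^2+\beta_{\mu,2}(3B_Q)^2\big)$, which is $\lesssim_{A,\tau}\varepsilon_0^2$ by \eqref{eq:sum of alphas containing x estimate} and \eqref{eq:sum of betas containing x estimate}.

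The pointwise bound \eqref{eq:pointwise F bound} is where the content lies, and it is essentially \lemref{lem:plane from beta good for alpha} applied with $B_1=2.5B_Q$ and $B_2=3B_Q$. These balls are concentric, $B_1\subset0.9B_2$ since $2.5<2.7$, and, recalling that $c_Q$ is the constant minimizing $\alpha_{\mu}(3B_Q)$ and $L_Q$ the plane minimizing $\beta_{\mu,2}(3B_Q)$, the conclusion \eqref{eq:Lbeta good for alpha} reads
\begin{equation*}
\frac{1}{\mu(2.5B_Q)\,r(2.5B_Q)}\,F_{2.5B_Q}(\mu, c_Q\Hn{L_Q})\lesssim\beta_{\mu,2}(3B_Q)+\alpha_{\mu}(3B_Q),
\end{equation*}
which gives \eqref{eq:pointwise F bound} after noting $\mu(2.5B_Q)\,r(2.5B_Q)\approx_{A,\tau}\ell(Q)^{n+1}$. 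The hypotheses of \lemref{lem:plane from beta good for alpha} are verified from the tree estimates: since $Q\in\Tree_0$ forces $Q\notin\LD_0$, \eqref{eq:notLD} yields $\mu(1.5B_Q)\ge\tau\ell(Q)^n$ while \eqref{eq:notHD} yields $\mu(100B_Q)\lesssim A\ell(Q)^n$, whence $\mu(2.5B_Q)\approx_{A,\tau}\mu(9B_Q)\approx_{A,\tau}\ell(Q)^n\approx r(2.5B_Q)^n\approx r(3B_Q)^n$.

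I expect the only mildly delicate point to be the remaining hypothesis of \lemref{lem:plane from beta good for alpha}, that the planes minimizing $\alpha_{\mu}(3B_Q)$ and $\beta_{\mu,2}(3B_Q)$ intersect $0.9\cdot2.5B_Q=2.25B_Q$. I would dispose of it by a dichotomy with a small threshold $\delta$ (depending on $A,\tau$). If $\alpha_{\mu}(3B_Q)+\beta_{\mu,2}(3B_Q)\ge\delta$, then \eqref{eq:pointwise F bound} is trivial: using $c_Q\lesssim_A1$ (by \eqref{eq:c smaller then mu over Hn}, taking $c_Q=0$ if the optimal $\alpha$-plane misses $2.7B_Q$) and $\lVert\phi\rVert_\infty\lesssim\ell(Q)$ for $\phi\in\lip_1(2.5B_Q)$, one checks directly that $F_{2.5B_Q}(\mu, c_Q\Hn{L_Q})\lesssim_A\ell(Q)^{n+1}\lesssim_{A,\delta}\big(\alpha_{\mu}(3B_Q)+\beta_{\mu,2}(3B_Q)\big)\ell(Q)^{n+1}$. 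If instead $\alpha_{\mu}(3B_Q)+\beta_{\mu,2}(3B_Q)<\delta$, then both minimizing planes must meet $2.25B_Q$: if $L_Q$ did not, then $\dist(y,L_Q)>\tfrac34 r(B_Q)$ for all $y\in1.5B_Q$, forcing $\beta_{\mu,2}(3B_Q)^2\gtrsim\mu(1.5B_Q)/\mu(9B_Q)\gtrsim_A\tau$, contradicting $\beta_{\mu,2}(3B_Q)<\delta$ for $\delta$ small; the optimal $\alpha$-plane is handled analogously by testing $F_{3B_Q}$ against $\phi\in\lip_1(3B_Q)$ with $\phi\approx r(B_Q)$ on $1.5B_Q$. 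In all cases \eqref{eq:pointwise F bound} holds, and the Corollary follows.
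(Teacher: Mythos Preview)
Your proof is correct and follows essentially the same route as the paper: apply \lemref{lem:plane from beta good for alpha} with $B_1=2.5B_Q$, $B_2=3B_Q$ to get the pointwise bound, then sum using \lemref{lem:beta and alpha estimates}. The only cosmetic difference is that the paper dispenses with your dichotomy by observing that \eqref{eq:sum of alphas containing x estimate} and \eqref{eq:sum of betas containing x estimate} already force $\alpha_{\mu}(3B_Q)+\beta_{\mu,2}(3B_Q)\lesssim_{A,\tau}\varepsilon_0$ for every $Q\in\Tree_0$, so for $\varepsilon_0$ small the minimizing planes automatically meet $2B_Q$ and the ``large'' branch of your dichotomy never occurs.
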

\begin{proof}
	Let $Q\in \Tree_0$. Recall that by \eqref{eq:notLD}, \eqref{eq:notHD}, we have $\mu(2.5B_Q)\approx_{A,\tau}\mu(3B_Q)\approx_{A,\tau} \ell(Q)^n.$ Moreover, it follows easily by \eqref{eq:notLD} and the smallness of $\alpha$ and $\beta$ numbers \eqref{eq:sum of alphas containing x estimate},\eqref{eq:sum of betas containing x estimate}, that the best approximating planes for $\beta_{\mu,2}(3B_Q)$ and $\alpha_{\mu}(3B_Q)$ intersect $2B_Q$. 
	
	Hence, by \lemref{lem:plane from beta good for alpha} applied to $B_1=2.5B_Q$ and $B_2=3B_Q$, and by \lemref{lem:beta and alpha estimates}, we get the desired estimates.
\end{proof}
\begin{cor}
	For every $Q\in\Tree_0$
	\begin{equation}\label{eq:cBQ estimate}
	c_{Q}\approx_{A,\tau}1.
	\end{equation}
\end{cor}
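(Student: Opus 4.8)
The plan is to establish the two one-sided estimates $c_Q\lesssim_{A,\tau}1$ and $c_Q\gtrsim_{A,\tau}1$ separately, in both cases by applying \lemref{lem:c comparable to density} with the ball $B=3B_Q$. By definition $c_Q$ is the constant-component of a minimizing pair $(c_Q,L)$ for $\alpha_\mu(3B_Q)$, so for this fixed plane $L$ the constant $c_Q$ minimizes $c\mapsto F_{3B_Q}(\mu,c\Hn{L})$ — which is exactly the hypothesis of \lemref{lem:c comparable to density}. As was already recorded in the proof of the previous corollary, the smallness of the $\alpha$-numbers on $\Tree_0$ together with the non-low-density bound \eqref{eq:notLD} forces the optimal plane $L$ to intersect $2B_Q$, hence $0.9\cdot 3B_Q$; this supplies the remaining hypothesis of \lemref{lem:c comparable to density}.

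For the upper bound I would simply combine \eqref{eq:c smaller then mu over Hn}, which gives $c_Q\lesssim \mu(3B_Q)/r(3B_Q)^n$, with \eqref{eq:notHD}, which gives $\mu(3B_Q)\le\mu(100B_Q)\lesssim_A\ell(Q)^n\approx r(3B_Q)^n$; together these yield $c_Q\lesssim_A 1$.

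For the lower bound I would invoke \eqref{eq:c greater then mu over Hn}, checking its two hypotheses. First, $\mu(0.9\cdot 3B_Q)=\mu(2.7B_Q)\approx_{A,\tau}\mu(9B_Q)$: the inequality $\mu(2.7B_Q)\ge\mu(1.5B_Q)\gtrsim_\tau\ell(Q)^n$ comes from \eqref{eq:notLD}, and $\mu(9B_Q)\le\mu(100B_Q)\lesssim_A\ell(Q)^n$ from \eqref{eq:notHD}. Second, since $F_{3B_Q}(\mu,c_Q\Hn{L})=\alpha_\mu(3B_Q)\,r(3B_Q)\,\mu(9B_Q)$, it suffices to know $\alpha_\mu(3B_Q)\le\varepsilon$, where $\varepsilon$ is the absolute constant from \lemref{lem:c comparable to density}; but evaluating \eqref{eq:sum of alphas containing x estimate} at $x=z_Q\in 3B_Q$ gives $\alpha_\mu(3B_Q)\lesssim_{A,\tau}\varepsilon_0$, so this holds once $\varepsilon_0$ is taken small enough in terms of $A,\tau$ — a harmless extra smallness requirement, since $\varepsilon_0$ is only fixed at the end of the argument. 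Then \eqref{eq:c greater then mu over Hn} gives $c_Q\gtrsim_{A,\tau}\mu(9B_Q)/r(3B_Q)^n\ge\mu(1.5B_Q)/r(3B_Q)^n\gtrsim_\tau 1$, once more by \eqref{eq:notLD}. Combining the two bounds gives $c_Q\approx_{A,\tau}1$.

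There is no genuine obstacle here: the corollary is a short bookkeeping consequence of \lemref{lem:c comparable to density} and the stopping estimates \eqref{eq:notLD}, \eqref{eq:notHD}, \eqref{eq:sum of alphas containing x estimate}. The only points that deserve a moment of care are (i) that the $\alpha$-optimal plane for $3B_Q$ meets $0.9\cdot 3B_Q$, which was already verified when proving \eqref{eq:F sum estimate}, and (ii) that the additional smallness demanded of $\varepsilon_0$ is compatible with the other finitely many constraints placed on it elsewhere in the proof.
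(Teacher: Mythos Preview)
Your proposal is correct and follows essentially the same approach as the paper: apply \lemref{lem:c comparable to density} with $B=3B_Q$, using \eqref{eq:notLD}, \eqref{eq:notHD} for the density comparability, \eqref{eq:sum of alphas containing x estimate} for the smallness of $\alpha_\mu(3B_Q)$, and the previously noted fact that the $\alpha$-optimal plane meets $2B_Q$. The paper's proof is simply a more compressed version of exactly what you wrote.
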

\begin{proof}
	By \eqref{eq:notLD}, \eqref{eq:notHD}, we have $\mu(1.5B_Q)\approx_{A,\tau}\mu(9B_Q)\approx_{A,\tau} \ell(Q)^n.$ Together with the smallness of $\alpha_{\mu}(3B_Q)$ \eqref{eq:sum of alphas containing x estimate}, this implies that the best approximating plane for $\alpha_{\mu}(3B_Q)$ intersects $2B_Q$. Thus, \lemref{lem:c comparable to density} yields
	\begin{equation*}
	c_Q\approx_{A,\tau} 1.
	\end{equation*}
\end{proof}

\begin{lemma}\label{lem:small measure of RFar}
	We have
	\begin{equation}\label{eq:small measure of RFar}
	\mu(\RFar)\lesssim_{A,\tau} \sqrt{\varepsilon_0}\mu(R_0)^n.
	\end{equation}
\end{lemma}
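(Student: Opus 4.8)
The plan is to combine a Chebyshev-type estimate with the summed $\beta_{\mu,2}$ bound \eqref{eq:beta sum estimate}. By the definition of $\RFar$, every $x\in\RFar$ belongs to $3B_Q$ for some $Q\in\Tree_0$ with $\dist(x,L_Q)\ge\sqrt{\varepsilon_0}\,\ell(Q)$. Setting
\[
E_Q:=\left\{x\in 3B_Q\ :\ \dist(x,L_Q)\ge\sqrt{\varepsilon_0}\,\ell(Q)\right\}\qquad\text{for }Q\in\Tree_0,
\]
we obtain $\RFar\subset\bigcup_{Q\in\Tree_0}E_Q$, hence $\mu(\RFar)\le\sum_{Q\in\Tree_0}\mu(E_Q)$ by countable subadditivity; the resulting overcounting is harmless since only an upper bound is sought. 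If $\Tree_0=\varnothing$ then $\RFar=\varnothing$ and there is nothing to prove, so we may assume $R_0\in\Tree_0$; this is in any case forced by the stopping conditions together with \eqref{eq:set Gr is small}, once $A,\tau,\varepsilon_0$ are fixed, because $R_0\in\D^{sdb}$ and $\Theta_\mu(3B_0)=1$.

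Next I would estimate each $\mu(E_Q)$. On $E_Q$ one has $\dist(x,L_Q)^2\ge\varepsilon_0\,\ell(Q)^2$, so Chebyshev's inequality gives
\[
\mu(E_Q)\le\frac{1}{\varepsilon_0\,\ell(Q)^2}\int_{3B_Q}\dist(x,L_Q)^2\,d\mu(x).
\]
Since $L_Q$ is, by definition, the $n$-plane minimizing $\beta_{\mu,2}(3B_Q)$ and the radius of $3B_Q$ is $\approx\ell(Q)$, the last integral is $\approx\beta_{\mu,2}(3B_Q)^2\,\mu(9B_Q)\,\ell(Q)^2$; using the upper density bound \eqref{eq:notHD} for $Q\in\Tree_0$ we get $\mu(9B_Q)\le\mu(100B_Q)\lesssim A\,\ell(Q)^n$, so that
\[
\mu(E_Q)\lesssim_A\frac{1}{\varepsilon_0}\,\beta_{\mu,2}(3B_Q)^2\,\ell(Q)^n.
\]

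Finally, summing over $Q\in\Tree_0$, noting $\Tree_0=\Tree_0(R_0)$ (since $\Tree_0\subset\D(R_0)$) with $R_0\in\Tree_0$, and applying \eqref{eq:beta sum estimate} with $R=R_0$, I arrive at
\[
\mu(\RFar)\lesssim_A\frac{1}{\varepsilon_0}\sum_{Q\in\Tree_0(R_0)}\beta_{\mu,2}(3B_Q)^2\,\ell(Q)^n\lesssim_{A,\tau}\frac{\varepsilon_0^2}{\varepsilon_0}\,\ell(R_0)^n=\varepsilon_0\,\ell(R_0)^n.
\]
Since $\ell(R_0)^n\approx\mu(R_0)$ by \remref{rem:WLOG density of BR0 is 1} and $\varepsilon_0<1$, this is (even somewhat stronger than) the asserted bound \eqref{eq:small measure of RFar}. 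I do not expect any genuine difficulty; the only two points requiring a little care are the passage from $\int_{3B_Q}\dist(\cdot,L_Q)^2\,d\mu$ to $\beta_{\mu,2}(3B_Q)^2\,\ell(Q)^{n+2}$, which relies on the tree-cube density control \eqref{eq:notHD}, and the verification that $R_0\in\Tree_0$ so that \eqref{eq:beta sum estimate} may be invoked with $R=R_0$.
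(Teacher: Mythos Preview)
Your argument is correct and uses essentially the same ingredients as the paper's proof---Chebyshev's inequality together with the summed $\beta_{\mu,2}$ estimate \eqref{eq:beta sum estimate}. Your direct route (bounding $\mu(\RFar)\le\sum_Q\mu(E_Q)$ and then applying Chebyshev to each $E_Q$) is in fact cleaner than the paper's, which passes through an extra Cauchy--Schwarz step and consequently obtains only $\sqrt{\varepsilon_0}$ where you get $\varepsilon_0$.
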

\begin{proof}
	We begin by using Chebyshev and Cauchy-Schwarz inequalities to obtain
	\begin{multline*}
	\sqrt{\varepsilon_0}\mu(\RFar)\le \int_{3B_0} \Bigg(\sum_{\substack{Q\in\Tree_0\\ x\in 3B_Q}}\bigg(\frac{\dist(x,L_Q)}{\ell(Q)}\bigg)^2\Bigg)^{1/2}\ d\mu(x)\\
	\le \Bigg(\int_{3B_0} \sum_{\substack{Q\in\Tree_0\\ x\in 3B_Q}}\bigg(\frac{\dist(x,L_Q)}{\ell(Q)}\bigg)^2\ d\mu(x)\Bigg)^{1/2}\mu(3B_0)^{1/2}.
	\end{multline*}
	By Fubini, the right hand side is equal to
	\begin{multline*}
	\Bigg(\sum_{Q\in\Tree_0} \int_{3B_Q} \bigg(\frac{\dist(x,L_Q)}{\ell(Q)}\bigg)^2\ d\mu(x)\Bigg)^{1/2}\mu(3B_0)^{1/2}\\
	\lesssim_{A,\tau} \bigg(\sum_{Q\in\Tree_0}\beta_{\mu,2}(3B_Q)^2\ell(Q)^n\bigg)^{1/2}\mu(R_0)^{n/2}.
	\end{multline*}
	We can estimate this using the smallness of $\beta$-numbers \eqref{eq:beta sum estimate}, and thus
	\begin{equation*}
	\sqrt{\varepsilon_0}\mu(\RFar)\lesssim_{A,\tau}\varepsilon_0\mu(R_0)^n.
	\end{equation*}
\end{proof}


\subsection{Balanced balls}
\begin{lemma}[{\cite[Lemma 3.1, Remark 3.2]{azzam2015characterization}}]\label{lem:balanced balls}
	Let $\mu$ be a Radon measure on $\R^d$, and let $B\subset\R^d$ be some ball with radius $r>0$ such that $\mu(B)>0$. Let $0<\gamma<1$. Then there exist constants $\rho_1=\rho_1(\gamma)>0$ and $\rho_2=\rho_2(\gamma)>0$ such that one of the following alternatives holds:
	\begin{itemize}
		\item[(a)] There are points $x_0, \dots, x_n\in B$ such that
		\begin{equation*}
		\mu(B(x_k,\rho_1 r)\cap B)\ge \rho_2\mu(B)\quad \text{for $0\le k\le n,$}
		\end{equation*}
		and for any $y_k\in B(x_k,\rho_1 r),\ k=1,\dots,n$, if we denote by $L^y_k$ the $k$-plane passing through $y_0,\dots,y_k$, then we have
		\begin{equation}\label{eq:yk far from Lk}
		\dist(y_k, L^y_{k-1})\ge\gamma r.
		\end{equation}
		\item[(b)] There exists a family of balls $\{B_i\}_{i\in I_B},$ with radii $r(B_i)=4\gamma r$, centered on $B$, so that the balls $\{10B_i\}_{i\in I_B}$ are pairwise disjoint,
		\begin{equation}\label{eq:unbalanced condition 1}
		\sum_{i\in I_B} \mu(B_i)\gtrsim \mu(B),
		\end{equation}
		and
		\begin{equation}\label{eq:unbalanced condition 2}
		\Theta_{\mu}(B_i)\gtrsim\gamma^{-1}\Theta_{\mu}(B).
		\end{equation}
	\end{itemize}
\end{lemma}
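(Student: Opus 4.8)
The plan is to set up a dichotomy by inductively trying to build a ``spread-out'' simplex $x_0,\dots,x_n$ inside $B$. I would begin by choosing a point $x_0\in B$ of high density, which is possible because $\mu(B)>0$ implies that a definite fraction of the mass of $B$ sits in some ball $B(x_0,\rho_1 r)$ of controlled radius; here $\rho_1=\rho_1(\gamma)$ will be chosen at the end, small relative to $\gamma$, and $\rho_2$ will be the resulting mass fraction. Then, having selected $x_0,\dots,x_{k-1}$ spanning an affine $(k-1)$-plane $L_{k-1}$, I would ask whether there is a point of $B$ at distance $\ge c\gamma r$ from $L_{k-1}$ carrying a definite proportion of $\mu(B)$ in a small ball around it. If yes for every $k$ up to $n$, we land in alternative (a): the points $x_0,\dots,x_n$ are exactly the required simplex, and the quantitative separation \eqref{eq:yk far from Lk} follows by taking $\rho_1$ small enough that moving each $y_j$ within $B(x_j,\rho_1 r)$ perturbs the planes $L^y_{k-1}$ by much less than $\gamma r$ in the relevant region — this is a compactness/continuity argument on the (finite-dimensional) configuration of $n+1$ points in a ball, and the dependence of $\rho_1$ on $\gamma$ comes out of it.

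If instead the inductive construction fails at some stage $k\le n$, then \emph{most} of the mass of $B$ lies within distance $O(\gamma r)$ of the affine plane $L_{k-1}$ (of dimension $k-1\le n-1$), in the sense that the part of $\mu(B)$ outside the $C\gamma r$-neighborhood of $L_{k-1}$ can be covered by balls of radius $\rho_1 r$ each carrying less than $\rho_2\mu(B)$, and a Vitali/Besicovitch-type argument forces that mass to be a small fraction of $\mu(B)$ — so after adjusting constants, at least half (say) of $\mu(B)$ lies in $N_{C\gamma r}(L_{k-1})\cap B$. I would then cover that slab $N_{C\gamma r}(L_{k-1})\cap B$ by balls $B_i$ of radius $4\gamma r$ centered on $B$; since $L_{k-1}$ has dimension at most $n-1$, the slab is covered by $\lesssim \gamma^{-(n-1)}$ such balls. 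Passing to a subfamily with $\{10B_i\}$ pairwise disjoint (Vitali) while retaining comparable total mass gives \eqref{eq:unbalanced condition 1}. For \eqref{eq:unbalanced condition 2}: the retained balls together capture $\gtrsim\mu(B)$, there are $\lesssim\gamma^{-(n-1)}$ of them, and each has radius $4\gamma r$, so by pigeonhole the average density satisfies $\Theta_\mu(B_i)=\mu(B_i)/(4\gamma r)^n\gtrsim \gamma^{-(n-1)}\mu(B)/(\gamma^{-(n-1)}(4\gamma r)^n)=\gamma^{-1}\mu(B)/(4r)^n\approx \gamma^{-1}\Theta_\mu(B)$ — here one keeps only those $B_i$ whose mass is at least a fixed fraction of the average, which is exactly alternative (b).

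The main obstacle I expect is making the two alternatives genuinely exhaustive with a \emph{single} choice of $\rho_1,\rho_2$ depending only on $\gamma$ (and $n$): the failure of alternative (a) at stage $k$ must be turned into the quantitative concentration near $L_{k-1}$ that feeds alternative (b), and this requires carefully tracking how ``no point of mass $\ge\rho_2\mu(B)$ in a $\rho_1 r$-ball lies $\gamma r$-far from $L_{k-1}$'' upgrades — via a covering argument with bounded overlap — to ``at most $\tfrac12\mu(B)$ lies far from $L_{k-1}$''. One also has to be slightly careful that the plane produced in the failure step has dimension $\le n-1$ (so the slab-covering count $\gamma^{-(n-1)}$, not $\gamma^{-n}$, is what makes the density gain $\gamma^{-1}$ rather than $\gamma^0$), which is why the induction is run only up to $k=n$. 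None of the individual estimates is deep; the bookkeeping of constants is the real content, and since the lemma is quoted from \cite{azzam2015characterization} I would in fact just cite it and only sketch this argument for completeness.
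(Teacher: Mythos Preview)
The paper does not give its own proof of this lemma; it is quoted verbatim from \cite[Lemma 3.1, Remark 3.2]{azzam2015characterization} and used as a black box. Your sketch is the standard argument from that reference---inductively build the simplex, and if the process fails at stage $k\le n$ conclude that the mass concentrates near a $(k{-}1)$-plane and cover the resulting slab by $\lesssim\gamma^{-(n-1)}$ balls of radius $\sim\gamma r$---so there is nothing to compare; you have correctly identified both the mechanism and, in your final sentence, the appropriate way to handle the lemma in this paper.
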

We will say that a ball $B$ is $\gamma$-balanced if the alternative (a) holds.	
\begin{lemma}\label{lem:small alpha gives balanced ball}
	Let $\mu$ be a Radon measure on $\R^d$, $B\subset\R^d$ be a ball such that $\mu(B)\approx \mu(1.1B)>0$. Suppose $L$ is the $n$-plane minimizing $\alpha_{\mu}(1.1B)$ and that $L$ intersects $0.9B$.
	There exist $C=C(n,d)<1,\,\gamma=\gamma(n,d)<1$ such that if $\alpha_{\mu}(1.1B)\le C\gamma$, then $B$ is $\gamma$-balanced.
\end{lemma}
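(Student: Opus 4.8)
\textbf{Proof proposal for \lemref{lem:small alpha gives balanced ball}.}

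The plan is to argue by contradiction, playing off the dichotomy in \lemref{lem:balanced balls} against the flatness encoded by a small $\alpha_\mu(1.1B)$. Suppose $B$ is \emph{not} $\gamma$-balanced; then alternative (b) of \lemref{lem:balanced balls} holds, so we obtain a family of balls $\{B_i\}_{i\in I_B}$ of radius $4\gamma r$, centered on $B$, with $\{10B_i\}$ pairwise disjoint, $\sum_i \mu(B_i)\gtrsim\mu(B)$, and $\Theta_\mu(B_i)\gtrsim\gamma^{-1}\Theta_\mu(B)$. The idea is that such a configuration forces $\mu$ to have a lot of mass concentrated on small balls at density roughly $\gamma^{-1}$ times larger than the ambient density, which is incompatible with $\varphi_{1.1B}\mu$ being $W_1$-close (hence $F_{1.1B}$-close, via \lemref{lem:alpha2 controls alpha and beta2}-type duality, or directly via the definition of $\alpha_\mu$) to a flat measure $c\Hn{L}$ whose density is controlled by $\Theta_\mu(1.1B)$ thanks to \lemref{lem:c comparable to density}.

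First I would fix a minimizing plane $L$ and constant $c$ for $\alpha_\mu(1.1B)$, and note that by \eqref{eq:c smaller then mu over Hn} (applicable since $L$ intersects $0.9B$ and $\mu(B)\approx\mu(1.1B)$) we have $c\lesssim \Theta_\mu(1.1B)\approx\Theta_\mu(B)$. Next, for each $i$ I would test $F_{1.1B}(\mu, c\Hn{L})$ against a bump function $\phi_i\in\lip_1(1.1B)$ adapted to $B_i$: take $\phi_i$ comparable to $4\gamma r$ on $B_i$, supported in $2B_i\subset 1.1B$, with Lipschitz constant $\le 1$. Then $\int\phi_i\,d\mu\gtrsim \gamma r\,\mu(B_i)\gtrsim \gamma r\cdot \gamma^{-1}\Theta_\mu(B)(4\gamma r)^n = \Theta_\mu(B)(4\gamma)^n \gamma^{-1+1}r^{n+1}$, while $\int\phi_i\,d(c\Hn{L})\lesssim c\cdot \gamma r\cdot(\gamma r)^n \lesssim \Theta_\mu(B)\gamma^{n+1}r^{n+1}$, i.e. the $\mu$-side beats the $c\Hn{L}$-side by a factor $\gtrsim\gamma^{-1}$. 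Summing over $i$ — using that the $\phi_i$ have disjoint supports, so the total test function $\phi=\sum_i\phi_i$ is still in $\lip_1(1.1B)$ — and using $\sum_i\mu(B_i)\gtrsim\mu(B)\gtrsim\Theta_\mu(B)r^n$, one gets
\begin{equation*}
F_{1.1B}(\mu, c\Hn{L}) \;\gtrsim\; \sum_i \Bigl(\int\phi_i\,d\mu - \int\phi_i\,d(c\Hn{L})\Bigr)\;\gtrsim\; \gamma r\sum_i\mu(B_i)\;\gtrsim\;\gamma\,r\,\mu(B)\;\approx\;\gamma\, r(B)\,\mu(1.1B).
\end{equation*}
Comparing with $F_{1.1B}(\mu,c\Hn{L}) \le \alpha_\mu(1.1B)\, r(1.1B)\,\mu(3\cdot 1.1B)$ — and using $\mu(3.3B)\lesssim\mu(1.1B)$, which we may assume after possibly shrinking constants, or which one extracts from the hypotheses together with a doubling bound — this yields $\alpha_\mu(1.1B)\gtrsim\gamma$, i.e. $\alpha_\mu(1.1B)\ge C\gamma$ for a suitable dimensional $C$. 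Contrapositively, if $\alpha_\mu(1.1B)< C\gamma$ then (b) cannot hold, so (a) holds and $B$ is $\gamma$-balanced.

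The main obstacle I expect is bookkeeping the normalization and the passage from the $W_p$-definition of $\alpha_{\mu,p}$ back to the $F_B$-type functional: the coefficient $\alpha_\mu$ is defined directly via $F_B$, so this is clean, but one must be careful that the test function $\phi=\sum_i\phi_i$ really is supported in $1.1B$ (the balls $2B_i$ have radius $8\gamma r$ and are centered on $B$, so this holds once $\gamma$ is small) and that the density lower bound \eqref{eq:unbalanced condition 2} is used with the correct normalization of $\Theta_\mu$. A secondary technical point is ensuring $\mu(3.3B)\approx\mu(1.1B)$ — if this is not available from the stated hypotheses one should instead phrase the conclusion in terms of $\alpha_\mu^h$ or simply absorb the doubling factor into the constant $C$, noting that in all applications (cubes in $\Tree_0$) this comparability does hold by \eqref{eq:notLD}–\eqref{eq:notHD}. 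Everything else is a routine estimate with bump functions.
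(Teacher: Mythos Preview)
Your approach is essentially the same as the paper's: argue by contradiction via alternative (b) of \lemref{lem:balanced balls}, build a Lipschitz test function $\phi=\sum_i\phi_i$ supported on $\bigcup_i 2B_i\subset 1.1B$, and compare $\int\phi\,d\mu$ with $c\int\phi\,d\Hn{L}$ using \eqref{eq:c smaller then mu over Hn} and the density gain \eqref{eq:unbalanced condition 2}. The paper organizes the arithmetic slightly differently (it bounds $\sum_i r_i^{n+1}$ via $\sum_i\Theta_\mu(B_i)^{-1}\mu(B_i)r_i$ and obtains $\alpha_\mu(1.1B)\ge C_1\gamma-C_2\gamma^2$), but this is equivalent to your term-by-term observation that the $\mu$-side dominates the flat side by a factor $\gtrsim\gamma^{-1}$; you are also right to flag the $\mu(3.3B)\approx\mu(B)$ normalization issue, which the paper silently absorbs and which indeed holds in the only application (\corref{cor:doubling cubes are balanced}).
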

\begin{proof}
	Proof by contradiction. Suppose that $B$ is not $\gamma$-balanced, i.e. that the alternative (b) in \lemref{lem:balanced balls} holds. 
	
	We will estimate $\alpha_{\mu}(1.1B)$ from below. Let $c$ be the constant minimizing $\alpha_{\mu}(1.1B)$, so that by \eqref{eq:c smaller then mu over Hn}
	\begin{equation*}
	c\lesssim\Theta_{\mu}(1.1B)\approx\Theta_{\mu}(B).
	\end{equation*}
	Let balls $\{B_i\}_{i\in I_B}$ be as in  \lemref{lem:balanced balls} (b), with $r(B_i)=r_i=4\gamma r(B)$. Let $f\in\lip_1(1.1B)$ be defined in such a way that $f\equiv r_i$ on each $B_i$ and $\supp f\subset \bigcup_{i\in I_B} 2B_i\subset 1.1B$. Then,
	\begin{equation*}
	\int f\ d\mu \ge \sum_{i\in I_B} \mu(B_i)r_i\overset{\eqref{eq:unbalanced condition 1}}{\gtrsim} \gamma r(B)\mu(B).
	\end{equation*}
	On the other hand,
	\begin{multline*}
	c\int f\ d\Hn{L}\overset{\eqref{eq:c smaller then mu over Hn}}{\lesssim} \Theta_{\mu}(B) \sum_{i\in I_B} r_i^{n+1}= \Theta_{\mu}(B) \sum_{i\in I_B}\Theta_{\mu}(B_i)^{-1}\mu(B_i) r_i\\
	\overset{\eqref{eq:unbalanced condition 2}}{\lesssim}\gamma \sum_{i\in I_B} \mu(B_i)r_i\lesssim \gamma^2 r(B)\mu(B).
	\end{multline*}
	The two estimates above imply that for some dimensional constants $C_1, C_2$
	\begin{equation*}
	\alpha_{\mu}(1.1B)\ge C_1\gamma - C_2\gamma^2> C\gamma,
	\end{equation*}
	if we take $\gamma$ and $C=C(C_1,C_2)$ small enough. We reach a contradiction with the assumption $\alpha_{\mu}(1.1B)\le C\gamma$.
\end{proof}
\begin{cor}\label{cor:doubling cubes are balanced}
	Let $Q\in\Tree_0$. Then $2.5B_Q$ is $\gamma$-balanced, where $\gamma=\gamma(n,d)$.
\end{cor}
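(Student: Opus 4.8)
The plan is to apply \lemref{lem:small alpha gives balanced ball} directly, with $B=2.5B_Q$; then $1.1B=2.75B_Q$, $0.9B=2.25B_Q$, and $3\cdot(1.1B)=8.25B_Q$, and the conclusion of that lemma is precisely that $2.5B_Q$ is $\gamma$-balanced for some $\gamma=\gamma(n,d)$. So it suffices to check the three hypotheses of \lemref{lem:small alpha gives balanced ball}: (i) $\mu(2.5B_Q)\approx\mu(2.75B_Q)>0$; (ii) the $n$-plane $L$ minimizing $\alpha_{\mu}(2.75B_Q)$ meets $2.25B_Q$; (iii) $\alpha_{\mu}(2.75B_Q)\le C\gamma$, where $C=C(n,d)$ and $\gamma=\gamma(n,d)$ are the constants produced by that lemma.

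Hypotheses (i) and (iii) are bookkeeping with the Tree estimates. Since $Q\in\Tree_0$ it is in particular not low-density, so \eqref{eq:notLD} gives $\mu(1.5B_Q)\ge\tau\ell(Q)^n$ and \eqref{eq:notHD} gives $\mu(100B_Q)\lesssim_{A,\tau}\ell(Q)^n$; as $1.5B_Q\subset 2.5B_Q\subset 2.75B_Q\subset 3B_Q\subset 8.25B_Q\subset 9B_Q\subset 100B_Q$, all of $\mu(2.5B_Q),\mu(2.75B_Q),\mu(8.25B_Q),\mu(9B_Q)$ are comparable, with constants depending on $A,\tau$, to $\ell(Q)^n\approx r(B_Q)^n$, which in particular yields (i). For (iii), I would apply \eqref{eq:alpha scaling estimate} to the pair $2.75B_Q\subset 3B_Q$ --- admissible since the radii are comparable and $\mu(8.25B_Q)\approx_{A,\tau}\mu(9B_Q)$ --- to get $\alpha_{\mu}(2.75B_Q)\lesssim_{A,\tau}\alpha_{\mu}(3B_Q)$, and then bound $\alpha_{\mu}(3B_Q)$ by keeping only the term indexed by $Q$ itself in \eqref{eq:sum of alphas containing x estimate} evaluated at $x=z_Q\in Q\subset 3B_Q\cap 3B_0$, obtaining $\alpha_{\mu}(3B_Q)^2\lesssim_{A,\tau}\varepsilon_0^2$. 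Hence $\alpha_{\mu}(2.75B_Q)\lesssim_{A,\tau}\varepsilon_0$, and taking $\varepsilon_0$ small enough in terms of $A,\tau$ (and of the dimensional $C,\gamma$) forces $\alpha_{\mu}(2.75B_Q)\le C\gamma$.

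The only step with any content is (ii); this is exactly the argument already used to justify the corresponding claim in the proof of \eqref{eq:cBQ estimate}. Suppose the minimizing plane $L$ missed $2.25B_Q$. The function $\phi(x)=\big(0.75\,r(B_Q)-\dist(x,1.5B_Q)\big)_+$ lies in $\lip_1(2.75B_Q)$, vanishes outside $2.25B_Q$ --- hence $\int\phi\,d\Hn{L}=0$ --- and equals $0.75\,r(B_Q)$ on $1.5B_Q$, so that $F_{2.75B_Q}(\mu,c\Hn{L})\ge\int\phi\,d\mu\gtrsim r(B_Q)\mu(1.5B_Q)\gtrsim_{A,\tau} r(B_Q)\ell(Q)^n\approx_{A,\tau} r(2.75B_Q)\,\mu(8.25B_Q)$ (here $c$ is the corresponding optimal constant); this forces $\alpha_{\mu}(2.75B_Q)\gtrsim_{A,\tau}1$, contradicting the bound from the preceding paragraph once $\varepsilon_0$ is chosen small. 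With (i)--(iii) verified, \lemref{lem:small alpha gives balanced ball} gives the conclusion. I do not anticipate a real obstacle: the corollary is a transcription of \eqref{eq:notLD}, \eqref{eq:notHD}, \eqref{eq:sum of alphas containing x estimate} and \eqref{eq:alpha scaling estimate} into the hypotheses of \lemref{lem:small alpha gives balanced ball}, the only mild subtlety being the non-escape of the optimal plane in (ii), which was already handled for $3B_Q$.
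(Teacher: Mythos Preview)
Your proof is correct and follows the same approach as the paper: verify the hypotheses of \lemref{lem:small alpha gives balanced ball} with $B=2.5B_Q$ using the $\Tree_0$ estimates \eqref{eq:notLD}, \eqref{eq:notHD}, \eqref{eq:sum of alphas containing x estimate}, and \eqref{eq:alpha scaling estimate}. In fact your write-up is more careful than the paper's sketch --- you work with the correct ball $1.1B=2.75B_Q$ and explicitly justify that the optimal plane meets $0.9B=2.25B_Q$, whereas the paper informally refers to the plane for $3B_Q$ meeting $2B_Q$ and leaves the minor adjustment to the reader.
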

\begin{proof}
	We know that $\mu(1.5B_Q)\approx_{A,\tau} \mu(9B_Q)$, and that
	\begin{equation*}
	\alpha_{\mu}(3B_Q)\overset{\eqref{eq:sum of alphas containing x estimate}}{\lesssim_{A,\tau}}\varepsilon_0,
	\end{equation*}
	which implies (for $\varepsilon_0$ small enough) that the best approximating plane for $3B_Q$ intersects $2B_Q$. Applying \lemref{lem:small alpha gives balanced ball} to $B=2.5B_Q$ finishes the proof.
	%
	
\end{proof}
\subsection{Small measure of cubes from \texorpdfstring{$\BS$}{BA} and \texorpdfstring{$\F$}{F}}
\begin{lemma}\label{lem:small measure of BA and F}
	We have
	\begin{gather*}
	\sum_{Q\in\BS}\mu(Q)\lesssim\varepsilon_0\mu(R_0),\\
	\sum_{Q\in\F}\mu(Q)\lesssim_{A,\tau} {\varepsilon_0}^{1/4}\,\mu(R_0).
	\end{gather*}
\end{lemma}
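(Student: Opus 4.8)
The plan is to treat the two families separately; throughout I use that $\BS$ and $\F$ are subfamilies of $\Stop$, hence consist of pairwise disjoint cubes contained in $R_0$. The bound for $\BS$ is immediate from the stopping condition: every $Q\in\BS\subset\BS_0$ satisfies \eqref{eq:BA}, i.e. $\mu(Q)<2\mu(Q\setminus G)$, and since the sets $Q\setminus G$, $Q\in\BS$, are pairwise disjoint subsets of $R_0\setminus G$, summing over $\BS$ gives $\sum_{Q\in\BS}\mu(Q)<2\mu(R_0\setminus G)$, which by the hypothesis \eqref{eq:set Gr is small} of the Main Lemma is $\le 2\varepsilon_0\,\mu(3B_{R_0})$. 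Finally $\mu(3B_{R_0})\lesssim\mu(100B_{R_0})\lesssim\mu(R_0)$ because $R_0\in\D^{sdb}$ (using \eqref{eq:very doubling cubes} together with $\mu(B(R_0))\le\mu(R_0)$), which yields $\sum_{Q\in\BS}\mu(Q)\lesssim\varepsilon_0\,\mu(R_0)$.

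For $\F$ the naive estimate fails: although the cubes of $\F$ are disjoint, the balls $\{3B_Q\}_{Q\in\F}$ can overlap with unbounded multiplicity (disjoint cubes of very different sizes may have triples with a common point), so one cannot directly pass from $\sum_{Q\in\F}\mu(3B_Q\cap\RFar)$ to $\mu(\RFar)$. I would remedy this with a Vitali-type selection. First record that $\F\subset\Tree_0\setminus\LD_0$: a cube of $\F$ is maximal in $\Stop_0\cup\BA_0\cup\F_0$, hence not contained in any cube of $\Stop_0$, hence it lies in $\Tree_0$ and in none of $\HD_0,\LD_0,\BS_0$. Then \eqref{eq:notLD} and \eqref{eq:notHD} apply: for every $Q\in\F$ one has $\mu(15B_Q)\le\mu(100B_Q)\lesssim A\,\ell(Q)^n$ while $\mu(3B_Q)\ge\mu(1.5B_Q)\ge\tau\,\ell(Q)^n$, so $\mu(15B_Q)\lesssim_{A,\tau}\mu(3B_Q)$. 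Apply the basic $5r$-covering lemma to the bounded-radius family $\{3B_Q\}_{Q\in\F}$ to obtain a subfamily $\mathcal G\subset\F$ with $\{3B_Q\}_{Q\in\mathcal G}$ pairwise disjoint and $\bigcup_{Q\in\F}3B_Q\subset\bigcup_{Q\in\mathcal G}15B_Q$. Since the cubes of $\F$ are disjoint with $Q\subset 3B_Q$, and using \eqref{eq:def of F} for the cubes of $\mathcal G$,
\[
\sum_{Q\in\F}\mu(Q)=\mu\Big(\bigcup_{Q\in\F}Q\Big)\le\sum_{Q\in\mathcal G}\mu(15B_Q)\lesssim_{A,\tau}\sum_{Q\in\mathcal G}\mu(3B_Q)<\varepsilon_0^{-1/4}\sum_{Q\in\mathcal G}\mu(3B_Q\cap\RFar).
\]
The disjointness of $\{3B_Q\}_{Q\in\mathcal G}$ gives $\sum_{Q\in\mathcal G}\mu(3B_Q\cap\RFar)\le\mu(\RFar)$, and \lemref{lem:small measure of RFar} bounds this by $\lesssim_{A,\tau}\sqrt{\varepsilon_0}\,\mu(R_0)$. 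Combining yields $\sum_{Q\in\F}\mu(Q)\lesssim_{A,\tau}\varepsilon_0^{1/4}\,\mu(R_0)$.

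The only genuine obstacle is the overlap issue for $\F$: one has to notice that disjointness of the cubes alone is insufficient and that a covering-lemma selection is needed, and that after enlarging $3B_Q$ to $15B_Q$ one needs the two-sided density comparison $\mu(15B_Q)\approx_{A,\tau}\mu(3B_Q)$ — which is precisely why $\F$ must be shown to avoid $\LD_0$ and $\HD_0$. The $\BS$ bound and the remaining bookkeeping are routine.
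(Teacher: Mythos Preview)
Your proof is correct and follows essentially the same route as the paper's. For $\BS$ the argument is identical; for $\F$ the paper also applies the $5r$-covering lemma to $\{3B_Q\}_{Q\in\F}$, uses the two-sided density bounds \eqref{eq:notLD}, \eqref{eq:notHD} to compare $\mu(15B_Q)$ with $\mu(3B_Q)$, then invokes \eqref{eq:def of F} and \lemref{lem:small measure of RFar} exactly as you do --- the only difference being that you make explicit the verification $\F\subset\Tree_0\setminus\LD_0$ which the paper uses tacitly when citing \eqref{eq:notLD} and \eqref{eq:notHD}.
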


\begin{proof}
	We start by estimating the measure of cubes from $\BS$. 
	%
	We use the definition of $\BS$ \eqref{eq:BA} to get
	\begin{equation*}
	\sum_{Q\in\BS}\mu(Q)\le 2\sum_{Q\in\BS}\mu(Q\setminus G)\le 2\mu(R_0\setminus G)\overset{\eqref{eq:set Gr is small}}{\le}2\varepsilon_0\mu(3B_0)\approx\varepsilon_0\mu(R_0).
	\end{equation*}
	
	Concerning $\F$, we use the $5R$-covering lemma to get a countable family of pairwise disjoint balls $B_i:=3B_{Q_i}$, $Q_i\in\F$, such that $\bigcup_i 5B_i\supset\bigcup_{Q\in\F} Q$. For every $i$ we have
	\begin{equation*}
	\mu(5B_i)=\mu(15B_{Q_i})\overset{\eqref{eq:notHD}}{\lesssim} A\ell(Q_i)^n\overset{\eqref{eq:notLD}}{\le} \frac{A}{\tau}\mu(B_i).
	\end{equation*} 
	Then
	\begin{multline*}
	\sum_{Q\in\F}\mu(Q)\lesssim\sum_i\mu(5B_i)\lesssim_{A,\tau} \sum_i\mu(B_i)\\
	\overset{\eqref{eq:def of F}}{\le}\frac{1}{\varepsilon_0^{1/4}}\sum_i\mu(B_i\cap \RFar)\le \frac{1}{\varepsilon_0^{1/4}}\mu(\RFar)\overset{\eqref{eq:small measure of RFar}}{\lesssim_{A,\tau}}\varepsilon_0^{1/4}\mu(R_0).
	\end{multline*}
\end{proof}

\section{Construction of the Lipschitz graph}\label{sec:construction of graph}
In this section we construct the Lipschitz graph $\Gamma$. At the beginning of Subsection \ref{sec:F on good part} we define also the good set $R_G\subset\Gamma\cap R_0$, and we show that $\restr{\mu}{R_G}\ll\mathcal{H}^n$. We start by proving some auxiliary estimates.
\subsection{Estimates involving best approximating planes}
\begin{lemma}{{\cite[Lemma 6.4]{azzam2015characterization}}}\label{lem:geometrical lemma}
	Suppose $P_1, P_2$ are $n$-planes in $\R^d$, $X=\{x_0,\dots, x_n \}$ is a collection of $n$ points, and
	\begin{gather}
	d_1=d_1(X)=\dfrac{1}{\diam(X)}\min_i\big\{\dist\big(x_i,\mathrm{span}(X\setminus\{x_i\}) \big) \big\}\in(0,1),\tag{a} \\
	\dist(x_i,P_j)<d_2\,\diam(X)\quad \text{for}\quad i=0,\dots,n\quad\text{and}\quad j=1,2, \tag{b}
	\end{gather}
	where $d_2<d_1/(2d)$. Then for $y\in P_2$
	\begin{equation}
	\dist(y,P_1)\le d_2\left(\frac{2d}{d_1}\dist(y,X) + \diam(X)\right).
	\end{equation}
\end{lemma}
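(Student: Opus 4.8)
The plan is to exploit that $X$ is a \emph{quantitatively affinely independent} collection of $n+1$ points, so that $\mathrm{span}(X)$ (the affine $n$-plane through $x_0,\dots,x_n$) is stable: any $n$-plane passing close to all the $x_i$ must be close to $\mathrm{span}(X)$, with the distortion controlled by $d_2/d_1$. Concretely, first I would translate so that $x_0=0$ and work with the vectors $v_i=x_i-x_0$, $i=1,\dots,n$; condition~(a) says precisely that these vectors are bounded away from the hyperplane spanned by the others, relative to $\diam(X)$, which gives a lower bound $\gtrsim d_1$ on the smallest singular value of the $n\times n$ Gram-type matrix after rescaling by $\diam(X)$. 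Then, given the $n$-plane $P_j$, pick points $p_i\in P_j$ with $|x_i-p_i|=\dist(x_i,P_j)<d_2\diam(X)$; the affine span of $\{p_0,\dots,p_n\}$ is contained in $P_j$ (generically equal to it, and if the $p_i$ are affinely dependent one perturbs them slightly inside $P_j$ at no cost), and each defining point has moved by less than $d_2\diam(X)$.

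The key step is then a perturbation estimate: if $W=\mathrm{span}(x_0,\dots,x_n)$ and $W'=\mathrm{span}(p_0,\dots,p_n)$ with $|x_i-p_i|<d_2\diam(X)$ and the $x_i$ satisfy the nondegeneracy~(a), then for any unit vector $u$ one has $\dist(u,W'-p_0)\lesssim (d/d_1)d_2$ for directions, and for the affine planes the ``offset'' between $W$ and $W'$ near $X$ is $\lesssim d_2\diam(X)$. This is exactly the kind of linear-algebra bound where the factor $2d/d_1$ and the hypothesis $d_2<d_1/(2d)$ (which keeps the perturbed configuration still nondegenerate, so the argument does not collapse) enter. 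I would carry this out by writing a point $y\in P_2$ in barycentric-type coordinates with respect to $p_0,\dots,p_n$ when $y$ is near $X$, namely $y=\sum_i \lambda_i p_i$ with $\sum_i\lambda_i=1$ and $\sum_i|\lambda_i|\lesssim 1 + \dist(y,X)/(d_1\diam(X))$ (this is where $d_1$ and $\dist(y,X)$ produce the term $\tfrac{2d}{d_1}\dist(y,X)$), then replacing each $p_i$ by $x_i\in P_1$:
\begin{equation*}
\dist(y,P_1)\le \Big|\,y-\sum_i\lambda_i x_i\,\Big|\le \sum_i|\lambda_i|\,|p_i-x_i| \le d_2\diam(X)\sum_i|\lambda_i|,
\end{equation*}
and substituting the bound on $\sum_i|\lambda_i|$ gives the claimed inequality $\dist(y,P_1)\le d_2\big(\tfrac{2d}{d_1}\dist(y,X)+\diam(X)\big)$.

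The main obstacle I anticipate is making the barycentric-coordinate bound $\sum_i|\lambda_i|\lesssim 1+\dist(y,X)/(d_1\diam(X))$ clean and with the \emph{precise} constant $2d/d_1$ rather than a generic $C(d)/d_1$: one must express the coordinate functionals $\lambda_i$ dual to the affine frame $\{p_i\}$ and bound their norms by $2d/(d_1\diam(X))$, using that $\{p_i\}$ inherits nondegeneracy $\gtrsim d_1$ from $\{x_i\}$ because $d_2<d_1/(2d)$. A secondary technical point is handling the case where $\dim P_j<n$ or the projected points $p_i$ fail to be affinely independent — here one argues by a density/perturbation argument or simply notes the inequality is only needed for the minimizing planes which are genuinely $n$-dimensional in our application. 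Since this is cited verbatim as \cite[Lemma~6.4]{azzam2015characterization}, I would in fact just invoke that reference; the above is the self-contained argument one would write if a proof were demanded.
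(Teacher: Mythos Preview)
The paper does not prove this lemma at all; it simply cites \cite[Lemma~6.4]{azzam2015characterization} and uses the statement as a black box. Your final remark --- that you would just invoke the reference --- is therefore exactly what the paper does, and nothing more is required.

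Your self-contained sketch is the standard argument and is essentially correct, but there is one slip worth flagging: in the displayed estimate you write ``replacing each $p_i$ by $x_i\in P_1$'', yet the $x_i$ are \emph{not} assumed to lie in $P_1$ (only within $d_2\diam(X)$ of it). The fix is immediate: also choose $q_i\in P_1$ with $|x_i-q_i|<d_2\diam(X)$, so that $\sum_i\lambda_i q_i\in P_1$, and then
\[
\dist(y,P_1)\le\Big|\sum_i\lambda_i p_i-\sum_i\lambda_i q_i\Big|\le\sum_i|\lambda_i|\,|p_i-q_i|\le 2d_2\diam(X)\sum_i|\lambda_i|.
\]
This costs an extra factor of $2$, which is harmless since (as you already note) only the structure of the bound matters for the applications in this paper, not the precise constant $2d/d_1$.
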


\begin{lemma}\label{lem:planes close to each other for similar cubes}
	Suppose $Q_1, Q_2\in\Tree_0$ are such that $\dist(Q_1,Q_2)\lesssim\ell(Q_1)\approx\ell(Q_2)$. Let $P\in\Tree_0$ be the smallest cube such that $3B_P\supset 3B_{Q_1}\cup 3B_{Q_2}.$ Then $\ell(P)\approx\ell(Q_1),$ and for all $y\in L_{Q_2}$
	\begin{equation*}
	\dist(y,L_{Q_1})\lesssim_{A,\tau}\beta_{\mu,2}(3B_P)(\dist(y,Q_2)+ \ell(Q_2)).
	\end{equation*}
	In particular,
	\begin{equation}
	\measuredangle(L_{Q_1},L_{Q_2})\lesssim_{A,\tau}\beta_{\mu,2}(3B_P)\lesssim_{A,\tau}\varepsilon_0.
	\end{equation}
\end{lemma}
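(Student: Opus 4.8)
The strategy is to apply the geometric comparison Lemma~\ref{lem:geometrical lemma} with $P_1 = L_{Q_1}$, $P_2 = L_{Q_2}$, and $X = \{x_0,\dots,x_n\}$ a well-spread collection of points in $2.5B_{Q_2}$ coming from the balancedness of $2.5B_{Q_2}$ (Corollary~\ref{cor:doubling cubes are balanced}). First I would note that $\ell(P)\approx\ell(Q_1)\approx\ell(Q_2)$: since $\dist(Q_1,Q_2)\lesssim\ell(Q_1)$ and the two cubes have comparable sidelengths, the smallest $P\in\Tree_0$ whose $3B_P$ contains both $3B_{Q_i}$ has $\ell(P)$ comparable to $\ell(Q_1)$ (one just checks that an ancestor of bounded generation difference already works, and $\Tree_0$ is stable under taking ancestors up to $R_0$). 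Consequently $\mu(3B_P)\approx_{A,\tau}\ell(P)^n$ by \eqref{eq:notLD}, \eqref{eq:notHD}, and $3B_{Q_1},3B_{Q_2}\subset 3B_P$.

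Next I would produce the point configuration. By Corollary~\ref{cor:doubling cubes are balanced}, $2.5B_{Q_2}$ is $\gamma$-balanced with $\gamma=\gamma(n,d)$, so there are points $x_0,\dots,x_n\in 2.5B_{Q_2}$ and a scale $\rho_1\ell(Q_2)$ such that $\mu(B(x_k,\rho_1\ell(Q_2))\cap 2.5B_{Q_2})\gtrsim\mu(2.5B_{Q_2})$ and the spreading condition \eqref{eq:yk far from Lk} holds; this gives $d_1(X)\gtrsim_{n,d}1$ in hypothesis (a) of Lemma~\ref{lem:geometrical lemma}, with $\diam(X)\approx\ell(Q_2)$. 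To verify hypothesis (b) — that each $x_k$ is within $d_2\diam(X)$ of both $L_{Q_1}$ and $L_{Q_2}$ with $d_2$ as small as we like — I would argue that a large fraction of the mass of $\mu$ near each $x_k$ is close to $L_{Q_j}$: indeed $\int_{3B_{Q_j}}\dist(y,L_{Q_j})^2\,d\mu(y)\le\beta_{\mu,2}(3B_{Q_j})^2\ell(Q_j)^{n+2}\lesssim_{A,\tau}\varepsilon_0^2\ell(Q_j)^{n+2}$ by the definition of $\beta_{\mu,2}$ and \eqref{eq:sum of betas containing x estimate}; a Chebyshev argument together with the lower mass bound $\mu(B(x_k,\rho_1\ell(Q_2))\cap 3B_{Q_j})\gtrsim_{A,\tau}\ell(Q_j)^n$ forces $\dist(x_k,L_{Q_j})\lesssim_{A,\tau}\varepsilon_0^{1/2}\ell(Q_j)\lesssim_{A,\tau}\beta_{\mu,2}(3B_P)^{?}$ — more carefully, I would instead bound $\dist(x_k,L_{Q_j})$ directly by $\beta_{\mu,2}(3B_P)\ell(P)$ (up to $A,\tau$ constants) using that $3B_{Q_j}\subset 3B_P$ and comparability of all these masses, which is exactly the quantity we want in the conclusion. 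Here one must be a little careful to get $\beta_{\mu,2}(3B_P)$ rather than $\beta_{\mu,2}(3B_{Q_j})$ on the right; that is where containment in the common ancestor $3B_P$ is used. Then, once $\varepsilon_0$ is small enough that $d_2:=C(A,\tau)\beta_{\mu,2}(3B_P)<d_1/(2d)$ — which holds since $\beta_{\mu,2}(3B_P)\lesssim_{A,\tau}\varepsilon_0$ by \eqref{eq:sum of betas containing x estimate} — Lemma~\ref{lem:geometrical lemma} applies and yields, for $y\in L_{Q_2}=P_2$,
\[
\dist(y,L_{Q_1})\le d_2\Bigl(\tfrac{2d}{d_1}\dist(y,X)+\diam(X)\Bigr)\lesssim_{n,d}d_2\bigl(\dist(y,X)+\ell(Q_2)\bigr).
\]
Since $X\subset 2.5B_{Q_2}\subset C\cdot Q_2$ up to dilation, $\dist(y,X)\lesssim\dist(y,Q_2)+\ell(Q_2)$, so $\dist(y,L_{Q_1})\lesssim_{A,\tau}\beta_{\mu,2}(3B_P)(\dist(y,Q_2)+\ell(Q_2))$, as claimed.

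Finally, the angle bound is immediate: take $y$ ranging over $L_{Q_2}\cap B(z_{Q_2},\ell(Q_2))$ in the estimate just proved (so $\dist(y,Q_2)\lesssim\ell(Q_2)$), divide by $\ell(Q_2)$, and recall that $\measuredangle(L_{Q_1},L_{Q_2})$ is comparable to $\sup\{\dist(y,L_{Q_1})/\ell(Q_2): y\in L_{Q_2},\ |y-z_{Q_2}|\le\ell(Q_2)\}$ after translating so that a common point lies on both planes (the planes are close, so $L_{Q_1}\cap 3B_{Q_2}\neq\varnothing$, which lets one pass from affine to linear planes with acceptable error); this gives $\measuredangle(L_{Q_1},L_{Q_2})\lesssim_{A,\tau}\beta_{\mu,2}(3B_P)$, and then $\lesssim_{A,\tau}\varepsilon_0$ by \eqref{eq:sum of betas containing x estimate}. \textbf{The main obstacle} is the bookkeeping in hypothesis (b): one has to extract, from the $L^2$-flatness encoded in $\beta_{\mu,2}$ plus the balancedness mass lower bounds, a genuine pointwise bound $\dist(x_k,L_{Q_j})\lesssim_{A,\tau}\beta_{\mu,2}(3B_P)\ell(P)$ for each of the finitely many configuration points, uniformly, and to make sure all the comparabilities of masses and radii (between $3B_{Q_1}$, $3B_{Q_2}$, $2.5B_{Q_2}$, $3B_P$) are valid — these all rest on $Q_1,Q_2,P\in\Tree_0$ and \eqref{eq:notLD}–\eqref{eq:notHD}, but must be tracked with the correct dependence on $A,\tau$.
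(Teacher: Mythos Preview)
Your argument has a gap in verifying hypothesis (b) of Lemma~\ref{lem:geometrical lemma} for the plane $L_{Q_1}$. The configuration points $x_k$ lie in $2.5B_{Q_2}$, and balancedness only gives you the mass lower bound $\mu\big(B(x_k,\rho_1 r)\cap 2.5B_{Q_2}\big)\gtrsim\mu(2.5B_{Q_2})$. This, together with control on $\beta_{\mu,2}(3B_{Q_2})$, lets you (via Chebyshev) find points $y_k\in B(x_k,\rho_1 r)$ close to $L_{Q_2}$. But you cannot do the same for $L_{Q_1}$: the plane $L_{Q_1}$ minimizes $\beta_{\mu,2}(3B_{Q_1})$, so the only integral you control is $\int_{3B_{Q_1}}\dist(y,L_{Q_1})^2\,d\mu$, and the balls $B(x_k,\rho_1 r)\subset 3B_{Q_2}$ need not lie in $3B_{Q_1}$ under the hypothesis $\dist(Q_1,Q_2)\lesssim\ell(Q_1)$ with an unspecified implicit constant. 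Your sentence ``bound $\dist(x_k,L_{Q_j})$ directly by $\beta_{\mu,2}(3B_P)\ell(P)$ using that $3B_{Q_j}\subset 3B_P$'' conflates $L_{Q_j}$ with $L_P$: the quantity $\beta_{\mu,2}(3B_P)$ controls distances to $L_P$, not to $L_{Q_1}$, so it cannot by itself force $y_k$ close to $L_{Q_1}$.

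The paper fixes this by routing through the intermediate plane $L_P$. It takes the balanced configuration inside $2.5B_{Q_1}$, so that the small balls $B_k$ satisfy $B_k\subset 3B_{Q_1}\subset 3B_P$; then a single Chebyshev step produces points $y_k$ simultaneously close to $L_{Q_1}$ (from $\beta_{\mu,2}(3B_{Q_1})\lesssim_{A,\tau}\beta_{\mu,2}(3B_P)$, cf.~\eqref{eq:beta2 scaling estimate}) and to $L_P$ (from $\beta_{\mu,2}(3B_P)$ directly). Lemma~\ref{lem:geometrical lemma} applied to the pair $(L_{Q_1},L_P)$ gives \eqref{eq:LP close to LQ1}; by symmetry one obtains the analogous estimate \eqref{eq:LQ2 close to LP} for $(L_{Q_2},L_P)$; a triangle-inequality step then yields the stated bound on $\dist(y,L_{Q_1})$ for $y\in L_{Q_2}$. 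The extra plane $L_P$ is precisely what makes the ``both planes close to the same spread set of points'' hypothesis of Lemma~\ref{lem:geometrical lemma} verifiable, and it is the missing idea in your approach.
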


\begin{proof}
	Since $3B_0\supset 3B_{Q_1}\cup 3B_{Q_2}$ and $R_0\in \Tree_0$, the cube $P$ is well-defined. The comparability $\ell(P)\approx\ell(Q_2)$ holds due to the assumption $\dist(Q_1,Q_2)\lesssim\ell(Q_1)\approx\ell(Q_2)$.
	
	Since $Q_1\in\Tree_0$, Corollary \ref{cor:doubling cubes are balanced} tells us that $2.5B_{Q_1}$ is $\gamma$-balanced. Let $x_0,\dots, x_n\in 2.5B_{Q_1}$ be the points from alternative (a) in \lemref{lem:balanced balls}. Thus, we have a family of balls $\{B_k \coloneqq B(x_k,\rho_1 r(2.5B_{Q_1}))\}_{k=0,\dots,n}$, such that $\mu(B_k\cap 2.5B_{Q_1})\ge \rho_2\mu(2.5B_{Q_1})\approx_{A,\tau} \rho_2\ell(Q_1).$ 
	
	Since $r(B_k) = \rho_1 r(2.5B_{Q_1})\approx \ell(P)$, and $B_k\subset 3B_{Q_1}\subset 3B_P$, it is clear that
	\begin{equation*}
	\frac{1}{\mu(B_k)}\int_{B_k}\left(\frac{\dist(x,L_{Q_1})}{r(B_k)}\right)^2\ d\mu(x)\lesssim_{\rho_2,A,\tau} \beta_{\mu,2}(3B_{Q_1})^2\lesssim_{A,\tau} \beta_{\mu,2}(3B_P)^2,
	\end{equation*}
	and
	\begin{equation*}
	\frac{1}{\mu(B_k)}\int_{B_k}\left(\frac{\dist(x,L_P)}{r(B_k)}\right)^2\ d\mu(x)\lesssim_{\rho_2,A,\tau} \beta_{\mu,2}(3B_P)^2.
	\end{equation*}
	Keeping in mind that $\rho_2$ is a dimensional constant, we will not signal dependence on it in further computations. We use the above estimates and the Chebyshev inequality to find points $y_k\in B_k$ such that
	\begin{gather*}
	\dist(y_k, L_{Q_1})\lesssim_{A,\tau} \beta_{\mu,2}(3B_P)\ell(P),\\
	\dist(y_k, L_{P})\lesssim_{A,\tau} \beta_{\mu,2}(3B_P)\ell(P).
	\end{gather*}
	We would like to apply \lemref{lem:geometrical lemma} to $n$-planes $L_{Q_1}, L_P$ and points $X=\{y_0,\dots, y_n\}$. We have $d_1\gtrsim \gamma$ thanks to \eqref{eq:yk far from Lk}. Furthermore, due to estimate \eqref{eq:sum of betas containing x estimate} we know that $\beta_{\mu,2}(3B_P)\lesssim_{A,\tau}\varepsilon_0$, and so $\beta_{\mu,2}(3B_P)\approx_{A,\tau}d_2<d_1/(2d)$ for $\varepsilon_0$ small enough. Thus, 
	\begin{align}\label{eq:LP close to LQ1}
	\dist(y,L_{Q_1})\lesssim_{A,\tau}\beta_{\mu,2}(3B_P)(\dist(y,Q_1)+ \ell(Q_1))\quad&\text{for $y\in L_P$},\\
	\dist(y,L_{P})\lesssim_{A,\tau}\beta_{\mu,2}(3B_P)(\dist(y,Q_1)+ \ell(Q_1))\quad&\text{for $y\in L_{Q_1}$}.\notag
	\end{align}
	Since the assumptions about cubes $Q_1$ and $Q_2$ are identical, it turns out that the estimates above are also valid if we replace $Q_1$ with $Q_2$, i.e.
	\begin{align}\label{eq:LQ2 close to LP}
	\dist(y,L_{Q_2})\lesssim_{A,\tau}\beta_{\mu,2}(3B_P)(\dist(y,Q_2)+ \ell(Q_2))\quad&\text{for $y\in L_P$}\notag,\\
	\dist(y,L_{P})\lesssim_{A,\tau}\beta_{\mu,2}(3B_P)(\dist(y,Q_2)+ \ell(Q_2))\quad&\text{for $y\in L_{Q_2}$}.
	\end{align}
	Using the triangle inequality, estimates \eqref{eq:LQ2 close to LP}, \eqref{eq:LP close to LQ1}, and the fact that $(\dist(y,Q_1)+ \ell(Q_1))\approx (\dist(y,Q_2)+ \ell(Q_2))$ we finally reach the desired inequality
	\begin{equation*}
	\dist(y,L_{Q_1})\lesssim_{A,\tau}\beta_{\mu,2}(3B_P)(\dist(y,Q_2)+ \ell(Q_2)) \quad\text{for $y\in L_{Q_2}$}.
	\end{equation*}	
\end{proof}

\begin{lemma}\label{lem:planes close to each other for ancestors}
	Let $Q, P\in\Tree$ be such that $\ell(Q)\lesssim\ell(P)$ and $\dist(Q,P)\lesssim\ell(P)$. Then for any $x\in L_{Q}\cap CB_{Q}$ we have
	\begin{equation*}
	\dist(x,L_P)\lesssim_{A,\tau,C}\sqrt{\varepsilon_0}\ell(P).
	\end{equation*}
\end{lemma}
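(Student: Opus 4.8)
The plan is to chain the comparison estimate of \lemref{lem:planes close to each other for similar cubes} along the tower of ancestors of $Q$. First I would fix the chain $Q = Q^{0} \subsetneq Q^{1} \subsetneq \dots \subsetneq Q^{k}$ of consecutive ancestors of $Q$, stopping at the first generation with $\ell(Q^{k}) \gtrsim \ell(P)$ (and capping at $R_{0}$ if that generation would be too large); since ancestors' sidelengths grow by the fixed factor $A_{0}$ and $\ell(P) \le \ell(R_{0})$, this gives $\ell(Q^{k}) \approx \ell(P)$, and because $\Tree$ is closed under taking ancestors inside $R_{0}$ all the $Q^{j}$ lie in $\Tree \subset \Tree_{0}$. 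Each consecutive pair $Q^{j} \subset Q^{j+1}$ satisfies the hypotheses of \lemref{lem:planes close to each other for similar cubes} (zero distance, comparable sidelengths), so I obtain cubes $P_{j} \in \Tree_{0}$ with $\beta_{\mu,2}(3B_{P_{j}}) \lesssim_{A,\tau} \varepsilon_{0}$ and the pointwise estimate $\dist(y, L_{Q^{j+1}}) \lesssim_{A,\tau} \beta_{\mu,2}(3B_{P_{j}})(\dist(y, Q^{j}) + \ell(Q^{j}))$ for $y \in L_{Q^{j}}$.

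Next I would run the standard iterated‑projection argument: set $x_{0} = x$ and $x_{j+1} = \Pi_{L_{Q^{j+1}}}(x_{j})$, so that $|x_{j}-x_{j+1}| = \dist(x_{j}, L_{Q^{j+1}})$ and $x_{j} \in L_{Q^{j}}$. By induction on $j$ I would show $\dist(x_{j}, Q^{j}) \lesssim_{C} \ell(Q^{j})$: the base case uses $x \in CB_{Q}$ and $z_{Q} \in Q$, and the inductive step combines the estimate above (which yields $|x_{j}-x_{j+1}| \lesssim_{A,\tau,C} \varepsilon_{0}\,\ell(Q^{j})$) with $Q^{j} \subset Q^{j+1}$ and $\ell(Q^{j}) = A_{0}^{-1}\ell(Q^{j+1})$. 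The gain of the factor $A_{0}^{-1}$ at each step, together with $A_{0} \ge 5000$ and $\varepsilon_{0}$ small, keeps the implicit constant from accumulating. Telescoping then gives $\dist(x, L_{Q^{k}}) \le \sum_{j} |x_{j}-x_{j+1}| \lesssim_{A,\tau,C} \varepsilon_{0}\,\ell(Q^{k})\sum_{m \ge 1} A_{0}^{-m} \lesssim \varepsilon_{0}\,\ell(Q^{k}) \approx \varepsilon_{0}\,\ell(P)$. The crucial point is that the geometric weighting $\ell(Q^{j}) = A_{0}^{-(k-j)}\ell(Q^{k})$ already makes the series converge, so no Cauchy--Schwarz (and no loss of a $\sqrt{k}$ factor) is needed; this is really why the statement holds even though the height $k$ of the tower is unbounded.

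Finally I would compare $L_{Q^{k}}$ with $L_{P}$ directly. Since $Q \subset Q^{k}$ we have $\dist(Q^{k}, P) \le \dist(Q, P) \lesssim \ell(P) \approx \ell(Q^{k})$, so \lemref{lem:planes close to each other for similar cubes} applies once more and gives $\dist(y, L_{P}) \lesssim_{A,\tau} \varepsilon_{0}(\dist(y, Q^{k}) + \ell(Q^{k}))$ for $y \in L_{Q^{k}}$; taking $y = x_{k}$ and using $\dist(x_{k}, Q^{k}) \lesssim_{C} \ell(Q^{k})$ from the induction yields $\dist(x_{k}, L_{P}) \lesssim_{A,\tau,C} \varepsilon_{0}\,\ell(P)$. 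Adding this to $|x - x_{k}| \lesssim_{A,\tau,C} \varepsilon_{0}\,\ell(P)$ gives $\dist(x, L_{P}) \lesssim_{A,\tau,C} \varepsilon_{0}\,\ell(P) \le \sqrt{\varepsilon_{0}}\,\ell(P)$ once $\varepsilon_{0} \le 1$.

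I expect the only delicate point to be the bookkeeping in the inductive estimate $\dist(x_{j}, Q^{j}) \lesssim_{C} \ell(Q^{j})$: one must check that the implicit constant genuinely does not grow over the (arbitrarily many) steps, which is exactly where the largeness of $A_{0}$ relative to the implicit constants, and the smallness of $\varepsilon_{0}$ (which is fixed last), come in. Everything else is a routine iteration of \lemref{lem:planes close to each other for similar cubes} together with the uniform bound $\beta_{\mu,2}(3B_{Q}) \lesssim_{A,\tau} \varepsilon_{0}$ for $Q \in \Tree_{0}$ supplied by \eqref{eq:sum of betas containing x estimate}.
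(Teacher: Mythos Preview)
Your chaining argument is correct and in fact yields the sharper bound $\dist(x,L_P)\lesssim_{A,\tau,C}\varepsilon_0\,\ell(P)$; the geometric decay $\ell(Q^j)=A_0^{-(k-j)}\ell(Q^k)$ indeed keeps the inductive constant in $\dist(x_j,Q^j)\lesssim_C\ell(Q^j)$ from growing, since $M_{j+1}\le A_0^{-1}\big((1+C_1\varepsilon_0)M_j+C_1\varepsilon_0\big)$ is a contraction once $A_0\ge 3$ and $\varepsilon_0$ is small. The paper takes a genuinely different route. It first handles the case $Q\subset P$ by using that $2.5B_Q$ is $\gamma$-balanced (Corollary~\ref{cor:doubling cubes are balanced}) and then invoking the $\F$ stopping condition \eqref{eq:notF}: each balanced sub-ball $B_i$ meets $(\RFar)^c$, and any $y_i\in B_i\setminus\RFar$ is by definition simultaneously $\sqrt{\varepsilon_0}\,\ell(Q)$-close to $L_Q$ and $\sqrt{\varepsilon_0}\,\ell(P)$-close to $L_P$; elementary linear algebra with the well-separated projections $z_i=\Pi_{L_Q}(y_i)$ then propagates the estimate to all of $L_Q\cap CB_Q$. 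The general case follows from this and one application of \lemref{lem:planes close to each other for similar cubes} at the top scale, exactly as in your last step. What each approach buys: yours uses only \lemref{lem:planes close to each other for similar cubes} and the pointwise bound $\beta_{\mu,2}(3B_Q)\lesssim_{A,\tau}\varepsilon_0$ from \eqref{eq:sum of betas containing x estimate}, so it would apply verbatim to $Q,P\in\Tree_0$ and never touches $\RFar$ or the $\F$ family; the paper's argument is where those stopping cubes first earn their keep, and the $\sqrt{\varepsilon_0}$ comes precisely from the threshold in the definition of $\RFar$. Since $\RFar$ is needed later anyway (e.g.\ \lemref{lem:x outside RFar close to Gamma} and the $\LD$ estimate), the paper is reusing machinery it has already built, whereas your proof is more self-contained and quantitatively stronger for this lemma alone.
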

\begin{proof}
	Consider first the special case $Q\subset P$.
	
	By Corollary \ref{cor:doubling cubes are balanced}, there exist balls $B_k=B(x_k,\rho_1 r(Q)),\ k=0,\dots,n,$ such that $\mu(B_k\cap 2.5B_{Q})\ge \rho_2\mu(2.5B_{Q})$, and  $\dist(y_k, L^y_{k-1})\gtrsim\gamma \ell(Q)$ for $y_k\in B_k$ (see \eqref{eq:yk far from Lk}).
	
	It follows by \eqref{eq:notF} that, for $\varepsilon_0$ small enough, $B_i\setminus\RFar\not =\varnothing$. Fix some $y_i\in B_i\setminus\RFar$ for every $i=0,\dots, n$, so that
	\begin{gather*}
	\dist(y_i,L_Q)\lesssim \sqrt{\varepsilon_0}\ell(Q),\\
	\dist(y_i,L_P)\lesssim \sqrt{\varepsilon_0}\ell(P).
	\end{gather*}
	Let $z_i$ be the orthogonal projection of $y_i$ onto $L_Q$. Since $\ell(Q)\lesssim\ell(P)$, the triangle inequality yields
	\begin{equation}\label{eq:zi close to Lp}
	\dist(z_i,L_P)\le |y_i-z_i|+\dist(y_i,L_P)\lesssim\sqrt{\varepsilon_0}\ell(P).
	\end{equation}
	Furthermore, if $\varepsilon_0$ is small enough, $|y_i-z_i|\lesssim \sqrt{\varepsilon_0}\ell(Q)$ and $\dist(y_k,L^y_{k-1})\gtrsim \ell(Q)$ imply that $\dist(z_k,L^z_{k-1})\gtrsim \ell(Q)$, and that $z_i\in 3B_Q$. Since $L_Q = \text{span} (z_0,\dots,z_n)$, it follows by elementary geometry and \eqref{eq:zi close to Lp} that for any $x\in L_{Q}\cap CB_{Q}$
	\begin{equation*}
	\dist(x,L_P)\lesssim_C\sqrt{\varepsilon_0}\ell(P),
	\end{equation*}
	which concludes the proof in the case $Q\subset P$.
	
	Now, the general case follows by the above and \lemref{lem:planes close to each other for similar cubes}. Indeed, take a cube $R\in \Tree$ such that $R\supset Q$ and $\ell(R)=\ell(P)$. The assumption $\dist(Q,P)\lesssim\ell(P)$ gives us $\dist(R,P)\lesssim\ell(P)$, and so we can apply \lemref{lem:planes close to each other for similar cubes} to get
	\begin{equation*}
	\dist(y,L_P)\lesssim_{A,\tau,C}\varepsilon_0\ell(P),\qquad y\in L_R\cap CB_R.
	\end{equation*}
	On the other hand, since $Q\subset R$, we already know that for $x\in L_Q\cap CB_Q$ we have
	\begin{equation*}
	\dist(x,L_R)\lesssim_C\sqrt{\varepsilon_0}\ell(R)=\sqrt{\varepsilon_0}\ell(P).
	\end{equation*}
	Putting together the two inequalities above yields the desired result.
\end{proof}

\begin{lemma}\label{lem:cBQ similar for similar cubes}
	Suppose the cubes $Q_1, Q_2\in \Tree_0$ satisfy $2.5B_{Q_1}\subset 2.5B_{Q_2},\ \ell(Q_1)\approx \ell(Q_2).$ Then
	\begin{equation*}
	|c_{Q_1}-c_{Q_2}|\lesssim_{A,\tau} \varepsilon_0.
	\end{equation*}
\end{lemma}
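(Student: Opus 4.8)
The plan is to use that both flat measures $c_{Q_1}\Hn{L_{Q_1}}$ and $c_{Q_2}\Hn{L_{Q_2}}$ approximate $\mu$ extremely well on the smaller ball $2.5B_{Q_1}$, and then to disentangle the contribution of the (nearly equal) constants from that of the (nearly equal) planes. Concretely, applying \eqref{eq:sum of F containing x estimate} with $x=z_{Q_i}\in 3B_{Q_i}$ to the single term indexed by $Q_i$ gives, for $i=1,2$,
\[
F_{2.5B_{Q_i}}(\mu,\, c_{Q_i}\Hn{L_{Q_i}})\lesssim_{A,\tau}\varepsilon_0\,\ell(Q_i)^{n+1}.
\]
Since $\lip_1(2.5B_{Q_1})\subset\lip_1(2.5B_{Q_2})$ because $2.5B_{Q_1}\subset 2.5B_{Q_2}$, we get $F_{2.5B_{Q_1}}(\mu,\,c_{Q_2}\Hn{L_{Q_2}})\le F_{2.5B_{Q_2}}(\mu,\,c_{Q_2}\Hn{L_{Q_2}})\lesssim_{A,\tau}\varepsilon_0\,\ell(Q_2)^{n+1}\approx\varepsilon_0\,\ell(Q_1)^{n+1}$, and hence, by the triangle inequality,
\[
F_{2.5B_{Q_1}}\big(c_{Q_1}\Hn{L_{Q_1}},\; c_{Q_2}\Hn{L_{Q_2}}\big)\lesssim_{A,\tau}\varepsilon_0\,\ell(Q_1)^{n+1}.
\]

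Next I would replace $L_{Q_2}$ by $L_{Q_1}$ in the second measure at the cost of an $\varepsilon_0$-error. From $2.5B_{Q_1}\subset 2.5B_{Q_2}$ and $\ell(Q_1)\approx\ell(Q_2)$ the centers satisfy $|z_{Q_1}-z_{Q_2}|\lesssim\ell(Q_1)$, so $\dist(Q_1,Q_2)\lesssim\ell(Q_1)\approx\ell(Q_2)$ and \lemref{lem:planes close to each other for similar cubes} applies; besides $\measuredangle(L_{Q_1},L_{Q_2})\lesssim_{A,\tau}\varepsilon_0$ it yields the pointwise bound $\dist(y,L_{Q_1})\lesssim_{A,\tau}\varepsilon_0\,\ell(Q_1)$ for $y\in L_{Q_2}\cap 2.5B_{Q_1}$, and symmetrically (swapping the roles of $Q_1,Q_2$) $\dist(y,L_{Q_2})\lesssim_{A,\tau}\varepsilon_0\,\ell(Q_1)$ for $y\in L_{Q_1}\cap 2.5B_{Q_1}$. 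Arguing exactly as at the end of the proof of \lemref{lem:plane from beta good for alpha} — the orthogonal projection $\Pi\colon L_{Q_1}\to L_{Q_2}$ is a near-isometry with $\big|1-|J\Pi|\big|\lesssim_{A,\tau}\varepsilon_0$ and displacement $\lesssim_{A,\tau}\varepsilon_0\,\ell(Q_1)$ — and using $c_{Q_2}\approx_{A,\tau}1$ from \eqref{eq:cBQ estimate}, one obtains
\[
F_{2.5B_{Q_1}}\big(c_{Q_2}\Hn{L_{Q_1}},\; c_{Q_2}\Hn{L_{Q_2}}\big)\lesssim_{A,\tau}\varepsilon_0\,\ell(Q_1)^{n+1}.
\]
Combining the last two displays with the triangle inequality and the identity $F_{2.5B_{Q_1}}(c_{Q_1}\Hn{L_{Q_1}},c_{Q_2}\Hn{L_{Q_1}})=|c_{Q_1}-c_{Q_2}|\cdot F_{2.5B_{Q_1}}(\Hn{L_{Q_1}},0)$ gives
\[
|c_{Q_1}-c_{Q_2}|\cdot F_{2.5B_{Q_1}}(\Hn{L_{Q_1}},0)\lesssim_{A,\tau}\varepsilon_0\,\ell(Q_1)^{n+1}.
\]

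Finally I would bound $F_{2.5B_{Q_1}}(\Hn{L_{Q_1}},0)$ from below by $\gtrsim\ell(Q_1)^{n+1}$: testing against $\Phi(x)=\big(r-|x-z(2.5B_{Q_1})|\big)_{+}\in\lip_1(2.5B_{Q_1})$ with $r=r(2.5B_{Q_1})$, and using that $L_{Q_1}$ intersects $2B_{Q_1}\subset 0.9\cdot 2.5B_{Q_1}$ (as in the proof of \eqref{eq:cBQ estimate}), so that $\Phi\approx r$ on a fixed fraction of the $n$-ball $L_{Q_1}\cap 2.5B_{Q_1}$, we get $F_{2.5B_{Q_1}}(\Hn{L_{Q_1}},0)\ge\int\Phi\,d\Hn{L_{Q_1}}\gtrsim r^{n+1}\approx\ell(Q_1)^{n+1}$. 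Dividing, $|c_{Q_1}-c_{Q_2}|\lesssim_{A,\tau}\varepsilon_0$, which is the claim. The main obstacle is the middle step: the constants $c_{Q_1},c_{Q_2}$ come paired with different approximating planes, so to isolate $|c_{Q_1}-c_{Q_2}|$ one must quantitatively trade one plane for the other, which forces reusing the angle bound of \lemref{lem:planes close to each other for similar cubes} together with the near-isometry/Jacobian estimates already established in the proof of \lemref{lem:plane from beta good for alpha}.
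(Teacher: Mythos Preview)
Your proof is correct and follows essentially the same approach as the paper: both arguments go through $\mu$ by the triangle inequality using \eqref{eq:sum of F containing x estimate}, then swap $L_{Q_2}$ for $L_{Q_1}$ via the plane-closeness of \lemref{lem:planes close to each other for similar cubes}, and finally exploit that the tent function $\Phi(x)=(r-|x-z(2.5B_{Q_1})|)_+$ sees $\approx\ell(Q_1)^{n+1}$ mass of $\Hn{L_{Q_1}}$. The only cosmetic difference is that the paper fixes the tent function $\phi$ at the outset and carries it through the triangle inequality, whereas you work at the level of the $F_B$-distance until the very end and only then test against $\Phi$ to extract the factor $|c_{Q_1}-c_{Q_2}|$; the ingredients and the error bookkeeping are identical.
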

\begin{proof}
	Set $B_i = 2.5B_{Q_i}, r_i=r(B_i), z_i=z(B_i), c_i=c_{Q_i}, L_i=L_{Q_i}$ for $i=1,2.$ Let $\phi(z)=(r_1-|z_1-z|)_+\in\lip_1(B_1).$ Then
	\begin{align*}
	r_1^n|c_1-c_2|&\lesssim \left\lvert \int\phi \ c_1d\Hn{L_1} -  \int\phi \ c_2d\Hn{L_1}\right\rvert\\
	&\le \left\lvert \int\phi \ c_1d\Hn{L_1} - \int\phi \ d\mu \right\rvert + \left\lvert \int\phi\ d\mu -  \int\phi \ c_2d\Hn{L_2}\right\rvert\\
	&\quad\quad\quad\quad\quad\qquad\qquad\qquad\qquad+ c_2\left\lvert\int\phi\ d\Hn{L_2} - \int\phi \ d\Hn{L_1} \right\rvert\\
	&\le F_{B_1}(\mu, c_1\Hn{L_1})+F_{B_2}(\mu, c_2\Hn{L_2}) + c_2\left\lvert\int\phi\ d\Hn{L_2} - \int\phi \ d\Hn{L_1} \right\rvert\\
	&\overset{\eqref{eq:sum of F containing x estimate}, \eqref{eq:cBQ estimate}}{\lesssim_{A,\tau}} \varepsilon_0r_1^n + \left\lvert\int\phi\ d\Hn{L_2} - \int\phi \ d\Hn{L_1} \right\rvert.
	\end{align*}
	The fact that the last term above can also be estimated by $\varepsilon_0r_1^n$ follows easily by the fact that $L_1$ and $L_2$ are close to each other, see \lemref{lem:planes close to each other for similar cubes}.
\end{proof}

\subsection{Lipschitz function \texorpdfstring{$F$}{F} corresponding to the good part of \texorpdfstring{$R_0$}{R0}}\label{sec:F on good part}
Consider an auxiliary function
\begin{equation}\label{eq:definition d}
d(x) = \inf_{Q\in\Tree} \big(\dist(x,Q)+\diam(B_Q)\big),\quad x\in\R^d.
\end{equation}
Let 
\begin{equation*}
R_G = \{x\in\R^d\ :\ d(x)=0 \}.
\end{equation*}
Observe that, by the definition of function $d$, we have $R_0\setminus\bigcup_{Q\in\Stop}Q\subset R_G$. 
\begin{lemma}\label{lem:mu on RG absolutely continuous wrto Hn}
	We have $\restr{\mu}{R_G}\ll \mathcal{H}^n$, and for $x\in R_G$
	\begin{equation*}
	\Theta^{n}_*(\mu,x)\approx_{A,\tau}\Theta^{*n}(\mu,x)\approx_{A,\tau}1.
	\end{equation*}
	In consequence, $d\restr{\mu}{R_G}=g\,d\Hn{R_G}$ with $g\approx_{A,\tau} 1$.
\end{lemma}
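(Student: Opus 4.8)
The plan is to show that at every point $x\in R_G$ the measure $\mu$ has comparable upper and lower $n$-densities, both bounded above and below by dimensional constants times $\Theta_\mu(3B_0)=1$; absolute continuity with respect to $\mathcal H^n$ then follows from a standard density comparison argument (if $\Theta^{*n}(\mu,x)\lesssim_{A,\tau}1$ for every $x\in R_G$, then $\mu\mres R_G\lesssim_{A,\tau}\mathcal H^n\mres R_G$), and the existence of the density $g$ with $g\approx_{A,\tau}1$ is immediate from the two-sided density bounds together with $\mu\mres R_G\ll\mathcal H^n$ and the Lebesgue--Besicovitch differentiation theorem. So the whole lemma reduces to proving, for each $x\in R_G$, that
\begin{equation*}
\mu(B(x,r))\approx_{A,\tau} r^n \qquad\text{for all sufficiently small }r>0.
\end{equation*}

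To get this, fix $x\in R_G$ and a small scale $r$. Since $d(x)=0$, there is a cube $Q\in\Tree$ with $\dist(x,Q)+\diam(B_Q)$ as small as we like; in particular we can choose $Q\in\Tree$ with $x\in 3B_Q$ (or at least $x$ within $C\ell(Q)$ of $Q$) and with $\ell(Q)\approx r$ — say $\ell(Q)$ comparable to $r$ up to the fixed constant $A_0$, which is possible because $\Tree$ contains cubes of all small sidelengths near $x$ (every ancestor of such a $Q$ up to $R_0$ is again in $\Tree$, and we can descend as far as we need since $d(x)=0$). For the \emph{upper} bound I would use \eqref{eq:notHD}: since $Q\in\Tree\subset\Tree_0$, $\mu(100B_Q)\lesssim A\ell(Q)^n$, and $B(x,r)\subset 100B_Q$ once $r\approx\ell(Q)$ and $x$ is close to $Q$, hence $\mu(B(x,r))\lesssim_A r^n$. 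For the \emph{lower} bound I would use \eqref{eq:notLD}: $\mu(1.5B_Q)\ge\tau\ell(Q)^n$. The issue is that $1.5B_Q$ need not be contained in $B(x,r)$ for the particular $Q$ we picked, so instead I would invoke Corollary~\ref{cor:doubling cubes are balanced}: $2.5B_Q$ is $\gamma$-balanced, so there are points $x_0,\dots,x_n\in 2.5B_Q$ with $\mu(B(x_k,\rho_1 r(2.5B_Q))\cap 2.5B_Q)\ge\rho_2\mu(2.5B_Q)\approx_{A,\tau}\rho_2\ell(Q)^n$. At least one of these balls $B(x_k,\rho_1 r(2.5B_Q))$ lies inside $B(x,Cr)$ for an appropriate dimensional $C$ (since all the $x_k$ are within $2.5r(B_Q)$ of $z_Q$ and $x$ is within $O(\ell(Q))$ of $Q$), giving $\mu(B(x,Cr))\gtrsim_{A,\tau}r^n$, and after rescaling $r$ this is the desired lower bound $\mu(B(x,r))\gtrsim_{A,\tau}r^n$.

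Having established $\mu(B(x,r))\approx_{A,\tau}r^n$ for all small $r$ and all $x\in R_G$, the conclusion $\Theta^n_*(\mu,x)\approx_{A,\tau}\Theta^{*n}(\mu,x)\approx_{A,\tau}1$ is just the definition of the densities. For the absolute continuity, recall the standard fact (see e.g.\ \cite[Theorem 6.9]{mattila1999geometry}) that if $A$ is a Borel set and $\Theta^{*n}(\mu,x)\le c$ for all $x\in A$, then $\mu\mres A\le 2^n c\,\mathcal H^n\mres A$; applying this with $A=R_G$ and $c\approx_{A,\tau}1$ gives $\mu\mres R_G\ll\mathcal H^n$. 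Finally, by the Radon--Nikodym theorem we may write $d(\mu\mres R_G)=g\,d(\mathcal H^n\mres R_G)$, and by the Lebesgue differentiation theorem for $\mathcal H^n\mres R_G$ (valid since $\mathcal H^n\mres R_G$ is Radon — note $\mathcal H^n(R_G)<\infty$ because $R_G\subset 3B_0$ and $\mu\mres R_G\ll\mathcal H^n$ with $\mu$ finite, or more directly because $R_G$ will turn out to lie in the Lipschitz graph $\Gamma$) we have $g(x)=\lim_{r\to 0}\frac{\mu(B(x,r)\cap R_G)}{\mathcal H^n(B(x,r)\cap R_G)}$ for $\mathcal H^n$-a.e.\ $x\in R_G$; comparing with the two-sided bound on $\mu(B(x,r))$ and using that $\mathcal H^n(B(x,r)\cap R_G)\le\omega_n r^n$ while also $\mathcal H^n(B(x,r)\cap R_G)\gtrsim r^n$ (again because near such $x$ the set $R_G$ contains a piece of the $n$-plane $L_Q$ up to small error, or simply because it sits in the Lipschitz graph $\Gamma$ with density bounds) forces $g\approx_{A,\tau}1$ $\mathcal H^n$-a.e.\ on $R_G$.

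The main obstacle is the lower density bound: the cubes furnished by $d(x)=0$ come with the lower mass bound \eqref{eq:notLD} on $1.5B_Q$, but this ball is not centered at $x$, so one genuinely needs the balanced-ball machinery of Corollary~\ref{cor:doubling cubes are balanced} to relocate a definite chunk of mass into a ball centered at (or near) $x$ of the right radius. One also has to be a little careful that the comparability constants in ``$\ell(Q)\approx r$'' and in the inclusions $B(x_k,\rho_1 r(2.5B_Q))\subset B(x,Cr)\subset 100 B_Q$ are all absorbed into the $\lesssim_{A,\tau}$ notation, and that one can indeed find, for every small $r$, a cube $Q\in\Tree$ near $x$ of comparable sidelength — this uses $d(x)=0$ together with the fact that $\Tree$ is closed under taking ancestors inside $R_0$.
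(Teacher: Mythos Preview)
Your density estimates $\mu(B(x,r))\approx_{A,\tau}r^n$ are correct, and the upper bound matches the paper exactly. For the lower bound you take an unnecessary detour through balanced balls. Since $d(x)=0$, for any small $h>0$ you can choose $Q\in\Tree$ with $\dist(x,Q)+\diam(B_Q)\ll h$, and then the ancestor $P\in\Tree$ of $Q$ with $\ell(P)$ a small fixed multiple of $h$ satisfies $3B_P\subset B(x,h)$ outright; hence $\mu(B(x,h))\ge\mu(3B_P)\ge\mu(1.5B_P)\ge\tau\ell(P)^n\approx h^n$. This is the paper's argument. Your balanced-ball version is not wrong, but the balance property contributes nothing: in the end you only use that the balls $B(x_k,\rho_1 r(2.5B_Q))$ sit inside $B(x,Cr)$, which already follows from $2.5B_Q\subset B(x,Cr)$ and the plain bound $\mu(1.5B_Q)\ge\tau\ell(Q)^n$.

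The genuine gap is in your derivation of $g\approx_{A,\tau}1$. You compute $g$ via Lebesgue differentiation as $g(x)=\lim_{r\to 0}\mu(B(x,r)\cap R_G)/\mathcal{H}^n(B(x,r)\cap R_G)$ and then need both the numerator and denominator to be $\approx r^n$. Neither lower bound is available at this point. You have shown $\mu(B(x,r))\gtrsim r^n$, not $\mu(B(x,r)\cap R_G)\gtrsim r^n$; and your claim $\mathcal{H}^n(B(x,r)\cap R_G)\gtrsim r^n$ ``because it sits in the Lipschitz graph $\Gamma$'' is both a forward reference (the graph is only constructed after this lemma) and logically insufficient, since containment in an $n$-dimensional graph gives an \emph{upper} bound on $\mathcal{H}^n$, not a lower one --- for that you would need $R_G$ to fill a definite portion of $\Gamma$ near $x$, which is never proved. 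The paper sidesteps all of this by invoking \cite[Theorem~6.9]{mattila1999geometry} in \emph{both} directions: the upper density bound yields $\mu\mres R_G(B)\lesssim_A\mathcal{H}^n\mres R_G(B)$ for every Borel $B$, as you note, while the lower density bound and part~(2) of that theorem yield $\mathcal{H}^n\mres R_G(B)\lesssim_\tau\mu\mres R_G(B)$. These two inequalities together force $g\approx_{A,\tau}1$ immediately, with no structural knowledge of $R_G$ required.
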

\begin{proof}
	Let $x\in R_G$. Given some small $h>0$ we use the fact that $d(x)=0$ to find $Q\in\Tree$  such that $B(x,h)\subset 3B_Q$ and $\ell(Q)\approx h$. Then
	\begin{equation*}
	\mu(B(x,h))\le\mu(3B_Q)\overset{\eqref{eq:notHD}}{\lesssim_{A}}\ell(Q)^n\approx h^n.
	\end{equation*}
	Now, let $P\in\Tree$ be such that $3B_P\subset B(x,h)$ and $\ell(P)\approx h$. Then
	\begin{equation*}
	\mu(B(x,h))\ge\mu(3B_P)\overset{\eqref{eq:notLD}}{\gtrsim_{\tau}}\ell(P)^n\approx h^n.
	\end{equation*}
	Letting $h\rightarrow0$ we get $1\lesssim_{\tau}\Theta^{n}_*(\mu,x)\le\Theta^{*n}(\mu,x)\lesssim_{A}1$ for $x\in R_G$. The upper density estimate and \cite[Theorem 6.9 (1)]{mattila1999geometry} imply $\restr{\mu}{R_G}\ll \restr{\mathcal{H}^n}{R_G}$ and $\restr{\mu}{R_G}(B)\lesssim_A \restr{\mathcal{H}^n}{R_G}(B)$ for all $B\subset\R^d$ Borel. The lower density estimate together with \cite[Theorem 6.9 (2)]{mattila1999geometry} give $\restr{\mathcal{H}^n}{R_G}\ll\restr{\mu}{R_G}$ and $\restr{\mathcal{H}^n}{R_G}(B)\lesssim_\tau \restr{\mu}{R_G}(B)$ (in particular, $\restr{\mathcal{H}^n}{R_G}$ is a finite Radon measure). Putting it all together, we use Radon-Nikodym theorem to get $d\restr{\mu}{R_G}=g\,d\Hn{R_G}$, with $g\approx_{A,\tau} 1$.
\end{proof}
In this subsection we will define function $F(x)$ for $x\in \Pi_0(R_G)\subset L_0.$ 

\begin{lemma}\label{lem:Lip property involving d(x)}
	If $\varepsilon_0$ and $\theta$ are small enough, then for any $x_1,x_2\in\R^d$
	\begin{equation}\label{eq:assertion of Lipschitz property involving d(x)}
	|\Pi_0^{\perp}(x_1)-\Pi_0^{\perp}(x_2)|\lesssim \theta|\Pi_0(x_1)-\Pi_0(x_2)| +d(x_1) + d(x_2).
	\end{equation}
\end{lemma}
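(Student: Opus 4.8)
The plan is the standard argument showing that the stopping ``graph'' is Lipschitz: compare $x_1$ and $x_2$ with nearby points lying close to the best approximating planes of tree cubes, transfer everything to a single plane $L_R$ attached to a tree cube at the scale $|x_1-x_2|$, and use that $\measuredangle(L_R,L_0)\le\theta$. Fix $x_1,x_2\in\R^d$ and an auxiliary $\sigma>0$. For $i=1,2$ choose $Q_i\in\Tree$ with $\dist(x_i,Q_i)+\diam(B_{Q_i})\le 2\max(d(x_i),\sigma)=:2\delta_i$ (possible since $d(x_i)\le\delta_i$); then $\ell(Q_i)\lesssim\delta_i$ and $|x_i-z_{Q_i}|\lesssim\delta_i$. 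From \eqref{eq:notLD}, \eqref{eq:notHD} and $\beta_{\mu,2}(3B_{Q_i})\lesssim_{A,\tau}\varepsilon_0$ (a single term of \eqref{eq:sum of betas containing x estimate}) one gets $\int_{3B_{Q_i}}\dist(y,L_{Q_i})^2\,d\mu(y)\lesssim_{A,\tau}\varepsilon_0^2\ell(Q_i)^{n+2}$ together with $\mu(1.5B_{Q_i})\gtrsim_\tau\ell(Q_i)^n$, so a Chebyshev argument inside $1.5B_{Q_i}$ produces a point $u^i\in 1.5B_{Q_i}$ with $\dist(u^i,L_{Q_i})\lesssim_{A,\tau}\varepsilon_0\ell(Q_i)$; moreover $|x_i-u^i|\lesssim\delta_i$. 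Since passing from $x_1,x_2$ to $u^1,u^2$ changes each side of \eqref{eq:assertion of Lipschitz property involving d(x)} by at most $O(\delta_1+\delta_2)$ (using $\theta\le1$), it suffices to prove $|\Pi_0^\perp(u^1)-\Pi_0^\perp(u^2)|\lesssim\theta\,|\Pi_0(u^1)-\Pi_0(u^2)|+\delta_1+\delta_2$.

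Write $\rho=|u^1-u^2|$. If $\rho\le C_1(\delta_1+\delta_2)$ for a large dimensional $C_1$, then $|\Pi_0^\perp(u^1)-\Pi_0^\perp(u^2)|\le\rho\lesssim\delta_1+\delta_2$ and we are done; so assume $\rho> C_1(\delta_1+\delta_2)$, which forces $\ell(Q_i)\ll\rho$, while $\rho\lesssim\ell(R_0)$ because each $u^i$ lies within $O(\ell(R_0))$ of $R_0$. Let $R\in\Tree$ be an ancestor of $Q_1$ with $\ell(R)\approx\rho$ (take $R=R_0$ if $\rho\gtrsim\ell(R_0)$); ancestors of tree cubes are tree cubes. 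Since $Q_1\subset R$ and $\dist(Q_2,R)\le\dist(Q_1,Q_2)\le|z_{Q_1}-z_{Q_2}|\lesssim\rho\approx\ell(R)$, both $Q_1$ and $Q_2$ satisfy the hypotheses of \lemref{lem:planes close to each other for ancestors} with $P=R$. Writing $v^i=\Pi_{L_{Q_i}}(u^i)\in L_{Q_i}\cap 3B_{Q_i}$, that lemma gives $\dist(v^i,L_R)\lesssim_{A,\tau}\sqrt{\varepsilon_0}\,\ell(R)$, and hence $\dist(u^i,L_R)\le|u^i-v^i|+\dist(v^i,L_R)\lesssim_{A,\tau}\sqrt{\varepsilon_0}\,\rho$.

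Now set $w^i=\Pi_{L_R}(u^i)\in L_R$, so $|u^i-w^i|\lesssim_{A,\tau}\sqrt{\varepsilon_0}\,\rho$ and $|w^1-w^2|\lesssim\rho$; since $R\in\Tree$, \eqref{eq:notBS} and the definition of $\measuredangle$ give $|\Pi_0^\perp(w^1)-\Pi_0^\perp(w^2)|\le\theta\,|w^1-w^2|\lesssim\theta\rho$ (because $w^1-w^2$ is parallel to $L_R$). Adding the three contributions,
\begin{equation*}
|\Pi_0^\perp(u^1)-\Pi_0^\perp(u^2)|\le|u^1-w^1|+|\Pi_0^\perp(w^1)-\Pi_0^\perp(w^2)|+|u^2-w^2|\lesssim_{A,\tau}(\sqrt{\varepsilon_0}+\theta)\,\rho .
\end{equation*}
Choosing $\theta$ small and then $\varepsilon_0$ small enough in terms of $\theta$ (so that $\sqrt{\varepsilon_0}\lesssim\theta$ and the last right-hand side is $\le\tfrac12\rho=\tfrac12|u^1-u^2|$) forces $\rho\lesssim|\Pi_0(u^1)-\Pi_0(u^2)|$, whence $|\Pi_0^\perp(u^1)-\Pi_0^\perp(u^2)|\lesssim\theta\,|\Pi_0(u^1)-\Pi_0(u^2)|$. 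Undoing the reduction of the first paragraph gives $|\Pi_0^\perp(x_1)-\Pi_0^\perp(x_2)|\lesssim\theta\,|\Pi_0(x_1)-\Pi_0(x_2)|+\delta_1+\delta_2$, and letting $\sigma\to0$ yields \eqref{eq:assertion of Lipschitz property involving d(x)}.

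The main obstacle is the transfer step: one must pass from the small-scale plane $L_{Q_i}$ to the plane $L_R$ of a cube at scale $\rho$, which is precisely what \lemref{lem:planes close to each other for ancestors} provides — but only after checking that $Q_2$, which a priori is related to $Q_1$ only through $\dist(Q_1,Q_2)\approx\rho$, indeed lies within $O(\ell(R))$ of the ancestor $R$ of $Q_1$, and after locating (via \eqref{eq:notLD}, \eqref{eq:notHD} and the smallness of $\beta_{\mu,2}$) a point near each $x_i$ that genuinely sits close to $L_{Q_i}$ — the centers $z_{Q_i}$ need not. The rest of the care is bookkeeping: the constants depend on $A,\tau$, and $\varepsilon_0$ must be taken $\ll\theta^2$ at the very end so that the coefficient of $|\Pi_0(x_1)-\Pi_0(x_2)|$ is genuinely $O(\theta)$.
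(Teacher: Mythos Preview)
Your proof is correct and follows the same strategy as the paper's: reduce to nearby points in tree cubes, transfer to the plane $L_R$ of a tree cube $R$ at scale $\approx|x_1-x_2|$, and use $\measuredangle(L_R,L_0)\le\theta$ together with an absorption argument. The only difference is cosmetic --- you invoke \lemref{lem:planes close to each other for ancestors} as a black box to place points near $L_R$, while the paper does this step directly by introducing intermediate cubes $P_i$ at scale $\approx\varepsilon_0^{1/n}|y_1-y_2|+\sum_i\ell(Q_i)$ and applying a Chebyshev argument with $\beta_{\mu,2}(3B_R)$ inside $3B_{P_i}$.
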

\begin{proof}
	Fix some small $h>0$. Let $Q_1, Q_2\in\Tree$ be such that
	\begin{equation*}
	\dist(x_i,Q_i)+\diam(B_{Q_i})\le d(x_i)+h,\quad i=1,2.
	\end{equation*}
	Take any $y_i\in Q_i$. Note that $|x_i-y_i|\le d(x_i)+h$. The triangle inequality gives us
	\begin{align*}
	|\Pi_0^{\perp}(x_1)-\Pi_0^{\perp}(x_2)|&\le |\Pi_0^{\perp}(y_1)-\Pi_0^{\perp}(y_2)| + |\Pi_0^{\perp}(x_1)-\Pi_0^{\perp}(y_1)| + |\Pi_0^{\perp}(x_2)-\Pi_0^{\perp}(y_2)|\\
	&\le |\Pi_0^{\perp}(y_1)-\Pi_0^{\perp}(y_2)| + d(x_1)+d(x_2)+2h,
	\end{align*}
	and similarly
	\begin{equation*}
	|\Pi_0(y_1)-\Pi_0(y_2)|\le|\Pi_0(x_1)-\Pi_0(x_2)| + d(x_1)+d(x_2)+2h.
	\end{equation*}
	Hence, if we show that 
	\begin{equation}\label{eq:reduction from Lip property involving d(x)}
	|\Pi_0^{\perp}(y_1)-\Pi_0^{\perp}(y_2)|\lesssim\theta |\Pi_0(y_1)-\Pi_0(y_2)| + d(x_1) + d(x_2)+2h,
	\end{equation}
	use the two former inequalities, and let $h\rightarrow 0$, we will get \eqref{eq:assertion of Lipschitz property involving d(x)}.
	
	Let $P_i\in\Tree$ be the smallest cubes such that $3B_{P_i}\supset B_{Q_i}$ and 
	\begin{equation*}
	\ell(P_i)\approx\varepsilon_0^{1/n} |y_1 - y_2| + \sum_i \ell(Q_i).
	\end{equation*}
	We also take the smallest cube $R\in\Tree$ such that $3B_{R}\supset 3B_{P_1}\cup 3B_{P_2}$ and
	\begin{equation}\label{eq:ellR is like y1-y2}
	\ell(R)\approx  |y_1 - y_2|+ \sum_i \ell(Q_i).
	\end{equation}
	We use the fact that $3B_{R}\supset 3B_{P_1}\cup 3B_{P_2}$, the estimates \eqref{eq:notLD}, \eqref{eq:notHD}, the smallness of $\beta$ numbers \eqref{eq:sum of betas containing x estimate}, and the bound $\varepsilon_0\ell(R)^n\lesssim\ell(P_i)^n$, to get		
	\begin{equation*}
	\frac{1}{\mu(9B_{P_i})}\int_{3B_{P_i}}\left(\frac{\dist(w,L_R)}{\ell(R)}\right)^2\ d\mu(w)\lesssim_{A,\tau} \frac{\ell(R)^n\beta_{\mu,2}(3B_R)^2}{\ell(P_i)^n}\lesssim_{A,\tau}\frac{\ell(R)^n\varepsilon_0^2}{\ell(P_i)^n}\lesssim\varepsilon_0.
	\end{equation*}
	Hence, by Chebyshev's inequality, there exist some $z_i\in 3B_{P_i}$ such that
	\begin{equation}\label{eq:zi close to pizi}
	\dist(z_i,L_R) = |z_i-\pi(z_i)|\lesssim_{A,\tau}\sqrt{\varepsilon_0}\ell(R)\lesssim\sqrt{\varepsilon_0}(|y_1-y_2|+d(x_1) + d(x_2) + 2h),
	\end{equation}
	where $\pi$ denotes orthogonal projection onto $L_R$, and the second inequality is due to \eqref{eq:ellR is like y1-y2}. 
	Note also that, since $y_i, z_i\in 3B_{P_i}$, we have
	\begin{equation}\label{eq:yi close to zi}
	|y_i - z_i|\lesssim \ell(P_i)\lesssim \varepsilon_0^{1/n} |y_1 - y_2| + d(x_1) + d(x_2) + 2h.
	\end{equation}
	
	Now, the triangle inequality and 1-Lipschitz property of $\Pi_0^{\perp}$ give us
	\begin{equation*}
	|\Pi_0^{\perp}(y_1)-\Pi_0^{\perp}(y_2)|\le |\Pi_0^{\perp}(\pi(z_1))-\Pi_0^{\perp}(\pi(z_2))| + \sum_{i=1}^2 \big(|z_i-\pi(z_i)| + |y_i - z_i|\big).
	\end{equation*}		
	To estimate the first term from the right hand side we use the fact that projections onto $L_R$ and $L_0$ are close to each other \eqref{eq:notBS}, the triangle inequality, and 1-Lipschitz property of $\Pi$:
	\begin{align*}
	|\Pi_0^{\perp}(\pi(z_1))-\Pi_0^{\perp}(\pi(z_2))|&\lesssim \theta|\pi(z_1))-\pi(z_2)|\lesssim \theta|\Pi_0(\pi(z_1))-\Pi_0(\pi(z_2))|\\ &\le \theta\big(|\Pi_0(y_1)-\Pi_0(y_2)| + \sum_{i=1}^2 \big(|z_i-\pi(z_i)| + |y_i - z_i|\big) \big).
	\end{align*}
	Putting together the two estimates above, as well as \eqref{eq:zi close to pizi}, \eqref{eq:yi close to zi}, yields
	\begin{multline*}
	|\Pi_0^{\perp}(y_1)-\Pi_0^{\perp}(y_2)|\lesssim \theta|\Pi_0(y_1)-\Pi_0(y_2)| + \sum_{i=1}^2 \big(|z_i-\pi(z_i)| + |y_i - z_i|\big)\\
	\lesssim \theta|\Pi_0(y_1)-\Pi_0(y_2)| + C(A,\tau)\sqrt{\varepsilon_0}\big(|y_1-y_2| + d(x_1) + d(x_2) + 2h\big)\\
	 + \varepsilon_0^{1/n} |y_1 - y_2| + d(x_1) + d(x_2) + 2h.
	\end{multline*}
	Since $|y_1 - y_2|\approx |\Pi_0(y_1)-\Pi_0(y_2)| + |\Pi_0^{\perp}(y_1)-\Pi_0^{\perp}(y_2)|$, we may take $\varepsilon_0=\varepsilon_0(A,\tau,\theta)$ so small that 
	\begin{equation*}
	\big(C(A,\tau)\sqrt{\varepsilon_0}+ \varepsilon_0^{1/n}\big) |y_1 - y_2|\le \theta\big(|\Pi_0(y_1)-\Pi_0(y_2)| + |\Pi_0^{\perp}(y_1)-\Pi_0^{\perp}(y_2)|\big).
	\end{equation*}
	Then, for $\theta$ small enough, we obtain the desired inequality \eqref{eq:reduction from Lip property involving d(x)}:
	\begin{equation*}
	|\Pi_0^{\perp}(y_1)-\Pi_0^{\perp}(y_2)|\lesssim\theta |\Pi_0(y_1)-\Pi_0(y_2)| + d(x_1)+d(x_2) + 2h.
	\end{equation*}
\end{proof}

The lemma above gives us for any $x,y\in R_G$
\begin{equation*}
|\Pi_0^{\perp}(x)-\Pi_0^{\perp}(y)|\lesssim \theta|\Pi_0(x)-\Pi_0(y)|.
\end{equation*}
This allows us to define a function $F$ on $\Pi_0(R_G)\subset L_0$ as
\begin{equation}\label{eq:def of F on RG}
F(\Pi_0(x)) = \Pi_0^{\perp}(x),\qquad x\in R_G,
\end{equation}
with $\lip(F)\lesssim \theta$. Note that the graph of such $F$ is precisely $R_G$.

\subsection{Extension of \texorpdfstring{$F$}{F} to the whole \texorpdfstring{$L_0$}{L0}}\label{sec:extension of F}
For any $z\in L_0$ let us define
\begin{equation}\label{eq:definition D}
D(z) = \inf_{x\in \Pi_0^{-1}(z)} d(x) = \inf_{Q\in\Tree} \big( \dist(z,\Pi_0(Q)) + \diam(B_Q)\big).
\end{equation}

For each $z\in L_0$ with $D(z)>0$, i.e.  $z\in L_0\setminus\Pi_0(R_G)$, we define $J_z$ as the largest dyadic cube from $L_0$ such that $z\in J_z$ and
\begin{equation*}
\diam(J_z)\le \frac{1}{20}\inf_{u\in J_z} D(u).
\end{equation*}
Let $J_i, i\in I,$ be a relabeling of the set of all such cubes $J_z$, without repetition. 

\begin{lemma}\label{lem:properties of Ji}
	The cubes $\{J_i\}_{i\in I}$ are disjoint and satisfy the following:
	\begin{itemize}
		\item[(a)] If $z\in 15J_i$, then $5\diam(J_i)\le D(z)\le 50\diam(J_i)$.
		\item[(b)] If $15 J_i\cap 15 J_{i'}\not = \varnothing$, then
		\begin{equation*}
		\ell(J_i)\approx \ell(J_{i'}).
		\end{equation*}
		\item[(c)] For each interval $J_i$ there are at most $N$ intervals $J_{i'}$ such that $15J_i\cap 15J_{i'}\not = \varnothing$.
		\item[(d)] $L_0\setminus \Pi_0(R_G) = \bigcup_{i\in I} J_i = \bigcup_{i\in I} 15J_i.$
	\end{itemize}
\end{lemma}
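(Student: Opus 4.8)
The plan is to view $\{J_i\}_{i\in I}$ as a Whitney-type decomposition of the open set $L_0\setminus\Pi_0(R_G)$ associated with the gauge $D$, so that all four assertions reduce to a single two-sided estimate for $D$ on the dilates $15J_i$, which in turn comes from the $1$-Lipschitz property of $D$.

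First I would record the elementary facts. Both $d$ and $D$ are pointwise infima of the $1$-Lipschitz functions $x\mapsto\dist(x,Q)+\diam(B_Q)$ and $z\mapsto\dist(z,\Pi_0(Q))+\diam(B_Q)$, $Q\in\Tree$, and both are finite (take $Q=R_0$); hence $d,D$ are $1$-Lipschitz, $R_G=\{d=0\}$ is closed, $D\ge0$, and $D(\Pi_0(x))\le d(x)$. Next I would verify $\{z\in L_0:D(z)>0\}=L_0\setminus\Pi_0(R_G)$. Here ``$\supseteq$'' is immediate; for ``$\subseteq$'', if $D(z)=0$ there are $Q_k\in\Tree\subset\D(R_0)$ and $w_k\in Q_k$ with $|\Pi_0(w_k)-z|\to0$ and $\diam(B_{Q_k})\to0$, and since $w_k\in R_0$ is bounded, a subsequence converges to some $x$ with $\Pi_0(x)=z$ and $d(x)\le|x-w_k|+\diam(B_{Q_k})\to0$, so $x\in R_G$. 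This compactness point is the only genuinely non-formal step. Finally, for $z$ with $D(z)>0$ the cube $J_z$ is well defined: every admissible $J\ni z$ has $\diam(J)\le\tfrac1{20}\inf_{u\in J}D(u)\le\tfrac1{20}D(z)$, hence bounded diameter, whereas any dyadic $J\ni z$ with $\diam(J)<\tfrac1{40}D(z)$ is admissible because then $\inf_{u\in J}D(u)\ge D(z)-\diam(J)>20\diam(J)$; so a largest admissible cube exists.

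Next I would prove disjointness and the core estimate (a); (b)--(d) are then immediate. Disjointness: if $J_z\cap J_{z'}\ne\varnothing$, then being dyadic one contains the other, say $J_{z'}\subseteq J_z$; but $J_z$ is a dyadic cube containing $z'$ satisfying the defining inequality, so maximality of $J_{z'}$ gives $J_z\subseteq J_{z'}$, hence $J_z=J_{z'}$, and the relabeled family $\{J_i\}$ is pairwise disjoint. For (a), fix $z\in15J_i$ and any $w\in J_i$, so $|z-w|\le8\diam(J_i)$. Since $\inf_{u\in J_i}D(u)\ge20\diam(J_i)$, the Lipschitz bound gives $D(z)\ge20\diam(J_i)-8\diam(J_i)\ge5\diam(J_i)$. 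For the upper bound, let $\hat J_i$ be the dyadic parent of $J_i$; by maximality of $J_i$ it violates the defining inequality, so there is $u'\in\hat J_i$ with $D(u')<20\diam(\hat J_i)=40\diam(J_i)$, and since $|z-u'|\le|z-w|+|w-u'|\le8\diam(J_i)+2\diam(J_i)$ we obtain $D(z)\le D(u')+10\diam(J_i)<50\diam(J_i)$.

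Finally, (b) follows by picking $z\in15J_i\cap15J_{i'}$ and applying (a) to both cubes, giving $\diam(J_i)\approx\diam(J_{i'})$ and hence $\ell(J_i)\approx\ell(J_{i'})$. For (c), if $15J_i\cap15J_{i'}\ne\varnothing$ then by (b) the side lengths are comparable, so $J_{i'}$ lies in a fixed dilate $CJ_i$ with $C$ absolute; being pairwise disjoint cubes of side $\gtrsim\ell(J_i)$ inside a cube of side $\approx\ell(J_i)$, their number is bounded by some $N=N(n)$. For (d), $z\in J_z\subset15J_z$ yields $L_0\setminus\Pi_0(R_G)\subseteq\bigcup_iJ_i\subseteq\bigcup_i15J_i$, while $z\in15J_i$ forces $D(z)\ge5\diam(J_i)>0$ by (a), so $\bigcup_i15J_i\subseteq L_0\setminus\Pi_0(R_G)$, and the three sets coincide. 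The main obstacle is the compactness argument identifying $\{D>0\}$ with $L_0\setminus\Pi_0(R_G)$; the remaining constants are generous enough that the estimates close with room to spare.
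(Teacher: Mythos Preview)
Your argument is correct and is exactly the standard Whitney-decomposition proof the paper has in mind (the paper omits the proof entirely, pointing to \cite[Lemma 7.20]{tolsa2014analytic}); your constants in (a) close precisely to the stated $5$ and $50$, and you correctly supply the compactness step identifying $\{D>0\}$ with $L_0\setminus\Pi_0(R_G)$, which the paper asserts without justification. One cosmetic slip: in verifying $\{D>0\}=L_0\setminus\Pi_0(R_G)$ you swap the labels ``$\supseteq$'' and ``$\subseteq$'' (the immediate direction is $z\in\Pi_0(R_G)\Rightarrow D(z)=0$, while the compactness argument handles the converse), but both inclusions are present and correct.
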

The proof is straightforward and follows directly from the definition of $J_i$, see \cite[Lemma 7.20]{tolsa2014analytic}.

Note that, since $\beta_{\mu,2}(3B_0)$ is very small \eqref{eq:sum of betas containing x estimate} and $R_0$ is doubling, we have 
$\dist(z_0, L_0)\le 2r(R_0)=\frac{1}{14}r_0$. It follows that
\begin{equation}\label{eq:BR0 subset B0}
\Pi_0(R_0)\subset\Pi_0(B_0)\subset\Pi_0(1.01B_0)\subset 1.1B_0\cap L_0.
\end{equation}
We define the set of indices  
\begin{equation}\label{eq:definition I0}
I_0 = \{i\in I\ :\ J_i\cap 1.5B_0\not= \varnothing \}.
\end{equation}
\begin{lemma}\label{lem:dist of Ji to B0}
	The following holds:
	\begin{itemize}
		\item[(a)] If $i\in I_0$, then $\diam(J_i)\le0.2r_0$, and $3J_i\subset L_0\cap 1.9B_0$.
		\item[(b)]If $J_i\cap 1.4B_0 = \varnothing$ (in particular if $i\not\in I_0$), then
		\begin{equation*}
		\ell(J_i)\approx\dist(z_0,J_i)\approx |z_0-z|\gtrsim\ell(R_0)\quad \text{for all $z\in J_i$}.
		\end{equation*}
	\end{itemize}
\end{lemma}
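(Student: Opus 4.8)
The plan is to exploit two facts: that the root cube $R_0$ belongs to $\Tree$, so it is always an admissible competitor in the infimum \eqref{eq:definition D} defining $D$, and the two-sided comparison $D\approx\diam(J_i)$ on $15J_i$ furnished by \lemref{lem:properties of Ji}(a). So the first thing I would do is check that $R_0\in\Tree$, i.e. that $R_0$ escapes all five stopping conditions. Using \remref{rem:WLOG density of BR0 is 1} and $\ell(R_0)\approx r_0$ one has $\mu(3B_0)=(3r_0)^n\lesssim\ell(R_0)^n$, so $R_0\notin\HD_0$ once $A$ is large; the strong doubling \eqref{eq:very doubling cubes} gives $\mu(1.5B_0)\ge\mu(B(R_0))\gtrsim\mu(3B_0)\approx\ell(R_0)^n$, so $R_0\notin\LD_0$ once $\tau$ is small; the hypothesis \eqref{eq:set Gr is small} together with the doubling of $R_0$ gives $\mu(R_0\setminus G)\le\varepsilon_0\mu(3B_0)\lesssim\varepsilon_0\mu(R_0)<\tfrac12\mu(R_0)$, so $R_0\notin\BS_0$; we have $\measuredangle(L_{R_0},L_0)=0\le\theta$ because $L_{R_0}=L_0$, so $R_0\notin\BA_0$; and \lemref{lem:small measure of RFar} gives $\mu(3B_0\cap\RFar)\le\mu(\RFar)\lesssim_{A,\tau}\sqrt{\varepsilon_0}\,\mu(3B_0)<\varepsilon_0^{1/4}\mu(3B_0)$ for $\varepsilon_0$ small, so $R_0\notin\F_0$. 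Hence $R_0\in\Tree$.

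For part (a) I would then argue as follows. Since $z_0\in R_0\in\Tree$, taking $Q=R_0$ in \eqref{eq:definition D} and using that $\Pi_0$ is $1$-Lipschitz and fixes $L_0$ pointwise gives the crude bound $D(z)\le\dist(z,\Pi_0(R_0))+\diam(B_0)\le|z-z_0|+2r_0$ for every $z\in L_0$. If $i\in I_0$, I pick $z\in J_i\cap1.5B_0$; then $|z-z_0|\le1.5r_0$, hence $D(z)\le3.5\,r_0$, and the defining property $\diam(J_i)\le\tfrac1{20}\inf_{u\in J_i}D(u)\le\tfrac1{20}D(z)$ forces $\diam(J_i)\le0.175\,r_0<0.2\,r_0$. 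The inclusion $3J_i\subset L_0\cap1.9B_0$ then follows because any $w\in3J_i$ satisfies $|w-z|\le2\diam(J_i)\le0.35\,r_0$ (half the diameter of $3J_i$ plus half that of $J_i$), so $|w-z_0|<1.85\,r_0$, while $w\in L_0$ trivially.

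For part (b) I would first note that $J_i\cap1.4B_0=\varnothing$ forces $|z-z_0|\ge1.4\,r_0\gtrsim\ell(R_0)$ for all $z\in J_i$. Fixing such a $z$, which lies in $15J_i$, the bound from (a) gives $D(z)\le|z-z_0|+2r_0\le\tfrac{17}{7}|z-z_0|$ (using $|z-z_0|\ge1.4\,r_0$), while on the other hand every $Q\in\Tree$ has $\Pi_0(Q)\subset1.1B_0$ by \eqref{eq:BR0 subset B0}, whence $\dist(z,\Pi_0(Q))\ge|z-z_0|-1.1\,r_0\ge\tfrac3{14}|z-z_0|$ and therefore $D(z)\ge\tfrac3{14}|z-z_0|$. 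Combining both inequalities with \lemref{lem:properties of Ji}(a), which says $5\diam(J_i)\le D(z)\le50\diam(J_i)$, yields $\diam(J_i)\approx|z-z_0|$ for every $z\in J_i$; in particular all points of $J_i$ lie at comparable distance from $z_0$, so that common scale is comparable to $\dist(z_0,J_i)$, and $\ell(J_i)\approx\diam(J_i)\gtrsim|z-z_0|\ge1.4\,r_0\gtrsim\ell(R_0)$.

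The only genuinely delicate point is the preliminary claim $R_0\in\Tree$, since it requires unwinding all five stopping conditions against the normalization $\Theta_\mu(3B_0)=1$, the strong doubling of $R_0$, and the quantitative hierarchy among $A$, $\tau$, $\theta$ and $\varepsilon_0$ (with $\varepsilon_0$ chosen last, small depending on $A,\tau$). Everything after that is elementary metric geometry of the cubes $J_i$ combined with \lemref{lem:properties of Ji}, exactly as in \cite[Lemma 7.20 ff.]{tolsa2014analytic}.
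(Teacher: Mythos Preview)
Your proof is correct and follows essentially the same route as the paper: bound $D$ from above by taking $Q=R_0$ as a competitor, bound it from below via $\Pi_0(Q)\subset\Pi_0(R_0)\subset 1.1B_0$, and then invoke \lemref{lem:properties of Ji}(a). The arithmetic matches almost line for line; your explicit verification that $R_0\in\Tree$ is extra rigor the paper leaves implicit (it simply writes $d(x)\le\dist(x,R_0)+\diam(B_0)$ without comment), and it is correctly carried out.
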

\begin{proof}
	We begin by proving (a). Suppose $i\in I_0$. Then $J_i\cap 1.5B_0\not=\varnothing$ and
	\begin{equation*}
	3J_i\subset L_0\cap B(z_0, 1.5r_0+2\diam(J_i)).
	\end{equation*}
	We need to estimate $\diam(J_i)$. By the definition of $J_i$, we have
	\begin{equation*}
	\diam(J_i)\le\frac{1}{20}\inf_{u\in J_i}D(u).
	\end{equation*}
	Since $J_i\cap 1.5B_0\not=\varnothing$ we have $\inf_{u\in J_i}D(u)\le\max_{u\in L_0\cap 1.5B_0}D(u)$, and so it suffices to estimate the latter quantity. Note that the definition of $d$ \eqref{eq:definition d} gives for $x\in 1.5B_0$
	\begin{equation*}
	d(x)\le\dist(x,R_0)+\diam(B_0)\le 1.5r_0+2r_0=3.5r_0.
	\end{equation*}
	Hence, by the definition of $D$ \eqref{eq:definition D}
	\begin{equation*}
	\max_{u\in L_0\cap 1.5B_0}D(u)\le \max_{x\in 1.5B_0}d(x)\le 3.5r_0.
	\end{equation*}
	It follows that $\diam(J_i)\le \frac{7}{40}r_0$, and 
	\begin{equation*}
	3J_i\subset L_0\cap B(z_0, 1.85r_0).
	\end{equation*}
	
	Now, let us prove (b). Suppose $J_i\cap 1.4B_0 = \varnothing$ and $z\in J_i$. Clearly, $|z_0-z|\ge 1.4r_0$. Together with the definition of $D$ \eqref{eq:definition D} this gives
	\begin{equation*}
	D(z)\le |\Pi_0(z_0)-z|+\diam(B_0)\le 3|z_0-z|.
	\end{equation*}
	On the other hand, by \eqref{eq:BR0 subset B0} we have
	\begin{equation*}
	D(z)\ge \dist(z,\Pi_0(R_0))\ge \dist(z,1.1B_0)=|z_0-z|-1.1r_0\ge\frac{3}{14}|z_0-z|.
	\end{equation*}
	Putting together the two estimates above gives for $z\in J_i$
	\begin{equation*}
	\frac{1}{5}|z_0-z|\le D(z)\le 3|z_0-z|.
	\end{equation*}
	Applying \lemref{lem:properties of Ji} (a) yields
	\begin{equation*}
	\frac{5}{3}\diam(J_i)\le |z_0-z|\le 250\diam(J_i).
	\end{equation*}
	Moreover, since
	\begin{equation*}
	|z_0-z|-\diam(J_i)\le\dist(z_0,J_i)\le |z_0-z|,
	\end{equation*}
	we finally obtain
	\begin{equation*}
	\frac{2}{3}\diam(J_i)\le\dist(z_0,J_i)\le 250\diam(J_i).
	\end{equation*}
\end{proof}

\begin{lemma}\label{lem:Qi corresposnding to Ji}
	Given $i\in I_0$, there exists a cube $Q_i\in\Tree$ such that
	\begin{gather*}
	\ell(J_i)\approx \ell(Q_i),\label{eq:Ji and Qi comparable}\\
	\dist(J_i,\Pi_0(Q_i))\lesssim \ell(J_i).\label{eq:Ji close to Qi}
	\end{gather*}
\end{lemma}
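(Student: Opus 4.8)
The plan is to produce $Q_i$ by (almost) realizing the infimum defining $D$ and then correcting the side length by passing to an ancestor. First I would fix $i\in I_0$ and choose any point $z\in J_i$. Since $z\in J_i\subset 15J_i$, \lemref{lem:properties of Ji}(a) gives $D(z)\le 50\diam(J_i)$, so by the definition of $D$ in \eqref{eq:definition D} there exists $P\in\Tree$ with
\[
\dist(z,\Pi_0(P))+\diam(B_P)\le 2D(z)\le 100\diam(J_i).
\]
This cube $P$ already yields the required distance estimate, because $\dist(J_i,\Pi_0(P))\le\dist(z,\Pi_0(P))\lesssim\diam(J_i)\approx\ell(J_i)$, and it satisfies the upper bound $\ell(P)\lesssim\diam(B_P)\lesssim\ell(J_i)$; the only thing that may fail for $P$ itself is the lower bound $\ell(J_i)\lesssim\ell(P)$, since a priori a very small cube can come close to $J_i$.

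To repair this I would replace $P$ by a suitable ancestor, using two elementary observations. First, $\Tree$ is closed under taking ancestors inside $R_0$: if $S\in\D(R_0)$ and $P\subset S$ with $P\in\Tree$, then $S$ cannot be contained in any cube of $\Stop$ (otherwise $P$ would be too), so $S\in\Tree$. Second, for $i\in I_0$ we have $\diam(J_i)\le 0.2\,r_0$ by \lemref{lem:dist of Ji to B0}(a), while $\ell(R_0)=56C_0A_0^{-J(R_0)}\ge 2r_0$ by the David--Mattila size bounds of \lemref{lem:DM lattice}, hence $\ell(R_0)\ge 10\diam(J_i)>\diam(J_i)$. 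Thus the family of ancestors $S$ of $P$ with $S\subset R_0$ and $\ell(S)\ge\diam(J_i)$ is nonempty (it contains $R_0$), and I would let $Q_i$ be the smallest cube in this family. Then $Q_i\in\Tree$ by the first observation, $\ell(Q_i)\ge\diam(J_i)\approx\ell(J_i)$ by construction, and since $P\subset Q_i$ we get $\Pi_0(P)\subset\Pi_0(Q_i)$, so $\dist(J_i,\Pi_0(Q_i))\le\dist(J_i,\Pi_0(P))\lesssim\ell(J_i)$.

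It then remains to bound $\ell(Q_i)$ from above. If $Q_i=P$, this is immediate from $\ell(P)\lesssim\diam(B_P)\lesssim\ell(J_i)$. If $Q_i\neq P$, let $S'$ be the child of $Q_i$ on the chain from $P$ up to $Q_i$; then $P\subset S'\subsetneq Q_i\subset R_0$, and $S'$ is exactly one David--Mattila generation finer than $Q_i$, so $\ell(S')=A_0^{-1}\ell(Q_i)$. Minimality of $Q_i$ forces $\ell(S')<\diam(J_i)$, hence $\ell(Q_i)=A_0\ell(S')<A_0\diam(J_i)\lesssim\ell(J_i)$. In either case $\ell(Q_i)\approx\ell(J_i)$, which together with the distance bound of the previous paragraph proves the lemma.

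I do not expect a genuine obstacle here: the argument is essentially bookkeeping with the definitions of $D$, $\Tree$, and the David--Mattila lattice. The only mildly delicate point is the discreteness of the side lengths of David--Mattila cubes, which is why I allow the harmless $A_0$-factor loss when moving to an ancestor and cap the ancestor at $R_0$; this cap is only relevant when $\diam(J_i)$ is already comparable to $\ell(R_0)$, a case that the same argument (with $Q_i=R_0$) absorbs automatically.
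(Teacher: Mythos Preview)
Your proof is correct and follows essentially the same approach as the paper: approximate the infimum defining $D(z)$ to obtain a cube $P\in\Tree$ with the right distance bound and $\ell(P)\lesssim\ell(J_i)$, then pass to an ancestor to enforce $\ell(Q_i)\gtrsim\ell(J_i)$, using $\ell(J_i)\lesssim\ell(R_0)$ to guarantee such an ancestor exists. You spell out the bookkeeping (closure of $\Tree$ under ancestors, the discreteness argument for the upper bound on $\ell(Q_i)$) more explicitly than the paper does, but the strategy is identical.
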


\begin{proof}
	Let $i\in I_0$ and $z\in J_i$. We know by \lemref{lem:properties of Ji} (a) that $D(z)\approx \ell(J_i)$. Thus, by the definition of $D$ \eqref{eq:definition D} we may find $Q\in\Tree$ such that
	\begin{equation*}
	\dist(z,\Pi_0(Q)) + \diam(B_Q)\approx \ell(J_i).
	\end{equation*}
	Clearly, $\ell(Q)\lesssim \ell(J_i)$, and $\dist(J_i,\Pi_0(Q))\lesssim \ell(J_i)$. If $\ell(Q)\gtrsim \ell(J_i)$, we set $Q_i=Q$ and we are done. If that is not the case, then we define $Q_i$ as the ancestor $P\supset Q$ satisfying $\ell(P)\gtrsim \ell(J_i)$ (we can always do that because $\ell(J_i)\lesssim\ell(R_0)$ by \lemref{lem:dist of Ji to B0} (a)).
\end{proof}

For all $i\in I_0$ we define $F_i: L_0\rightarrow L_0^{\perp}$ as the affine function whose graph is the $n$-plane $L_{Q_i}$. Since $\measuredangle(L_{Q_i},L_0)\le \theta$ by \eqref{eq:notBS}, we have $\lip(F_i)\lesssim \theta$. For $i\not\in I_0$ set $F_i\equiv 0$, so that the graph of $F_i$ is the plane $L_0$.

\begin{lemma}\label{lem:Fi estimates}
	Suppose $10J_i\cap 10 J_{i'}\not = \varnothing$. We have:
	\begin{itemize}
		\item[(a)]if  $ i,i'\in I_0$, then
		\begin{equation*}
		\dist(Q_i,Q_{i'})\lesssim \ell(J_i),
		\end{equation*}
		\item[(b)]for $x\in 100J_i$
		\begin{equation*}
		|F_i(x)-F_{i'}(x)|\lesssim\sqrt{\varepsilon_0}\ell(J_i),
		\end{equation*}
		\item[(c)] $\lVert\nabla F_i - \nabla F_{i'}\rVert_{\infty}\lesssim \sqrt{\varepsilon_0}.$	
	\end{itemize}
\end{lemma}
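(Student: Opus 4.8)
The plan is to derive everything from the geometry of the cubes $Q_i$ associated to the $J_i$ in \lemref{lem:Qi corresposnding to Ji}, together with the fact that the approximating planes $L_Q$ of nearby tree cubes are close to each other (\lemref{lem:planes close to each other for similar cubes} and \lemref{lem:planes close to each other for ancestors}). First I would prove (a): since $10J_i\cap 10J_{i'}\neq\varnothing$, \lemref{lem:properties of Ji}(b) gives $\ell(J_i)\approx\ell(J_{i'})$, and then \lemref{lem:Qi corresposnding to Ji} gives $\ell(Q_i)\approx\ell(J_i)\approx\ell(J_{i'})\approx\ell(Q_{i'})$, and $\dist(J_i,\Pi_0(Q_i))\lesssim\ell(J_i)$ (same for $i'$). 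The triangle inequality through a common point of $10J_i$ and $10J_{i'}$ yields $\dist(\Pi_0(Q_i),\Pi_0(Q_{i'}))\lesssim\ell(J_i)$; since $\Pi_0$ is $1$-Lipschitz and each $Q_j\subset B_{Q_j}$ with $\diam(B_{Q_j})\approx\ell(J_j)$, and since $\measuredangle(L_{Q_j},L_0)\le\theta$ controls the vertical extent of $Q_j$ over its $\Pi_0$-shadow, we conclude $\dist(Q_i,Q_{i'})\lesssim\ell(J_i)$.

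For (b), fix $x\in 100J_i$. If both $i,i'\in I_0$, then by part (a) and $\ell(Q_i)\approx\ell(Q_{i'})$ we are in a position to apply \lemref{lem:planes close to each other for similar cubes} (or, to be safe about the inclusion hypothesis, \lemref{lem:planes close to each other for ancestors} with a common ancestor of controlled size) to the cubes $Q_i$ and $Q_{i'}$: this gives, for every $y\in L_{Q_{i'}}$,
\begin{equation*}
\dist(y,L_{Q_i})\lesssim_{A,\tau}\sqrt{\varepsilon_0}\,(\dist(y,Q_{i'})+\ell(Q_{i'})).
\end{equation*}
Now $F_i,F_{i'}$ are the affine functions whose graphs are $L_{Q_i},L_{Q_{i'}}$; since $x\in 100J_i$ lies within distance $\lesssim\ell(J_i)\approx\ell(Q_{i'})$ of $\Pi_0(Q_{i'})$, the point $(x,F_{i'}(x))\in L_{Q_{i'}}$ satisfies $\dist((x,F_{i'}(x)),Q_{i'})\lesssim\ell(Q_{i'})$ (using $\measuredangle(L_{Q_{i'}},L_0)\le\theta$ to pass from the horizontal estimate to the genuine distance), hence $\dist((x,F_{i'}(x)),L_{Q_i})\lesssim\sqrt{\varepsilon_0}\ell(J_i)$. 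Because $L_{Q_i}$ is a graph over $L_0$ with slope $\lesssim\theta$, the vertical distance $|F_i(x)-F_{i'}(x)|$ is comparable to $\dist((x,F_{i'}(x)),L_{Q_i})$, which gives (b). If one of $i,i'$ is not in $I_0$, say $i'\notin I_0$, then $F_{i'}\equiv 0$ and $L_{Q_{i'}}=L_0$; but $10J_i\cap 10J_{i'}\neq\varnothing$ forces $J_i$ to be near $1.5B_0$ while $J_{i'}\cap 1.4B_0=\varnothing$, so \lemref{lem:dist of Ji to B0}(b) pins down $\ell(J_i)\approx\ell(J_{i'})\approx\ell(R_0)$ and $Q_i$ is a cube of size $\approx\ell(R_0)$ near $R_0$; then $\measuredangle(L_{Q_i},L_0)\le\theta$ and \eqref{eq:BR0 subset B0} give $|F_i(x)-0|\lesssim\theta\ell(R_0)\lesssim\sqrt{\varepsilon_0}\ell(J_i)$ provided $\theta$ is chosen small relative to $\sqrt{\varepsilon_0}$ (or one simply absorbs this into a $\lesssim_{A,\tau}$-type bound).

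For (c), the two gradients $\nabla F_i,\nabla F_{i'}$ are constant and encode the directions of $L_{Q_i},L_{Q_{i'}}$, so $\lVert\nabla F_i-\nabla F_{i'}\rVert_\infty\approx\measuredangle(L_{Q_i},L_{Q_{i'}})$ up to a factor depending on $\theta$; by part (a) and \lemref{lem:planes close to each other for similar cubes} applied to $Q_i,Q_{i'}$ we get $\measuredangle(L_{Q_i},L_{Q_{i'}})\lesssim_{A,\tau}\sqrt{\varepsilon_0}$ (in fact $\lesssim_{A,\tau}\varepsilon_0$ in the case $i,i'\in I_0$, but $\sqrt{\varepsilon_0}$ suffices and covers the mixed case via the preceding paragraph). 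The main obstacle I anticipate is bookkeeping: making sure the hypotheses of \lemref{lem:planes close to each other for similar cubes}/\lemref{lem:planes close to each other for ancestors} (comparable sidelengths, bounded distance) are genuinely met for $Q_i,Q_{i'}$ — which is exactly what part (a) is for — and carefully handling the degenerate case where one index leaves $I_0$, where the "approximating plane" is the fixed plane $L_0$ rather than an $L_Q$ coming from the tree; in that case one leans on \lemref{lem:dist of Ji to B0}(b) and the smallness of $\theta$ rather than on the plane-comparison lemmas.
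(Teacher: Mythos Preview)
Your overall strategy matches the paper's, but there are two genuine gaps.

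\textbf{Part (a): the vertical lift.} You correctly obtain $\dist(\Pi_0(Q_i),\Pi_0(Q_{i'}))\lesssim\ell(J_i)$, but your passage from this horizontal estimate to $\dist(Q_i,Q_{i'})\lesssim\ell(J_i)$ is not justified. Knowing that each $Q_j$ has diameter $\approx\ell(J_j)$ and that $\measuredangle(L_{Q_j},L_0)\le\theta$ only tells you $Q_j$ is \emph{thin}; it says nothing about the \emph{height} of $Q_i$ relative to $Q_{i'}$. Two thin sets with the same $\Pi_0$-shadow can still be far apart vertically. The paper closes this gap with \lemref{lem:Lip property involving d(x)}: for $z_1\in Q_i$, $z_2\in Q_{i'}$ one has $d(z_1)\lesssim\ell(Q_i)$, $d(z_2)\lesssim\ell(Q_{i'})$, and then \eqref{eq:assertion of Lipschitz property involving d(x)} gives
\[
|\Pi_0^\perp(z_1)-\Pi_0^\perp(z_2)|\lesssim\theta|\Pi_0(z_1)-\Pi_0(z_2)|+d(z_1)+d(z_2)\lesssim\ell(J_i),
\]
which is exactly the missing vertical control.

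\textbf{Part (b), mixed case $i\in I_0$, $i'\notin I_0$.} Your argument here produces $|F_i(x)|\lesssim\theta\,\ell(R_0)$ and then asks for $\theta\lesssim\sqrt{\varepsilon_0}$. This is backwards: in the paper's hierarchy of constants, $\theta$ is fixed first and $\varepsilon_0$ is chosen small depending on $\theta$, so $\theta\lesssim\sqrt{\varepsilon_0}$ is never available (and absorbing $\theta$ into an implicit constant is not allowed either, since the lemma's $\lesssim$ does not carry $\theta$-dependence). The correct move---which the paper makes---is to note that $\ell(Q_i)\approx\ell(R_0)$ and $\dist(Q_i,R_0)=0$, so \lemref{lem:planes close to each other for similar cubes} applies directly to the pair $Q_i$, $R_0$. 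Since $L_{R_0}=L_0$, this gives $\dist\big((x,F_i(x)),L_0\big)\lesssim_{A,\tau}\varepsilon_0\,\ell(R_0)$, i.e.\ $|F_i(x)|\lesssim_{A,\tau}\varepsilon_0\,\ell(J_i)$, with no $\theta$ entering.

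Once (a) is fixed, your arguments for (b) with $i,i'\in I_0$ and for (c) are essentially the paper's: apply \lemref{lem:planes close to each other for similar cubes} to $Q_i,Q_{i'}$ (or to $Q_i,R_0$ in the mixed case) and read off the pointwise and angle estimates.
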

\begin{proof}
	Let us start with (a). We know by Lemma \ref{lem:properties of Ji}(b) and \lemref{lem:Qi corresposnding to Ji} that $\ell(Q_i)\approx\ell(Q_{i'})\approx\ell(J_i)\approx\ell(J_{i'})$. Let $z_1\in Q_i, z_2\in Q_{i'}$ be such that $|\Pi_0(z_1)-\Pi_0(z_2)|\approx \dist(\Pi_0(Q_i),\Pi_0(Q_{i'}))$. Note that $d(z_1)\lesssim \ell(Q_i),\ d(z_2)\lesssim \ell(Q_{i'})$. It follows that
	\begin{multline*}
	\dist(Q_i,Q_{i'})\le|z_1-z_2|\le |\Pi_0^{\perp}(z_1)-\Pi_0^{\perp}(z_2)| +|\Pi_0(z_1)-\Pi_0(z_2)|\\
	\overset{\eqref{eq:assertion of Lipschitz property involving d(x)}}{\lesssim} |\Pi_0(z_1)-\Pi_0(z_2)|+d(z_1)+d(z_2)\lesssim \dist(\Pi_0(Q_i),\Pi_0(Q_{i'})) + \ell(J_i).
	\end{multline*}
	On the other hand, we have by \lemref{lem:Qi corresposnding to Ji}
	\begin{multline*}
	\dist(\Pi_0(Q_i),\Pi_0(Q_{i'}))\le \dist(\Pi_0(Q_i),J_i)) + \dist(J_i,J_{i'}) +\\ \dist(J_{i'},\Pi_0(Q_{i'})) + \diam(J_i)+\diam(J_{i'})
	\lesssim \ell(J_i).
	\end{multline*}
	The two estimates together give us (a). 
	
	Now, (b) and (c) for $i,\ i'\in I_0$ follow immediately because we can apply \lemref{lem:planes close to each other for similar cubes} to $Q_i$ and $Q_{i'}$.	
	If $i,\ i'\notin I_0$, then (b) and (c) are trivially true, since $F_i=F_{i'}\equiv 0$. The only remaining case is $i\in I_0,\ i'\notin I_0$.
	
	Since $10J_i\cap 10 J_{i'}\not = \varnothing$, we know by \lemref{lem:properties of Ji} (b) and \lemref{lem:dist of Ji to B0} that $\ell(J_i)\approx\ell(J_{i'})\approx\ell(R_0).$ We apply \lemref{lem:planes close to each other for similar cubes} to $Q_i$ and $R_0$, and the result follows..
\end{proof}

Now, to define function $F$ on $L_0\setminus\Pi_0(R_G)$ we consider the following partition of unity: for each $i\in I$ let $\widetilde{\varphi}_i\in C^{\infty}(L_0)$ be such that $\widetilde{\varphi}_i\equiv 1$ on $2J_i$, $\supp\widetilde{\varphi}_i\subset 3J_i$, and
\begin{gather*}
\lVert\nabla\widetilde{\varphi}_i\rVert_{\infty}\lesssim \ell(J_i)^{-1},\\ 
\lVert D^2\widetilde{\varphi}_i\rVert_{\infty}\lesssim \ell(J_i)^{-2}.
\end{gather*}
Now, we set
\begin{equation*}
\varphi_i = \frac{\widetilde{\varphi}_i}{\sum_{j\in I}\widetilde{\varphi}_j}.
\end{equation*}
Clearly, the family $\{\varphi_i\}_{i\in I}$ is a partition of unity subordinated to sets $\{3J_i\}_{i\in I}$. Moreover, the inequalities above together with \lemref{lem:properties of Ji} imply that each $\varphi_i$ satisfies
\begin{gather*}
\lVert\nabla{\varphi}_i\rVert_{\infty}\lesssim \ell(J_i)^{-1},\\ 
\lVert D^2{\varphi}_i\rVert_{\infty}\lesssim \ell(J_i)^{-2}.
\end{gather*}

Recall that in \eqref{eq:def of F on RG} we defined $F(z)$ for $z\in \Pi_0(R_G)$. Concerning $L_0\setminus \Pi_0(R_G)$, by \lemref{lem:properties of Ji} (d) we have $L_0\setminus \Pi_0(R_G) = \bigcup_{i\in I}J_i = \bigcup_{i\in I}3J_i.$ Thus, for $z\in L_0\setminus \Pi_0(R_G)$ we may set
\begin{equation}\label{eq:extending F}
F(z)=\sum_{i\in I_0} \varphi_i(z) F_i(z).
\end{equation}

Using Lemmas \ref{lem:properties of Ji}--\ref{lem:Fi estimates}, one may follow the proofs of \cite[Lemma 7.24, Remark 7.26, Lemma 7.27]{tolsa2014analytic} to get the following.
\begin{lemma}\label{lem:properties of F}
	The function $F:L_0\rightarrow L_0^{\perp}$ is supported on $L_0\cap 1.9B_0$ and is $C\theta$-Lipschitz, where $C>0$ is an absolute constant. Furthermore, for $z\in 15J_i, i\in I,$
	\begin{equation}\label{eq:compare gradFi with grad F}
	|\nabla F(z)-\nabla F_i(z)|\lesssim\sqrt{\varepsilon_0},
	\end{equation}
	and
	\begin{equation*}
	|D^{2}F(z)|\lesssim \frac{\sqrt{\varepsilon_0}}{\ell(J_i)}.
	\end{equation*}
\end{lemma}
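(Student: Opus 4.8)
The plan is to follow the scheme of \cite[Lemma 7.24, Remark 7.26, Lemma 7.27]{tolsa2014analytic}: differentiate the partition-of-unity formula \eqref{eq:extending F} and estimate the resulting sums using the bounded-overlap and size-comparability properties recorded in Lemmas \ref{lem:properties of Ji}--\ref{lem:Fi estimates}. The support claim I would dispose of first. On $\Pi_0(R_G)$ we have $F(z)=\Pi_0^{\perp}(x)$ for some $x\in R_G$, and since $R_G\subset\overline{R_0}\subset\overline{B_0}$ this part of the domain sits well inside $1.9B_0$ by \eqref{eq:BR0 subset B0}; on $L_0\setminus\Pi_0(R_G)$ the value $F(z)=\sum_{i\in I_0}\varphi_i(z)F_i(z)$ is nonzero only if $z\in\supp\varphi_i\subset 3J_i$ for some $i\in I_0$, and then $z\in 3J_i\subset L_0\cap 1.9B_0$ by \lemref{lem:dist of Ji to B0}(a). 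Hence $\supp F\subset L_0\cap 1.9B_0$.

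For the interior estimates I would fix $i\in I$ and $z\in 15J_i$. By \lemref{lem:properties of Ji}(d), $z\in L_0\setminus\Pi_0(R_G)$, so near $z$ the map $F$ is given by \eqref{eq:extending F}, and one may let $j$ range over all of $I$ (the terms with $j\notin I_0$ vanishing). Since $\sum_{j\in I}\varphi_j\equiv1$ on $L_0\setminus\Pi_0(R_G)$, we have $\sum_j\nabla\varphi_j(z)=\sum_j D^2\varphi_j(z)=0$, so inserting the constants $F_i(z)$ and $\nabla F_i(z)$ gives
\begin{equation*}
\nabla F(z)=\nabla F_i(z)+\sum_{j}\nabla\varphi_j(z)\big(F_j(z)-F_i(z)\big)+\sum_{j}\varphi_j(z)\big(\nabla F_j(z)-\nabla F_i(z)\big),
\end{equation*}
together with an analogous identity for $D^2F(z)$ in which the affine terms $\sum_j\varphi_j\,D^2F_j$ drop out, leaving only sums against $D^2\varphi_j(z)$ and $\nabla\varphi_j(z)$ of $F_j(z)-F_i(z)$ and $\nabla F_j(z)-\nabla F_i(z)$.

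Only the boundedly many (by \lemref{lem:properties of Ji}(c)) indices $j$ with $z\in 3J_j$ contribute to these sums; each such $J_j$ satisfies $15J_j\cap 15J_i\ne\varnothing$, hence $\ell(J_j)\approx\ell(J_i)$ by \lemref{lem:properties of Ji}(b), and applying \lemref{lem:Fi estimates}(b)--(c) along a bounded chain of Whitney cubes joining $J_i$ to $J_j$ yields $|F_j(z)-F_i(z)|\lesssim\sqrt{\varepsilon_0}\,\ell(J_i)$ and $\|\nabla F_j-\nabla F_i\|_\infty\lesssim\sqrt{\varepsilon_0}$. Inserting these bounds, together with $\|\nabla\varphi_j\|_\infty\lesssim\ell(J_j)^{-1}$ and $\|D^2\varphi_j\|_\infty\lesssim\ell(J_j)^{-2}$, immediately gives \eqref{eq:compare gradFi with grad F}, the estimate $|D^2F(z)|\lesssim\sqrt{\varepsilon_0}/\ell(J_i)$, and --- since $\lip(F_i)\lesssim\theta$ by \eqref{eq:notBS}, with $F_i\equiv0$ when $i\notin I_0$ --- also $|\nabla F(z)|\lesssim\theta$ throughout $L_0\setminus\Pi_0(R_G)$.

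It then remains to glue the two pieces of $F$ into a globally $C\theta$-Lipschitz map. On $\Pi_0(R_G)$ we already have $\lip(F)\lesssim\theta$, on $L_0\setminus\Pi_0(R_G)$ the interior gradient bound gives the same along chains of Whitney cubes, and for $z_1\in\Pi_0(R_G)$, $z_2\in J_i$ one routes through a point $z_3\in\Pi_0(R_G)$ almost realizing the infimum in \eqref{eq:definition D}, so that $|z_2-z_3|\approx D(z_2)\approx\ell(J_i)$ by \lemref{lem:properties of Ji}(a) and \lemref{lem:dist of Ji to B0}; the only extra input is that $F(z_3)$ differs from $F_i(z_3)$ by at most $O(\sqrt{\varepsilon_0}\,\ell(J_i))$, which holds because $R_G$ lies within $\sqrt{\varepsilon_0}\,\ell(Q_i)$ of $L_{Q_i}$ near $Q_i$ --- this is exactly where \eqref{eq:notF} and the definition of $\RFar$ enter, as in the proof of \lemref{lem:planes close to each other for ancestors}. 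As $\varepsilon_0$ is fixed only at the very end, with $\sqrt{\varepsilon_0}\ll\theta$, all the $O(\sqrt{\varepsilon_0})$ errors are absorbed. I expect this last patching step to be the main obstacle: one must check carefully that the graph part of $F$ over $\Pi_0(R_G)$ matches the interpolated part over the Whitney cubes in $C^1$ up to $O(\sqrt{\varepsilon_0})$ near their common boundary, which forces one to keep track of the relation between $R_G$, the cubes $Q_i$ and their approximating planes; once that is in hand, the interior bounds are routine bookkeeping with Lemmas \ref{lem:properties of Ji}--\ref{lem:Fi estimates}, precisely as in \cite[Lemmas 7.24 and 7.27]{tolsa2014analytic}.
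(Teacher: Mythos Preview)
The paper gives no proof, only the citation to \cite[Lemmas 7.24, 7.27, Remark 7.26]{tolsa2014analytic}, and your outline follows that scheme; the support claim and the interior estimates are handled correctly.

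One point in the gluing step needs adjustment. You route through a point $z_3\in\Pi_0(R_G)$ ``almost realizing the infimum in \eqref{eq:definition D}'' with $|z_2-z_3|\approx\ell(J_i)$, but $D(z_2)$ is an infimum over cubes in $\Tree$, not the distance from $z_2$ to $\Pi_0(R_G)$; such a $z_3$ need not exist (indeed $\dist(z_2,\Pi_0(R_G))$ can be much larger than $\ell(J_i)$, e.g.\ if $R_G$ is a single point). The argument in \cite{tolsa2014analytic} avoids routing and compares directly at the correct scale: take $R\in\Tree$ an ancestor of $Q_i$ with $\ell(R)\approx|z_1-z_2|$ (this exists because $|z_1-z_2|\gtrsim\ell(J_i)$ by the $1$-Lipschitz property of $D$, while $|z_1-z_2|\lesssim\ell(R_0)$), and let $F_R$ be the affine map with graph $L_R$. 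Then $|F_R(z_1)-F_R(z_2)|\lesssim\theta|z_1-z_2|$ by \eqref{eq:notBS}; the bound $|F(z_2)-F_R(z_2)|\lesssim\sqrt{\varepsilon_0}\,|z_1-z_2|$ follows from \lemref{lem:planes close to each other for ancestors} applied to $Q_i\subset R$ together with your interior estimate $|F(z_2)-F_i(z_2)|\lesssim\sqrt{\varepsilon_0}\ell(J_i)$; and $|F(z_1)-F_R(z_1)|\lesssim\sqrt{\varepsilon_0}\,|z_1-z_2|$ comes from exactly the mechanism you identify --- $d(x_1)=0$ gives arbitrarily small $P\in\Tree$ near $x_1=(z_1,F(z_1))$, and then \lemref{lem:planes close to each other for ancestors} yields $\dist(x_1,L_R)\lesssim\sqrt{\varepsilon_0}\,\ell(R)$. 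With this correction the $C\theta$-Lipschitz bound follows.
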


We denote the graph of $F$ as $\Gamma$, and we define a function $f: L_0\rightarrow\Gamma$ as
\begin{equation*}
f(x)=(x,F(x)).
\end{equation*}

We set also
\begin{equation*}
\sigma = \Hn{\Gamma}.
\end{equation*}

\begin{lemma}\label{lem:balls around f(Ji) in 1.5B0}
	Let $i\in I_0$. Then $B(f(z_{J_i}),2\diam(J_i))\subset 2.3B_0$.
\end{lemma}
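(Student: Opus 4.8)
The plan is to fix an arbitrary point $w\in B(f(z_{J_i}),2\diam(J_i))$, where $z_{J_i}$ is the center of $J_i$, and bound $|w-z_0|$ above by $2.3\,r_0$. The only tool is the triangle inequality, routed through the point $f(z_{J_i})$: since $f(z)=(z,F(z))$ we have $|f(z_{J_i})-z_{J_i}|=|F(z_{J_i})|$, so
\[
|w-z_0|\ <\ 2\diam(J_i)\ +\ |F(z_{J_i})|\ +\ |z_{J_i}-z_0|,
\]
and it suffices to control each of the three terms.

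For the first term, \lemref{lem:dist of Ji to B0}(a) gives $\diam(J_i)\le 0.2\,r_0$, so $2\diam(J_i)\le 0.4\,r_0$. For the third term I use that $i\in I_0$: by \eqref{eq:definition I0} there is $z'\in J_i\cap 1.5B_0$, and since $z_{J_i}$ and $z'$ both lie in $J_i$ we get $|z_{J_i}-z_0|\le |z_{J_i}-z'|+|z'-z_0|<\diam(J_i)+1.5\,r_0\le 1.7\,r_0$.

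The only term that needs an argument is $|F(z_{J_i})|$, and this will be the ``hard'' (but still short) point. By \lemref{lem:properties of F}, $F$ is $C\theta$-Lipschitz and supported on $L_0\cap 1.9B_0$, so $F$ vanishes on $L_0\setminus 1.9B_0$. The plan is to exhibit a nearby zero of $F$: pick $z''\in L_0$ with $|z''-\Pi_0(z_0)|=2\,r_0$; then $|z''-z_0|\ge |z''-\Pi_0(z_0)|=2\,r_0>1.9\,r_0$, hence $z''\notin 1.9B_0$ and $F(z'')=0$, while $|z_{J_i}-z''|\le |z_{J_i}-\Pi_0(z_0)|+2\,r_0\le |z_{J_i}-z_0|+2\,r_0<3.7\,r_0$. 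Therefore $|F(z_{J_i})|=|F(z_{J_i})-F(z'')|\le C\theta\,|z_{J_i}-z''|\le 4C\theta\,r_0$, which is at most $\tfrac{1}{100}r_0$ once $\theta$ is small enough — and $\theta$ is among the parameters fixed only at the end of the proof, so this is allowed.

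Combining the three bounds yields $|w-z_0|<0.4\,r_0+\tfrac{1}{100}r_0+1.7\,r_0<2.3\,r_0$, i.e. $w\in 2.3B_0$; as $w$ was arbitrary this gives $B(f(z_{J_i}),2\diam(J_i))\subset 2.3B_0$. The comfortable numerical margin ($\approx 2.11\,r_0$ versus $2.3\,r_0$) means no constant has to be tracked with care; the whole content is the triangle inequality plus the smallness of $\diam(J_i)$ relative to $r_0$, the defining property $J_i\cap 1.5B_0\neq\varnothing$ of $i\in I_0$, and the bound $|F(z_{J_i})|\lesssim\theta\,r_0$ coming from $F$ being a small-Lipschitz function with compact support in $L_0\cap 1.9B_0$.
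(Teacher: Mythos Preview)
Your proof is correct and follows essentially the same approach as the paper's: both use $\diam(J_i)\le 0.2\,r_0$ from \lemref{lem:dist of Ji to B0}(a) to place $z_{J_i}\in 1.7B_0$, then bound $|F(z_{J_i})|\lesssim\theta\,r_0$ using that $F$ is $C\theta$-Lipschitz with support in $L_0\cap 1.9B_0$, and conclude via the triangle inequality. The paper simply states the bound $|F(z_{J_i})|\lesssim\theta\,r_0$ in one line, whereas you spell it out by exhibiting a nearby zero $z''$ of $F$; this is a harmless elaboration of the same argument.
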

\begin{proof}
	By the definition of $I_0$ we have $J_i\cap 1.5B_0\neq\varnothing$. We know by \lemref{lem:dist of Ji to B0} that $\diam(J_i)\le0.2r_0$, and so $z_{J_i}\in1.7B_0$. Moreover, since $F$ is supported on $L_0\cap 1.9B_0$ and is Lipschitz continuous with constant comparable to $\theta$, we have $\dist (f(z_{J_i}), z_{J_i})=|F(z_{J_i})|\lesssim\theta r_0$.
	
	It follows easily that $B(f(z_{J_i}),2\diam(J_i))\subset 2.3B_0$.
\end{proof}

We have defined a Lipschitz graph $\Gamma$, and a set $R_G\subset \Gamma\cap R_0$ such that $\restr{\mu}{R_G}\ll\mathcal{H}^n$. Clearly, measure $\restr{\mu}{R_G}$ is $n$-rectifiable. What remains to be shown is that $\mu(R_G)\ge 0.5\mu(R_0)$. Since $R_G$ contains $R_0\setminus\bigcup_{Q\in\Stop}Q$, it is enough to estimate the measure of the stopping cubes -- this is what we will do in the remaining part of the article.

\section{Small measure of cubes from \texorpdfstring{$\LD$}{LD}}\label{sec:LD}
In this section we will bound the measure of low density cubes. First, let us prove some additional estimates.
\subsection{\texorpdfstring{$\Gamma$}{Gamma} lies close to \texorpdfstring{$R_0$}{R0}}
\begin{lemma}\label{lem:x arbitrary close to Gamma}
	There exists a constant $C_1$ such that for any $x\in 3B_0$
	\begin{equation*}
	\dist(x,\Gamma)\le C_1\, d(x).
	\end{equation*}
\end{lemma}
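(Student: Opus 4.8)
The plan is to estimate $\dist(x,\Gamma)$ by splitting into two cases according to whether $x$ is close to $R_G$ or not. First I would observe that if $d(x)=0$, then $x\in R_G\subset\Gamma$ and there is nothing to prove, so assume $d(x)>0$. Fix a small $h>0$ and pick $Q\in\Tree$ with $\dist(x,Q)+\diam(B_Q)\le d(x)+h$; pick any $y\in Q$, so $|x-y|\le d(x)+h$. It suffices to bound $\dist(y,\Gamma)$ by $C\,d(x)$ (up to the error $h$, which we let tend to $0$ at the end). The point $y$ lies in $3B_0$ (for $h$ small, since $Q\subset R_0\subset 3B_0$), so $z:=\Pi_0(y)\in L_0$ satisfies $z\in 1.9B_0$ by \eqref{eq:BR0 subset B0}, and $|y-f(z)|=|\Pi_0^{\perp}(y)-F(\Pi_0(y))|$.

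The key step is to compare $\Pi_0^{\perp}(y)$ with $F(\Pi_0(y))$. Note $D(z)\le d(y)\le \dist(y,Q)+\diam(B_Q)\le|x-y|+\dist(x,Q)+\diam(B_Q)\lesssim d(x)+h$. If $D(z)=0$ then $z\in\Pi_0(R_G)$ and, since $F$ was defined on $\Pi_0(R_G)$ via $F(\Pi_0(w))=\Pi_0^\perp(w)$ for $w\in R_G$, \lemref{lem:Lip property involving d(x)} (applied to $y$ and a point $w\in R_G$ with $\Pi_0(w)=z$, so $d(w)=0$) gives $|\Pi_0^\perp(y)-F(z)|=|\Pi_0^\perp(y)-\Pi_0^\perp(w)|\lesssim \theta|\Pi_0(y)-\Pi_0(w)|+d(y)+d(w)= d(y)\lesssim d(x)+h$. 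If $D(z)>0$, then $z\in 15J_i$ for some $i$ with $\diam(J_i)\approx D(z)\lesssim d(x)+h$ by \lemref{lem:properties of Ji}(a), and by \eqref{eq:extending F} together with \lemref{lem:Fi estimates}(b) (all $F_{i'}$ with $3J_{i'}\ni z$ are within $\sqrt{\varepsilon_0}\,\ell(J_i)$ of each other), $F(z)$ lies within $\lesssim \ell(J_i)\lesssim d(x)+h$ of $F_i(z)$, the value at $z$ of the affine map whose graph is $L_{Q_i}$. Thus $|\Pi_0^\perp(y)-F(z)|\le |\Pi_0^\perp(y)-F_i(z)|+Cd(x)+Ch$, and $|\Pi_0^\perp(y)-F_i(z)|=\dist_{L_0^\perp\text{-fiber}}(y,L_{Q_i})\le C\dist(y,L_{Q_i})$ since $L_{Q_i}$ is a $C\theta$-Lipschitz graph over $L_0$.

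It then remains to bound $\dist(y,L_{Q_i})$. Here I would use that $y\in Q\subset 3B_Q$, that $\ell(Q)\le d(x)+h$, and that $\dist(Q,Q_i)\lesssim \dist(Q,J_i)+\diam(J_i)+\dist(J_i,\Pi_0(Q_i))+\ell(Q_i)\lesssim d(x)+h$ (using $z=\Pi_0(y)\in 15J_i$, $\dist(x,Q)\le d(x)+h$, and \lemref{lem:Qi corresposnding to Ji}), with $\ell(Q)\lesssim\ell(Q_i)\approx\ell(J_i)$. Applying \lemref{lem:planes close to each other for ancestors} (or rather the combination of \lemref{lem:planes close to each other for similar cubes} and the $Q\subset P$ case) to $Q$ and $Q_i$, any point of $L_Q\cap CB_Q$ lies within $\lesssim_{A,\tau}\sqrt{\varepsilon_0}\,\ell(Q_i)\lesssim d(x)+h$ of $L_{Q_i}$; combining with $\dist(y,L_Q)\le\sqrt{\varepsilon_0}\,\ell(Q)$ (which holds because $y\in Q\subset 3B_Q$ and $Q\in\Tree$, so $y\notin\RFar$... actually more simply because the best approximating plane estimate \eqref{eq:notF} guarantees we can pick $y$ close to $L_Q$, but in fact we want it for the specific $y$; alternatively use that $Q$ is $\gamma$-balanced so $L_Q$ is well-approximated, and $\dist(y,L_Q)\lesssim_{A,\tau}\sqrt{\varepsilon_0}\ell(Q)$ on a large portion — one picks $y$ in that portion from the start), one gets $\dist(y,L_{Q_i})\lesssim_{A,\tau}\sqrt{\varepsilon_0}(d(x)+h)$. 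Putting everything together and letting $h\to 0$ yields $\dist(x,\Gamma)\le C_1 d(x)$.

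The main obstacle I anticipate is the bookkeeping in the case $D(z)>0$: one must carefully chase the cube $Q$ (coming from the definition of $d(x)$), the interval $J_i$ containing $z=\Pi_0(y)$, and the associated cube $Q_i$ from \lemref{lem:Qi corresposnding to Ji}, verifying all the required comparabilities $\ell(Q)\lesssim\ell(Q_i)\approx\ell(J_i)\approx D(z)\approx d(x)$ and $\dist(Q,Q_i)\lesssim d(x)$, so that \lemref{lem:planes close to each other for similar cubes}/\lemref{lem:planes close to each other for ancestors} can legitimately be invoked. A subtlety is that one should arrange from the outset (by Chebyshev on the $\beta_2$ estimate for $3B_Q$, or by $\gamma$-balancedness together with \eqref{eq:notF}) that the representative $y\in Q$ is chosen to also satisfy $\dist(y,L_Q)\lesssim_{A,\tau}\sqrt{\varepsilon_0}\,\ell(Q)$; this costs nothing since we only need \emph{some} point of $Q$ within $d(x)+h$ of $x$.
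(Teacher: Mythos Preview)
There is a genuine gap in your argument, and the paper's route is considerably simpler.

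\textbf{The gap.} Your chain of comparabilities ``$\ell(Q)\lesssim\ell(Q_i)\approx\ell(J_i)\approx D(z)\approx d(x)$'' is not correct. With $z=\Pi_0(y)$ and $y\in Q$, the definitions give only
\[
D(z)\le d(y)\le \diam(B_Q)\approx \ell(Q)\le d(x)+h,
\]
so $\ell(Q_i)\approx D(z)\lesssim \ell(Q)$ --- the reverse of what you claim --- and there is no reason for $D(z)\approx d(x)$ either (nothing prevents $\Pi_0(y)$ from landing very near $\Pi_0(R_G)$, making $D(z)\ll\ell(Q)$). Consequently \lemref{lem:planes close to each other for ancestors} cannot be applied in the direction $L_Q\to L_{Q_i}$: that lemma needs the \emph{smaller} cube's plane to be compared to the \emph{larger} one's, and here $Q_i$ is the smaller cube. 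So the step ``any point of $L_Q\cap CB_Q$ lies within $\lesssim\sqrt{\varepsilon_0}\,\ell(Q_i)$ of $L_{Q_i}$'' is unjustified.

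\textbf{How the paper proceeds.} The paper never introduces your auxiliary cube $Q$. It applies \lemref{lem:Lip property involving d(x)} directly to $x$ and to the point $y:=f(\Pi_0(x))\in\Gamma$ that shares the same $\Pi_0$--fiber; since $\Pi_0(x)=\Pi_0(y)$ this gives immediately
\[
\dist(x,\Gamma)\le |x-y|=|\Pi_0^\perp(x)-\Pi_0^\perp(y)|\lesssim d(x)+d(y).
\]
The whole problem is then reduced to bounding $d(y)$ for a point $y\in\Gamma$. If $\Pi_0(x)\in\Pi_0(R_G)$ this is trivial ($d(y)=0$). Otherwise $\Pi_0(x)\in J_i$ for some $i\in I_0$, and one shows $\dist(y,Q_i)\lesssim\ell(J_i)$ (via $|F(\xi)-F_i(\xi)|\lesssim\ell(J_i)$ and a point of $2B_{Q_i}$ close to $L_{Q_i}$), whence $d(y)\lesssim\ell(J_i)\approx D(\Pi_0(x))\le d(x)$. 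No comparison of two different approximating planes is needed.

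Your argument can be salvaged along these lines: once you have $y\in Q$ and $z=\Pi_0(y)\in J_i$, use \lemref{lem:Lip property involving d(x)} directly between $y$ and a well-chosen point $w\in Q_i$ (with $|\Pi_0(w)-z|\lesssim\ell(J_i)$ and $\dist(w,L_{Q_i})\lesssim\sqrt{\varepsilon_0}\,\ell(Q_i)$) to get $|y-w|\lesssim d(x)+h$; this bypasses the $L_Q$--$L_{Q_i}$ comparison entirely. But at that point you are essentially reproducing the paper's estimate of $d(y)$, only after an unnecessary detour through~$Q$.
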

\begin{proof}
	First, notice that if $x\in 3B_0\setminus 1.01B_0$, then $d(x)\gtrsim r_0$, and so the estimate $\dist(x,\Gamma)\le C_1\, d(x)$ is trivial. Now, assume $x\in 1.01B_0$.
	
	Let $\xi = \Pi_0(x)\in L_0,\ y=(\xi,F(\xi))\in\Gamma$. \lemref{lem:Lip property involving d(x)} gives us
	\begin{equation}\label{eq:dist x to Gamma estimated with dx and dy}
	\dist(x,\Gamma)\le |x-y| = |\Pi_0^{\perp}(x)-\Pi_0^{\perp}(y)|\lesssim d(x)+d(y).
	\end{equation}
	If $\xi\in \Pi_0(R_G)$, then $y\in R_G$, which means that $d(y)=0$ and we get $\dist(x,\Gamma) \lesssim d(x).$
	
	Now suppose $\xi\not\in \Pi_0(R_G)$. Let $i\in I$ be such that $\xi\in J_i$. Note that since $x\in 1.01B_0$, then by \eqref{eq:BR0 subset B0} $\xi\in 1.1B_0$, and so $J_i\cap 1.5B_0\not=\varnothing$. Hence, $i\in I_0$. Let $Q_i\in \Tree$ be the cube from \lemref{lem:Qi corresposnding to Ji} corresponding to $J_i$. It follows that
	\begin{equation}\label{eq:d(y) estimated with dist and ell}
	d(y)\le \dist(y, Q_i) + \ell(Q_i)\lesssim\dist(y,Q_i) + \ell(J_i).
	\end{equation}
	
	Now we will estimate $\dist(y,Q_i).$ Let $z = (\xi,F_i(\xi))\in L_{Q_i}$. We have
	\begin{multline*}
	|y-z| = |F(\xi) - F_i(\xi)| = \big\lvert \sum_{j\in I_0} \varphi_j(\xi)F_j(\xi) - F_i(\xi)\big\rvert = \big\lvert \sum_{j\in I_0} \varphi_j(\xi)(F_j(\xi) - F_i(\xi))\big\rvert\\
	\le \sum_{j\in I_0} \varphi_j(\xi) \big\lvert F_j(\xi) - F_i(\xi)\big\rvert.
	\end{multline*}
	Since $\varphi_j(\xi)\not= 0$ only for $j\in I_0$ such that $\xi\in 3J_j$, we get from \lemref{lem:Fi estimates} (b) that $\lvert F_j(\xi) - F_i(\xi)\rvert \lesssim \ell(J_i)$. Hence, 
	\begin{equation*}
	|y-z|\lesssim \ell(J_i).
	\end{equation*}
	
	We use the smallness of $\beta_{\mu,2}(3B_{Q_i})$ and Chebyshev inequality to find $p\in 2B_{Q_i},\ q\in L_{Q_i}$ such that $|p-q|\lesssim \ell(J_i)$. We know from \lemref{lem:Qi corresposnding to Ji} (b) that $|\Pi_0(p)-\xi|\lesssim\ell(J_i)$, and so $|\Pi_0(q)-\xi|\lesssim\ell(J_i).$ Together with the fact that both $q$ and $z$ belong to $L_{Q_i}$, and that $\measuredangle(L_0, L_{Q_i})\le\theta$ by \eqref{eq:notBS}, this implies
	\begin{equation*}
	|z-q|\lesssim\ell(J_i).
	\end{equation*}
	Thus,
	\begin{equation*}
	\dist(y, Q_i)\le |y-z|+|z-q|+|q-p|\lesssim\ell(J_i).
	\end{equation*}
	From this, \eqref{eq:d(y) estimated with dist and ell}, \lemref{lem:properties of Ji} (a), and the definition od $D$, we get
	\begin{equation*}
	d(y)\lesssim\ell(J_i)\approx D(\xi) \le d(x).		 
	\end{equation*}
	The estimate above together with \eqref{eq:dist x to Gamma estimated with dx and dy} conclude the proof.
\end{proof}
\begin{cor}\label{cor:doubling cubes close to Gamma}
	For every $Q\in\Tree$ we have
	\begin{equation*}
	\dist(Q,\Gamma)\lesssim \ell(Q).
	\end{equation*}
	Moreover, for $i\in I_0$ we have
	\begin{equation}\label{eq:Qi close to fJi}
	\dist(Q_i,f(J_i))\lesssim\ell(Q_i).
	\end{equation}
\end{cor}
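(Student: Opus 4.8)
The plan is to read off both estimates directly from \lemref{lem:x arbitrary close to Gamma}, using only the definition \eqref{eq:definition d} of the auxiliary function $d$ and the properties of $Q_i$ from \lemref{lem:Qi corresposnding to Ji}; no genuinely new argument is needed.

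For the first estimate I would fix $Q\in\Tree$ and an arbitrary point $x\in Q$. Since $Q\subset R_0\subset B_0\subset 3B_0$, \lemref{lem:x arbitrary close to Gamma} applies at $x$, and using $Q$ itself as a competitor in the infimum defining $d$ in \eqref{eq:definition d} gives $d(x)\le\dist(x,Q)+\diam(B_Q)=\diam(B_Q)\lesssim\ell(Q)$. Hence $\dist(x,\Gamma)\le C_1\,d(x)\lesssim\ell(Q)$, and since $\dist(Q,\Gamma)\le\dist(x,\Gamma)$ we obtain $\dist(Q,\Gamma)\lesssim\ell(Q)$.

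For the second estimate I would invoke the fact --- already established inside the proof of \lemref{lem:x arbitrary close to Gamma} --- that for every $\xi\in J_i$ (with $i\in I_0$) the point $f(\xi)=(\xi,F(\xi))\in\Gamma$ satisfies $\dist(f(\xi),Q_i)\lesssim\ell(J_i)$. Indeed, setting $z=(\xi,F_i(\xi))\in L_{Q_i}$, one has $|f(\xi)-z|\le\sum_{j}\varphi_j(\xi)|F_j(\xi)-F_i(\xi)|\lesssim\ell(J_i)$ by \lemref{lem:Fi estimates}(b) (the sum being over the finitely many $j$ with $\xi\in 3J_j$, which forces $10J_i\cap 10J_j\neq\varnothing$), while the smallness of $\beta_{\mu,2}(3B_{Q_i})$ and Chebyshev produce $p\in 2B_{Q_i}$ and $q\in L_{Q_i}$ with $|p-q|\lesssim\ell(J_i)$ and $|z-q|\lesssim\ell(J_i)$; the triangle inequality then gives $\dist(f(\xi),Q_i)\le|f(\xi)-z|+|z-q|+|q-p|\lesssim\ell(J_i)$. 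Choosing any $\xi\in J_i$ (e.g.\ its center), the point $f(\xi)$ lies in $f(J_i)$, so
\begin{equation*}
\dist(Q_i,f(J_i))\le\dist(f(\xi),Q_i)\lesssim\ell(J_i)\approx\ell(Q_i),
\end{equation*}
where the last comparability is \lemref{lem:Qi corresposnding to Ji}; this is \eqref{eq:Qi close to fJi}.

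I do not expect any real obstacle here: the corollary is essentially a repackaging of \lemref{lem:x arbitrary close to Gamma}. The only point demanding a little attention is the bookkeeping remark that the inequality $\dist(f(\xi),Q_i)\lesssim\ell(J_i)$ borrowed from the previous proof holds for \emph{all} $\xi\in J_i$, not just the particular $\xi=\Pi_0(x)$ that appeared there --- but the derivation recalled above uses nothing about $\xi$ beyond $\xi\in J_i$ and $i\in I_0$.
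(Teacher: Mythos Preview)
Your proof is correct. For the first estimate your argument is identical to the paper's. For the second estimate the paper takes a slightly more economical route: rather than re-entering the proof of \lemref{lem:x arbitrary close to Gamma} to extract the intermediate bound $\dist(f(\xi),Q_i)\lesssim\ell(J_i)$, it deduces \eqref{eq:Qi close to fJi} from the first inequality together with $\dist(\Pi_0(Q_i),J_i)\lesssim\ell(Q_i)$ (\lemref{lem:Qi corresposnding to Ji}) and the fact that $\Gamma$ is a Lipschitz graph with small constant --- one picks $x\in Q_i$ close to some $y\in\Gamma$, projects to $L_0$ to land near $J_i$, and then the Lipschitz graph property carries $y$ close to $f(J_i)$. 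Your approach works equally well; it is a bit more hands-on but has the virtue of making explicit which steps of the previous proof are being reused.
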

\begin{proof}
	Since $Q\subset R_0\subset B_0$, the first inequality follows immediately by \lemref{lem:x arbitrary close to Gamma} and the definition of function $d$.
	
	The second inequality is implied by the first one, the fact that $\dist(\Pi_0(Q_i), J_i)\lesssim\ell(Q_i)$ by \lemref{lem:Qi corresposnding to Ji}, and that $\Gamma$ is a Lipschitz graph with a small Lipschitz constant.
\end{proof}

\begin{lemma}\label{lem:x from Gamma close to LQ}
	Let $C>0$. If $\varepsilon_0$ is chosen small enough, then for each $Q\in\Tree$ and $x\in \Gamma\cap CB_Q$
	\begin{equation*}
	\dist(x, L_Q)\lesssim_{A,\tau,C} \sqrt{\varepsilon_0} \ell(Q).
	\end{equation*}
\end{lemma}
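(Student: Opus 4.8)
The plan is to show that the graph $\Gamma$, restricted to a large ball around $Q$, stays close to the best approximating plane $L_Q$. The natural strategy is to cover $\Gamma\cap CB_Q$ by pieces $f(15J_i)$ (for $z\in\Pi_0(Q)$ landing in $\Pi_0(R_G)$ we are exactly on $R_G$, and there $\dist(x,L_Q)\lesssim\sqrt{\varepsilon_0}\ell(Q)$ follows from \lemref{lem:planes close to each other for ancestors} once we know the relevant cube containing $x$ is in $\Tree$ and comparable to $Q$). The real content is the estimate on each Whitney piece $f(15J_i)$ with $15J_i$ close to $\Pi_0(CB_Q)$.

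\textbf{Key steps.} First I would note that $x\in\Gamma\cap CB_Q$ means $x=(\xi, F(\xi))$ with $\xi=\Pi_0(x)$, and $|\xi - \Pi_0(z_Q)|\lesssim_C\ell(Q)$. Two cases: (i) $\xi\in\Pi_0(R_G)$, so $x\in R_G$; then $d(x)=0$, so by the definition of $d$ there is a cube $P\in\Tree$ with $x\in 3B_P$ and $\ell(P)$ arbitrarily small — but I actually want a cube comparable to $Q$. Better: since $x\in R_G\cap CB_Q$ and also $x$ lies in the graph, pick (using $d(x)=0$) a sequence $P_j\in\Tree$ with $x\in 3B_{P_j}$; for $\ell(P_j)$ small enough $3B_{P_j}\subset C'B_Q$, so $\dist(P_j,Q)\lesssim_C\ell(Q)$ and $\ell(P_j)\lesssim\ell(Q)$, hence \lemref{lem:planes close to each other for ancestors} gives $\dist(x,L_Q)\le\dist(x,L_{P_j})+\dist_{\text{proj err}}$; but $x\in L_{P_j}$? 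No — $x\in R_G$, not $L_{P_j}$. Instead use \eqref{eq:notF}: $x\in 3B_{P_j}\cap\RFar^c$ eventually (since $\mu(3B_{P_j}\cap\RFar)$ is small and $x$ fixed won't directly give this). The cleaner route: directly invoke \lemref{lem:x arbitrary close to Gamma}-type reasoning is not needed; instead, (ii) treat both cases uniformly via the Whitney piece containing $\xi$, falling back to $R_G$ as the degenerate case.

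So the uniform argument: if $\xi\notin\Pi_0(R_G)$, let $J_i\ni\xi$; since $\xi\in\Pi_0(CB_Q)\subset$ a bounded dilate of $1.5B_0$, we have $i\in I_0$, and $D(\xi)\lesssim_C\ell(Q)$ (because $d$ of a point in $Q$ is $\lesssim\ell(Q)$ and $\xi$ is within $\lesssim_C\ell(Q)$ of $\Pi_0(Q)$), so by \lemref{lem:properties of Ji}(a), $\ell(J_i)\lesssim_C\ell(Q)$, and the associated cube $Q_i\in\Tree$ from \lemref{lem:Qi corresposnding to Ji} satisfies $\ell(Q_i)\approx\ell(J_i)\lesssim_C\ell(Q)$ with $\dist(Q_i,Q)\lesssim_C\ell(Q)$ (using \corref{cor:doubling cubes close to Gamma} and $\dist(J_i,\Pi_0(Q))\lesssim_C\ell(Q)$). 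Then \lemref{lem:planes close to each other for ancestors} applied to the pair $(Q_i,Q)$ gives $\dist(w,L_Q)\lesssim_{A,\tau,C}\sqrt{\varepsilon_0}\ell(Q)$ for $w\in L_{Q_i}\cap C'B_{Q_i}$. Since $F(\xi)=\sum_j\varphi_j(\xi)F_j(\xi)$ and by \lemref{lem:Fi estimates}(b) all the pieces $F_j(\xi)$ with $\varphi_j(\xi)\neq0$ are within $\lesssim\sqrt{\varepsilon_0}\ell(J_i)$ of $F_i(\xi)$, the point $x=(\xi,F(\xi))$ is within $\lesssim\sqrt{\varepsilon_0}\ell(J_i)\lesssim_C\sqrt{\varepsilon_0}\ell(Q)$ of the point $(\xi,F_i(\xi))\in L_{Q_i}$, which in turn (being in $L_{Q_i}\cap C'B_{Q_i}$ after checking $\xi$ is not too far from $z(Q_i)$, using $\measuredangle(L_{Q_i},L_0)\le\theta$) is within $\lesssim_{A,\tau,C}\sqrt{\varepsilon_0}\ell(Q)$ of $L_Q$. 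Summing via the triangle inequality gives the claim. For the case $\xi\in\Pi_0(R_G)$: then $x\in R_G\subset R_0\setminus\bigcup_{Q'\in\Stop}Q'$, and one picks $P\in\Tree$ with $x\in 3B_P$, $\ell(P)\approx$ small, $3B_P\subset C'B_Q$; by \eqref{eq:notF} for $\varepsilon_0$ small $3B_P\setminus\RFar\ni x$ (more carefully: $x$ need not avoid $\RFar$, but one chooses points near $x$ in $3B_P\setminus\RFar$ and passes to the limit as $\ell(P)\to0$), giving $\dist(x,L_P)\lesssim\sqrt{\varepsilon_0}\ell(P)\to0$ while $\dist(x,L_Q)\le\dist(x,L_P)+\sqrt{\varepsilon_0}\ell(Q)$ from \lemref{lem:planes close to each other for ancestors}. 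Letting $\ell(P)\to0$ yields $\dist(x,L_Q)\lesssim_{A,\tau,C}\sqrt{\varepsilon_0}\ell(Q)$.

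\textbf{Main obstacle.} The delicate point is verifying that the intermediate point $(\xi,F_i(\xi))$ (or in the degenerate case, $x$ itself) actually lies in a \emph{bounded} dilate $C'B_{Q_i}$ (respectively $C'B_P$) so that \lemref{lem:planes close to each other for ancestors} applies with a controlled constant; this requires bookkeeping that $\xi$ stays within $\lesssim_C\ell(Q_i)$ of the center $z(Q_i)$, which follows by chaining $\dist(J_i,\Pi_0(Q))\lesssim_C\ell(Q)$, $\ell(Q_i)\approx\ell(J_i)$, and the comparability of all sidelengths, but must be done with care since $C$ enters the constants and we must ensure $\varepsilon_0$ can still be chosen small enough \emph{depending on} $A,\tau,C$ at the end. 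The covering/limiting argument in the $R_G$ case also needs the observation that $\bigcap$ over a decreasing sequence of $3B_P$'s shrinks to $\{x\}$, which is standard.
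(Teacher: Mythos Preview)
Your outline follows the paper's proof closely: the split into $x\in R_G$ and $x\notin R_G$, the passage (in the second case) through the Whitney cube $J_i\ni\xi=\Pi_0(x)$ and the associated $Q_i\in\Tree$ from \lemref{lem:Qi corresposnding to Ji}, the convex-combination bound $|F(\xi)-F_i(\xi)|\lesssim\sqrt{\varepsilon_0}\ell(J_i)$ via \lemref{lem:Fi estimates}(b), and the application of \lemref{lem:planes close to each other for ancestors} to the pair $(Q_i,Q)$ all match the paper's Cases~1 and~2. Your handling of $x\in R_G$ is also essentially the paper's, though the paper avoids the $\RFar$ detour: it simply picks $P\in\Tree$ with $\dist(x,P)+\diam(B_P)\le h$, notes $\dist(x,L_P)\lesssim h$ directly (smallness of $\beta_{\mu,2}(3B_P)$ plus $x$ within $h$ of $3B_P$), projects onto $L_P$, applies \lemref{lem:planes close to each other for ancestors}, and lets $h\to 0$.

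There is one genuine gap. Your claim that $i\in I_0$ (``since $\xi\in\Pi_0(CB_Q)\subset$ a bounded dilate of $1.5B_0$'') does not follow: membership in $I_0$ requires specifically $J_i\cap 1.5B_0\neq\varnothing$, not merely $\xi\in\lambda B_0$ for some $\lambda=\lambda(C)$. When $C\,r(B_Q)$ is comparable to $r_0$ --- for instance when $\ell(Q)\approx\ell(R_0)$ --- the point $\xi$ may lie outside $1.5B_0$, $J_i$ need not meet $1.5B_0$, and then $Q_i$ is simply undefined, so your chain breaks. The paper treats this as a separate third case (phrased as $\sum_{j\in I_0}\varphi_j(\zeta)<1$): one checks, via \lemref{lem:dist of Ji to B0}(b) and \lemref{lem:properties of Ji}(b), that the presence of any nearby index $k\notin I_0$ forces $\ell(J_i)\approx\ell(R_0)$, hence $\ell(Q)\approx_C\ell(R_0)$; then $L_Q$ and $L_0$ are compared directly via \lemref{lem:planes close to each other for similar cubes}, and the contributions $(\zeta,F_j(\zeta))=(\zeta,0)\in L_0$ from $j\notin I_0$ are shown to lie within $\lesssim\varepsilon_0\ell(Q)$ of $L_Q$. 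Once this boundary case is inserted, your argument is complete.
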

\begin{proof}
	There are three cases to consider.
	\vspace{5pt}
	
	\textbf{Case 1.} $x\in R_G$, i.e. $d(x)=0$. 
	
	Fix some small $h>0$. Let $P\in \Tree$ be such that $(\dist(x,P)+\diam(B_P))\le h\ll\ell(Q)$. 
	Since $x\in \Gamma\cap CB_Q$, we have $\dist(P,Q)\lesssim \ell(Q)$. Setting $y=\Pi_{L_P}(x)$, we clearly have $|x-y|\lesssim h$, and in consequence $y\in L_P\cap C'B_{Q}$ with $C'\approx C$. Thus, we may apply \lemref{lem:planes close to each other for ancestors} to get
	\begin{equation*}
	\dist(y, L_Q)\lesssim_{A,\tau,C}\sqrt{\varepsilon_0}\ell(Q).
	\end{equation*}
	Thus, $\dist(x,L_Q)\lesssim_{A,\tau,C}\sqrt{\varepsilon_0}\ell(Q) + h$. Letting $h\rightarrow 0$ ends the proof in this case.
	\vspace{5pt}
	
	\textbf{Case 2.} $x=(\zeta, F(\zeta))$ for $\zeta\in L_0\setminus\Pi_0(R_G)$, and
	\begin{equation*}
	\sum_{i\in I_0}\varphi_i(\zeta) = 1.
	\end{equation*}
	Since $F(\zeta) = \sum_i \varphi_i(\zeta)F_i(\zeta)$, we get that $x$ is a convex combination of points $\{(\zeta, F_i(\zeta))\}_{i\in I_1}$, where $I_1\subset I_0$ consists of indices $i$ such that $\varphi_i(\zeta)\not = 0$. Thus, it suffices to show that for each $i\in I_1$
	\begin{equation*}
	\dist\big((\zeta, F_i(\zeta)), L_Q\big)\lesssim_{A,\tau,C}\sqrt{\varepsilon_0} \ell(Q).
	\end{equation*}
	First, note that since $x\in CB_Q$,
	\begin{equation*}
	D(\zeta)\le d(x)\lesssim_C \ell(Q).
	\end{equation*}
	Let $J_{i'}$ be the dyadic cube containing $\zeta$, $i'\in I_1$. Then
	\begin{equation}\label{eq:ell Jiprime smaller than ellQ}
	\diam(J_{i'})\le \frac{1}{20} D(\zeta)\lesssim_C \ell(Q).
	\end{equation}
	Moreover, as each $\varphi_i$ is supported in $3J_i$, we necessarily have $3J_i\cap J_{i'}\not= \varnothing$ for $i\in I_1$. Thus, by \lemref{lem:properties of Ji} (b) and by \lemref{lem:Qi corresposnding to Ji},
	\begin{equation}\label{eq:Qi smaller than Q}
	\ell(Q_{i'})\approx\ell(J_{i'})\approx \ell(J_i)\approx\ell(Q_i) \overset{\eqref{eq:ell Jiprime smaller than ellQ}}{\lesssim_C} \ell(Q).
	\end{equation}
	Furthermore, \lemref{lem:Fi estimates} (a) implies
	\begin{equation*}
	\dist(\Pi_0(Q_i),\Pi_0(Q_{i'}))\le \dist(Q_i,Q_{i'})\lesssim \ell(J_i).
	\end{equation*}
	Taking into account \lemref{lem:Qi corresposnding to Ji} and the fact that $\zeta\in J_{i'}\cap \Pi_0(CB_Q)$ we obtain
	\begin{equation*}
	\dist(\Pi_0(Q_{i'}),\Pi_0(Q))\le \dist(\Pi_0(Q_{i'}),J_{i'}) + \diam(J_{i'}) + \dist(\Pi_0(Q),J_{i'}) \lesssim_C \ell(J_{i'}) + \ell(Q).
	\end{equation*}
	The three estimates above yield
	\begin{align*}
	\dist(\Pi_0(Q_i),\Pi_0(Q))&\le\dist(\Pi_0(Q_i), \Pi_0(Q_{i'})) + \diam(\Pi_0(Q_{i'})) + \dist(\Pi_0(Q_{i'}),\Pi_0(Q))\\
	&\lesssim_{C} \ell(Q).
	\end{align*}
	Applying \lemref{lem:Lip property involving d(x)} to any $y_1\in Q_i, y_2\in Q$ gives us
	\begin{equation}\label{eq:Qi and Q are close}
	\dist(Q_i, Q)\lesssim_{A,\tau} \dist(\Pi_0(Q_i),\Pi_0(Q)) + \ell(Q) + \ell(Q_i) \lesssim_{C} \ell(Q).
	\end{equation}
	Note that $(\zeta, F_i(\zeta))\in L_{Q_i}\cap C'B_{Q_i}$ for some $C'=C'(n,d)>0$. Indeed:  $(\zeta, F_i(\zeta))\in L_{Q_i}$ by the definition of $F_i$; to see that $(\zeta, F_i(\zeta))\in  C'B_{Q_i}$ observe that $\varphi_i(\zeta)\not= 0$, and so $\zeta\in 3J_i$, which together with \lemref{lem:Qi corresposnding to Ji} gives $(\zeta, F_i(\zeta))\in  C'B_{Q_i}$.
	
	Due to the observation above and \eqref{eq:Qi smaller than Q}, \eqref{eq:Qi and Q are close}, we can use \lemref{lem:planes close to each other for ancestors} to get the desired inequality:
	\begin{equation*}
	\dist\big((\zeta,F_i(\zeta)), L_Q\big)\lesssim_{A,\tau,C} \sqrt{\varepsilon_0}\ell(Q).
	\end{equation*}
	\vspace{5pt}
	
	\textbf{Case 3.} $x=(\zeta, F(\zeta))$ for $\zeta\in L_0\setminus\Pi_0(R_G)$, and
	\begin{equation*}
	\sum_{i\in I_0}\varphi_i(\zeta) < 1.
	\end{equation*}
	It follows that there exists some $k\not\in I_0$ such that $\zeta\in 3J_k$. Hence, by \lemref{lem:dist of Ji to B0} (b)
	\begin{equation*}
	\ell(J_k)\approx\dist(\Pi_0(z_0),J_k)\gtrsim \ell(R_0).
	\end{equation*}
	Furthermore, if $J_{i'}$ is the cube containing $\zeta = \Pi_0(x)$, then using the definition of functions $d$ and $D$ yields
	\begin{equation*}
	\ell(J_{i'})\lesssim D(\Pi_0(x))\le d(x)\le \dist(x,Q) + \diam(B_Q) \lesssim \ell(Q)\le \ell(R_0).
	\end{equation*}
	Since $J_{i'}\cap 3J_k\not= \varnothing$, \lemref{lem:properties of Ji} (b) gives us $\ell(J_{i'})\approx\ell(J_k).$ Thus, 
	\begin{equation*}
	\ell(J_{i'})\approx\ell(Q)\approx\ell(R_0),
	\end{equation*}
	and again using  \lemref{lem:properties of Ji} (b) we get that $\ell(J_i)\approx\ell(R_0)$ for all $i\in I_1$, where $I_1\subset I_0$ are indices such that $\zeta\in 3J_i$. By the definition of cubes $Q_i$ in \lemref{lem:Qi corresposnding to Ji}, we also have $\ell(Q_i)\approx\ell(R_0)$.
	
	It is clear that $\dist(Q_i, R_0)=0$, and so the assumptions of \lemref{lem:planes close to each other for similar cubes} are satisfied for $Q_i$ and $R_0$. Since $\dist((\zeta, F_i(\zeta)), Q_i)\lesssim \ell(R_0)\approx\ell(Q_i)$, we get that
	\begin{equation*}
	|F_i(\zeta)| = \dist\big((\zeta, F_i(\zeta)), L_0\big)\lesssim_{A,\tau}\varepsilon_0\ell(R_0)\approx\varepsilon_0\ell(Q)
	\end{equation*}	
	for $i\in I_1$. Hence,
	\begin{equation*}
	\dist\big((\zeta, F(\zeta)), L_0\big) = |F(\zeta)|\le \sum_{i\in I_1} \varphi_i(\zeta)|F_i(\zeta)|\lesssim_{A,\tau} \varepsilon_0\ell(Q) \sum_{i\in I_1} \varphi_i(\zeta) \le \varepsilon_0\ell(Q).
	\end{equation*}
	At the same time, the planes $L_Q$ and $L_0$ are close to each other due to \lemref{lem:planes close to each other for similar cubes}, and so
	\begin{equation*}
	\dist\big((\zeta, F(\zeta)), L_Q\big)\lesssim_{A,\tau} \varepsilon_0\ell(Q).
	\end{equation*}
\end{proof}

\begin{cor}\label{cor:x from planes close to Gamma}
	Let $\theta$ and $\varepsilon_0$ be small enough. Suppose $Q\in\Tree$ satisfies $10B_Q\cap\Gamma\not= \varnothing$. Then for $y\in L_Q\cap 10B_Q$
	\begin{equation*}
	\dist(y,\Gamma)\lesssim_{A,\tau}\sqrt{\varepsilon_0}\ell(Q).
	\end{equation*}
\end{cor}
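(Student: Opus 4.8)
\emph{Proof plan.} The plan is to reduce the statement to \lemref{lem:x from Gamma close to LQ} via the fact that the orthogonal projection $\Pi_{L_Q}$ realizes $\Gamma$ as a Lipschitz graph over $L_Q$. First I would observe that $\Pi_{L_Q}|_\Gamma\colon\Gamma\to L_Q$ is a bi-Lipschitz homeomorphism: by \lemref{lem:properties of F} the set $\Gamma$ is a graph over $L_0$ with $\lip(F)\lesssim\theta$, so for any $w_1,w_2\in\Gamma$ the chord $w_1-w_2$ makes an angle $\lesssim\theta$ with $L_0$; since $\measuredangle(L_Q,L_0)\le\theta$ by \eqref{eq:notBS}, the vector $w_1-w_2$ is almost orthogonal to $L_Q^\perp$, which gives $(1-C\theta)|w_1-w_2|\le|\Pi_{L_Q}(w_1-w_2)|\le|w_1-w_2|$ for $\theta$ small. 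Being a proper continuous injection between two copies of $\R^n$ (equivalently: a Lipschitz graph over $L_0$ with small constant is also a Lipschitz graph over any plane at small angle with $L_0$), $\Pi_{L_Q}|_\Gamma$ is onto $L_Q$. Thus every $y\in L_Q$ has a unique preimage $w=w(y)\in\Gamma$, and $\dist(y,\Gamma)=|y-w(y)|=\dist(w(y),L_Q)$.

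Now fix $y\in L_Q\cap 10B_Q$ and $w=w(y)\in\Gamma$. The point of the argument is to first locate $w$ near $Q$ using only soft estimates, and then obtain the sharp bound on $\dist(w,L_Q)$ from \lemref{lem:x from Gamma close to LQ}. Using the hypothesis, pick $x_0\in\Gamma\cap 10B_Q$; by \lemref{lem:x from Gamma close to LQ} (applied with $C=10$) we have $\dist(x_0,L_Q)\lesssim_{A,\tau}\sqrt{\varepsilon_0}\ell(Q)$, hence $\dist(x_0,L_Q)\le r(B_Q)$ for $\varepsilon_0$ small. Therefore $|\Pi_{L_Q}(x_0)-z_Q|\le\dist(x_0,L_Q)+|x_0-z_Q|<11\,r(B_Q)$, and since $y\in 10B_Q$ also $|y-\Pi_{L_Q}(x_0)|<21\,r(B_Q)$. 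Feeding this into the lower bi-Lipschitz bound, $(1-C\theta)|w-x_0|\le|\Pi_{L_Q}(w)-\Pi_{L_Q}(x_0)|=|y-\Pi_{L_Q}(x_0)|<21\,r(B_Q)$, so $|w-x_0|\lesssim r(B_Q)$ and in particular $w\in\Gamma\cap 50B_Q$ (for $\theta$ small). Finally \lemref{lem:x from Gamma close to LQ} with $C=50$ gives $\dist(w,L_Q)\lesssim_{A,\tau}\sqrt{\varepsilon_0}\ell(Q)$, whence $\dist(y,\Gamma)=\dist(w,L_Q)\lesssim_{A,\tau}\sqrt{\varepsilon_0}\ell(Q)$, as desired. (Alternatively, one can avoid appealing to global surjectivity of $\Pi_{L_Q}|_\Gamma$ by a degree/connectedness argument proving $L_Q\cap 11B_Q\subset\Pi_{L_Q}(\Gamma\cap 50B_Q)$ directly: the set $\Pi_{L_Q}(\Gamma\cap 50B_Q)$ is open, its boundary lies in $\Pi_{L_Q}(\Gamma\cap\partial(50B_Q))$, and \lemref{lem:x from Gamma close to LQ} keeps that boundary at distance $>11\,r(B_Q)$ from $z_Q$, while the anchor $x_0$ shows $L_Q\cap 11B_Q$ meets the set.)

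The only delicate point — and the reason one cannot simply write $L_Q$ as an affine graph over $L_0$ and estimate the vertical gap between the two graphs — is that the elementary bound coming from $\lip(F)\lesssim\theta$ is of order $\theta\,\ell(Q)$, which is far worse than the claimed $\sqrt{\varepsilon_0}\,\ell(Q)$, since $\varepsilon_0$ is chosen much smaller than $\theta$. The mechanism above sidesteps this: the soft estimates are used only to guarantee that the relevant point $w$ of $\Gamma$ sits inside a fixed dilate of $B_Q$, after which the sharp bound is read off from \lemref{lem:x from Gamma close to LQ}. One should just be a little careful with the dilation factors, taking the ambient ball ($50B_Q$ here) large enough that the $\sqrt{\varepsilon_0}\,\ell(Q)$-perturbations supplied by \lemref{lem:x from Gamma close to LQ} cannot move points of $\Gamma\cap\partial(50B_Q)$ anywhere near $L_Q\cap 10B_Q$; the hypothesis $10B_Q\cap\Gamma\neq\varnothing$ enters precisely to furnish the anchor point $x_0$.
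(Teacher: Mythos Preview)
Your proof is correct and follows essentially the same two-step strategy as the paper: view $\Gamma$ as a Lipschitz graph over $L_Q$ (the paper does this via a function $\widetilde{F}:L_Q\to L_Q^\perp$, you via the inverse of $\Pi_{L_Q}|_\Gamma$), use the anchor point $x_0\in 10B_Q\cap\Gamma$ together with the soft $\theta$-Lipschitz bound to trap $w(y)$ in a fixed dilate of $B_Q$, and then invoke \lemref{lem:x from Gamma close to LQ} a second time for the sharp $\sqrt{\varepsilon_0}\,\ell(Q)$ bound. The paper's version is slightly tighter (it lands in $11B_Q$ rather than $50B_Q$), and one minor quibble: your equality $\dist(y,\Gamma)=|y-w(y)|$ should strictly be $\le$, though this is harmless for the upper bound you need.
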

\begin{proof}
	
	Let $\widetilde{F}:L_Q\rightarrow L_Q^{\perp}$ be defined in such a way that $\Gamma$ is the graph of $\widetilde{F}$. 
	This definition makes sense because $\measuredangle(L_Q, L_0)\le \theta$. Moreover, $\lip(F)\lesssim \theta$ implies that $\lip(\widetilde{F})\lesssim \theta$.
	
	Let $x\in L_Q$ be such that $(x,\widetilde{F}(x))\in 10B_Q\cap\Gamma$. By the triangle inequality and \lemref{lem:x from Gamma close to LQ} we have for 
	$y\in L_Q\cap 10B_Q$
	\begin{equation*}
	|\widetilde{F}(y)|\le |\widetilde{F}(y)-\widetilde{F}(x)| + |\widetilde{F}(x)|\lesssim \theta\ell(Q) + C(A,\tau)\sqrt{\varepsilon_0}\ell(Q).
	\end{equation*}
	Thus, for $\theta$ and $\varepsilon_0$ small enough, we have $(y,\widetilde{F}(y))\in 11B_Q\cap\Gamma$ and we may use \lemref{lem:x from Gamma close to LQ} once again to conclude that
	\begin{equation*}
	\dist(y,\Gamma)\le |\widetilde{F}(y)| = \dist\big((y,\widetilde{F}(y)), L_Q \big) \lesssim_{A,\tau} \sqrt{\varepsilon_0}\ell(Q).
	\end{equation*}
\end{proof}
Recall that 
\begin{equation*}
\RFar = \{x\in 3B_0\ :\ \dist(x, L_Q)\ge\, \sqrt{\varepsilon_0}\ell(Q)\quad \text{for some $Q\in\Tree_0$ s.t. $x\in 3B_Q$} \}.
\end{equation*}
\begin{lemma}\label{lem:x outside RFar close to Gamma}
	For all $x\in 3B_0\setminus \RFar$
	\begin{equation*}
	\dist(x,\Gamma)\lesssim_{A,\tau} \sqrt{\varepsilon_0}\, d(x).
	\end{equation*}
\end{lemma}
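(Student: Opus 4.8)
The plan is to refine the proof of \lemref{lem:x arbitrary close to Gamma}: instead of merely invoking the Lipschitz bound for $\Pi_0^{\perp}$, I would attach to $x$ a cube $P\in\Tree$ sitting \emph{exactly at scale} $\ell(P)\approx d(x)$ with $x\in 3B_P$, and then exploit that $x\notin\RFar$ forces $\dist(x,L_P)<\sqrt{\varepsilon_0}\,\ell(P)$ precisely at that scale. Concretely: if $d(x)=0$ then $x\in R_G\subset\Gamma$ and the claim is trivial, so assume $d(x)>0$. Fix a tiny $\delta>0$, choose $Q\in\Tree$ with $\dist(x,Q)+\diam(B_Q)\le(1+\delta)d(x)$ (possible by the definition \eqref{eq:definition d} of $d$), and pick $q\in Q$ with $|x-q|\le\dist(x,Q)+\delta\ell(Q)$; then $|x-q|\lesssim d(x)$, $\ell(Q)\lesssim d(x)$, and $|x-q|+\ell(Q)\approx d(x)$.

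Since $q\in\supp\mu$, for every scale $\le\ell(R_0)$ there is a unique cube of $\D(R_0)$ at that scale containing $q$, namely an ancestor of $Q$; I would take $P$ to be the smallest such ancestor with $\ell(P)\ge C_*\bigl(|x-q|+\ell(Q)\bigr)$ (and $P=R_0$ if that bound would exceed $\ell(R_0)$, which is consistent because $d(x)\lesssim r_0$), for a large absolute constant $C_*$ fixed at the end. Then $P\supset Q$, hence $P\in\Tree$ — and a fortiori $P\in\Tree_0$ — because an ancestor of a $\Tree$ cube cannot be contained in any stopping cube; moreover $\ell(P)\approx d(x)$, and since $q\in P\subset B_P$ with $|x-q|\ll\ell(P)$ once $C_*$ is large, one gets $x\in 3B_P$. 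Now the hypothesis does the work: $x\in 3B_0\setminus\RFar$, $x\in 3B_P$ and $P\in\Tree_0$ together force, directly from the definition of $\RFar$,
\begin{equation*}
\dist(x,L_P)<\sqrt{\varepsilon_0}\,\ell(P)\lesssim\sqrt{\varepsilon_0}\,d(x).
\end{equation*}

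It remains to pass from ``$x$ close to $L_P$'' to ``$x$ close to $\Gamma$''. Since $P\in\Tree$, Corollary~\ref{cor:doubling cubes close to Gamma} gives $\dist(P,\Gamma)\lesssim\ell(P)$, so $\Gamma$ meets some fixed multiple of $B_P$. Writing $\Gamma$ as the graph of a $\lesssim\theta$-Lipschitz map $\widetilde F\colon L_P\to L_P^{\perp}$ (legitimate since $\measuredangle(L_P,L_0)\le\theta$ by \eqref{eq:notBS}), I would run the same bootstrap as in the proof of Corollary~\ref{cor:x from planes close to Gamma}: first bound $|\widetilde F(\Pi_{L_P}(x))|$ crudely by $\lesssim\theta\,\ell(P)$ via \lemref{lem:x from Gamma close to LQ} applied on the (possibly large) multiple of $B_P$ in which $\Gamma$ is already known to lie, then — now that the corresponding graph point sits in a \emph{bounded} multiple of $B_P$ — apply \lemref{lem:x from Gamma close to LQ} once more to obtain $\dist(\Pi_{L_P}(x),\Gamma)\lesssim_{A,\tau}\sqrt{\varepsilon_0}\,\ell(P)$. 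Combining with the display above and $\ell(P)\approx d(x)$,
\begin{equation*}
\dist(x,\Gamma)\le\dist(x,L_P)+\dist(\Pi_{L_P}(x),\Gamma)\lesssim_{A,\tau}\sqrt{\varepsilon_0}\,d(x),
\end{equation*}
after letting $\delta\to0$.

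The genuinely delicate point is the scale-matching: one must land on a tree cube $P$ with $x\in 3B_P$ \emph{and} $\ell(P)\approx d(x)$, since it is only at this scale that $x\notin\RFar$ yields the gain $\sqrt{\varepsilon_0}$ (at a much coarser scale $\RFar$ says nothing useful, at a much finer scale no cube containing $x$ in $3B_P$ need exist). The secondary technicality — making sure $\Gamma$ intersects a suitable multiple of $B_P$ so that the argument of Corollary~\ref{cor:x from planes close to Gamma} applies verbatim — is taken care of by Corollary~\ref{cor:doubling cubes close to Gamma}. Everything else is routine bookkeeping with the David–Mattila lattice and the graph $F$.
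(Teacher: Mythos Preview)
Your proof is correct and follows essentially the same strategy as the paper's: find a cube $P\in\Tree$ with $x\in 3B_P$ and $\ell(P)\approx d(x)$, use $x\notin\RFar$ to get $\dist(x,L_P)\le\sqrt{\varepsilon_0}\,\ell(P)$, and then transfer from $L_P$ to $\Gamma$ via \lemref{lem:x from Gamma close to LQ}/Corollary~\ref{cor:x from planes close to Gamma}. The only cosmetic difference is in how $P$ is produced: you take an ancestor of $Q$ at the correct scale and then invoke Corollary~\ref{cor:doubling cubes close to Gamma} to bring $\Gamma$ into a bounded multiple of $B_P$ (which forces you to rerun the bootstrap of Corollary~\ref{cor:x from planes close to Gamma} with a possibly larger constant than $10$), whereas the paper chooses $P$ directly as the smallest $\Tree$-cube with $B(z,2(C_1+2)d(x))\subset 3B_P$, thereby guaranteeing $3B_P\cap\Gamma\neq\varnothing$ from \lemref{lem:x arbitrary close to Gamma} and allowing Corollary~\ref{cor:x from planes close to Gamma} to be applied verbatim.
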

\begin{proof}
	If $d(x)=0$, then $x\in R_G\subset\Gamma$ and we are done. Suppose that $d(x)>0$. Let $Q\in \Tree$ be such that
	\begin{equation*}
	\dist(x,Q)+\diam(B_Q)\le 2d(x).
	\end{equation*}
	Fix some $z\in Q$ and note that $|z-x|\le 2d(x)$.
	
	Let $C_1$ be the constant from \lemref{lem:x arbitrary close to Gamma}. 
	If we have $B(z, 2(C_1+2)d(x))\subset 3B_0$, then let $P\in \Tree$ be the smallest cube satisfying $B(z, 2(C_1+2)d(x))\subset 3B_{P}$; otherwise, set $P=R_0$.
	
	In both cases we have $\ell(P)\approx d(x)$, as well as $x\in 3B_P$. Moreover, we know from \lemref{lem:x arbitrary close to Gamma} that
	\begin{equation*}
	\dist(z,\Gamma)\le |z-x| + \dist(x,\Gamma)\le (2 + C_1)d(x).
	\end{equation*}
	Hence, $3B_P\cap\Gamma\neq \varnothing$ (for $P=R_0$ this is obvious, and for $P\subsetneq R_0$ it follows from the fact that $B(z, 2(C+2)d(x))\subset 3B_{P}$).
	
	The assumption $x\not\in \RFar$ gives us
	\begin{equation*}
	|x-\Pi_{L_P}(x)|\le \sqrt{\varepsilon_0}\ell(P),
	\end{equation*}
	and so $\Pi_{L_P}(x)\in 4B_P\cap L_P$. We apply Corollary \ref{cor:x from planes close to Gamma} to $\Pi_{L_P}(x)$ to get 
	\begin{equation*}
	\dist(\Pi_{L_P}(x), \Gamma)\lesssim_{A,\tau}\sqrt{\varepsilon_0}\ell(P).
	\end{equation*}
	The two inequalities above and the fact that $\ell(P)\approx d(x)$ imply 
	\begin{equation*}
	\dist(x,\Gamma)\lesssim_{A,\tau} \sqrt{\varepsilon_0}\, d(x).
	\end{equation*}		
\end{proof}
\begin{lemma}\label{lem:d and D comparable on graph}
	For every $x\in\Gamma$ we have $D(\Pi_0(x))\le d(x)\lesssim D(\Pi_0(x)).$
\end{lemma}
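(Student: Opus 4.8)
The first inequality, $D(\Pi_0(x))\le d(x)$, needs no work: since $x$ lies in the fiber $\Pi_0^{-1}(\Pi_0(x))$, the very definition \eqref{eq:definition D} of $D$ gives $D(\Pi_0(x))=\inf_{y\in\Pi_0^{-1}(\Pi_0(x))}d(y)\le d(x)$. So the whole content is the reverse bound $d(x)\lesssim D(\Pi_0(x))$, and this is where I would use that $x\in\Gamma$ together with \corref{cor:doubling cubes close to Gamma}.

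Here is the plan for $d(x)\lesssim D(\Pi_0(x))$. Set $z=\Pi_0(x)$; since $x\in\Gamma$ we have $x=(z,F(z))$, and as $F$ is $C\theta$-Lipschitz by \lemref{lem:properties of F}, $|x-(w,F(w))|\lesssim|z-w|$ for every $w\in L_0$. Fix $\varepsilon>0$. By the definition \eqref{eq:definition D} of $D(z)$ choose $Q\in\Tree$ with $\dist(z,\Pi_0(Q))+\diam(B_Q)\le D(z)+\varepsilon$; in particular $\ell(Q)\lesssim\diam(B_Q)\le D(z)+\varepsilon$ and $\diam(\Pi_0(Q))\le\diam(B_Q)\le D(z)+\varepsilon$. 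Since $\dist(Q,\Gamma)\lesssim\ell(Q)$ by \corref{cor:doubling cubes close to Gamma}, pick $\gamma\in\Gamma$ with $\dist(\gamma,Q)\lesssim\ell(Q)\lesssim D(z)+\varepsilon$. Using that orthogonal projections are $1$-Lipschitz,
\[
|z-\Pi_0(\gamma)|\le\dist(z,\Pi_0(Q))+\diam(\Pi_0(Q))+\dist(\Pi_0(\gamma),\Pi_0(Q))\lesssim D(z)+\varepsilon .
\]
Because $\gamma=(\Pi_0(\gamma),F(\Pi_0(\gamma)))\in\Gamma$, the Lipschitz bound on $F$ gives $|x-\gamma|\lesssim|z-\Pi_0(\gamma)|\lesssim D(z)+\varepsilon$. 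Finally, testing the infimum defining $d(x)$ with this $Q$,
\[
d(x)\le\dist(x,Q)+\diam(B_Q)\le|x-\gamma|+\dist(\gamma,Q)+\diam(B_Q)\lesssim D(z)+\varepsilon ,
\]
and letting $\varepsilon\to0$ completes the argument.

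I do not expect a genuine obstacle: the statement is a bookkeeping consequence of the fact (\corref{cor:doubling cubes close to Gamma}) that tree cubes — hence the near-optimal competitors in the definition of $D$ — sit within a bounded multiple of their size from $\Gamma$, combined with the triangle inequality and the Lipschitz control of $F$. The only mild point is that $D(z)$ may vanish (precisely when $z\in\Pi_0(R_G)$, i.e. $x\in R_G$); the $\varepsilon$-approximation above is written so that it still yields $d(x)\lesssim D(z)$ in that degenerate case, so no separate case analysis is needed. If one prefers, one can instead dispatch $x\in R_G$ directly via $d(x)=0=D(\Pi_0(x))$ and run the displayed argument only when $D(z)>0$.
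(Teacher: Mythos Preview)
Your proof is correct and follows essentially the same route as the paper: pick a near-optimal $Q\in\Tree$ for $D(\Pi_0(x))$, find a point of $\Gamma$ close to $Q$, use the Lipschitz graph structure to bound $|x-\gamma|$, and then test $d(x)$ with $Q$. The only difference is the tool used to produce a graph point near $Q$: the paper invokes \lemref{lem:x outside RFar close to Gamma} (via a point $y\in 3B_Q\setminus\RFar$, giving the sharper $\sqrt{\varepsilon_0}$ estimate that is not actually needed here), whereas you use the simpler \corref{cor:doubling cubes close to Gamma}, which already suffices and makes your argument slightly more direct.
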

\begin{proof}
	The inequality $D(\Pi_0(x))\le d(x)$ follows directly from the definition of $D$ \eqref{eq:definition D}. 
	
	To see that $d(x)\lesssim D(\Pi_0(x))$, let $Q\in\Tree$ be such that
	\begin{equation}\label{eq:D(Pi(x)) in the prf of d(x)approx D(Pix)}
	\diam(B_Q)+\dist(\Pi_0(Q),\Pi_0(x))\le D(\Pi_0(x))+h
	\end{equation}
	for some small $h>0$. Take any $y\in 3B_Q\setminus \RFar$, then by \lemref{lem:x outside RFar close to Gamma} we have some $z\in\Gamma$ such that 
	\begin{equation*}
	\dist(y,\Gamma)=|y-z|\lesssim_{A,\tau}\sqrt{\varepsilon_0}d(y)\lesssim\sqrt{\varepsilon_0}\diam(B_Q)\overset{\eqref{eq:D(Pi(x)) in the prf of d(x)approx D(Pix)}}{\le} D(\Pi_0(x))+h.
	\end{equation*}
	Using the fact that $x,z\in\Gamma$, that $y\in 3B_Q$, and the inequality above, we have
	\begin{equation*}
	|x-z|\le 2|\Pi_0(x)-\Pi_0(z)|\le 2|\Pi_0(x)-\Pi_0(y)| + 2|\Pi_0(y)-\Pi_0(z)|\overset{\eqref{eq:D(Pi(x)) in the prf of d(x)approx D(Pix)}}{\lesssim} D(\Pi_0(x))+h,
	\end{equation*}
	and so
	\begin{equation*}
	|x-y|\le |x-z|+ |z-y| \lesssim D(\Pi_0(x))+h.
	\end{equation*}
	It follows that 
	\begin{equation*}
	d(x)\le d(y)+|x-y|\lesssim \diam(B_Q) + D(\Pi_0(x))+h \overset{\eqref{eq:D(Pi(x)) in the prf of d(x)approx D(Pix)}}{\lesssim} D(\Pi_0(x))+h.
	\end{equation*}
	Letting $h\rightarrow 0$ ends the proof.
\end{proof}
\subsection{Estimating the measure of \texorpdfstring{$\LD$}{LD}}
\begin{lemma}
	If $\varepsilon_0$ and $\tau$ are small enough, with $\varepsilon_0=\varepsilon_0(\tau)\ll\tau$, then
	\begin{equation}\label{eq:small measure of LD}
	\sum_{Q\in \LD} \mu(Q)\lesssim\tau\mu(R_0).
	\end{equation}
\end{lemma}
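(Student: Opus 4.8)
The plan is to reduce \eqref{eq:small measure of LD} to a covering argument on the graph $\Gamma$. Since the cubes in $\LD$ are pairwise disjoint, writing $E=\bigcup_{Q\in\LD}Q$ we have $\sum_{Q\in\LD}\mu(Q)=\mu(E)$, and I would split this as $\mu(E\cap\RFar)+\mu(E\setminus\RFar)$. The first term is handled immediately by the smallness of $\RFar$: \lemref{lem:small measure of RFar} gives $\mu(E\cap\RFar)\le\mu(\RFar)\lesssim_{A,\tau}\sqrt{\varepsilon_0}\,\mu(R_0)$, which is $\le\tau\,\mu(R_0)$ once $\varepsilon_0=\varepsilon_0(A,\tau)$ is taken small enough (permissible, since $\varepsilon_0$ is fixed only at the very end of the proof).

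For $\mu(E\setminus\RFar)$ I would proceed as follows. To each $x\in E\setminus\RFar$ attach the unique cube $Q_x\in\LD$ containing it, its parent $\widehat{Q_x}\in\Tree$, and the radius $r_x:=r(Q_x)$. Since $x\in B_{Q_x}$, one has $B(x,5r_x)\subset B(z_{Q_x},33\,r(Q_x))\subset 1.5B_{Q_x}$, so the stopping condition defining $\LD$ gives $\mu(B(x,5r_x))\le\mu(1.5B_{Q_x})<\tau\,\ell(Q_x)^n\lesssim\tau\,r_x^n$. On the other hand, by the definition of $d$ in \eqref{eq:definition d} we have $d(x)\le\diam(B_{\widehat{Q_x}})\lesssim r(Q_x)=r_x$, so \lemref{lem:x outside RFar close to Gamma} upgrades this to $\dist(x,\Gamma)\lesssim_{A,\tau}\sqrt{\varepsilon_0}\,r_x$, which is at most $\tfrac12 r_x$ for $\varepsilon_0$ small. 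Applying the $5r$-covering lemma to the family $\{B(x,r_x)\}_{x\in E\setminus\RFar}$ (all radii are bounded by $r(R_0)$), I obtain a countable subfamily $\{B(x_j,r_j)\}_j$, with $r_j=r_{x_j}$, whose balls are pairwise disjoint and such that $E\setminus\RFar\subset\bigcup_j B(x_j,5r_j)$. Hence $\mu(E\setminus\RFar)\le\sum_j\mu(B(x_j,5r_j))\lesssim\tau\sum_j r_j^n$.

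It then remains to show $\sum_j r_j^n\lesssim\mu(R_0)$, which I would do by comparison with $\sigma=\Hn{\Gamma}$. For each $j$ choose $y_j\in\Gamma$ with $|x_j-y_j|\le\tfrac12 r_j$; then $B(y_j,\tfrac12 r_j)\subset B(x_j,r_j)$, so the balls $B(y_j,\tfrac12 r_j)$ are pairwise disjoint, and since $\Gamma$ is the graph of a Lipschitz function with small constant (\lemref{lem:properties of F}), one has $\sigma(\Gamma\cap B(y_j,\tfrac12 r_j))\gtrsim r_j^n$ (project onto $L_0$). Moreover $x_j\in R_0\subset B_0$ and $r_j\le r(R_0)\lesssim r_0$, so $B(y_j,\tfrac12 r_j)\subset 2B_0$; and, since $F$ is supported on $L_0\cap 1.9B_0$ and is $C\theta$-Lipschitz, $\sigma(\Gamma\cap 2B_0)\lesssim r_0^n$. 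Therefore, by disjointness, $\sum_j r_j^n\lesssim\sum_j\sigma(\Gamma\cap B(y_j,\tfrac12 r_j))=\sigma\big(\Gamma\cap\bigcup_j B(y_j,\tfrac12 r_j)\big)\le\sigma(\Gamma\cap 2B_0)\lesssim r_0^n\approx\mu(R_0)$, using $\Theta_\mu(3B_0)=1$ (\remref{rem:WLOG density of BR0 is 1}). Combining, $\mu(E\setminus\RFar)\lesssim\tau\,\mu(R_0)$, which together with the bound on $\mu(E\cap\RFar)$ proves the lemma.

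The main obstacle is the tension in the choice of the balls $B(x,5r_x)$: they must be small enough (contained in $1.5B_{Q_x}$) for the low-density stopping condition to control their $\mu$-mass, yet large enough to reach $\Gamma$ so as to carry a definite amount $\gtrsim r_x^n$ of $\sigma$-mass — and a priori $\dist(Q_x,\Gamma)$ is only known to be $\lesssim\ell(Q_x)$ with a large absolute constant, so these two requirements are not obviously compatible. This is exactly why $\RFar$ is separated off first: away from $\RFar$, \lemref{lem:x outside RFar close to Gamma} provides the decisive gain $\dist(x,\Gamma)\lesssim\sqrt{\varepsilon_0}\,r(Q_x)\ll r(Q_x)$ (this is where the finiteness of the $\beta_2$ square function enters, through the construction of $\Gamma$ and \lemref{lem:small measure of RFar}), while on $\RFar$ itself $\mu$ is directly negligible. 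One also has to check that every implicit constant in the $E\setminus\RFar$ estimate is absolute — the single factor of $\tau$ comes only from the stopping inequality, while the David–Mattila constants and the Lipschitz-graph Ahlfors-regularity constants are absolute — so that the final bound is genuinely $\lesssim\tau\,\mu(R_0)$ and not merely $\lesssim_{A,\tau}\mu(R_0)$.
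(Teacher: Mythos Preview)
Your proof is correct and follows essentially the same strategy as the paper's: split off $\RFar$, use that points of $E\setminus\RFar$ are $\sqrt{\varepsilon_0}$-close to $\Gamma$ via \lemref{lem:x outside RFar close to Gamma}, cover by balls whose $\mu$-mass is controlled by the $\LD$ condition, and bound $\sum r_j^n$ by the $\mathcal{H}^n$-measure of $\Gamma$ in $2B_0$. The only cosmetic difference is that the paper uses Besicovitch (bounded overlap of the balls $B(x_i,r(Q_i))$) where you use the $5r$-covering lemma (disjointness of the balls $B(x_j,r_j)$ and covering by $5B_j$); both lead to the same packing estimate on $\Gamma$.
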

\begin{proof}
	Recall that by \lemref{lem:small measure of RFar} we have 
	$\mu(\RFar)\lesssim_{A,\tau}\sqrt{\varepsilon_0}\mu(R_0).$
	Hence, for $\varepsilon_0$ small enough we get 
	$ 
	\mu(\RFar)\le\tau\mu(R_0),
	$
	and so to show \eqref{eq:small measure of LD} it suffices to prove
	\begin{equation*}
	\mu(R_{\LD})\lesssim \tau\mu(R_0),
	\end{equation*}
	where $R_{\LD}=\bigcup_{Q\in\LD}Q\setminus\RFar$.
	
	We use Besicovitch covering theorem to find a countable collection of points $x_i\in R_{\LD}$ such that $x_i\in Q_i\setminus \RFar,\ Q_i\in \LD,$ and
	\begin{gather*}
	R_{\LD}\subset \bigcup_i B(x_i,r(Q_i)),\\
	\sum_i \one_{B(x_i, r(Q_i))}\le N,
	\end{gather*}
	where $N$ is a dimensional constant.
	
	Observe that $B(x_i, r(Q_i))\subset 1.5B_{Q_i}$. It follows that
	\begin{equation*}
	\mu(R_{\LD})\le \sum_i \mu(B(x_i, r(Q_i)))\le\sum_i \mu(1.5B_{Q_i})\lesssim \tau\sum_i r(Q_i)^n,
	\end{equation*}
	where the last inequality was obtained using the fact that $Q_i\in \LD$. Furthermore, since $x_i\not\in \RFar$ we may use \lemref{lem:x outside RFar close to Gamma} to get $\dist(x_i,\Gamma)\lesssim_{A,\tau} \sqrt{\varepsilon_0}\, d(x_i)$. Note also that $d(x_i)\lesssim r(Q_i)$. Hence,
	\begin{equation*}
	\dist(x_i,\Gamma)\lesssim_{A,\tau}\sqrt{\varepsilon_0} r(Q_i).
	\end{equation*}
	So, if $\varepsilon_0$ is small enough, $\Gamma$ passes close to the center of $B(x_i, r(Q_i))$. Since $\Gamma$ is a Lipschitz graph with small Lipschitz constant we get
	\begin{equation*}
	r(Q_i)^n\lesssim\mathcal{H}^n(\Gamma\cap B(x_i,r(Q_i))).
	\end{equation*}
	Thus,
	\begin{multline*}
	\mu(R_{\LD})\lesssim \tau\sum_i r(Q_i)^n\lesssim\tau\sum_i \mathcal{H}^n(\Gamma\cap B(x_i,r(Q_i)))\lesssim\tau \mathcal{H}^n\big(\Gamma\cap\bigcup_i B(x_i,r(Q_i))\big)\\
	\le \tau\mathcal{H}^n(\Gamma\cap 1.5B_0)\approx\tau \ell(R_0)^n.
	\end{multline*}
	We have $\ell(R_0)^n\approx \mu(3B_0)\approx \mu(R_0)$ because $\Theta_{\mu}(3B_0)=1$, see \remref{rem:WLOG density of BR0 is 1}, and $R_0$ is doubling. Hence,
	\begin{equation*}
	\mu(R_{\LD})\lesssim \tau\mu(R_0).
	\end{equation*}
\end{proof}

\section{Approximating measure \texorpdfstring{$\nu$}{nu}}\label{sec:approximating measure}
In order to estimate the measure of high density cubes, we need to introduce a measure $\nu$ supported on $\Gamma$ which will approximate $\mu$.
\subsection{Definition and properties of \texorpdfstring{$\nu$}{nu}}
Let $\eta<1/1000$ be a small dimensional constant which will be fixed in the proof of \lemref{lem:properties of Bk} (c). For every $i\in I$ (the set of indices from Section \ref{sec:extension of F}) consider a finite collection of points $\{z'_{k}\}_{k\in K_i}\subset J_i,\,\# K_i\lesssim_n 1,$ such that the balls $B(z'_{k},0.5\eta\ell(J_i))$ cover the whole $J_i$. We set $K=\bigcup_i K_i$, $K_0 = \bigcup_{i\in I_0} K_i$.

For $k\in K_i$ we define
\begin{align*}
z_k &= f(z_k')\in\Gamma,\\
r_k &=\eta\ell(J_i),\\
B_k &= B(z_{k}, r_k).
\end{align*}

The following lemma collects basic properties of $B_k$.
\begin{lemma}\label{lem:properties of Bk}
	We have the following:
	\begin{itemize}
		\item[(a)] For $k\in K_i$
		\begin{equation}\label{eq:Bk subset Ji}
		\Pi_0(3B_k)\subset 2J_i.
		\end{equation}
		\item[(b)] For $k\in K$ there exist at most $C=C(n)$ indices $k'\in K$ such that $\Pi_0(3B_k)\cap \Pi_0(3B_{k'})\not=\varnothing$ (in particular, there are at most $C$ indices $k'\in K$ such that $3B_k\cap 3B_{k'}
		\not=\varnothing$). Moreover, for all such $k'$ we have
		\begin{equation}\label{eq:radii comparable if Bk intersect}
		r_k\approx r_{k'}.
		\end{equation}
		\item[(c)] For $k\in K$ and $x\in 3B_k$ we have
		\begin{equation}\label{eq:rk comparable to d(x)}
		r_k\le d(x)\le \eta^{-3/2}r_k.
		\end{equation}
		\item[(d)] For $k\in K_0$
		\begin{equation}\label{eq:3Bk in 2.5B0}
		3B_k\subset 2.3B_0.
		\end{equation}
		\item[(e)] For $k\not\in K_0$
		\begin{equation}\label{eq:rk approx dist z0 for k not in K0}
		r_k\approx |z_k-z_0|\gtrsim\ell(R_0).
		\end{equation}
		If additionally $3B_k\cap 3B_0\not= \varnothing$, then
		\begin{equation}\label{eq:rk approx ell R0 for k not in K0}
		r_k \approx \ell(R_0).
		\end{equation}	
		\item[(f)] Finally,
		\begin{equation}\label{eq:Bk cover Gamma minus RG}
		\bigcup_{k\in K}B_k\cap\Gamma  = \bigcup_{k\in K}3B_k\cap\Gamma = \Gamma\setminus R_G.
		\end{equation}
	\end{itemize}
\end{lemma}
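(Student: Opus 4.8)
The plan is to dispatch the six assertions in the order (a), (b), (f), (c), (d), (e): items (a), (b) are combinatorial/geometric and rely only on $\Pi_0$ being $1$-Lipschitz together with \lemref{lem:properties of Ji}, while (f) reduces to (c), which is where the constant $\eta$ gets fixed. For (a): the orthogonal projection $\Pi_0$ maps the ball $3B_k$ onto the disk $B(\Pi_0(z_k),3r_k)\cap L_0=B(z_k',3\eta\ell(J_i))\cap L_0$, since $\Pi_0(z_k)=\Pi_0(f(z_k'))=z_k'$; as $z_k'\in J_i$ and $3\eta<1/2$, this disk lies in $2J_i$. For (b): if $\Pi_0(3B_k)\cap\Pi_0(3B_{k'})\neq\varnothing$ with $k\in K_i$, $k'\in K_{i'}$, then by (a) $2J_i\cap 2J_{i'}\neq\varnothing$, hence $15J_i\cap 15J_{i'}\neq\varnothing$, so \lemref{lem:properties of Ji}(b) gives $\ell(J_i)\approx\ell(J_{i'})$ and therefore $r_k\approx r_{k'}$; the bound on the number of admissible $k'$ follows since \lemref{lem:properties of Ji}(c) allows only boundedly many $J_{i'}$ with $15J_{i'}\cap 15J_i\neq\varnothing$, and $\#K_{i'}\lesssim_n 1$. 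The parenthetical claims are immediate because $3B_k\cap 3B_{k'}\neq\varnothing$ forces $\Pi_0(3B_k)\cap\Pi_0(3B_{k'})\neq\varnothing$. For (f): once (c) is available, $d(x)\geq r_k>0$ for $x\in 3B_k$, so each $3B_k$ is disjoint from $R_G=\{d=0\}$, whence $\bigcup_k 3B_k\cap\Gamma\subset\Gamma\setminus R_G$; conversely $f$ is injective with $R_G=f(\Pi_0(R_G))$, so $\Gamma\setminus R_G=f(L_0\setminus\Pi_0(R_G))=\bigcup_{i\in I}f(J_i)$ by \lemref{lem:properties of Ji}(d), and given $\xi\in J_i$ one picks $k\in K_i$ with $\xi\in B(z_k',0.5\eta\ell(J_i))$ (these balls cover $J_i$) and uses $(1+C\theta)$-Lipschitzness of $f$ to get $|f(\xi)-z_k|\leq(1+C\theta)\cdot 0.5\eta\ell(J_i)<\eta\ell(J_i)=r_k$ for $\theta$ small, so $f(\xi)\in B_k\cap\Gamma$; thus all three sets coincide.

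The crux is (c). First I would show $d(x)\approx\ell(J_i)$ for all $x\in 3B_k$, $k\in K_i$, with absolute constants. Indeed $z_k=f(z_k')\in\Gamma$ with $\Pi_0(z_k)=z_k'\in J_i\subset L_0\setminus\Pi_0(R_G)$, so \lemref{lem:d and D comparable on graph} gives $d(z_k)\approx D(z_k')$, while \lemref{lem:properties of Ji}(a) gives $D(z_k')\approx\diam(J_i)\approx\ell(J_i)$; hence $d(z_k)\approx\ell(J_i)$. Since $d=\inf_{Q\in\Tree}(\dist(\cdot,Q)+\diam(B_Q))$ is an infimum of $1$-Lipschitz functions it is itself $1$-Lipschitz, so $|d(x)-d(z_k)|\leq 3r_k=3\eta\ell(J_i)$ on $3B_k$. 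Writing $c_1\ell(J_i)\leq d(z_k)\leq c_2\ell(J_i)$, we obtain $d(x)\geq(c_1-3\eta)\ell(J_i)\geq\eta\ell(J_i)=r_k$ as soon as $\eta\leq c_1/4$, and $d(x)\leq(c_2+3\eta)\ell(J_i)\leq\eta^{-1/2}\ell(J_i)=\eta^{-3/2}r_k$ as soon as $\eta\leq\min(1,(c_2+3)^{-2})$; fixing $\eta$ to be this small dimensional constant proves (c), and the slack between the exponents $\eta^{-1}$ and $\eta^{-3/2}$ is precisely what makes such a choice possible.

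Finally, for (d): if $i\in I_0$ then $z_k=f(z_k')$ with $z_k'\in J_i$, and since $f$ is Lipschitz with constant $\approx 1$, $|z_k-f(z_{J_i})|\leq(1+C\theta)\diam(J_i)$; hence for $\theta$ and $\eta$ small, $3B_k=B(z_k,3\eta\ell(J_i))\subset B(f(z_{J_i}),2\diam(J_i))$, which lies in $2.3B_0$ by \lemref{lem:balls around f(Ji) in 1.5B0}. For (e): if $i\notin I_0$ then $z_k'\notin 1.5B_0$, so $|z_0-z_k'|\geq 1.5r_0$ and \lemref{lem:dist of Ji to B0}(b) gives $\ell(J_i)\approx|z_0-z_k'|\gtrsim\ell(R_0)$; since $|\Pi_0^{\perp}(z_0)|\lesssim r_0$ and $|F(z_k')|\lesssim\theta r_0$ are negligible next to $|z_0-z_k'|\geq 1.5r_0$, the orthogonal decomposition of $z_k-z_0$ yields $|z_k-z_0|\approx|z_0-z_k'|\approx\ell(J_i)=\eta^{-1}r_k$, i.e. $r_k\approx|z_k-z_0|\gtrsim\ell(R_0)$; and if moreover $3B_k\cap 3B_0\neq\varnothing$ then $|z_k-z_0|<3r_k+3r_0$, which combined with $|z_k-z_0|\approx\eta^{-1}r_k$ forces $r_k\lesssim r_0\approx\ell(R_0)$, hence $r_k\approx\ell(R_0)$. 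The only genuinely delicate point is (c)---the two-sided estimate $d(x)\approx\ell(J_i)$ on $3B_k$ with absolute constants, and the ensuing choice of $\eta$; the rest is bookkeeping with the Whitney cubes $J_i$ and the Lipschitz graph $\Gamma$.
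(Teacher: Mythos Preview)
Your proof is correct and follows essentially the same approach as the paper's. The only noteworthy differences are cosmetic: the paper proves (f) using (a) alone (showing $f(J_i)\subset\bigcup_{k\in K_i}B_k\cap\Gamma\subset\bigcup_{k\in K_i}3B_k\cap\Gamma\subset f(2J_i)$ and then invoking \lemref{lem:properties of Ji}(d)), rather than appealing to (c) for the inclusion $\bigcup_k 3B_k\cap\Gamma\subset\Gamma\setminus R_G$; and for the second part of (e) the paper picks a point $x\in 3B_k\cap 3B_0$ and uses $D(\Pi_0(x))\le d(x)\lesssim\ell(R_0)$ together with \lemref{lem:properties of Ji}(a), whereas you argue directly via the triangle inequality $|z_k-z_0|<3r_k+3r_0$ against $|z_k-z_0|\approx\eta^{-1}r_k$. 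Both routes are valid and equally short.
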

\begin{proof}
	(a) follows immediately by the definition of $B_k$.
	
	\vspace{5pt}
	Concerning (b), suppose $k\in K_i$ and $\Pi_0(3B_k)\cap \Pi_0(3B_{k'})\not=\varnothing$ for some $k'\in K_{i'}$. By (a) we know that $2J_i\cap 2J_{i'}\not= \varnothing$, and there are at most $N$ such indices $i'$, see \lemref{lem:properties of Ji}) (c). Since $\# K_i\lesssim_n 1$ by the definition, we get that there are at most $C(n,N)$ indices $k'$ satisfying $\Pi_0(3B_k)\cap \Pi_0(3B_{k'})\not=\varnothing$. The estimate $r_k\approx r_{k'}$ follows by \lemref{lem:properties of Ji} (b).
	\vspace{5pt}
	
	To prove (c), 
	recall that $z_k$ is the center of $B_k$. By the definition, $\Pi_0(z_k)\in J_i$ for $i\in I$ such that $r_k=\eta\ell(J_i)$. \lemref{lem:properties of Ji} (a) gives us $D(\Pi_0(z_k))\approx_{n} \ell(J_i)$. Hence, by \lemref{lem:d and D comparable on graph} we get  
	\begin{equation*}
	d(z_k)\approx \ell(J_i)=\eta^{-1} r_k.
	\end{equation*}
	Now, for an arbitrary $x\in 3B_k$ we have by the 1-Lipschitz property of function $d$ that 
	\begin{equation*}
	|d(x)- d(z_k)|\le |x-z_k|\le 3r_k,
	\end{equation*} 
	Since $d(z_k)\approx \eta^{-1} r_k,$ choosing $\eta$ small enough we arrive at $d(x)\approx \eta^{-1}r_k,$ and so for $\eta$ small enough $r_k\le d(x)\le \eta^{-3/2}r_k$.
	
	\vspace{5pt}
	Concerning (d), let $i\in I_0$ be such that $\Pi_0(z_k)\in J_i$. We know by \lemref{lem:balls around f(Ji) in 1.5B0} that $B(f(z_{J_i}),2\diam(J_i))\subset 2.3B_0$. Since  $3B_k\subset B(f(z_{J_i}),2\diam(J_i)),$ we get $3B_k\subset 2.3B_0$.
	
	\vspace{5pt}
	To show (e), let $k\in K\setminus K_0$. Let $i\in I\setminus I_0$ be such that $k\in K_i$, i.e. $\Pi_0(z_k)\in J_i$. By (c) and \lemref{lem:d and D comparable on graph} we have $d(z_k)\approx D(\Pi_0(z_k))\approx r_k$. At the same time, $|\Pi_0(z_k)-z_0|\approx \ell(J_i) \gtrsim\ell(R_0)$ by \lemref{lem:dist of Ji to B0} (b). Recall also that $\lVert F\rVert_{\infty}\lesssim \theta \ell(R_0)$ due to Lipschitz continuity and the fact that $\supp(F)\subset 1.9 B_0$, see \lemref{lem:properties of F}. It follows that
	\begin{equation*}
	|z_k-z_0|\le |z_k-\Pi_0(z_k)|+|\Pi_0(z_k)-z_0|\lesssim |F(z_k)| + \ell(J_i)\lesssim \theta\ell(R_0) + \ell(J_i)\lesssim \ell(J_i),
	\end{equation*}
	and on the other hand
	\begin{multline*}
	|z_k-z_0|\ge |\Pi_0(z_k)-z_0|-|z_k-\Pi_0(z_k)|\ge C\, \ell(J_i) - |F(z_k)|\ge C\, \ell(J_i) - C'\, \theta\ell(R_0)\\
	\ge C\, \ell(J_i) - C''\, \theta\ell(J_i)\gtrsim \ell(J_i),
	\end{multline*}
	for $\theta$ small enough. Hence, $|z_k-z_0|\approx \ell(J_i)\approx r_k\gtrsim\ell(R_0)$.
	
	Now, assume also $3B_k\cap 3B_0\not= \varnothing$, and suppose $x\in 3B_0\cap 3B_k$. We have $\Pi_0(x)\in 2J_i$ by \eqref{eq:Bk subset Ji}. Clearly, $D(\Pi_0(x))\le d(x)\lesssim\ell(R_0)$, and so $r_k\approx \ell(J_i)\approx D(\Pi_0(x))\lesssim \ell(R_0)$ by \lemref{lem:properties of Ji} (a).
	
	\vspace{5pt}
	Finally, to see \eqref{eq:Bk cover Gamma minus RG} note that by the definition of $B_k$ and by (a) we have 
	\begin{equation*}
	f(J_i)\subset\bigcup_{k\in K_i}B_k\cap\Gamma\subset\bigcup_{k\in K_i}3B_k\cap\Gamma\subset f(2J_i).
	\end{equation*}
	Together with  \lemref{lem:properties of Ji} (d) this implies \eqref{eq:Bk cover Gamma minus RG}.
\end{proof}
Since $\eta$ is a dimensional constant, we will usually not mention dependence on it in our further estimates.
%
%
	
Due to bounded superposition of $3B_k$ (\lemref{lem:properties of Bk} (b)) we may define a partition of unity $\{h_k\}_{k\in K}$ such that $0\le h_k\le 1,\ \supp h_k\subset 3B_k,\ \lip(h_k)\approx \ell(J_i)^{-1},$ and
\begin{equation}\label{eq:definition of h}
h=\sum_{k\in K}h_k \equiv 1\quad \text{on $\bigcup_{k\in K} 2B_k$}.
\end{equation}	
Again, by the bounded superposition of $3B_k$ we may assume 
\begin{equation}\label{eq:hk comparable to 1}
h_k(x)\approx 1,\quad x\in B_k.
\end{equation}
Recall that $\sigma=\Hn{\Gamma},$ and that $c_0$ is a constant minimizing $\alpha_{\mu}(3B_0)$. We set
\begin{equation}\label{eq:ck definition}
c_k =\begin{cases}
\frac{\int h_k\ d\mu}{\int h_k\ d\sigma}\quad &\text{for $k\in K_0$},\\
c_0\quad &\text{for $k\not\in K_0$}.
\end{cases}
\end{equation}
We define the approximating measure as
\begin{equation}\label{eq:definition of nu}
\nu = \restr{\mu}{R_G} + \sum_k c_k h_k \sigma.
\end{equation}
Note that, since $\restr{\mu}{R_G}\ll\sigma$ by \lemref{lem:mu on RG absolutely continuous wrto Hn}, we also have $\nu\ll\sigma$. To simplify the notation, we introduce
\begin{align*}
\mu_G &= \restr{\mu}{R_G},\\
\mu_B &= \mu - \mu_G,\\
\nu_B &= \nu - \mu_G = \sum_k c_k h_k d\sigma.
\end{align*}

Note that by \lemref{lem:properties of Bk} (d), \eqref{eq:Bk cover Gamma minus RG}, and the fact that $R_G\subset B_0$, we get 
\begin{equation*}
\Gamma\setminus( 2.3B_0)=L_0\setminus (2.3B_0)\subset \Gamma\cap\bigcup_{k\not\in K_0}B_k,
\end{equation*}
and so by the definition of $\nu$ we have
\begin{equation}\label{eq:nu flat outside 1.5B0}
\restr{\nu}{(2.3B_0)^c}=c_0\Hn{L_0\setminus (2.3B_0)}.
\end{equation}

\begin{lemma}
	For each $k\in K_0$ there exists $P_k\in\Tree$ such that $3B_k\subset 2.5B_{P_k}$, and $\ell(P_k)\approx r_k$.
\end{lemma}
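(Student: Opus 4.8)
The plan is to obtain $P_k$ as a suitable ancestor of the cube $Q_i$ furnished by \lemref{lem:Qi corresposnding to Ji}. Fix $k\in K_0$, say $k\in K_i$ with $i\in I_0$, and let $Q_i\in\Tree$ be the cube from \lemref{lem:Qi corresposnding to Ji}, so that $\ell(Q_i)\approx\ell(J_i)$ and $\dist(J_i,\Pi_0(Q_i))\lesssim\ell(J_i)$. First I would record that $z_k$ lies close to $Q_i$: since $z_k=f(z_k')$ with $z_k'\in J_i$ we have $z_k\in f(J_i)$, so combining \eqref{eq:Qi close to fJi}, the Lipschitz continuity of $f$, and $\diam(J_i)\approx\ell(J_i)\approx\ell(Q_i)$ gives $\dist(z_k,Q_i)\lesssim\ell(Q_i)$, whence $|z_k-z_{Q_i}|\lesssim\ell(Q_i)$ because $z_{Q_i}\in Q_i$ and $\diam(Q_i)\lesssim\ell(Q_i)$. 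Also $r_k=\eta\ell(J_i)\lesssim\ell(Q_i)$, so $\ell(Q_i)\approx r_k$.

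Fix a large absolute constant $\Lambda$ (chosen below, depending only on $n,d,A_0,C_0$) and put $\Lambda'=A_0\Lambda$. If $\ell(Q_i)>\ell(R_0)/\Lambda'$, then $\ell(Q_i)\approx\ell(R_0)$, hence $r_k\approx\ell(R_0)$, and I take $P_k=R_0\in\Tree$: the inclusion $3B_k\subset 2.3B_0\subset 2.5B_{R_0}$ is exactly \lemref{lem:properties of Bk}(d), and $\ell(P_k)=\ell(R_0)\approx r_k$. Otherwise $\ell(Q_i)\le\ell(R_0)/\Lambda'$, and I let $P_k$ be the ancestor of $Q_i$ with $\ell(P_k)$ the smallest available sidelength that is $\ge\Lambda\,\ell(Q_i)$; then $\Lambda\,\ell(Q_i)\le\ell(P_k)<\Lambda'\ell(Q_i)\le\ell(R_0)$, so $P_k$ is a well-defined strict ancestor of $Q_i$ inside $\D(R_0)$, it lies in $\Tree$ (an ancestor of a $\Tree$-cube contained in $R_0$ is again in $\Tree$, since if it were contained in a stopping cube so would be $Q_i$), and $\ell(P_k)\approx\ell(Q_i)\approx r_k$.

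It remains to check $3B_k\subset 2.5B_{P_k}$ in this second case. Here $2.5B_{P_k}=B(z_{P_k},70\,r(P_k))$ with $r(P_k)\ge A_0^{-J(P_k)}=\ell(P_k)/(56C_0)\ge\Lambda\,\ell(Q_i)/(56C_0)$, i.e. $\ell(Q_i)\le 56C_0\,\Lambda^{-1}r(P_k)$; on the other hand $z_{Q_i}\in Q_i\subset B_{P_k}$ gives $|z_{Q_i}-z_{P_k}|<28\,r(P_k)$, so
\[
|z_k-z_{P_k}|+3r_k\;\le\;|z_k-z_{Q_i}|+3r_k+28\,r(P_k)\;\le\;C\,\ell(Q_i)+28\,r(P_k)
\]
for an absolute constant $C$, and the right-hand side is at most $(56CC_0/\Lambda+28)\,r(P_k)\le 70\,r(P_k)$ as soon as $\Lambda\ge 4CC_0/3$. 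Thus $3B_k=B(z_k,3r_k)\subset B(z_{P_k},70\,r(P_k))=2.5B_{P_k}$, completing the argument (the constant $\Lambda$ being fixed at the start in terms of $C$, which is legitimate since $C$ does not depend on $\Lambda$).

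The point that needs care — and the reason one cannot simply take $P_k=Q_i$, or the near-minimizer of $d(\cdot)$ at $z_k$ — is the mismatch between $\ell(Q)$ and $r(Q)$ for David--Mattila cubes: the ball $2.5B_Q$ has radius $70\,r(Q)$, which is only guaranteed to be $\gtrsim\ell(Q)/C_0$, whereas $Q$ itself (hence its center relative to a prescribed point such as $z_k$) may stretch over a ball of radius $\approx\ell(Q)$. Passing to an ancestor enlarged by the fixed factor $\Lambda\approx C_0$ manufactures exactly the slack required, and the dichotomy above guarantees that this enlargement never climbs above $R_0$.
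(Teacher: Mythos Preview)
Your proof is correct. The paper's argument is considerably more terse: it simply observes that $3B_k\subset 2.3B_0$ (this is \lemref{lem:properties of Bk}(d)), defines $P_k$ to be the \emph{smallest} cube in $\Tree$ with $3B_k\subset 2.5B_{P_k}$ (such cubes exist since $R_0$ is one), and then appeals to \eqref{eq:rk comparable to d(x)} to conclude $\ell(P_k)\approx r_k$.

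Your explicit construction via an ancestor of $Q_i$ is a different route to a cube with the same properties, and it has the virtue of making visible the step that the paper's three-line proof leaves to the reader: the upper bound on $\ell(P_k)$ in the paper's argument amounts to exhibiting a $\Tree$-cube of sidelength $\approx r_k$ whose $2.5B$-ball already contains $3B_k$, and --- as you correctly diagnose at the end --- this cannot be the raw $Q_i$ (or a raw near-minimizer of $d$) because $2.5B_Q$ has radius only $70\,r(Q)\ge 5\ell(Q)/(4C_0)$, which may be small compared to $|z_k-z_Q|\lesssim\ell(Q)$. Passing to an ancestor enlarged by a fixed factor $\approx C_0$ is exactly what is needed, and your dichotomy handles the edge case near $R_0$ cleanly. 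Both arguments give the lemma with absolute constants; yours is simply the unpacked version of the paper's.
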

\begin{proof}		
	We know by \eqref{eq:3Bk in 2.5B0} that $3B_k\subset 2.3B_0$. Thus, we may define $P_k$ as the smallest cube in $\Tree$ such that $3B_k\subset 2.5B_{P_k}$. We have $\ell(P_k)\approx r_k$ due to \eqref{eq:rk comparable to d(x)}.		
	%
	%
\end{proof}

We will write for $k\in K_0$
\begin{align}\label{eq:def of widetilde{B}_k}
\widetilde{B}_k &= 2.5B_{P_k},\\
\widetilde{c}_k&= c_{P_k},\notag\\
L_k &= L_{P_k},\notag
\end{align}
and for $k\not\in K_0$ set $\widetilde{B}_k=2.5B_0,\ \widetilde{c}_k=c_0$, and $L_k=L_0$.

Note that for every $k\in K$
\begin{equation}\label{eq:f(zk) close to Lk}
\dist(z_k,L_k)\lesssim_{A,\tau}\sqrt{\varepsilon_0}r_k.
\end{equation}
Indeed, for $k\in K_0$, it follows by \lemref{lem:x from Gamma close to LQ} applied to $z_k$ and $P_k$. For $k\not\in K_0$, but such that $z_k\in 1.9B_0$, again it follows by \lemref{lem:x from Gamma close to LQ} applied to $z_k$ and $R_0$. Finally, for $k\not\in K_0$ such that $z_k\not\in 1.9B_0$ this is trivially true because $\Gamma\setminus (1.9B_0)=L_0\setminus (1.9B_0)$, and so $\dist(z_k,L_0)=0$.


\begin{lemma}\label{lem:locally a good graph}
	For $k\in K$ the set $\Gamma\cap 3B_k$ is a Lipschitz graph over $L_k$, with a Lipschitz constant  at most $C\sqrt{\varepsilon_0}$.
\end{lemma}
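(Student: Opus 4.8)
The plan is to reduce everything to the \emph{slope inequality}: for all $x_1,x_2\in\Gamma\cap 3B_k$,
\begin{equation*}
|\Pi_{L_k}^{\perp}(x_1-x_2)|\lesssim_{A,\tau}\sqrt{\varepsilon_0}\,|\Pi_{L_k}(x_1-x_2)|.
\end{equation*}
This is precisely the condition guaranteeing that $\Gamma\cap 3B_k$ is the graph over $L_k$ of a $C\sqrt{\varepsilon_0}$-Lipschitz map: it makes $\Pi_{L_k}$ injective on $\Gamma\cap 3B_k$ with Lipschitz inverse, and then $G=\Pi_{L_k}^{\perp}\circ(\Pi_{L_k}|_{\Gamma\cap 3B_k})^{-1}$ is the desired function, with $\lip(G)\lesssim_{A,\tau}\sqrt{\varepsilon_0}$ (one may further extend $G$ to all of $L_k$ by Kirszbraun if a function on a full plane is wanted). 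So the whole proof is the displayed estimate.

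To prove it, I would first record the relevant geometry. Fix $i$ with $k\in K_i$, and recall that $F_i$ is affine with graph $L_{Q_i}$ when $i\in I_0$ and $F_i\equiv 0$ (graph $L_0$) otherwise; write $A_i=\nabla F_i$, a constant linear map with $\lVert A_i\rVert_{op}\lesssim\theta$, so that the direction plane of $L_{Q_i}$ (resp. of $L_0$) is $\{(z,A_iz):z\in L_0\}$. By \eqref{eq:Bk subset Ji} the set $\Pi_0(3B_k)$ is a ball in $L_0$, hence convex, contained in $2J_i\subset 15J_i\subset L_0\setminus\Pi_0(R_G)$, i.e. in the region where $F=\sum_j\varphi_jF_j$ and the pointwise bound \eqref{eq:compare gradFi with grad F}, $|\nabla F-\nabla F_i|\lesssim\sqrt{\varepsilon_0}$, is valid. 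I would also need, in the case $k\in K_0$, the comparison $\measuredangle(L_{Q_i},L_k)\lesssim_{A,\tau}\varepsilon_0$; this follows from \lemref{lem:planes close to each other for similar cubes} applied to $Q_i$ and $P_k$, once one checks $\ell(Q_i)\approx\ell(J_i)\approx r_k\approx\ell(P_k)$ (by \lemref{lem:Qi corresposnding to Ji} and the definition of $P_k$) and $\dist(Q_i,P_k)\lesssim\ell(Q_i)$ — the latter because $3B_k\subset 2.5B_{P_k}$ and $\Pi_0(z_k)=z_k'\in J_i$ give $\dist(\Pi_0(P_k),J_i)\lesssim\ell(P_k)$, which combined with $\dist(\Pi_0(Q_i),J_i)\lesssim\ell(J_i)$ (\lemref{lem:Qi corresposnding to Ji}) and \lemref{lem:Lip property involving d(x)} yields $\dist(Q_i,P_k)\lesssim\ell(P_k)\approx\ell(Q_i)$. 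For $k\notin K_0$ there is nothing to check, since $L_k=L_0$ and $A_i=0$.

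Then I would take $x_1,x_2\in\Gamma\cap 3B_k$, write $x_j=(y_j,F(y_j))$ with $y_j=\Pi_0(x_j)\in\Pi_0(3B_k)$, and set $w=y_1-y_2$, $v=x_1-x_2=(w,\,F(y_1)-F(y_2))$. Since the segment $[y_2,y_1]$ lies in the convex set $\Pi_0(3B_k)\subset 15J_i$, integrating \eqref{eq:compare gradFi with grad F} along it gives $F(y_1)-F(y_2)=A_iw+E$ with $|E|\lesssim_{A,\tau}\sqrt{\varepsilon_0}\,|w|$; thus $v=(w,A_iw)+(0,E)$. The vector $(w,A_iw)$ lies in the direction plane of $L_{Q_i}$ (resp. $L_0$), so, by the definition of $\measuredangle$, $|\Pi_{L_k}^{\perp}((w,A_iw))|\lesssim_{A,\tau}\varepsilon_0|w|$ (this term simply vanishes when $k\notin K_0$, as then $A_i=0$ and $L_k=L_0$); hence $|\Pi_{L_k}^{\perp}(v)|\le|\Pi_{L_k}^{\perp}((w,A_iw))|+|E|\lesssim_{A,\tau}\sqrt{\varepsilon_0}\,|w|$. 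Finally $|v|\ge|w|$ forces $|\Pi_{L_k}(v)|^2=|v|^2-|\Pi_{L_k}^{\perp}(v)|^2\ge(1-C(A,\tau)\varepsilon_0)|w|^2\ge\tfrac12|w|^2$ for $\varepsilon_0$ small enough, and combining the last two estimates yields the slope inequality, hence the lemma.

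The only genuinely delicate point is ensuring the error term $E$ is of size $\sqrt{\varepsilon_0}|w|$ and not merely $\sqrt{\varepsilon_0}\,r_k$: this is exactly what produces the small \emph{Lipschitz} constant, rather than the much weaker information that $\Gamma\cap 3B_k$ lies in a $\sqrt{\varepsilon_0}r_k$-slab around $L_k$, which would already follow from \lemref{lem:x from Gamma close to LQ}. It is obtained precisely by integrating the pointwise gradient comparison \eqref{eq:compare gradFi with grad F} over the segment $[y_2,y_1]$, which is legitimate because $\Pi_0(3B_k)$ is convex and, by \eqref{eq:Bk subset Ji}, stays inside $15J_i$. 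Everything else is routine bookkeeping with comparabilities already available for cubes in $\Tree$.
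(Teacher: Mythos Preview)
Your proof is correct and follows essentially the same approach as the paper: both rely on the gradient comparison \eqref{eq:compare gradFi with grad F} on $15J_i$ (via the containment $\Pi_0(3B_k)\subset 2J_i$ from \eqref{eq:Bk subset Ji}) to see that $\Gamma\cap 3B_k$ is a $C\sqrt{\varepsilon_0}$-Lipschitz graph over $L_{Q_i}$, and then tilt to $L_k$ using $\measuredangle(L_{Q_i},L_k)\lesssim_{A,\tau}\varepsilon_0$ from \lemref{lem:planes close to each other for similar cubes}. The paper's version is more compressed---it invokes \eqref{eq:Qi close to fJi} directly for $\dist(Q_i,P_k)\lesssim\ell(Q_i)$ and states the graph-over-$L_{Q_i}$ conclusion without unpacking the slope inequality---but the content is the same.
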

\begin{proof}
	Suppose $k\in K_0$, i.e. that $k\in K_i$ for some $i\in I_0$. We know by \eqref{eq:compare gradFi with grad F} that $f(15J_i)$ is a $C\sqrt{\varepsilon_0}$-Lipschitz graph over $L_{Q_i}$ (recall that $F_i$ is an affine function whose graph is $L_{Q_i}$). 
	
	At the same time, since $P_k$ satisfies $\dist(P_k,Q_i)\lesssim \ell(Q_i)$ (see \eqref{eq:Qi close to fJi} and the definition of $P_k$) and $\ell(P_k)\approx r_k\approx \ell(Q_i),$ we can apply \lemref{lem:planes close to each other for similar cubes} to get $\measuredangle(L_{Q_i}, L_k)\lesssim_{A,\tau}\varepsilon_0$. It follows that $f(15J_i)$ is a $C\sqrt{\varepsilon_0}$-Lipschitz graph over $L_k$. The same is true for $k\not\in K_0$: since $L_k=L_0=\graph({F_i})$, it follows immediately by \eqref{eq:compare gradFi with grad F}.  We conclude by noting that
	\begin{equation*}
	\Gamma\cap 3B_k\overset{\eqref{eq:Bk subset Ji}}{\subset}f(15J_i).
	\end{equation*}
\end{proof}

\lemref{lem:locally a good graph} and \eqref{eq:f(zk) close to Lk} imply that for every $k\in K$
\begin{equation}\label{eq:sigma close to HnLk in 3Bk}
F_{3B_k}(\sigma, \Hn{L_k})\lesssim_{A,\tau}\sqrt{\varepsilon_0}r_k^{n+1}.
\end{equation}
Furthermore, by $\eqref{eq:sum of F containing x estimate}$ we have
\begin{equation}\label{eq:mu close to HnLk in tildeBk}
F_{\widetilde{B}_k}(\mu, \widetilde{c}_{k}\Hn{L_k})\lesssim_{A,\tau}\varepsilon_0 r_k^{n+1}.
\end{equation}

\begin{lemma}\label{lem:ck and ctildeBk close}
	For $k\in K$ we have 
	\begin{equation}\label{eq:ck and ctildeBk close}
	\left|c_k-\widetilde{c}_{k}\right|\lesssim_{A,\tau}\sqrt{\varepsilon_0}.
	\end{equation}
\end{lemma}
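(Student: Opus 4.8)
The plan is to compare both $c_k$ and $\widetilde c_k$ to a common intermediate quantity, namely a density constant on the plane $L_k$, by testing against a single well-chosen Lipschitz bump function supported in $3B_k$. For $k \in K_0$ (the case $k \notin K_0$ is even easier since then $c_k = \widetilde c_k = c_0$ by definition, so I would dispose of it in one line), recall $c_k = (\int h_k\,d\mu)/(\int h_k\,d\sigma)$ and $\widetilde c_k = c_{P_k}$, where $P_k \in \Tree$ satisfies $3B_k \subset \widetilde B_k = 2.5B_{P_k}$ and $\ell(P_k) \approx r_k$. By \eqref{eq:cBQ estimate} we have $\widetilde c_k = c_{P_k} \approx_{A,\tau} 1$, and by the balanced-ball / density estimates also $c_k \lesssim_{A,\tau} 1$; hence without loss of generality we may assume $\varepsilon_0$ small, so that $\sqrt{\varepsilon_0} \ll 1$.

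First I would set up a test function. Since $3B_k \subset \widetilde B_k$ and $\Gamma \cap 3B_k$ is a $C\sqrt{\varepsilon_0}$-Lipschitz graph over $L_k$ with $\dist(z_k, L_k) \lesssim_{A,\tau}\sqrt{\varepsilon_0}\,r_k$ (by \lemref{lem:locally a good graph} and \eqref{eq:f(zk) close to Lk}), pick $\phi \in \lip_1(3B_k)$ with $\phi \approx r_k$ on a fixed fraction of the $n$-disk $L_k \cap 2B_k$ and $\phi = 0$ outside $3B_k$; then
\begin{equation*}
\int \phi\, d\Hn{L_k} \approx r_k^{n+1}.
\end{equation*}
The key identity is the chain
\begin{multline*}
r_k^{n+1}\,|c_k - \widetilde c_k| \lesssim \Big| c_k \int \phi\, d\sigma - \int \phi\, d\mu \Big| + \Big| \int \phi\, d\mu - \widetilde c_k \int \phi\, d\Hn{L_k}\Big| \\
+ \widetilde c_k \Big| \int \phi\, d\Hn{L_k} - \int \phi\, d\sigma\Big| + |c_k|\,\Big|\int\phi\,d\sigma - \int\phi\,d\Hn{L_k}\Big|,
\end{multline*}
where I also used $\int\phi\,d\sigma \approx r_k^{n+1}$ (again because $\Gamma\cap 3B_k$ is a flat Lipschitz graph). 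Now estimate the three types of terms: the first term is controlled by $|c_k \int h_k d\sigma - \int h_k d\mu|$-type reasoning — more precisely, since $c_k$ is exactly $(\int h_k d\mu)/(\int h_k d\sigma)$ and $\phi$ can be written as $\phi = (\phi/h_k)h_k$ on the support (with $h_k \approx 1$ there by \eqref{eq:hk comparable to 1}), one gets $|c_k\int\phi\,d\sigma - \int\phi\,d\mu| \lesssim F_{3B_k}(c_k\sigma,\mu)$, but actually the cleanest route is: $\int\phi\,d\mu - c_k\int\phi\,d\sigma = \int\phi\,d(\mu - c_k\sigma)$ and on $3B_k$ we have $\mu = \mu_G + \mu_B$... — the honest way is to bound $F_{3B_k}(\mu, c_k\sigma)$ by $F_{3B_k}(\mu,\widetilde c_k\Hn{L_k}) + F_{3B_k}(\widetilde c_k \Hn{L_k}, c_k\sigma)$ and iterate. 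The second term is $\lesssim F_{\widetilde B_k}(\mu, \widetilde c_k \Hn{L_k}) \lesssim_{A,\tau}\varepsilon_0 r_k^{n+1}$ by \eqref{eq:mu close to HnLk in tildeBk} (using $3B_k\subset\widetilde B_k$ and $\phi\in\lip_1(3B_k)\subset\lip_1(\widetilde B_k)$). The third and fourth terms are $\lesssim \widetilde c_k\, F_{3B_k}(\Hn{L_k},\sigma) \lesssim_{A,\tau}\sqrt{\varepsilon_0}\,r_k^{n+1}$ by \eqref{eq:sigma close to HnLk in 3Bk} and $\widetilde c_k, c_k \approx_{A,\tau} 1$. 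Dividing through by $r_k^{n+1}$ yields $|c_k - \widetilde c_k| \lesssim_{A,\tau}\sqrt{\varepsilon_0}$, as claimed.

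The one genuinely delicate point — and the main obstacle — is controlling the first term, i.e. showing $F_{3B_k}(\mu, c_k \sigma) \lesssim_{A,\tau}\sqrt{\varepsilon_0}\,r_k^{n+1}$. Naively $c_k$ is defined only via the single bump $h_k$, so it is not automatic that $c_k\sigma$ is close to $\mu$ in the full $F_{3B_k}$-metric. The fix is to go through $L_k$: combine $F_{\widetilde B_k}(\mu, \widetilde c_k\Hn{L_k}) \lesssim_{A,\tau}\varepsilon_0 r_k^{n+1}$ with $F_{3B_k}(\widetilde c_k\Hn{L_k}, \widetilde c_k\sigma)\lesssim_{A,\tau}\sqrt{\varepsilon_0}r_k^{n+1}$ to get $F_{3B_k}(\mu,\widetilde c_k\sigma)\lesssim_{A,\tau}\sqrt{\varepsilon_0}r_k^{n+1}$, and then observe that testing this last inequality against $h_k$ itself gives $|\int h_k d\mu - \widetilde c_k\int h_k d\sigma| \lesssim_{A,\tau}\sqrt{\varepsilon_0}\,r_k\cdot r_k^{n}$, which upon dividing by $\int h_k d\sigma \approx r_k^n$ yields directly $|c_k - \widetilde c_k| \lesssim_{A,\tau}\sqrt{\varepsilon_0}$ — so in fact this route bypasses the test function $\phi$ altogether and is the cleaner proof. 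I would therefore write the argument this way: establish $F_{3B_k}(\mu,\widetilde c_k\sigma)\lesssim_{A,\tau}\sqrt{\varepsilon_0}r_k^{n+1}$ via the triangle inequality through $\widetilde c_k\Hn{L_k}$ (using \eqref{eq:mu close to HnLk in tildeBk} and \eqref{eq:sigma close to HnLk in 3Bk}, noting $\int h_k d\sigma \approx r_k^n$ because $\Gamma \cap 3B_k$ is a flat graph), then plug in $\phi = h_k$ and divide.
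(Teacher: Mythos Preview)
Your final ``cleaner route'' is exactly the paper's proof: write $|c_k-\widetilde c_k|\int h_k\,d\sigma = \big|\int h_k\,d\mu - \widetilde c_k\int h_k\,d\sigma\big|$, triangle through $\widetilde c_k\Hn{L_k}$, and invoke \eqref{eq:mu close to HnLk in tildeBk} and \eqref{eq:sigma close to HnLk in 3Bk}; the initial detour via the auxiliary bump $\phi$ is unnecessary, as you correctly observe. One minor arithmetic slip: since $\lip(h_k)\approx r_k^{-1}$, testing the bound $F_{3B_k}(\mu,\widetilde c_k\sigma)\lesssim_{A,\tau}\sqrt{\varepsilon_0}\,r_k^{n+1}$ against $h_k$ gives $\big|\int h_k\,d\mu - \widetilde c_k\int h_k\,d\sigma\big|\lesssim_{A,\tau}\sqrt{\varepsilon_0}\,r_k^{n}$ (not $r_k^{n+1}$), which after dividing by $\int h_k\,d\sigma\approx r_k^n$ does yield the claimed $\sqrt{\varepsilon_0}$.
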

\begin{proof}
	For $k\not\in K_0$ we have $c_k=c_0=\widetilde{c}_k$, so the claim is trivially true. Suppose $k\in K_0$. Recall that $h_k\approx 1$ in $B_k$ \eqref{eq:hk comparable to 1}, $\lip(h_k)\approx r_k^{-1}$, and $\widetilde{c}_k\approx_{A,\tau}1$  by \eqref{eq:cBQ estimate}. It follows that
	\begin{multline*}
	\left\lvert c_k-\widetilde{c}_k\right\rvert r_k^n \overset{\eqref{eq:hk comparable to 1}}{\approx}\left\lvert c_k-\widetilde{c}_k\right\rvert \int h_k\ d\sigma = \left\lvert \int h_k\ d\mu - \int h_k \widetilde{c}_k\ d\sigma\right\rvert \\
	\le \left\lvert \int h_k\ d\mu - \int h_k \widetilde{c}_k\ d\Hn{L_k}\right\rvert + \widetilde{c}_k\left\lvert \int h_k\ d\Hn{L_k} - \int h_k \ d\sigma\right\rvert\\
	\le F_{\widetilde{B}_k}(\mu, \widetilde{c}_k\Hn{L_k})r_k^{-1} + \widetilde{c}_k F_{3B_k}(\sigma, \Hn{L_k})r_k^{-1}\overset{\eqref{eq:sigma close to HnLk in 3Bk},\eqref{eq:mu close to HnLk in tildeBk}}{\lesssim_{A,\tau}}\sqrt{\varepsilon_0}r_k^n.
	\end{multline*}
\end{proof}
An immediate corollary of \eqref{eq:cBQ estimate} and the lemma above is that for $k\in K$
\begin{equation}\label{eq:ck comparable to 1}
c_k\approx_{A,\tau}1.
\end{equation}
\begin{lemma}\label{lem:nu is AD regular}
	The measure $\nu$ is $n$-AD-regular, that is, for $x\in\Gamma,\ r>0$
	\begin{equation*}
	\nu(B(x,r))\approx_{A,\tau}r^n
	\end{equation*}
\end{lemma}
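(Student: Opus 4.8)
The plan is to reduce the statement to two ingredients: the $n$-AD-regularity of $\sigma=\Hn{\Gamma}$, and the fact that $\nu$ has Radon--Nikodym density comparable to $1$ with respect to $\sigma$ (with constants depending on $A$ and $\tau$). Once these are available, for $x\in\Gamma$ and $r>0$ one simply writes
\begin{equation*}
\nu(B(x,r))=\int_{B(x,r)}\frac{d\nu}{d\sigma}\,d\sigma\approx_{A,\tau}\sigma(B(x,r))\approx r^n,
\end{equation*}
which is the desired conclusion.

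First I would verify that $\sigma$ is $n$-AD-regular. By \lemref{lem:properties of F}, $\Gamma$ is the graph of $F\colon L_0\to L_0^{\perp}$ with $\lip(F)\lesssim\theta$, so for $\theta$ small the map $f(y)=(y,F(y))$ is bi-Lipschitz from $L_0$ onto $\Gamma$ with constants close to $1$. Comparing $\Gamma\cap B(x,r)$ with $f(L_0\cap B(\Pi_0(x),r/2))$ and $f(L_0\cap B(\Pi_0(x),2r))$ for $x\in\Gamma$ then gives $\sigma(B(x,r))\approx r^n$ for every $x\in\Gamma$ and $r>0$, with absolute implicit constants. Here one uses that $F$ is defined and Lipschitz on all of $L_0$, so $\Gamma$ is a genuine Lipschitz graph at every scale.

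Next I would compute the density of $\nu$. Recall from \eqref{eq:definition of nu} that $\nu=\restr{\mu}{R_G}+\sum_k c_k h_k\sigma$, that $d\restr{\mu}{R_G}=g\,d\Hn{R_G}$ with $g\approx_{A,\tau}1$ by \lemref{lem:mu on RG absolutely continuous wrto Hn}, and that $c_k\approx_{A,\tau}1$ for every $k\in K$ by \eqref{eq:ck comparable to 1}. The key point I would establish is that, on $\Gamma$, the partition-of-unity sum $h=\sum_k h_k$ equals $\one_{\Gamma\setminus R_G}$: a point of $R_G\cap\Gamma$ lies in no ball $3B_k$ (since $3B_k\cap\Gamma\subset\Gamma\setminus R_G$ by \eqref{eq:Bk cover Gamma minus RG}), so $h=0$ there, while a point of $\Gamma\setminus R_G$ belongs to some $B_k\subset 2B_k$ by \eqref{eq:Bk cover Gamma minus RG}, whence $h=1$ there by \eqref{eq:definition of h}. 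Consequently $\frac{d\nu}{d\sigma}=g\,\one_{R_G}+\sum_k c_k h_k$ holds $\sigma$-a.e.; on $R_G$ this equals $g\approx_{A,\tau}1$, and on $\Gamma\setminus R_G$ it is a convex combination (weights $h_k$ summing to $1$) of the numbers $c_k\approx_{A,\tau}1$, hence also $\approx_{A,\tau}1$. Thus $\frac{d\nu}{d\sigma}\approx_{A,\tau}1$ $\sigma$-a.e.\ on $\Gamma$, and the conclusion follows from the first step.

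The argument is mostly bookkeeping, since every quantitative input (\lemref{lem:properties of F}, \lemref{lem:mu on RG absolutely continuous wrto Hn}, \eqref{eq:ck comparable to 1}, \eqref{eq:Bk cover Gamma minus RG}, \eqref{eq:definition of h}) is already at hand; the one step deserving genuine care is the lower bound on the density, i.e.\ the identity $h\equiv1$ on all of $\Gamma\setminus R_G$, which prevents the density of $\nu$ from degenerating on the part of $\Gamma$ covered by the balls $B_k$, and which relies precisely on the covering property $\bigcup_k B_k\cap\Gamma=\Gamma\setminus R_G$ of \lemref{lem:properties of Bk}(f).
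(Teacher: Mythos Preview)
Your proposal is correct and follows essentially the same approach as the paper: both show that $d\nu/d\sigma\approx_{A,\tau}1$ by splitting $\Gamma$ into $R_G$ (where \lemref{lem:mu on RG absolutely continuous wrto Hn} applies) and $\Gamma\setminus R_G$ (where $h\equiv1$ and $c_k\approx_{A,\tau}1$), and then invoke the $n$-AD-regularity of $\sigma$. You are merely more explicit than the paper in spelling out why $h=\one_{\Gamma\setminus R_G}$ on $\Gamma$ and why $\sigma$ is $n$-AD-regular.
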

\begin{proof}
	We know by
	\eqref{eq:Bk cover Gamma minus RG}, the definition of $h$ \eqref{eq:definition of h}, and
	\eqref{eq:ck comparable to 1} that 
	\begin{equation*}
	d\restr{\sigma}{\Gamma\setminus R_G} = \sum_k h_kd\sigma\approx_{A,\tau} \sum_k c_k h_kd\sigma.
	\end{equation*}
	Together with \lemref{lem:mu on RG absolutely continuous wrto Hn} this gives
	\begin{equation*}
	d\nu = d\mu_G + \sum_k c_k h_k d\sigma\approx_{A,\tau} d\sigma.
	\end{equation*}
\end{proof}
\begin{lemma}\label{lem:ck close to cj for close Bk and Bj}
	If $k,j\in K$ satisfy $3B_k\cap 3B_j\not = \varnothing$, then
	\begin{equation*}
	|c_k - c_j|\lesssim_{A,\tau}\sqrt{\varepsilon_0}.
	\end{equation*}
\end{lemma}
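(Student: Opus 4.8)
The plan is to reduce everything, via the triangle inequality, to comparing the constants $\widetilde c_k = c_{P_k}$ and $\widetilde c_j = c_{P_j}$ attached to the $\Tree$-cubes $P_k,P_j$ (the cubes produced just above, satisfying $3B_k\subset 2.5B_{P_k}$, $3B_j\subset 2.5B_{P_j}$ and $\ell(P_k)\approx r_k$, $\ell(P_j)\approx r_j$). First I would dispose of the easy configurations. If $k,j\notin K_0$, then $c_k=c_j=c_0$ by \eqref{eq:ck definition} and there is nothing to prove. Next I would observe that, under the hypothesis $3B_k\cap 3B_j\neq\varnothing$, the mixed case $k\in K_0$, $j\notin K_0$ cannot occur: by \eqref{eq:radii comparable if Bk intersect} we have $r_k\approx r_j$; writing $k\in K_i$ with $i\in I_0$, \lemref{lem:dist of Ji to B0} (a) gives $\ell(J_i)\lesssim\ell(R_0)$, hence $r_k=\eta\ell(J_i)\ll\ell(R_0)$ (using that $\eta$ is a dimensional constant which we may take small); on the other hand $j\notin K_0$ would force $r_j\gtrsim\ell(R_0)$ by \eqref{eq:rk approx dist z0 for k not in K0}, a contradiction. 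Thus it remains to treat $k,j\in K_0$.

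For $k,j\in K_0$, \lemref{lem:ck and ctildeBk close} gives
\[
|c_k-c_j|\le |c_k-\widetilde c_k|+|\widetilde c_k-\widetilde c_j|+|\widetilde c_j-c_j|\lesssim_{A,\tau}\sqrt{\varepsilon_0}+|c_{P_k}-c_{P_j}|,
\]
so the whole point is to show $|c_{P_k}-c_{P_j}|\lesssim_{A,\tau}\varepsilon_0$. Here $P_k,P_j\in\Tree$ with $\ell(P_k)\approx r_k\approx r_j\approx\ell(P_j)$, and since $3B_k\cap 3B_j\neq\varnothing$ the balls $2.5B_{P_k}$ and $2.5B_{P_j}$ intersect and have comparable radii $\approx r_k$. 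The key remark is that, because $P_k\in\Tree$, all its ancestors inside $\D(R_0)$ lie in $\Tree$ as well, while $\ell(P_k)\approx r_k\ll\ell(R_0)$; hence, arguing exactly as in the proof of \lemref{lem:planes close to each other for similar cubes}, the smallest cube $P\in\Tree$ with $2.5B_P\supset 2.5B_{P_k}\cup 2.5B_{P_j}$ is well-defined and satisfies $\ell(P)\approx r_k$, so that $\ell(P_k)\approx\ell(P)\approx\ell(P_j)$ and $2.5B_{P_k}\subset 2.5B_P$, $2.5B_{P_j}\subset 2.5B_P$. Applying \lemref{lem:cBQ similar for similar cubes} to the pair $(P_k,P)$ and then to $(P_j,P)$ yields
\[
|c_{P_k}-c_{P_j}|\le |c_{P_k}-c_P|+|c_P-c_{P_j}|\lesssim_{A,\tau}\varepsilon_0,
\]
and combining with the previous display finishes the proof.

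The argument is essentially bookkeeping layered on top of \lemref{lem:ck and ctildeBk close} and \lemref{lem:cBQ similar for similar cubes}; the one genuinely delicate point — and the step I would be most careful about — is the existence of a common ``parent'' cube $P\in\Tree$ of size comparable to $r_k$ containing both $2.5B_{P_k}$ and $2.5B_{P_j}$. One cannot simply take the smallest cube of $\D(R_0)$ with this property, since it need not belong to $\Tree$; instead one ascends from $P_k$ a bounded, absolute number of generations, which is legitimate precisely because $\ell(P_k)\ll\ell(R_0)$ leaves room to do so. This is the same device already used (for $3B$-balls) in \lemref{lem:planes close to each other for similar cubes}, so it can be cited rather than repeated, keeping the proof short.
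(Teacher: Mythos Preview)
Your approach matches the paper's exactly: reduce via \lemref{lem:ck and ctildeBk close} to $|\widetilde c_k-\widetilde c_j|$, then find a common cube $R\in\Tree$ with $2.5B_R\supset\widetilde B_k\cup\widetilde B_j$ and $\ell(R)\approx r_k$, and apply \lemref{lem:cBQ similar for similar cubes} twice. The only real issue is your treatment of the mixed case.

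The claim that $k\in K_0$, $j\notin K_0$ cannot occur under $3B_k\cap 3B_j\neq\varnothing$ is false. You compare $r_k=\eta\,\ell(J_i)\ll\ell(R_0)$ with $r_j\gtrsim\ell(R_0)$ from \eqref{eq:rk approx dist z0 for k not in K0}, but the implicit constant in the latter already contains a factor of $\eta$: it is $\ell(J_{i'})$, not $r_j=\eta\,\ell(J_{i'})$, that satisfies $\ell(J_{i'})\approx|z_j-z_0|\gtrsim\ell(R_0)$ with $\eta$-free constants. Both sides therefore scale with $\eta$ and no contradiction results; adjacent cubes $J_i,J_{i'}$ with $i\in I_0$, $i'\notin I_0$ straddling $\partial(1.5B_0)$ do occur. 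The paper avoids the issue by not splitting into cases: since $\widetilde B_j=2.5B_0$ and $\widetilde c_j=c_0$ for $j\notin K_0$, the same chain $|c_k-\widetilde c_k|+|\widetilde c_k-c_R|+|c_R-\widetilde c_j|+|\widetilde c_j-c_j|$ works uniformly, taking $R=R_0$ in the mixed case (the containment $2.5B_{P_k}\subset 2.5B_0$ holds because $P_k\subsetneq R_0$ forces $r(P_k)\le C_0A_0^{-1}r(R_0)$, and in this situation $r_k\approx r_j\approx\ell(R_0)$ by \eqref{eq:rk approx ell R0 for k not in K0}). For the same reason your worry about having ``room to ascend'' is unnecessary: $R_0\in\Tree$ always serves as an upper cap, so one never needs $\ell(P_k)\ll\ell(R_0)$.
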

\begin{proof}
	If  $3B_k\cap 3B_j\not = \varnothing$, then by \eqref{eq:Bk subset Ji} and \lemref{lem:properties of Ji} (b) it follows that 
	\begin{equation*}
	r_k\approx r_j.
	\end{equation*}		
	%
	
	Now, since $3B_k\cap 3B_j\not = \varnothing$ and $r_k\approx r_j$, we get that there exists $R\in\Tree$ such that $2.5B_R\supset \widetilde{B}_k\cup \widetilde{B}_j$ and $\ell(R)\approx r_k$. Hence, we may use \lemref{lem:cBQ similar for similar cubes} and \lemref{lem:ck and ctildeBk close} to obtain
	\begin{equation*}
	|c_k - c_j| \le |c_k - \widetilde{c}_k| + |\widetilde{c}_k - c_R| + |c_R - \widetilde{c}_j| + | \widetilde{c}_j - c_j|\lesssim_{A,\tau}\sqrt{\varepsilon_0}.
	\end{equation*}
\end{proof}

\subsection{\texorpdfstring{$\nu$}{nu} approximates \texorpdfstring{$\mu$}{mu} well}
\begin{lemma}\label{lem:h=1 outside RFar}
	We have 
	\begin{equation*}
	3B_0\setminus(R_G\cup \RFar)\subset \bigcup_{k\in K} 2B_k.
	\end{equation*}
	In consequence, for every $x\in 3B_0\setminus(R_G\cup \RFar)$ we have $h(x)=1.$
\end{lemma}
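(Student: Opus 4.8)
The plan is to prove the inclusion $3B_0\setminus(R_G\cup\RFar)\subset\bigcup_{k\in K}2B_k$; the assertion $h(x)=1$ is then immediate, since by \eqref{eq:definition of h} we have $h\equiv 1$ on $\bigcup_{k\in K}2B_k$. So I would fix $x\in 3B_0\setminus(R_G\cup\RFar)$. Since $x\notin R_G$, the definition of $R_G$ gives $d(x)>0$. Since $x\in 3B_0\setminus\RFar$, \lemref{lem:x outside RFar close to Gamma} applies, so we may pick $x'\in\Gamma$ with $|x-x'|\le 2\dist(x,\Gamma)\lesssim_{A,\tau}\sqrt{\varepsilon_0}\,d(x)$.

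Next I would transfer this to the covering $\{B_k\}$. The $1$-Lipschitz property of $d$ yields $|d(x)-d(x')|\le|x-x'|\lesssim_{A,\tau}\sqrt{\varepsilon_0}\,d(x)$, so for $\varepsilon_0=\varepsilon_0(A,\tau)$ small enough we get $d(x')\ge d(x)/2>0$ and $d(x)\le 2\,d(x')$; in particular $x'\in\Gamma\setminus R_G$. By \eqref{eq:Bk cover Gamma minus RG} there is some $k\in K$ with $x'\in B_k$, i.e. $|x'-z_k|<r_k$. Since $x'\in B_k\subset 3B_k$, \lemref{lem:properties of Bk} (c) gives $d(x')\le\eta^{-3/2}r_k$, and combined with $d(x)\le 2\,d(x')$ this yields $d(x)\lesssim r_k$ with an implicit constant depending only on the dimensional constant $\eta$. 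Therefore
\[
|x-z_k|\le|x-x'|+|x'-z_k|\lesssim_{A,\tau}\sqrt{\varepsilon_0}\,d(x)+r_k\lesssim_{A,\tau}\big(1+\sqrt{\varepsilon_0}\big)r_k,
\]
which is $<2r_k$ once $\varepsilon_0$ is taken small enough (depending on $A$ and $\tau$). Hence $x\in 2B_k$, completing the inclusion and the proof.

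The only point requiring a little care is the bookkeeping of constants: the constant in \lemref{lem:x outside RFar close to Gamma} depends on $A$ and $\tau$, whereas $\eta$ is dimensional, so one must choose $\varepsilon_0=\varepsilon_0(A,\tau)$ small enough that the coefficient of $r_k$ in the last display stays below $2$. Since $\varepsilon_0$ is fixed only at the very end of the argument, this causes no circularity. Everything else is a routine application of the triangle inequality together with \lemref{lem:x outside RFar close to Gamma}, \lemref{lem:properties of Bk}, and the covering property \eqref{eq:Bk cover Gamma minus RG}.
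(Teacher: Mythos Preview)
Your proof is correct and follows essentially the same route as the paper: pick a nearby point on $\Gamma$ via \lemref{lem:x outside RFar close to Gamma}, use the $1$-Lipschitz property of $d$ to see that this point lies in $\Gamma\setminus R_G$, invoke \eqref{eq:Bk cover Gamma minus RG} to place it in some $B_k$, and then use \lemref{lem:properties of Bk}~(c) together with the triangle inequality to conclude $x\in 2B_k$. The only cosmetic difference is that the paper bounds $|x-x'|\le r_k/2$ and then uses $x'\in B_k$, whereas you bound $|x-z_k|<2r_k$ directly; both arrive at the same conclusion.
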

\begin{proof}
	Let $x\in 3B_0\setminus(R_G\cup \RFar)$. We will find $k\in K$ such that $x\in 2B_k$.
	
	By \lemref{lem:x outside RFar close to Gamma} we have $y\in \Gamma$ such that 
	\begin{equation} \label{eq:x close to y in prf of h=1 outside RFar}
	|x-y|\lesssim_{A,\tau}\sqrt{\varepsilon_0}d(x).
	\end{equation}
	Since $x\not\in R_G$, we have $d(x)>0$. Moreover, since $d(x)\le d(y)+|x-y|\le d(y) + 0.5 d(x)$, we get that $0<d(x)\le 2d(y)$. In particular, $y\not\in R_G$ and by \eqref{eq:Bk cover Gamma minus RG} there exists $k\in K$ such that $y\in B_k\cap\Gamma$. It follows by \lemref{lem:properties of Bk} (c) that
	\begin{equation*}
	d(x)\le 2d(y)\approx r_k.
	\end{equation*}
	Together with \eqref{eq:x close to y in prf of h=1 outside RFar} this gives $|x-y|\le r_k/2$, for $\varepsilon_0$ small enough. Since $y\in B_k$, we get that $x\in 2B_k$.
	
\end{proof}
\begin{lemma}
	Suppose that $x\in 2.5B_0$, $r\ge Cd(x)$ for some $C>0$, and that $B(x,r)\subset 3B_0$. Then,
	\begin{equation*}
	F_{B(x,r)}(\mu_B, h\mu)\lesssim_{A,C}\varepsilon_0^{1/4}r^{n+1}.
	\end{equation*}
\end{lemma}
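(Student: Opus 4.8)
The plan is to reduce the claimed bound on $F_{B(x,r)}(\mu_B,h\mu)$ to an estimate for $\mu(B(x,r)\cap\RFar)$, and then to produce a single cube $Q\in\Tree$ with $B(x,r)\subset 3B_Q$ and $\ell(Q)\lesssim_C r$, so that \eqref{eq:notF} and \eqref{eq:notHD} close the argument.

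\textbf{Step 1: reduction to $\RFar$.} First I would record that $\mu_B=\mu-\mu_G=\mu\mres(\R^d\setminus R_G)$, and that $h\equiv 0$ on $R_G$: indeed $\supp h\subset\bigcup_{k\in K}3B_k$, while by \eqref{eq:Bk cover Gamma minus RG} we have $\bigcup_{k}3B_k\cap\Gamma=\Gamma\setminus R_G$, so $\bigcup_k 3B_k$ is disjoint from $R_G\subset\Gamma$. Hence for any $\phi\in\lip_1(B(x,r))$,
\[
\int\phi\,d\mu_B-\int\phi\,d(h\mu)=\int_{\R^d\setminus R_G}\phi\,(1-h)\,d\mu ,
\]
and since $0\le 1-h\le1$, $\|\phi\|_\infty\le r$, and $\phi$ is supported in $B(x,r)\subset 3B_0$, the right-hand side is bounded in absolute value by $r\,\mu\big(B(x,r)\cap(\R^d\setminus R_G)\cap\{h<1\}\big)$. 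By \lemref{lem:h=1 outside RFar}, $\{h<1\}\cap 3B_0\subset R_G\cup\RFar$, so the set above is contained in $B(x,r)\cap\RFar$. Taking the supremum over $\phi$ gives
\[
F_{B(x,r)}(\mu_B,h\mu)\le r\,\mu(B(x,r)\cap\RFar).
\]

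\textbf{Step 2: a good cube containing $B(x,r)$.} It remains to bound $\mu(B(x,r)\cap\RFar)$ by $\varepsilon_0^{1/4}r^n$ up to constants depending on $A,C$. If $r\gtrsim_C r_0$, I would take $Q=R_0\in\Tree$: then $B(x,r)\subset 3B_0=3B_{R_0}$ by hypothesis, $\mu(3B_0)\approx r_0^n\lesssim_C r^n$, and \eqref{eq:notF} applied to $R_0$ gives the bound. If instead $r\lesssim_C r_0$, I would use $r\ge Cd(x)$ to choose $Q'\in\Tree$ with $\dist(x,Q')+\diam(B_{Q'})\le d(x)+r\le r(1+C^{-1})$, and let $Q\supset Q'$ be the smallest cube of $\D(R_0)$ with $r(Q)\ge\tfrac{1}{56}r(2+C^{-1})$ (such a $Q$ exists precisely in this regime). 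Ancestors of tree cubes are tree cubes, so $Q\in\Tree$; since $z_Q$ is the centre of $3B_Q=B(z_Q,84r(Q))$ and $|x-z_Q|\le\dist(x,Q')+28r(Q)$, an elementary computation shows $B(x,r)\subset 3B_Q$, and minimality of $Q$ together with \lemref{lem:DM lattice} yields $\ell(Q)\approx_C r$. In either case,
\[
\mu(B(x,r)\cap\RFar)\le\mu(3B_Q\cap\RFar)\overset{\eqref{eq:notF}}{\le}\varepsilon_0^{1/4}\mu(3B_Q)\overset{\eqref{eq:notHD}}{\lesssim_A}\varepsilon_0^{1/4}\ell(Q)^n\lesssim_{A,C}\varepsilon_0^{1/4}r^n,
\]
which combined with Step 1 gives $F_{B(x,r)}(\mu_B,h\mu)\lesssim_{A,C}\varepsilon_0^{1/4}r^{n+1}$.

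The only mildly delicate point is the cube selection in Step 2: fixing the threshold for $r(Q)$ so that $B(x,r)\subset 3B_Q$ holds with the David--Mattila normalisation while keeping $\ell(Q)\approx_C r$, and handling the borderline range $r\approx r_0$ where one must fall back on $Q=R_0$. The hypothesis $r\ge Cd(x)$ is used exactly to guarantee that a tree cube of size $\lesssim_C r$ sits within distance $\lesssim_C r$ of $x$; everything else is routine.
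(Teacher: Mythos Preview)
Your proposal is correct and follows essentially the same approach as the paper. Your Step~1 is a slightly more direct version of the paper's argument (the paper splits via the triangle inequality through $\restr{\mu_B}{(\RFar)^c}$ and $\restr{h\mu}{(\RFar)^c}$, while you identify the difference $\mu_B-h\mu$ as $(1-h)\mu_B$ and localise its support to $\RFar$ in one stroke), and your Step~2 supplies explicit details on the cube selection that the paper leaves implicit. Both routes rest on the same two ingredients: \lemref{lem:h=1 outside RFar} and the bound \eqref{eq:notF} for a tree cube $Q$ with $B(x,r)\subset 3B_Q$ and $\ell(Q)\approx_C r$.
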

\begin{proof}
	Since $B(x,r)\subset 3B_0$, and $r\ge Cd(x)$, there exists a cube $Q\in \Tree$ such that $B(x,r)\subset 3B_{Q}$ and $\ell(Q)\approx_C r$. In consequence, using the properties of $\Tree$ yields
	\begin{equation*}
	\mu(B(x,r)\cap\RFar)\le \mu(3B_{Q}\cap \RFar)\overset{\eqref{eq:notF}}{\le}\varepsilon_0^{1/4}\mu(3B_{Q})\overset{\eqref{eq:notHD}}{\lesssim_{A,C}}\varepsilon_0^{1/4}r^n.
	\end{equation*}
	Thus, given any $\phi\in\lip_1(B(x,r))$ we have
	\begin{equation*}
	\left\lvert \int\phi\ d\mu_B - \int\phi\ d\restr{\mu_B}{(\RFar)^c}\right\rvert \le r\mu (B(x,r)\cap \RFar)\lesssim_{A,C}\varepsilon_0^{1/4}r^{n+1},
	\end{equation*}
	and so $F_{B(x,r)}(\mu_B, \restr{\mu_B}{(\RFar)^c})\lesssim_{A,C}\varepsilon_0^{1/4}r^{n+1}.$ Similarly, $F_{B(x,r)}(h\mu, h\restr{\mu}{(\RFar)^c})\lesssim_{A,C}\varepsilon_0^{1/4}r^{n+1}.$ 
	
	Now, observe that $h\mu = h\mu_B$ by the definition of $h$. Moreover, inside $B(x,r)$ we have 
	\begin{equation*}
	h\restr{\mu_B}{(\RFar)^c} = \restr{\mu_B}{(\RFar)^c}
	\end{equation*}
	because $h\equiv1$ on $3B_0\setminus(R_G\cup \RFar)$ by \lemref{lem:h=1 outside RFar}. Thus, the triangle inequality yields 
	\begin{equation*}
	F_{B(x,r)}(\mu_B, h\mu)\le F_{B(x,r)}(\mu_B, \restr{h\mu}{(\RFar)^c}) + F_{B(x,r)}(h\mu, h\restr{\mu}{(\RFar)^c})\lesssim_{A,C}\varepsilon_0^{1/4}r^{n+1}.	
	\end{equation*}
\end{proof}
%
\begin{lemma}\label{lem:nuB and hmu close}
	If $x\in 2.5B_0$ and $r>0$ satisfy $B(x,r)\subset 2.5B_0$, then
	\begin{equation}\label{eq:nuB and hmu close}
	F_{B(x,r)}(\nu_B,h\mu)\lesssim_{A,\tau}\sqrt{\varepsilon_0}\sum_{3B_k\cap B(x,r)\not=\varnothing} r_k^{n+1}.
	\end{equation}
\end{lemma}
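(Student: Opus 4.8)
The plan is to expand both measures as sums indexed by $K$, use that $F_B(\cdot,\cdot)$ is subadditive, and reduce to a single‑ball estimate. Writing $\nu_B=\sum_{k\in K}c_kh_k\sigma$ (this is \eqref{eq:definition of nu}) and $h\mu=\sum_{k\in K}h_k\mu$ (simply because $h=\sum_k h_k$), and noting that for $\phi\in\lip_1(B(x,r))$ the product $\phi h_k$ vanishes unless $3B_k\cap B(x,r)\neq\varnothing$, one gets
\[
F_{B(x,r)}(\nu_B,h\mu)\le\sum_{3B_k\cap B(x,r)\neq\varnothing}F_{B(x,r)}(c_kh_k\sigma,h_k\mu).
\]
Thus it suffices to prove $F_{B(x,r)}(c_kh_k\sigma,h_k\mu)\lesssim_{A,\tau}\sqrt{\varepsilon_0}\,r_k^{n+1}$ for every $k$ with $3B_k\cap B(x,r)\neq\varnothing$, and then sum.

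To estimate one such term, fix $\phi\in\lip_1(B(x,r))$ and decompose $\phi=\phi(z_k)+\widetilde\phi$ with $\widetilde\phi:=\phi-\phi(z_k)$. The constant part contributes $\phi(z_k)\bigl(c_k\!\int h_k\,d\sigma-\int h_k\,d\mu\bigr)$, which is identically zero when $k\in K_0$ by the definition of $c_k$ in \eqref{eq:ck definition}. For the remaining part, $\widetilde\phi h_k$ is supported in $3B_k$, satisfies $|\widetilde\phi|\le 3r_k$ on $3B_k$ (as $\widetilde\phi$ is $1$‑Lipschitz and $\widetilde\phi(z_k)=0$), and obeys $\lip(\widetilde\phi h_k)\lesssim\|\widetilde\phi\|_{L^\infty(3B_k)}\lip(h_k)+\lip(\widetilde\phi)\lesssim1$ by \eqref{eq:hk comparable to 1} and $\lip(h_k)\approx r_k^{-1}$; hence $C^{-1}\widetilde\phi h_k\in\lip_1(3B_k)$ for an absolute $C$. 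For $k\in K_0$ one then runs the triangle inequality through $\widetilde c_k\Hn{L_k}$:
\begin{multline*}
\Bigl|\int\widetilde\phi\,c_kh_k\,d\sigma-\int\widetilde\phi h_k\,d\mu\Bigr|
\le c_k\Bigl|\int\widetilde\phi h_k\,d\sigma-\int\widetilde\phi h_k\,d\Hn{L_k}\Bigr|\\
+|c_k-\widetilde c_k|\int|\widetilde\phi|h_k\,d\Hn{L_k}
+\Bigl|\int\widetilde c_k\widetilde\phi h_k\,d\Hn{L_k}-\int\widetilde\phi h_k\,d\mu\Bigr|.
\end{multline*}
The first term is $\lesssim_{A,\tau}\sqrt{\varepsilon_0}\,r_k^{n+1}$ by \eqref{eq:sigma close to HnLk in 3Bk} and $c_k\approx_{A,\tau}1$ from \eqref{eq:ck comparable to 1}; the second is $\lesssim_{A,\tau}\sqrt{\varepsilon_0}\cdot r_k\cdot r_k^{n}$ by \eqref{eq:ck and ctildeBk close} and $\Hn{L_k}(3B_k)\lesssim r_k^{n}$; the third is $\lesssim_{A,\tau}\varepsilon_0\,r_k^{n+1}$ by \eqref{eq:mu close to HnLk in tildeBk}, since $3B_k\subset\widetilde B_k$ contains the support of $\widetilde\phi h_k$. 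Together with the vanishing of the constant part, this gives the bound for $k\in K_0$.

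The delicate indices are $k\notin K_0$. For those $c_k=\widetilde c_k=c_0$ and $L_k=L_0$, so the middle term drops out but the constant part $\phi(z_k)\bigl(c_0\!\int h_k\,d\sigma-\int h_k\,d\mu\bigr)$ survives. Here the key geometric observation is that if $k\notin K_0$ and $3B_k\cap B(x,r)\neq\varnothing$ with $B(x,r)\subset2.5B_0$, then \eqref{eq:rk approx dist z0 for k not in K0}--\eqref{eq:rk approx ell R0 for k not in K0}, together with the fact that $\eta$ is a tiny dimensional constant, force $r_k\approx r_0$ and $3B_k\subset3B_0$; in particular $|\phi(z_k)|\le r\lesssim r_k$. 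On $3B_0$ the flatness of $\mu$ is still available: $\alpha_{\mu}(3B_0)\lesssim_{A,\tau}\varepsilon_0$ and $\beta_{\mu,2}(3B_0)\lesssim_{A,\tau}\varepsilon_0$ by \eqref{eq:sum of alphas containing x estimate} (taking $x=z_0$), so comparing the $\alpha$‑optimal and $\beta$‑optimal planes for $3B_0$ exactly as in \lemref{lem:plane from beta good for alpha} yields $F_{3B_0}(\mu,c_0\Hn{L_0})\lesssim_{A,\tau}\varepsilon_0\,r_0^{n+1}\approx\varepsilon_0\,r_k^{n+1}$. Feeding this, \eqref{eq:sigma close to HnLk in 3Bk} (for $\sigma$ against $\Hn{L_0}$), the Lipschitz bound $r_kh_k\in C\lip_1(3B_k)$, and $|\phi(z_k)|\lesssim r_k$ into the triangle inequality bounds both the constant part and the term through $\Hn{L_0}$ by $C(A,\tau)\sqrt{\varepsilon_0}\,r_k^{n+1}$, finishing the single‑ball estimate.

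I expect this last point to be the main obstacle: for the boundary balls $B_k$ with $k\notin K_0$ the tree‑local estimates do not apply (such $B_k$ are not contained in any $2.5B_{P_k}$ with $P_k\in\Tree$), so one must first verify that they live in a fixed dilate of $B_0$ and then substitute the top‑scale flatness of $\mu$ near $R_0$. Everything else is a routine assembly of the triangle inequality with the already‑established closeness of $\sigma$, $\mu$, and the flat measures $\widetilde c_k\Hn{L_k}$.
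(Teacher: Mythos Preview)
Your argument is essentially the paper's proof: for $k\in K_0$ the decomposition $\phi=\phi(z_k)+\widetilde\phi$, the vanishing of the constant part via \eqref{eq:ck definition}, and the triangle inequality through $\widetilde c_k\Hn{L_k}$ using \eqref{eq:sigma close to HnLk in 3Bk}, \eqref{eq:ck and ctildeBk close}, \eqref{eq:mu close to HnLk in tildeBk} are identical to what the paper does.

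The only difference is the handling of $k\notin K_0$, and here your version has a small gap. You claim $F_{3B_0}(\mu,c_0\Hn{L_0})\lesssim_{A,\tau}\varepsilon_0\,r_0^{n+1}$ by invoking \lemref{lem:plane from beta good for alpha}, but that lemma (and its corollary \eqref{eq:F sum estimate}--\eqref{eq:sum of F containing x estimate}) only delivers $F_{2.5B_0}(\mu,c_0\Hn{L_0})$: to get the estimate on $B_1=3B_0$ one would need $\alpha_\mu$ and $\beta_{\mu,2}$ controlled on a strictly larger ball $B_2$, which \eqref{eq:sum of alphas containing x estimate} does not provide (the top cube in $\Tree_0$ is $R_0$, giving only $3B_0$). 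Your geometric claim $3B_k\subset 3B_0$ is correct --- in fact $3B_k\subset 2.51B_0$, since $r_k=\eta\ell(J_i)$ with $\eta<10^{-3}$ and $\diam(J_i)\le d(x)/5\le 0.9r_0$ for any $x\in 3B_k\cap 2.5B_0$ --- but this still leaves you just outside the available $F_{2.5B_0}$ bound.

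The paper sidesteps this entirely: for $k\notin K_0$ it does \emph{not} split off the constant $\phi(z_k)$. Instead it observes that $\supp(\phi h_k)\subset B(x,r)\cap 3B_k\subset 2.5B_0$ and that $\lip(\phi h_k)\lesssim 1$ (since $r\lesssim r_0\approx r_k$ gives $\|\phi\|_\infty\lip(h_k)\lesssim r\cdot r_k^{-1}\lesssim 1$), so $\phi h_k$ can be tested directly against $F_{2.5B_0}(\mu,c_0\Hn{L_0})$ and $F_{3B_k}(\sigma,\Hn{L_0})$. This is both simpler and avoids the need for any flatness estimate beyond $2.5B_0$.
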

\begin{proof}
	Since $\nu_B =\sum_k c_k h_k \sigma$, our aim is estimating $F_{B(x,r)}(\sum_k c_k h_k \sigma,h\mu)$.
	Set
	\begin{equation*}
	K(x,r)=\{k\in K\ :\ 3B_k\cap B(x,r)\not=\varnothing\}.
	\end{equation*}
	
	First, we will deal with $k\in K(x,r)\setminus K_0$. For such $k$ by \eqref{eq:rk approx ell R0 for k not in K0} we have 
	\begin{equation}\label{eq:rk approx ellR0 outside K0}
	r_k\approx \ell(R_0).
	\end{equation} 
	
	In particular, $r\lesssim r_k,$ and so given $\phi\in\lip_1(B(x,r))$ we have $\lip(\phi h_k)\lesssim 1,\ \supp(\phi h_k)\subset B(x,r)\cap 3B_k\subset 2.5B_0$. 
	
	Moreover, recall that
	\begin{equation}\label{eq:mu close to HnL0 in 5BR0}
	F_{2.5B_0}(\mu,c_0\Hn{L_0}) \overset{\eqref{eq:sum of F containing x estimate},\eqref{eq:notHD}}{\lesssim_{A}}\varepsilon_0\ell(R_0)^{n+1}\overset{\eqref{eq:rk approx ellR0 outside K0}}{\approx} \varepsilon_0 r_k^{n+1}.
	\end{equation}
	In consequence, since $c_k=c_0$ by \eqref{eq:ck definition}, we have for any $\phi\in\lip_1(B(x,r))$
	\begin{multline*}
	\left|\sum_{k\in K(x,r)\setminus K_0}\left( \int\phi h_k c_0 \ d\sigma - \int\phi h_k\ d\mu\right)\right|\\
	\le \sum_{k\in K(x,r)\setminus K_0}\left( \left| \int\phi h_k c_0 \ d\Hn{L_0} - \int\phi h_k\ d\mu\right|+ c_0\left| \int\phi h_k \ d\Hn{L_0} - \int\phi h_k\ d\sigma\right|\right)\\
	\le \sum_{k\in K(x,r)\setminus K_0} \left( F_{2.5B_0}(\mu, c_0\Hn{L_0}) + c_0F_{3B_k}( \sigma, \Hn{L_0})\right) \\
	\overset{\eqref{eq:mu close to HnL0 in 5BR0},\eqref{eq:sigma close to HnLk in 3Bk}, \eqref{eq:cBQ estimate}}{\lesssim_{A,\tau}}\sum_{k\in K(x,r)\setminus K_0} \sqrt{\varepsilon_0} r_k^{n+1}.
	\end{multline*}
	
	Now, we turn our attention to $k\in K_0(x,r)=K(x,r)\cap K_0$. For any $\phi\in\lip_1(B(x,r))$ we have
	\begin{multline*}
	\left|\sum_{k\in K_0(x,r)}\left( \int\phi c_k h_k\ d\sigma - \int\phi h_k\ d\mu\right)\right|\\
	\le \left|\sum_{k\in K_0(x,r)}\left( \int(\phi-\phi(z_k)) c_k h_k\ d\sigma -\int(\phi-\phi(z_k)) h_k\ d\mu\right)\right| \\
	+ \left| \sum_{k\in K_0(x,r)}\phi(z_k)\left(\int c_k h_k\ d\sigma - \int h_k\ d\mu\right)\right| =: I_1 + I_2.
	\end{multline*}
	We start by estimating $I_1$. Observe that setting $\Phi_k=(\phi-\phi(z_k))h_k$ we have $\lip(\Phi_k)\lesssim 1$ and $\supp\Phi_k\subset 3B_k$. Hence,
	\begin{multline*}
	I_1 = \left|\sum_{k\in K_0(x,r)}\left( \int c_k \Phi_k\ d\sigma -\int\Phi_k\ d\mu\right)\right|\\
	\overset{\eqref{eq:sigma close to HnLk in 3Bk},\eqref{eq:ck comparable to 1}}{\le} \sum_{k\in K_0(x,r)}\left(\left | \int c_k \Phi_k d\Hn{L_k} -\int\Phi_k\ d\mu\right| + C(A,\tau)\sqrt{\varepsilon_0}r_k^{n+1} \right)\\
	\overset{\eqref{eq:ck and ctildeBk close}}{\le} \sum_{k\in K_0(x,r)}\left(\left | \int \widetilde{c}_k \Phi_k d\Hn{L_k} -\int\Phi_k\ d\mu\right| + C(A,\tau)\sqrt{\varepsilon_0}r_k^{n+1} \right)\\
	\overset{\eqref{eq:mu close to HnLk in tildeBk}}{\lesssim_{A,\tau}}\sum_{k\in K_0(x,r)}\sqrt{\varepsilon_0}r_k^{n+1}.
	\end{multline*}
	
	Concerning $I_2$, note that for $k\in K_0(x,r)$ we have by the definition of $c_k$ \eqref{eq:ck definition}
	\begin{equation*}
	\int c_k h_k\ d\sigma - \int h_k\ d\mu = 0,
	\end{equation*}
	and so 
	\begin{equation*}
	I_2 = 0.
	\end{equation*}
	Putting together the estimates for $k\in K(x,r)\setminus K_0$ and for $k\in K_0(x,r)$, and taking supremum over $\phi\in\lip_1(B(x,r))$, we finally get
	\begin{equation*}
	F_{B(x,r)}(\nu_B,h\mu)\lesssim_{A,\tau}\sqrt{\varepsilon_0}\sum_{k\in K(x,r)} r_k^{n+1}.
	\end{equation*}
\end{proof}
The previous two lemmas, and the fact that $F_B(\nu,\mu) = F_B(\nu_B,\mu_B)$, imply the following:
\begin{lemma}
	For $x\in 2.5B_0$ and $r\gtrsim d(x)$ such that $B(x,r)\subset 2.5B_0$ we have
	\begin{equation}\label{eq:nu close to mu}
	F_{B(x,r)}(\nu,\mu)\lesssim_{A,\tau}\varepsilon_0^{1/4}r^{n+1}+\sqrt{\varepsilon_0}\sum_{3B_k\cap B(x,r)\not=\varnothing} r_k^{n+1}.
	\end{equation}
\end{lemma}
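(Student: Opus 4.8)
The plan is to deduce \eqref{eq:nu close to mu} from the two preceding lemmas by a single application of the triangle inequality for the distance $F_{B(x,r)}$, with $h\mu$ serving as an intermediate measure. First I would record the identity $\nu-\mu=\nu_B-\mu_B$: since $\nu=\mu_G+\nu_B$ and $\mu=\mu_G+\mu_B$, for every $\phi\in\lip_1(B(x,r))$ we have $\int\phi\,d\nu-\int\phi\,d\mu=\int\phi\,d\nu_B-\int\phi\,d\mu_B$, and taking the supremum over such $\phi$ gives
\[
F_{B(x,r)}(\nu,\mu)=F_{B(x,r)}(\nu_B,\mu_B).
\]

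Next I would split
\[
F_{B(x,r)}(\nu_B,\mu_B)\le F_{B(x,r)}(\nu_B,h\mu)+F_{B(x,r)}(h\mu,\mu_B).
\]
The first term is bounded directly by \lemref{lem:nuB and hmu close}, whose only hypothesis, $B(x,r)\subset 2.5B_0$, is part of the assumptions here; it gives $F_{B(x,r)}(\nu_B,h\mu)\lesssim_{A,\tau}\sqrt{\varepsilon_0}\sum_{3B_k\cap B(x,r)\neq\varnothing}r_k^{n+1}$. For the second term I would use the symmetry $F_{B(x,r)}(h\mu,\mu_B)=F_{B(x,r)}(\mu_B,h\mu)$ together with the lemma immediately preceding \lemref{lem:nuB and hmu close}: the assumption $r\gtrsim d(x)$ means $r\ge C\,d(x)$ for an absolute constant $C$, and $B(x,r)\subset 2.5B_0\subset 3B_0$, so that lemma yields $F_{B(x,r)}(\mu_B,h\mu)\lesssim_{A}\varepsilon_0^{1/4}r^{n+1}$. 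Adding the two bounds produces exactly \eqref{eq:nu close to mu}.

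No genuine obstacle arises, as all the substantive estimates are already contained in the earlier lemmas; the only points worth a moment of care are that the constant hidden in $r\gtrsim d(x)$ is absolute, so the dependence $\lesssim_{A,C}$ of the preceding lemma collapses to $\lesssim_A$, and — should one wish to double-check it — that $h\mu=h\mu_B$, which holds because $\supp h\subset\bigcup_k 3B_k$ and each $3B_k$ is disjoint from $R_G$ thanks to $d\ge r_k>0$ on $3B_k$ (\lemref{lem:properties of Bk} (c)), so $h$ vanishes $\mu_G$-almost everywhere.
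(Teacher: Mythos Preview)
Your proposal is correct and follows exactly the approach of the paper: the paper's entire proof is the one-line remark that the two preceding lemmas together with the identity $F_B(\nu,\mu)=F_B(\nu_B,\mu_B)$ yield the result, and your write-up simply unpacks this via the triangle inequality through $h\mu$. Your additional checks (that the implicit constant in $r\gtrsim d(x)$ is absolute so $\lesssim_{A,C}$ becomes $\lesssim_A$, and that $h\mu=h\mu_B$) are correct and consistent with how these facts are used or noted earlier in the paper.
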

In particular, we have
\begin{equation}\label{eq:nu close to mu in 5BR0}
F_{2.5B_0}(\nu,\mu)\lesssim_{A,\tau}\varepsilon_0^{1/4}\ell(R_0)^{n+1}.
\end{equation}
\begin{lemma}
	For $x\in\Gamma$ and $r\gtrsim \ell(R_0)$ such that $B(x,r)\cap 3B_0\not=\varnothing$ we have
	\begin{equation}\label{eq:nu close to flat on big balls}
	F_{B(x,r)}(\nu,c_0\Hn{L_0})\lesssim_{A,\tau} \varepsilon_0^{1/4} r\ell(R_0)^n.
	\end{equation}
\end{lemma}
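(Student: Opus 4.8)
The plan is to bootstrap from the $\ell(R_0)$-scale, using that $\nu$ is already flat far away. Two ingredients are available: by \eqref{eq:nu flat outside 1.5B0} the measure $\nu$ coincides with $c_0\Hn{L_0}$ on $(2.3B_0)^c$, and at the scale $2.5B_0$ it is very close to this flat measure. For the latter, the $Q=R_0$ term of \eqref{eq:sum of F containing x estimate} gives $F_{2.5B_0}(\mu,c_0\Hn{L_0})\lesssim_{A,\tau}\varepsilon_0\ell(R_0)^{n+1}$, so combining with \eqref{eq:nu close to mu in 5BR0} and the triangle inequality for $F_{2.5B_0}$ we get
\begin{equation*}
F_{2.5B_0}(\nu,c_0\Hn{L_0})\lesssim_{A,\tau}\varepsilon_0^{1/4}\ell(R_0)^{n+1}.
\end{equation*}
The task is to pass from this to the arbitrarily large ball $B(x,r)$, exploiting the agreement of the two measures outside $2.3B_0$.

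Concretely, set $\lambda=\nu-c_0\Hn{L_0}$; by \eqref{eq:nu flat outside 1.5B0} this is a finite signed measure with $\supp\lambda\subset\overline{2.3B_0}$. Fix $\phi\in\lip_1(B(x,r))$ and put $a=\phi(z_0)$, so $|a|\le r$ (with $a=0$ if $z_0\notin B(x,r)$) and, since $\phi$ is $1$-Lipschitz, $\lVert\phi-a\rVert_{L^\infty(2.5B_0)}\lesssim r_0$. Choose a cutoff $\psi$ with $0\le\psi\le1$, $\psi\equiv1$ on $2.4B_0$, $\supp\psi\subset2.5B_0$ and $\lip(\psi)\lesssim r_0^{-1}$. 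Since $\psi\equiv1$ on $\supp\lambda$, we may write
\begin{equation*}
\int\phi\ d\lambda=\int(\phi-a)\psi\ d\lambda+a\int\psi\ d\lambda.
\end{equation*}
For the first term, $\supp\big((\phi-a)\psi\big)\subset2.5B_0$ and the Leibniz estimate together with the two bounds above give $\lip\big((\phi-a)\psi\big)\lesssim1$, hence $\big|\int(\phi-a)\psi\ d\lambda\big|\lesssim F_{2.5B_0}(\nu,c_0\Hn{L_0})\lesssim_{A,\tau}\varepsilon_0^{1/4}\ell(R_0)^{n+1}$. For the second term, $\big|\int\psi\ d\lambda\big|\le\lip(\psi)\,F_{2.5B_0}(\nu,c_0\Hn{L_0})\lesssim_{A,\tau}\varepsilon_0^{1/4}\ell(R_0)^{n}$, so $\big|a\int\psi\ d\lambda\big|\lesssim_{A,\tau}r\,\varepsilon_0^{1/4}\ell(R_0)^{n}$. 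Adding the two contributions and using $r\gtrsim\ell(R_0)$ yields $\big|\int\phi\ d\nu-\int\phi\ c_0\ d\Hn{L_0}\big|\lesssim_{A,\tau}\varepsilon_0^{1/4}\,r\,\ell(R_0)^n$, and taking the supremum over $\phi\in\lip_1(B(x,r))$ gives \eqref{eq:nu close to flat on big balls}.

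The main point to watch is that a naive truncation of $\phi$ to $2.3B_0$ would produce an annular error $\int_{2.5B_0\setminus2.3B_0}\phi\ d(\nu-c_0\Hn{L_0})$ of size $\approx\ell(R_0)^{n+1}$ with no $\varepsilon_0$-gain, which would be fatal; this is exactly why one must work with $\lambda$ itself (which vanishes on that annulus by \eqref{eq:nu flat outside 1.5B0}) rather than with $\restr{\nu}{2.3B_0}$ and $\restr{\mu}{2.3B_0}$ separately. The only other delicate bookkeeping is the constant piece $a\int\psi\,d\lambda$: one must bound $|a|$ by $r$ and no more, and must extract the factor $\varepsilon_0^{1/4}$ from the mass defect $\lambda(\R^d)=\int\psi\,d\lambda$ by pairing against a genuine Lipschitz bump (cost $r_0^{-1}$), not against $\one_{2.3B_0}$. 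With those handled, everything else is routine.
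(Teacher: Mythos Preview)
Your proof is correct and follows essentially the same approach as the paper: both split $\phi$ as $(\phi-\phi(z_0))\psi + \phi(z_0)\psi$ plus a remainder that vanishes against $\lambda=\nu-c_0\Hn{L_0}$ thanks to \eqref{eq:nu flat outside 1.5B0}, bound the first piece using $F_{2.5B_0}(\nu,c_0\Hn{L_0})\lesssim_{A,\tau}\varepsilon_0^{1/4}\ell(R_0)^{n+1}$ (obtained via $\mu$ and \eqref{eq:nu close to mu in 5BR0}, \eqref{eq:sum of F containing x estimate}), and control the constant piece by $|\phi(z_0)|\cdot r_0^{-1}F_{2.5B_0}(\nu,c_0\Hn{L_0})$. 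Your presentation is slightly cleaner in that you phrase everything through the signed measure $\lambda$ from the outset, but the argument is the same.
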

\begin{proof}
	Recall that by \eqref{eq:nu flat outside 1.5B0} we have
	\begin{equation*}
	\restr{\nu}{(2.3B_0)^c}=c_0\Hn{L_0\cap (2.3B_0)^c}.
	\end{equation*}
	To take advantage of this equality, we define an auxiliary function $\psi$ such that $\psi\equiv 1$ on $2.3B_0$, $\supp(\psi)\subset 2.5B_0$, and $\lip(\psi)\lesssim \ell(R_0)^{-1}$. Then,
	\begin{multline}\label{eq: dist of nu to flat}
	\left| \int\psi\ d\nu - c_0\int\psi\ d\Hn{L_0}\right|\\
	\overset{\eqref{eq:nu close to mu in 5BR0}}{\lesssim}\left| \int\psi\ d\mu - c_0\int\psi\ d\Hn{L_0}\right| + \varepsilon_0^{1/4}\ell(R_0)^{n}\\
	\overset{\eqref{eq:sum of F containing x estimate}}{\le} C(A,\tau) \varepsilon_0\ell(R_0)^{n} + \varepsilon_0^{1/4}\ell(R_0)^{n}\lesssim \varepsilon_0^{1/4} \ell(R_0)^{n}.
	\end{multline}
	
	Recall that $z_0=z_{R_0}$. It follows that for $\phi\in\lip_1(B(x,r))$ we have
	\begin{multline*}
	\left\lvert\int\phi\ (d\nu-c_0d\Hn{L_0})\right\rvert = \left\lvert \int \left( (\phi-\phi(z_0))\psi + \phi(z_0)\psi + \phi(1-\psi)\right)(d\nu -c_0d\Hn{L_0})\right\rvert\\
	\overset{\eqref{eq:nu flat outside 1.5B0}}{\le}F_{2.5B_0}(\nu, \mu)+ F_{2.5B_0}(c_0\Hn{L_0}, \mu) + |\phi(z_0)|	\left\lvert\int\psi\ (d\nu-c_0d\Hn{L_0})\right\rvert + 0\\
	\overset{\eqref{eq:nu close to mu in 5BR0},\eqref{eq:sum of F containing x estimate}}{\lesssim_{A,\tau}}
	\varepsilon_0^{1/4}\ell(R_0)^{n+1} + \varepsilon_0\ell(R_0)^{n+1}+ |\phi(z_0)|	\left\lvert\int\psi\ (d\nu-c_0d\Hn{L_0})\right\rvert\\
	\overset{\eqref{eq: dist of nu to flat}}{\lesssim} \varepsilon_0^{1/4}\ell(R_0)^{n+1} + r\varepsilon_0^{1/4}\ell(R_0)^{n}\lesssim \varepsilon_0^{1/4} r\ell(R_0)^n.
	\end{multline*}
\end{proof}

\section{Small measure of cubes from \texorpdfstring{$\HD$}{HD}}\label{sec:HD}
For brevity of notation let us denote by $\Pi_*\nu$ the image measure of $\nu$ by $\Pi_0$, that is the measure such that $\Pi_*\nu(A) = \nu(\Pi_0^{-1}(A))$. Set
\begin{equation*}
f=\frac{d\Pi_*\nu}{d\Hn{L_0}}.
\end{equation*}
The key estimate necessary to bound the measure of high density cubes is the following.
\begin{lemma}\label{lem:L2 norm estimate of nu density}
	We have 
	\begin{equation}\label{eq:L2 norm estimate of nu density}
	\lVert f-c_0\rVert_{L^2(\Hn{L_0})}^2 \lesssim_{A,\tau} \varepsilon_0^{1/8}\mu(R_0).
	\end{equation}
\end{lemma}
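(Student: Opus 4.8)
The plan is to decompose $\Gamma$ into the good piece $R_G$ (over which $f = g \approx_{A,\tau} 1$, so $|f - c_0|$ is bounded there) and the union of the balls $B_k$, $k \in K$, and to estimate $\|f - c_0\|_{L^2}$ by a Littlewood--Paley-type telescoping argument built from the differences of $\nu$-averages at consecutive scales. First I would observe that on $\Pi_0(R_G)$ we have $f = g \approx 1$ by \lemref{lem:mu on RG absolutely continuous wrto Hn}, so $\int_{\Pi_0(R_G)} |f - c_0|^2 \lesssim_{A,\tau} \mathcal{H}^n(\Pi_0(R_G)) \lesssim \mu(R_0)$ using that $\sigma = \Hn{\Gamma}$ is comparable to $\mu$ on $R_G$ and $\mathcal{H}^n(\Gamma \cap 1.5 B_0) \approx \ell(R_0)^n \approx \mu(R_0)$; also one should note $|g - c_0|$ is actually small, not just bounded, which will matter — $c_0 \approx_{A,\tau} 1$ by \eqref{eq:cBQ estimate}, and $\nu$ approximates $\mu$ which is flat-like, so $g$ is close to $c_0$. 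The bulk of the work is bounding $\int_{L_0 \setminus \Pi_0(R_G)} |f - c_0|^2 \, d\Hn{L_0} = \sum_{k} \int_{J_{i(k)}\text{-ish region}} |f - c_0|^2$; here $f$ is governed by the local density $c_k$ of $\nu_B$ near $B_k$, and the point is that $|c_k - c_0|$ is controlled by a sum of the increments $|c_{Q} - c_{Q'}|$ along a chain of cubes from $P_k$ up to $R_0$.

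The key mechanism is the following. For $x \in \Pi_0(3B_k)$, $f(x)$ is essentially $c_k$ (up to the $C\sqrt{\varepsilon_0}$ graph-tilt error from \lemref{lem:locally a good graph}, which contributes harmlessly since $\measuredangle(L_k, L_0) \le \theta$), so it suffices to bound $\sum_k |c_k - c_0|^2 r_k^n$ — recall $r_k \approx \ell(J_{i})$, the $J_i$ have bounded overlap after dilation, and $\sum_i \ell(J_i)^n \lesssim \ell(R_0)^n$. Fix $k \in K_0$ and the associated $P_k \in \Tree$; choose a chain $P_k = S_0 \subsetneq S_1 \subsetneq \dots \subsetneq S_m = R_0$ of consecutive cubes in $\Tree$. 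Using \lemref{lem:cBQ similar for similar cubes} between $2.5B_{S_j}$ and $2.5B_{S_{j+1}}$ one gets $|c_{S_j} - c_{S_{j+1}}| \lesssim_{A,\tau}$ (something like $\beta_{\mu,2}(3B_{S_{j+1}}) + \alpha_\mu(3B_{S_{j+1}})$), and similarly \lemref{lem:ck and ctildeBk close} gives $|c_k - c_{P_k}| \lesssim_{A,\tau} \sqrt{\varepsilon_0}$, $|c_0 - c_{R_0}| = 0$. So $|c_k - c_0| \lesssim_{A,\tau} \sqrt{\varepsilon_0} + \sum_{j} (\alpha_\mu(3B_{S_j}) + \beta_{\mu,2}(3B_{S_j}))$ over the cubes $S_j \supset P_k$. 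Squaring, using Cauchy--Schwarz in the form $(\sum a_j)^2 \le (\sum \lambda^{-|j|})(\sum \lambda^{|j|} a_j^2)$ to exploit the geometric decay of scales, and then summing over $k$ and invoking Fubini together with \eqref{eq:alpha sum estimate}--\eqref{eq:beta sum estimate} (which bound $\sum_{Q \in \Tree_0(R)}(\alpha_\mu(3B_Q)^2 + \beta_{\mu,2}(3B_Q)^2)\ell(Q)^n \lesssim_{A,\tau} \varepsilon_0^2 \ell(R)^n$) yields $\sum_k |c_k - c_0|^2 r_k^n \lesssim_{A,\tau} \varepsilon_0 \ell(R_0)^n + \varepsilon_0^2 \ell(R_0)^n \cdot (\text{log-type factor})$. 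The residual $\varepsilon_0^{1/8}$ (rather than $\varepsilon_0^2$) on the right of \eqref{eq:L2 norm estimate of nu density} is generous slack — presumably the weakest piece is the $R_G$-contribution, where the bound on $|g - c_0|$ only gives $\varepsilon_0^{1/4}$-type smallness coming from $F_{2.5B_0}(\nu, \mu)$ and \eqref{eq:nu close to mu}; I would route that through \lemref{lem:nuB and hmu close} and \lemref{lem:nu is AD regular} to compare $g$ with $c_0$ on $R_G$ in $L^2$.

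Alternatively, and perhaps more in the spirit of the ``Paley--Littlewood result'' alluded to in the sketch (see the reference to \eqref{eq:classical harmonic analysis result}), one writes $f - c_0$ as a telescoping sum over dyadic scales $\int |f - c_0|^2 \approx \sum_j \int |D_j f|^2$ where $D_j f$ is the martingale-type difference of $f$ averaged at scale $2^{-j}$, then identifies $D_j f$ at a point $x = \Pi_0(y)$, $y \in \Gamma$, with the increment of the $\nu$-density between the dyadic cube of $L_0$ containing $x$ at generation $j$ and its parent. Each such increment is controlled, via $\nu \approx \mu$ on the relevant ball (\eqref{eq:nu close to mu}, \eqref{eq:nu close to flat on big balls}) and the AD-regularity of $\nu$ (\lemref{lem:nu is AD regular}), by $F$-distances of $\mu$ to flat measures at comparable scales, i.e.\ by $\alpha_\mu + \beta_{\mu,2}$ of Tree cubes near $y$; then \eqref{eq:alpha sum estimate}--\eqref{eq:beta sum estimate} and Fubini close the estimate. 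The main obstacle I anticipate is the bookkeeping at the ``seam'': points $x$ near $\partial \Pi_0(R_G)$, or in balls $B_k$ with $k \notin K_0$ (where $r_k \approx \ell(R_0)$ and $\nu$ is genuinely flat so the increments vanish), and more importantly making the transition between the scale $\ell(J_i)$ of a $J$-cube and the scales of the $\Tree$ cubes $Q_i, P_k$ sitting above it uniform — this is exactly where Lemmas \ref{lem:Qi corresposnding to Ji}, \ref{lem:ck close to cj for close Bk and Bj}, and the comparabilities $\ell(J_i) \approx \ell(Q_i) \approx \ell(P_k) \approx r_k$ are needed, and where one must be careful that the overlap constants in $\{15 J_i\}$ and $\{3 B_k\}$ do not blow up. I expect the actual inequality chain to be long but each link to be a routine application of an already-proved lemma; the one genuinely delicate point is extracting the square-summability in $j$ with the correct (geometric) weights so that the $\log$ factor from the chain-of-scales is absorbed, which is why the statement only claims $\varepsilon_0^{1/8}$ and not a cleaner power.
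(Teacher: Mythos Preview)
Your first approach---splitting $\Gamma$ into $R_G$ and $\bigcup_k B_k$ and bounding $|f-c_0|$ pointwise by a telescoping chain of $c_Q$'s---has a genuine gap that cannot be patched without Littlewood--Paley theory. On $\Pi_0(R_G)$ the density $f\approx g=d\mu_G/d\Hn{R_G}$ is only known to satisfy $g\approx_{A,\tau}1$; there is no finite chain terminating at $g(x)$, because $g$ is a pointwise limit of averages over balls of radius $\to 0$ and the chain is infinite. Extracting $\|g-c_0\|_{L^2}$-smallness from square-summability of $\alpha_\mu,\beta_{\mu,2}$ along that infinite chain \emph{is} the Littlewood--Paley identity \eqref{eq:classical harmonic analysis result}; there is no shortcut, and your suggestion to ``route that through \lemref{lem:nuB and hmu close}'' does not touch $\mu_G$ at all. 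The $B_k$ part has the same defect in disguise: from $|c_k-c_0|\lesssim\sum_{S\supset P_k}a_S$ one gets $\sum_k r_k^n|c_k-c_0|^2\lesssim\sum_k r_k^n\big(\sum_S a_S\big)^2$, and any weighted Cauchy--Schwarz leaves either $\sum_{S\supset P_k}(\ell(P_k)/\ell(S))^{-\delta}$ divergent or $\sum_{k:P_k\subset S}r_k^{\,n-\delta}$ unbounded (the Whitney radii $r_k$ can be arbitrarily small). The missing ingredient is orthogonality between scales, which is precisely what \eqref{eq:classical harmonic analysis result} supplies.

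Your alternative is the right one and is exactly what the paper does: reduce via \eqref{eq:classical harmonic analysis result} to $\int_\Gamma\int_0^\infty\Lambda_\nu(x,r)^2\,\frac{dr}{r}\,d\sigma$ and split the $(x,r)$ domain by comparing $r$ to $d(x)$ and $r_0$. However, your sketch misses the decisive technical device in the intermediate range $\eta^2 d(x)<r<\eta r_0$. There one writes $\nu=(\nu_B-h\mu)+(h\mu-\mu_B)+\mu$; the outer pieces are handled by \eqref{eq:nuB and hmu close} and \eqref{eq:lambdamu bdd by alpha}, but $(h\mu-\mu_B)=(h-1)\mu_B$ is essentially $-\mu\!\restriction\!\RFar$ (\lemref{lem:h=1 outside RFar}) and admits no pointwise control. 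It is handled via the weak-$(1,\infty)$ bound for $T\lambda=(\int_0^\infty\Lambda_\lambda^2\,dr/r)^{1/2}$ of \lemref{lem:boundedness of T}: one isolates an exceptional set $H$ with $\sigma(H)\lesssim\varepsilon_0^{-1/8}\mu(\RFar)\lesssim\varepsilon_0^{1/4}\ell(R_0)^n$, and on $H$ applies H\"older with the $L^4$-boundedness of $T_\sigma$ to obtain $\int_H\Lambda_\nu^2\le\sigma(H)^{1/2}\|T_\sigma q\|_{L^4}^2\lesssim\varepsilon_0^{1/8}\ell(R_0)^n$. This H\"older step---not the $R_G$ contribution---is where the exponent $1/8$ comes from.
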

We postpone the proof of the above lemma to the next subsection. Let us show now how we can use it to estimate the measure of cubes in $\HD$.
\begin{lemma}
	We have
	\begin{equation}\label{eq:small measure of HD}
	\sum_{Q\in\HD}\mu(Q)\lesssim_{A,\tau}\varepsilon_0^{1/8}\mu(R_0).
	\end{equation}
\end{lemma}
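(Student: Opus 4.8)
The plan is to convert, for each $Q\in\HD$, the high‑density condition $\mu(3B_Q)>A\ell(Q)^n$ into a lower bound for an integral of $|f-c_0|$ over a disk in $L_0$ sitting above $Q$, and then to sum these bounds against the $L^2$‑smallness of $f-c_0$ given by \lemref{lem:L2 norm estimate of nu density}. First I would dispose of the cubes comparable to $R_0$: since $\Theta_\mu(3B_0)=1$ (Remark \ref{rem:WLOG density of BR0 is 1}), $R_0$ is strongly doubling, and \eqref{eq:notHD} holds for $Q\in\HD\subset\Stop_0$, no $Q\in\HD$ can have $\ell(Q)\approx\ell(R_0)$ once $A$ is large, and a short iteration gives $4B_Q\subset 2.5B_0$ for every $Q\in\HD$; also $d(x)\lesssim\ell(Q)$ on $Q$, by taking the parent $\widehat Q\in\Tree_0$ in \eqref{eq:definition d}.

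For such a cube, \eqref{eq:nu close to mu}, together with the bounded superposition of the balls $3B_k$ and the $n$‑AD‑regularity of $\Gamma$ (which makes the error sum in \eqref{eq:nu close to mu} over $4B_Q$ at most $\lesssim\ell(Q)^{n+1}$), yields $F_{4B_Q}(\mu,\nu)\lesssim_{A,\tau}\varepsilon_0^{1/4}\ell(Q)^{n+1}$. Testing against a $1$‑Lipschitz function supported in $4B_Q$ and $\approx\ell(Q)$ on $3B_Q$, using $\nu(3B_Q)\le\Pi_*\nu(\Pi_0(4B_Q))=\int_{\Pi_0(4B_Q)}f\,d\Hn{L_0}$, the splitting $f=c_0+(f-c_0)$, the fact that $c_0\approx 1$ (by \eqref{eq:cBQ estimate}, since $R_0\in\Tree_0$), and $\Hn{L_0}(\Pi_0(4B_Q))\approx\ell(Q)^n$, one gets
\begin{equation*}
\mu(3B_Q)\le C_1\ell(Q)^n+C\int_{\Pi_0(4B_Q)}|f-c_0|\,d\Hn{L_0}
\end{equation*}
with $C_1$ absolute (for $\varepsilon_0$ small). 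As $\mu(3B_Q)>A\ell(Q)^n$, choosing $A>2C_1$ absorbs the first term and produces, after a Cauchy--Schwarz in the space variable, the two estimates
\begin{equation*}
\ell(Q)^n\lesssim A^{-1}\int_{\Pi_0(4B_Q)}|f-c_0|\,d\Hn{L_0},\qquad
\mu(Q)\le\mu(3B_Q)\lesssim A^{-1}\int_{\Pi_0(4B_Q)}|f-c_0|^2\,d\Hn{L_0}.
\end{equation*}

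Summing the second estimate over the pairwise disjoint family $\HD$ and invoking \lemref{lem:L2 norm estimate of nu density} would give $\sum_{Q\in\HD}\mu(Q)\lesssim A^{-1}\sum_{Q\in\HD}\int_{\Pi_0(4B_Q)}|f-c_0|^2\,d\Hn{L_0}\lesssim_{A,\tau}A^{-1}\varepsilon_0^{1/8}\mu(R_0)$, which is \eqref{eq:small measure of HD} — provided the sets $\Pi_0(4B_Q)$, $Q\in\HD$, can be arranged to have bounded superposition. This is exactly the delicate point and the step I expect to be the main obstacle: disjoint $\HD$ cubes of very different sizes may sit over the same part of $L_0$, so the dilates $\Pi_0(4B_Q)$ genuinely fail to have bounded overlap (the overlap is at worst of the order of the number of scales present). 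The way around it is to reorganize $\HD$ scale by scale along the David--Mattila lattice: within a fixed generation the balls $4B_Q$ have bounded overlap by \lemref{lem:DM lattice}(b), while the contributions of different scales are recombined by using that each $\ell(Q)^n$ is already controlled, via the first estimate above, by an $L^1$‑integral of $|f-c_0|$ with a gain of $A^{-1}$, so that feeding this back (together with a $5r$‑covering of $\{3B_Q\}_{Q\in\HD}$) closes the sum with a geometric loss that $A$ absorbs. Making this bookkeeping precise, with all constants tracked in $A$ and $\tau$, is where the real work of the section lies; everything upstream is a routine combination of \eqref{eq:nu close to mu}, \lemref{lem:nu is AD regular}, and \lemref{lem:L2 norm estimate of nu density}.
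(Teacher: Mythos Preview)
Your reduction to the $L^2$ bound \eqref{eq:L2 norm estimate of nu density} is the right one, and the local estimate you derive for an individual $Q\in\HD$ is fine. But the step you flag as ``the main obstacle'' is a genuine gap that your sketch does not close. The projections $\Pi_0(4B_Q)$, $Q\in\HD$, can fail to have bounded overlap by an arbitrarily large factor (nested $\HD$ cubes at many scales over the same spot of $L_0$), and your proposed ``scale by scale'' recombination does not produce a convergent sum: within a fixed generation you get disjointness, but summing $\int_{\Pi_0(4B_Q)}|f-c_0|^2$ over generations simply multiplies $\lVert f-c_0\rVert_{L^2}^2$ by the number of scales present, and ``feeding back'' the $L^1$ control of $\ell(Q)^n$ does not change this because that $L^1$ control is itself over the same region and suffers the same overlap. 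No gain of $A^{-1}$ per scale appears.

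The paper bypasses this entirely by a different device. It does \emph{not} sum over $Q\in\HD$; instead it works with the set $R_{\HD}=\bigcup_{Q\in\HD}Q\setminus\RFar$, covers it by balls $B(x,r(Q_x)/100)$ centered at points $x\notin\RFar$, applies the $5r$-covering lemma to extract a disjoint subfamily $\{B_{x_j}\}$, and notes that because $x_j\notin\RFar$ (so $\dist(x_j,\Gamma)\lesssim\sqrt{\varepsilon_0}r(B_j)$) the projections $\Pi_0(\tfrac12 B_{x_j})$ are pairwise disjoint. This gives $\mu(R_{\HD})\lesssim_A\sum_j r(B_{x_j})^n\approx\mathcal{H}^n\big(\bigcup_j\Pi_0(\tfrac12 B_{x_j})\big)$. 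The remaining point is to show that each $\Pi_0(\tfrac12 B_{x_j})$ lies in $\mathcal{BM}=\{y\in L_0:\mathcal{M}(f-c_0)(y)>1\}$, where $\mathcal{M}$ is the Hardy--Littlewood maximal function on $L_0$; this follows from the high-density condition and the closeness of $\mu$ and $\nu$ on $10B_{Q_j}$, essentially your local computation reinterpreted. Then the weak-type $(2,2)$ inequality for $\mathcal{M}$ gives $\mathcal{H}^n(\mathcal{BM})\lesssim\lVert f-c_0\rVert_{L^2}^2$, and \eqref{eq:L2 norm estimate of nu density} finishes. The maximal-function/weak-type step is exactly the missing idea that absorbs all the potential overlap into a single level-set estimate.
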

\begin{proof}
	Recall that by \lemref{lem:small measure of RFar} we have 
	$\mu(\RFar)\lesssim_{A,\tau}\sqrt{\varepsilon_0}\mu(R_0).$
	Thus, to show \eqref{eq:small measure of HD} it suffices to prove
	\begin{equation*}
	\mu(R_{\HD})\lesssim \varepsilon_0^{1/8}\mu(R_0),
	\end{equation*}
	where $R_{\HD}=\bigcup_{Q\in\HD}Q\setminus\RFar$.
	
	For every $x\in R_{\HD}$ we define $B_x = B(x, r(Q_x)/100)$, where $Q_x\in \HD$ is such that $x\in Q_x$. We use the $5r$-covering theorem to choose $\{x_j\}_{j\in J}$ such that all $B_{x_j}$ are pairwise disjoint and $\bigcup_j 5B_{x_j}$ covers $\bigcup_{x\in R_{\HD}} B_x$. Observe that $5B_{x_j}\subset 3B_{Q_{x_j}}$, and so by \eqref{eq:notHD}
	\begin{equation}\label{eq:mass of 5Bxj bounded}
	\mu(5B_{x_j})\lesssim_A r(B_{x_j})^n.
	\end{equation} 
	
	For every $j$ set $B_j = \frac{1}{2}B_{x_j}, Q_j = Q_{x_j}$, and let $P_j\in\Tree$ be the parent of $Q_j$. We have $\ell(P_j)\approx\ell(Q_j)\approx r(B_j).$ Since $x_j\not\in\RFar$, we can use \lemref{lem:x outside RFar close to Gamma} to obtain
	\begin{equation*}
	\dist(x_j,\Gamma)\lesssim_{A,\tau}\sqrt{\varepsilon_0}d(x_j)\lesssim\sqrt{\varepsilon_0}\ell(P_j)\approx_{A,\tau}\sqrt{\varepsilon_0} r(B_j).
	\end{equation*}
	Since $2B_j$ are disjoint, the centers of $B_j$ are close to $\Gamma$, and $\Gamma$ is a graph of function $F$ with $\lip(F)\lesssim\theta\ll 1$, it follows that $\Pi_0(B_j)$ are disjoint as well.
	
	%
	We use the above to get
	\begin{equation}\label{eq:estimating RHD}
	\mu(R_{\HD})\le \sum_{j\in J}\mu(5B_{x_j})\overset{\eqref{eq:mass of 5Bxj bounded}}{\lesssim_{A}}\sum_{j\in J}r(B_{x_j})^n \\
	\approx \sum_{j\in J}\mathcal{H}^n(\Pi_0(B_j))= \mathcal{H}^n\big(\bigcup_{j\in J}\Pi_0(B_j)\big).
	\end{equation}
	We claim that 
	\begin{equation}\label{eq:subset of BM}
	\bigcup_{j\in J}\Pi_0(B_j)\subset \mathcal{BM},
	\end{equation}
	where 
	\begin{equation*}
	\mathcal{BM} = \{ x\in L_0\ : \ \mathcal{M}(f-c_0)>1 \},
	\end{equation*}
	and $\mathcal{M}$ is the Hardy-Littlewood maximal function on $L_0$. $\mathcal{BM}$ stands for ``big $\mathcal{M}$''. Before we prove \eqref{eq:subset of BM}, note that due to the weak type $(2,2)$ estimate for $\mathcal{M}$ we have
	\begin{equation*}
	\mathcal{H}^n(\mathcal{BM})\lesssim \lVert f-c_0\rVert^2_{L^2(\Hn{L_0})}.
	\end{equation*}
	Putting this together with \eqref{eq:estimating RHD}, \eqref{eq:subset of BM}, and our key estimate from \lemref{lem:L2 norm estimate of nu density}, we get that
	\begin{equation*}
	\mu(R_{\HD})\lesssim_{A,\tau}\varepsilon_0^{1/8}\mu(R_0).
	\end{equation*}
	Therefore, all that remains is to show \eqref{eq:subset of BM}.
	
	Let $j\in J,\ y\in \Pi_0(B_j)$. Since $|y-\Pi_0(x_j)|\le r(B_j)\le r(B_{Q_j})$ and $\Pi_0(x_j)\in \Pi_0(B_{Q_j})$, we have $B(y,25r(B_{Q_j}))\supset \Pi_0(10B_{Q_j})$. Clearly, for some $C=C(n)>0$
	\begin{multline}\label{eq:MF estimate}
	\mathcal{M}(f-c_0)(y)\ge \frac{C}{r(B_{Q_j})^n}\Pi_*\nu(B(y,25r(B_{Q_j}))) - c_0\\
	\ge \frac{C}{r(B_{Q_j})^n}\Pi_*\nu(\Pi_0(10B_{Q_j})) - c_0\ge \frac{C}{r(B_{Q_j})^n}\nu(10B_{Q_j}) - c_0.
	\end{multline}		
	Recall that by \eqref{eq:c smaller then mu over Hn} and \remref{rem:WLOG density of BR0 is 1} we have
	\begin{equation*}
	c_0\lesssim 1.
	\end{equation*}
	Thus, 
	if we show that $\nu(10B_{Q_j})\gtrsim Ar(B_{Q_j})^n,$ for $A$ big enough we will have $\mathcal{M}(f-c_0)(y)>1$, and so we will be done.
	
	Let us define
	\begin{equation*}
	\lambda(z)=(r(10B_{Q_j})-|z-z_{Q_j}|)_+.
	\end{equation*}
	Note that $\lambda$ is $1$-Lipschitz and that $\supp(\lambda)\subset 10B_{Q_j}\subset 2.5B_0$. Moreover,
	\begin{equation*}
	7r({B_{Q_j}})\one_{3B_{Q_j}}\le \lambda\le 10r({B_{Q_j}})\one_{10B_{Q_j}}.
	\end{equation*}
	Note that $r(B_{Q_j})\gtrsim d(z_{Q_j})$. We get that
	\begin{align*}
	r({B_{Q_j}})\nu(10B_{Q_j})\gtrsim& \int \lambda(z)\ d\nu(z)\\
	\overset{\eqref{eq:nu close to mu}}{\ge}& \int \lambda(z)\ d\mu(z) - C(A,\tau)\big( \varepsilon_0^{1/4}r({B_{Q_j}})^{n+1} + \varepsilon_0^{1/2} \sum_{3B_k\cap 10B_{Q_j}\not=\varnothing} r_k^{n+1}\big)\\
	\ge& \ 7r({B_{Q_j}}) \mu(3B_{Q_j}) - C(A,\tau)\big( \varepsilon_0^{1/4}r({B_{Q_j}})^{n+1} + \varepsilon_0^{1/2} \sum_{3B_k\cap 10B_{Q_j}\not=\varnothing} r_k^{n+1}\big).
	\end{align*}
	Note that for all $k$ such that $3B_k\cap 10B_{Q_j}\not=\varnothing$ we have $r_k\lesssim_{A,\tau} r(B_{Q_j})$. Indeed, for $x\in 10B_{Q_j}$ it holds that $d(x)\lesssim_{A,\tau} r(B_{Q_j})$, and for $x\in 3B_k$ we have $r_k\le d(x)$ by \lemref{lem:properties of Bk} (c). Moreover, since the balls $\Pi_0(3B_k)$ are of bounded intersection by \lemref{lem:properties of Bk} (b), we get
	\begin{equation*}
	\sum_{3B_k\cap 10B_{Q_j}\not=\varnothing} r_k^{n}\le \sum_{\Pi_0(3B_k)\cap \Pi_0(10B_{Q_j})\not=\varnothing} r_k^{n}\lesssim_{A,\tau} r(B_{Q_j})^n.
	\end{equation*}
	Hence, using the above and the fact that $Q_j\in\HD$
	\begin{align*}
	r({B_{Q_j}})\nu(10B_{Q_j})\gtrsim& \ r({B_{Q_j}}) \mu(3B_{Q_j}) - C(A,\tau)\varepsilon_0^{1/4} r(B_{Q_j})^{n+1}\\
	\gtrsim& Ar({B_{Q_j}})^{n+1} - C(A,\tau)\varepsilon_0^{1/4} r(B_{Q_j})^{n+1} \gtrsim Ar({B_{Q_j}})^{n+1},
	\end{align*}
	for $\varepsilon_0$ small enough. Thus, $\nu(10B_{Q_j})\gtrsim Ar({B_{Q_j}})^n$ and by \eqref{eq:MF estimate} we get $\mathcal{M}(f-c_0)(y)>1$.
\end{proof}
\subsection{\texorpdfstring{$\Lambda$}{Lambda}-estimates}
The aim of this section is to prove the crucial estimate from \lemref{lem:L2 norm estimate of nu density}, i.e.
\begin{equation}\label{eq:L2 norm estimate of nu density reiterated}
\lVert f-c_0\rVert_{L^2(\Hn{L_0})}^2 \lesssim_{A,\tau} \varepsilon_0^{1/8}\mu(R_0).
\end{equation}
From now on we will denote by $\phi:\R^d\rightarrow\R$ a radial $C^{\infty}$ function such that $\phi\equiv 1$ on $B(0,1/2)$, $\supp(\phi)\subset B(0,1)$, and
\begin{equation*}
\phi_r(x)=r^{-n}\phi\left(\frac{x}{r}\right).
\end{equation*}
We also set 
\begin{equation*}
\psi_r(x) = \phi_{r}(x)-\phi_{2r}(x).
\end{equation*}


A classical result of harmonic analysis (see \cite[Sections I.6.3, I.8.23]{stein1993harmonic}) states that
\begin{equation}\label{eq:classical harmonic analysis result}
\lVert f-c_0\rVert_{L^2(\Hn{L_0})}^2\approx \int_{L_0}\int_0^{\infty}|\psi_r\ast \Pi_*\nu(z)|^2\frac{dr}{r}\ d\mathcal{H}^n(z),
\end{equation}
and so we will work with the latter expression.

For $r>0$ and $x\in\R^d$ let us define
\begin{equation*}
\widetilde{\psi}_r(x) = \psi_r\circ \Pi_0(x)\cdot \phi\left(\frac{x}{5r}\right).
\end{equation*}	
Given a measure $\lambda$ on $\R^d$ we set
\begin{equation*}
\Lambda_{\lambda}(x,r) = \left\lvert \widetilde{\psi}_r\ast\lambda(x)\right\rvert.
\end{equation*}

\begin{lemma}
	We have for all $x\in\Gamma$
	\begin{equation}
	\Lambda_{\nu}(x,r) = |\psi_r\ast \Pi_*\nu(\Pi_0(x))|.
	\end{equation}
\end{lemma}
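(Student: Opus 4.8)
The plan is to unwind the definitions and observe that the extra spatial cutoff $\phi(\cdot/5r)$ built into $\widetilde{\psi}_r$ is identically $1$ on the only region that contributes to the convolution against $\nu$, since $\nu$ is concentrated on the (nearly flat) Lipschitz graph $\Gamma$.

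First I would unwind the definitions. By the definition of $\widetilde{\psi}_r$ and linearity of $\Pi_0$,
\begin{equation*}
\Lambda_{\nu}(x,r)=\left\lvert\,\int \psi_r\!\bigl(\Pi_0(x)-\Pi_0(y)\bigr)\,\phi\!\left(\frac{x-y}{5r}\right)d\nu(y)\right\rvert ,
\end{equation*}
while by the definition of the image measure $\Pi_*\nu$ and, again, linearity of $\Pi_0$,
\begin{equation*}
\psi_r\ast\Pi_*\nu(\Pi_0(x))=\int \psi_r\bigl(\Pi_0(x)-z\bigr)\,d\Pi_*\nu(z)=\int \psi_r\!\bigl(\Pi_0(x)-\Pi_0(y)\bigr)\,d\nu(y).
\end{equation*}
Hence it would suffice to show that $\phi((x-y)/5r)=1$ whenever $y\in\supp\nu$ and $\psi_r(\Pi_0(x)-\Pi_0(y))\neq 0$.

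Next I would use that $\nu$ is supported on $\Gamma$ (by the definition \eqref{eq:definition of nu} of $\nu$, together with \lemref{lem:mu on RG absolutely continuous wrto Hn} and $R_G\subset\Gamma$), and that $\Gamma$ is the graph of the $C\theta$-Lipschitz map $F$ (\lemref{lem:properties of F}). Then for $x,y\in\Gamma$ one has $\Pi_0^{\perp}(x-y)=F(\Pi_0(x))-F(\Pi_0(y))$, so $|\Pi_0^{\perp}(x-y)|\le C\theta\,|\Pi_0(x)-\Pi_0(y)|$ and therefore $|x-y|\le\sqrt{1+C^2\theta^2}\,|\Pi_0(x)-\Pi_0(y)|$. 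Since $\psi_r=\phi_r-\phi_{2r}$ is supported in $B(0,2r)$, the condition $\psi_r(\Pi_0(x)-\Pi_0(y))\neq 0$ forces $|\Pi_0(x)-\Pi_0(y)|\le 2r$, whence $|x-y|\le 2r\sqrt{1+C^2\theta^2}\le 5r/2$ once $\theta$ is taken small enough (which is harmless, as $\theta\ll 1$ is assumed throughout). As $\phi\equiv1$ on $B(0,1/2)$, this gives $\phi((x-y)/5r)=1$, and the two integrals displayed above coincide, which is the claim.

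I do not expect a genuine obstacle here; the only point requiring (routine) care is checking that the smallness constraint on $\theta$ needed to make $\Gamma$ flat enough — so that the Euclidean ball $B(x,5r)$ contains $\Gamma\cap\Pi_0^{-1}(B(\Pi_0(x),2r))$ — is compatible with the other smallness requirements imposed on $\theta$ elsewhere in the argument, which it is.
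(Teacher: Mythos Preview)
Your proposal is correct and follows essentially the same approach as the paper: reduce to showing that $\phi((x-y)/5r)=1$ whenever $x,y\in\Gamma$ and $\psi_r(\Pi_0(x)-\Pi_0(y))\neq 0$, then use that $\Gamma$ is a $C\theta$-Lipschitz graph and $\supp\psi_r\subset B(0,2r)$ to bound $|x-y|\le 5r/2$. The only cosmetic difference is that the paper bounds $|x-y|$ by $2(1+C\theta)r$ via the triangle inequality while you use $2r\sqrt{1+C^2\theta^2}$ via Pythagoras.
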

\begin{proof}
	By the definition of $\Lambda_{\nu}$, it suffices to show that for all $x,y\in\Gamma$ we have 
	\begin{equation*}
	\widetilde{\psi}_r(x-y) = \psi_r(\Pi_0(x)-\Pi_0(y)).
	\end{equation*}
	Hence, by the definition of $\widetilde{\psi}_r$, we need to check that $ \phi((5r)^{-1}(x-y))=1$ whenever $\psi_r(\Pi_0(x)-\Pi_0(y))\not= 0$. 
	
	Since $\supp(\psi_r)\subset B(0,2r)$, we get that $|\Pi_0(x)-\Pi_0(y)|\le 2r$. Thus, due to the fact that $\Gamma$ is a $C\theta$-Lipschitz graph, we have
	\begin{equation*}
	|x-y|\le 2(1+C\theta)r\le \frac{5}{2}r.
	\end{equation*}
	Hence, $y\in B(x,5r/2)$, which gives $\phi((5r)^{-1}(x-y))=1$.
\end{proof}

The lemma above and the fact that $\Pi_0$ is bilipschitz between $\Gamma$ and $L_0$ imply that
\begin{align*}
\int_{L_0}\int_0^{\infty}|\psi_r\ast \Pi_*\nu(z)|^2\frac{dr}{r}\ d\mathcal{H}^n(z) &\approx \int_{L_0}\int_0^{\infty}|\psi_r\ast \Pi_*\nu(z)|^2\frac{dr}{r}\ d\Pi_*\sigma(z)\\
&= \int_{\Gamma}\int_0^{\infty}\Lambda_{\nu}(x,r)^2\frac{dr}{r}\ d\sigma(x).
\end{align*}

In consequence of \eqref{eq:classical harmonic analysis result} and the above, to prove \lemref{lem:L2 norm estimate of nu density} it suffices to show that
\begin{equation}\label{eq:lambda estimate to prove}
\int_{\Gamma}\int_0^{\infty}\Lambda_{\nu}(x,r)^2\frac{dr}{r}\ d\sigma(x) \lesssim_{A,\tau} \varepsilon_0^{1/8}\mu(R_0).
\end{equation}

We start with the following simple calculation.

\begin{lemma}
	For $x\in\Gamma$ we have 
	\begin{equation}\label{eq:lambdanu bdd by alpha}
	\Lambda_{\nu}(x,r)\lesssim_{A,\tau} \alpha_{\nu}(x,2r).
	\end{equation}
	Moreover, for $x\in\Gamma\cap 2.5B_0$ and $d(x)\lesssim r<\eta r_0$ we have
	\begin{equation}\label{eq:lambdamu bdd by alpha}
	\Lambda_{\mu}(x,r)\lesssim_{A,\tau} \alpha_{\mu}(3B_Q),
	\end{equation}
	for some $Q\in\Tree$ such that $B(x,5r)\subset 3B_Q$ and $r\approx \ell(Q)$.
\end{lemma}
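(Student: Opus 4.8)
The plan is to view $\Lambda_\lambda(x,r)=\lvert\widetilde\psi_r\ast\lambda(x)\rvert$ as the pairing of $\lambda$ against the test function $g_{x,r}:=\widetilde\psi_r(x-\cdot)$, and to exploit that $g_{x,r}$ annihilates flat measures sitting close to $x$. First I would record the elementary bounds $\lVert g_{x,r}\rVert_\infty\lesssim r^{-n}$, $\lip(g_{x,r})\lesssim r^{-n-1}$ (a product‑rule computation from $\lvert\psi_r\rvert\lesssim r^{-n}$, $\lvert\nabla\psi_r\rvert\lesssim r^{-n-1}$, $\lvert\nabla\phi(\cdot/5r)\rvert\lesssim r^{-1}$), and $\supp g_{x,r}\subset B(x,5r)$, together with the mean‑zero identity $\int_{L_0}\psi_r\,d\mathcal H^n=0$. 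The crucial observation is: \emph{if $L$ is an $n$-plane with $\measuredangle(L,L_0)$ small and $\dist(x,L)\le r/2$, then $\widetilde\psi_r\ast\Hn{L}(x)=0$.} Indeed, on $L$ the factor $y\mapsto\psi_r(\Pi_0x-\Pi_0y)$ is supported on $\{y\in L:\lvert\Pi_0(x-y)\rvert<2r\}$, and on that set one checks $\lvert x-y\rvert<5r/2$ (bounding the $L_0^\perp$-component of $x-y$ using $\dist(x,L)\le r/2$ and the small angle), hence $\phi((x-y)/5r)\equiv1$ there; therefore $\widetilde\psi_r\ast\Hn{L}(x)=\int_L\psi_r(\Pi_0x-\Pi_0y)\,d\Hn{L}(y)$, which vanishes after the affine change of variables $y\mapsto\Pi_0y$ from $L$ onto $L_0$ (constant Jacobian) because $\int_{L_0}\psi_r=0$.

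For the $\mu$-estimate, given $x\in\Gamma\cap 2.5B_0$ and $d(x)\lesssim r<\eta r_0$, I would use $d(x)\lesssim r\lesssim\ell(R_0)$ and $R_0\in\Tree$ to pick $Q\in\Tree$ with $B(x,5r)\subset 3B_Q$ and $r\approx\ell(Q)$ (take a cube of $\Tree$ nearly realizing $d(x)$, then pass to a suitable ancestor). Let $c_Q$ and an $n$-plane $\ell_Q$ realize $\alpha_\mu(3B_Q)$. Since $x\in\Gamma\cap 3B_Q$, \lemref{lem:x from Gamma close to LQ} gives $\dist(x,L_Q)\lesssim_{A,\tau}\sqrt{\varepsilon_0}\ell(Q)$; combining the smallness of $\alpha_\mu(3B_Q)$ and $\beta_{\mu,2}(3B_Q)$ on $\Tree_0$ (\lemref{lem:beta and alpha estimates}) with the angle estimate in the proof of \lemref{lem:plane from beta good for alpha} yields $\measuredangle(\ell_Q,L_Q)\lesssim_{A,\tau}\varepsilon_0$, hence $\dist(x,\ell_Q)\lesssim_{A,\tau}\sqrt{\varepsilon_0}\,r$ and $\measuredangle(\ell_Q,L_0)\lesssim_{A,\tau}\varepsilon_0+\theta$. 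For $\varepsilon_0,\theta$ small the observation above applies, so $\widetilde\psi_r\ast(c_Q\Hn{\ell_Q})(x)=0$ and, since $g_{x,r}$ is supported in $B(x,5r)\subset 3B_Q$,
\begin{equation*}
\Lambda_\mu(x,r)=\bigl\lvert\widetilde\psi_r\ast(\mu-c_Q\Hn{\ell_Q})(x)\bigr\rvert\le\lip(g_{x,r})\,F_{3B_Q}(\mu,c_Q\Hn{\ell_Q})\lesssim r^{-n-1}\,\alpha_\mu(3B_Q)\,r(3B_Q)\,\mu(9B_Q)\lesssim_{A,\tau}\alpha_\mu(3B_Q),
\end{equation*}
using $r(3B_Q)\approx r$ and $\mu(9B_Q)\lesssim_A\ell(Q)^n\approx r^n$ by \eqref{eq:notHD}.

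For the $\nu$-estimate, since $\nu$ is $n$-AD-regular (\lemref{lem:nu is AD regular}) and $\lVert g_{x,r}\rVert_\infty\lesssim r^{-n}$, $\supp g_{x,r}\subset B(x,5r)$, one always has $\Lambda_\nu(x,r)\le\lVert g_{x,r}\rVert_\infty\,\nu(B(x,5r))\lesssim_{A,\tau}1$, so it suffices to treat $\alpha_\nu(x,2r)<\delta_0$ for a small $A,\tau$-dependent $\delta_0$ (otherwise the bound is trivial). Let $c,L$ realize $\alpha_\nu(x,2r)$. Testing $F_{B(x,2r)}(\nu,c\Hn{L})$ against a bump supported in $B(x,r/100)$ and using the lower AD-regularity of $\nu$ forces $\dist(x,L)\le r/100$ once $\delta_0$ is small; and because near $x$ the support $\Gamma$ of $\nu$ is simultaneously $\lesssim\theta r$-close to $L_0$ and $\lesssim_{A,\tau}\alpha_\nu(x,2r)\,r$-close to $L$ (again by AD-regularity and smallness of $\alpha_\nu$), we get $\measuredangle(L,L_0)\lesssim_{A,\tau}\theta+\alpha_\nu(x,2r)$. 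For $\delta_0,\theta$ small the observation applies, $\widetilde\psi_r\ast(c\Hn{L})(x)=0$, and
\begin{equation*}
\Lambda_\nu(x,r)=\bigl\lvert\widetilde\psi_r\ast(\nu-c\Hn{L})(x)\bigr\rvert\lesssim r^{-n-1}\,F_{B(x,2r)}(\nu,c\Hn{L})\lesssim_{A,\tau}r^{-n-1}\,\alpha_\nu(x,2r)\,r\,\nu(B(x,6r))\lesssim_{A,\tau}\alpha_\nu(x,2r),
\end{equation*}
where one uses $\nu(B(x,6r))\approx_{A,\tau}r^n$ and that, against both $\nu$ (carried by the nearly flat $\Gamma$) and $c\Hn{L}$ ($L$ nearly flat and within $r/100$ of $x$), $g_{x,r}$ is effectively supported in a ball of radius comparable to $r$, matching the normalization of $\alpha_\nu(x,2r)$ up to universal constants.

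The main obstacle is exactly the positioning of the optimal flat planes: one must verify that the minimizing plane of $\alpha_\nu(x,2r)$ (resp.\ of $\alpha_\mu(3B_Q)$) makes a small angle with $L_0$ and passes within $\ll r$ of $x$, so that the regularizing cutoff $\phi(\cdot/5r)$ is identically $1$ where $\psi_r\circ\Pi_0$ lives and the flat term drops out \emph{exactly}. For $\mu$ this is immediate from \lemref{lem:x from Gamma close to LQ} and the angle comparison already present in \lemref{lem:plane from beta good for alpha}; for $\nu$ it is where the AD-regularity of $\nu$ and the near-flatness of $\Gamma$ over $L_0$ are genuinely used. Everything else is a routine estimate for $\widetilde\psi_r$ and the defining normalizations of $\alpha_\mu$ and $\alpha_\nu$.
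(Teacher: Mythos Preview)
Your argument is correct and follows the same strategy as the paper: subtract a flat measure, show the flat term vanishes because $\int_{L_0}\psi_r=0$ and the cutoff $\phi(\cdot/5r)$ is identically $1$ on the relevant support, then bound the remainder via the Lipschitz constant of the test function and the $F$-distance. The paper differs in two minor ways. For $\nu$, it exploits the identity $\Lambda_\nu(x,r)=|\psi_r\ast\Pi_*\nu(\Pi_0 x)|$ and works with the test function $y\mapsto\psi_r(\Pi_0 x-\Pi_0 y)$ rather than $\widetilde\psi_r$, so the flat term $\int_{L_B}\psi_r(\Pi_0 x-\Pi_0 y)\,d\mathcal{H}^n$ vanishes by a linear change of variables without any cutoff analysis (though this tacitly assumes $\Pi_0|_{L_B}$ is a bijection onto $L_0$, which is precisely the positioning you verify explicitly). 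For $\mu$, the paper compares against $c_Q\Hn{L_Q}$ with the $\beta$-plane $L_Q$ rather than your $\alpha$-plane $\ell_Q$: the positioning of $L_Q$ is then immediate from $\measuredangle(L_Q,L_0)\le\theta$ and Lemma~\ref{lem:x from Gamma close to LQ}, while the passage from $F_{B(x,5r)}(\mu,c_Q\Hn{L_Q})$ to $\alpha_\mu(3B_Q)$ implicitly leans on Lemma~\ref{lem:plane from beta good for alpha} (and strictly yields $\alpha_\mu(3B_Q)+\beta_{\mu,2}(3B_Q)$, harmless for the later use). Your choice reverses this trade-off: you need the extra angle comparison for $\ell_Q$, but then $F_{3B_Q}(\mu,c_Q\Hn{\ell_Q})=\alpha_\mu(3B_Q)\,r(3B_Q)\,\mu(9B_Q)$ on the nose.
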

\begin{proof}
	First, we will prove \eqref{eq:lambdanu bdd by alpha}. Let $B=B(x,2r)$, and $L_B, c_B$ be the minimizing plane and constant for $\alpha_{\nu}(B)$. Using the fact that $\Lambda_{\nu}(x,r) = |\psi_r\ast \Pi_*\nu(\Pi_0(x))|$ we get
	\begin{multline*}
	\Lambda_{\nu}(x,r) = \left| \int \psi_r(\Pi_0(x)-\Pi_0(y))\ d\nu(y)\right|\\
	\le \left| \int \psi_r(\Pi_0(x)-\Pi_0(y)) \ d(\nu-c_B\Hn{L_B})(y)\right|
	+ \left| \int \psi_r(\Pi_0(x)-\Pi_0(y))\ d(c_B\Hn{L_B})(y)\right|\\
	\lesssim r^{-(n+1)}F_B(\nu, c_B\Hn{L_B}) + 0.
	\end{multline*}
	Hence, by $n$-AD-regularity of $\nu$ we arrive at 
	\begin{equation*}
	\Lambda_{\nu}(x,r)\lesssim_{A,\tau} \alpha_{\nu}(x,2r).
	\end{equation*}
	
	Now, let us look at \eqref{eq:lambdamu bdd by alpha}. Since $x\in\Gamma\cap 2.5B_0$ and $d(x)\lesssim r<\eta r_0$, we may find $Q\in\Tree$ such that $B(x,5r)\subset 3B_Q$ and $\ell(Q)\approx_{A,\tau} r$. We use the fact that $|\nabla\widetilde{\psi}_r|\lesssim r^{-n-1}$ and $\supp \widetilde{\psi}_r\subset B(x,5r)$ to get
	\begin{align*}
	\Lambda_{\mu}(x,r)\le& \left| \int \widetilde{\psi}_r(x-y)d(\mu-c_Q\Hn{L_Q}) \right| + c_Q\left| \int \widetilde{\psi}_r(x-y)d\Hn{L_Q} \right|\\
	& \le {r^{-(n+1)}}F_{B(x,5r)}(\mu, c_Q\Hn{L_Q}) + c_Q\left| \int \widetilde{\psi}_r(x-y)d\Hn{L_Q} \right|
	\\ & \lesssim_{A,\tau} \alpha_{\mu}(3B_Q) + c_Q\left| \int \widetilde{\psi}_r(x-y)d\Hn{L_Q} \right|.
	\end{align*}
	We claim that the last integral above is equal to $0$. To prove this, it suffices to show that for $x\in\Gamma$ and $y\in L_Q$ we have $\widetilde{\psi}_r(x-y) = \psi_r(\Pi_0(x)-\Pi_0(y))$, because
	\begin{equation*}
	\int \psi_r(\Pi_0(y)-\Pi_0(x))\ d(\Hn{L_Q})(y) = 0.
	\end{equation*}
	Since $\widetilde{\psi}_r(x-y) = \psi_r(\Pi_0(y)-\Pi_0(x))\phi((5r)^{-1}(x-y))$, we only have to check that $\phi((5r)^{-1}(x-y))=1$ for $\Pi_0(y)-\Pi_0(x)\in\supp \psi_r$. In other words, knowing that $|\Pi_0(y)-\Pi_0(x)|\le 2r$, we expect that $|x-y|\le \frac{5}{2}r$.
	
	Indeed, the fact that $\Gamma$ is a $C\theta$-Lipschitz graph, that $\measuredangle(L_Q,L_0)\le \theta$, $|\Pi_0(y)-\Pi_0(x)|\le 2r$, and \lemref{lem:x from Gamma close to LQ}, imply
	\begin{equation*}
	|\Pi_0^{\perp}(y) - \Pi_0^{\perp}(x)|\lesssim_{A,\tau}\theta r.
	\end{equation*}
	Hence,
	\begin{equation*}
	|x-y|\le2r + C(A,\tau)\theta r \le \frac{5}{2}r,
	\end{equation*}
	as expected.
\end{proof}
Before we proceed, let us state the following auxiliary result. Recall that given a ball $B$, $z(B)$ denotes the center of $B$.
\begin{lemma}[{\cite[Lemma 6.11]{azzam2018characterization}}]\label{lem:small alphas of fsigma}
	Let $B$ be a ball centered on an $\varepsilon$-Lipschitz graph $\Gamma$, and $f$ a function such that
	\begin{equation*}
	\lVert f-f(z(B))\rVert_{L^{\infty}(3B\cap\Gamma)}\lesssim \varepsilon,
	\end{equation*}
	and $f(x)\approx 1$ uniformly for $x\in 3B\cap\Gamma$. Then
	\begin{equation*}
	\int_B \int_0^{r(B)} \alpha_{f\sigma}(x,r)^2 \frac{dr}{r}d\sigma(x)\lesssim \varepsilon^2 r(B)^n,
	\end{equation*}
	where $\sigma$ denotes the surface measure on $\Gamma$.
\end{lemma}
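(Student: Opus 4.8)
The plan is to reduce the statement to a statement about $\alpha$-coefficients of a flat measure plus a perturbation argument. First I would fix a ball $B$ centered at a point $x_0\in\Gamma$ with radius $r(B)$, write $c = f(z(B))$, and use the triangle inequality for $F_{B(x,r)}$ to split, for $x\in B$ and $0<r<r(B)$,
\begin{equation*}
W_1(f\sigma, a\Hn{L})\lesssim W_1(f\sigma, c\sigma) + W_1(c\sigma, c\Hn{L}) + \text{(choice of }a\text{)},
\end{equation*}
so that it suffices to estimate separately the ``vertical'' error coming from $f$ not being constant and the ``geometric'' error coming from $\Gamma$ not being flat. Since $\alpha_{f\sigma}$ involves $W_1$, I would use the Kantorovich duality (as in the proof of \lemref{lem:alpha2 controls alpha and beta2}) to pass to the dual formulation $F_{B(x,r)}(f\sigma, \cdot)$, which is easier to manipulate, and also use that $f\sigma$ is comparable to $\sigma$, hence $n$-AD-regular, so that normalizing factors $\mu(3B(x,r))^{-1}$ behave like $r^{-n}$.

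The first (vertical) term is controlled by writing, for $\varphi\in\lip_1(B(x,r))$,
\begin{equation*}
\left\lvert\int\varphi\,(f-c)\,d\sigma\right\rvert\le \lVert f - c\rVert_{L^\infty(3B\cap\Gamma)}\,r\,\sigma(B(x,r))\lesssim\varepsilon\, r^{n+1},
\end{equation*}
which immediately yields a contribution $\lesssim\varepsilon$ to $\alpha_{f\sigma}(x,r)$, uniformly in $r\le r(B)$; integrating $\varepsilon^2$ against $\frac{dr}{r}\,d\sigma$ over $x\in B$, $0<r<r(B)$ would naively diverge, so the key observation is that this crude bound must be replaced by a scale-sensitive one — one only loses $\varepsilon$ at scales comparable to $r(B)$, while at smaller scales the relevant oscillation of $f$ on $B(x,r)\cap\Gamma$ is what enters. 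Here I would invoke the Lipschitz (or at least the stated $L^\infty$) control of $f-f(z(B))$ more carefully: the honest statement is that the contribution of the vertical part to $\int_B\int_0^{r(B)}\alpha_{f\sigma}(x,r)^2\frac{dr}{r}d\sigma$ is $\lesssim\varepsilon^2 r(B)^n$ because, after summing over a Whitney-type decomposition in $r$, the total is dominated by $\varepsilon^2$ times $\sigma(B)\approx r(B)^n$. For the second (geometric) term, since $\Gamma$ is an $\varepsilon$-Lipschitz graph over some $n$-plane $L_0$, the flat measure $\Hn{L_0}$ (suitably scaled) approximates $\sigma$ in every ball $B(x,r)\subset CB$ with $F_{B(x,r)}(\sigma,\Hn{L_0})\lesssim\varepsilon\,r^{n+1}$ — this is exactly the type of estimate recorded in \eqref{eq:sigma close to HnLk in 3Bk} — so the same $\varepsilon^2$ bound after integration follows.

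The main obstacle I expect is making the integration in $r$ genuinely convergent: the pointwise bound $\alpha_{f\sigma}(x,r)\lesssim\varepsilon$ is \emph{not} summable against $\frac{dr}{r}$ on its own, so one really needs a telescoping/orthogonality argument showing that the ``defect'' $\alpha_{f\sigma}(x,r)$ decays (on average over $x$) as $r\to 0$, i.e. a square-function bound rather than an $L^\infty$ bound. The cleanest route is to compare $\alpha_{f\sigma}(x,r)$ to a martingale-type difference: with $m_{B(x,r)}f$ the $\sigma$-average of $f$ over $B(x,r)\cap\Gamma$, one has $\alpha_{f\sigma}(x,r)\lesssim\varepsilon + r^{-1}|m_{B(x,r)}f - m_{B(x,2r)}f|\cdot(\text{something bounded}) + (\text{geometric }\varepsilon)$, and then $\int_0^{r(B)}|m_{B(x,r)}f - m_{B(x,2r)}f|^2\frac{dr}{r}\lesssim\lVert f\rVert_{BMO(3B\cap\Gamma)}^2\lesssim\varepsilon^2$ by a standard Littlewood--Paley / John--Nirenberg estimate on the AD-regular set $\Gamma$. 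Combining the vertical square-function bound, the geometric square-function bound \eqref{eq:sigma close to HnLk in 3Bk}, and integrating over $x\in B$ using AD-regularity of $\sigma$ gives the claimed $\int_B\int_0^{r(B)}\alpha_{f\sigma}(x,r)^2\frac{dr}{r}d\sigma(x)\lesssim\varepsilon^2 r(B)^n$. Since this lemma is quoted verbatim from \cite[Lemma 6.11]{azzam2018characterization}, in the paper itself I would simply cite it; the sketch above is how one would reconstruct the proof if needed.
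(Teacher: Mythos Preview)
You are right that the paper does not prove this lemma; it is quoted verbatim from \cite[Lemma~6.11]{azzam2018characterization}, so there is no in-paper proof to compare against.

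Your reconstruction, however, has a genuine gap that you have not closed. You correctly diagnose the obstacle --- the uniform bound $\alpha_{f\sigma}(x,r)\lesssim\varepsilon$ is not integrable against $\tfrac{dr}{r}$ --- but neither of your proposed repairs works as written. For the vertical term, the displayed inequality
\[
\int_0^{r(B)}\bigl|m_{B(x,r)}f - m_{B(x,2r)}f\bigr|^2\,\frac{dr}{r}\lesssim\lVert f\rVert_{\mathrm{BMO}}^2
\]
is \emph{false} for fixed $x$: take $n=1$, $\Gamma=\R$, $f(y)=\log|y|$; then each difference equals $\log 2$ and the integral diverges. BMO (or $L^\infty$) control on $f-f(z(B))$ only bounds the integrand uniformly, which is exactly what you said was insufficient. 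For the geometric term you call \eqref{eq:sigma close to HnLk in 3Bk} a ``geometric square-function bound'', but that equation is a \emph{single-scale} estimate $F_{3B_k}(\sigma,\Hn{L_k})\lesssim\sqrt{\varepsilon_0}\,r_k^{n+1}$; it yields precisely the pointwise $\alpha_\sigma\lesssim\varepsilon$ that you already know does not integrate.

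What actually makes the lemma true is a Carleson-measure phenomenon that only appears after integrating in $x$ and $r$ jointly. For the geometric part one needs that $\alpha_\sigma(x,r)^2\,\tfrac{dr}{r}\,d\sigma(x)$ is a Carleson measure on an $\varepsilon$-Lipschitz graph with norm $\lesssim\varepsilon^2$; this comes from Dorronsoro's theorem (\thmref{thm:dorronsoro}) together with the $L^2$ comparability of $\alpha$ and $\beta$ square functions for AD-regular measures \cite{tolsa2008uniform}. For the vertical part one needs an honest Littlewood--Paley estimate $\int_B\int_0^{r(B)}\bigl(r^{-(n+1)}\inf_c F_{B(x,r)}((f-c)\sigma,0)\bigr)^2\tfrac{dr}{r}\,d\sigma(x)\lesssim\|f-f(z(B))\|_{L^2(3B)}^2\lesssim\varepsilon^2 r(B)^n$, obtained by dominating the $F_B$-term by a smooth convolution square function. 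Both steps are genuinely $L^2$-in-$x$ statements; your sketch treats them as if they were pointwise-in-$x$, which is where it breaks down.
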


We split the area of integration from \eqref{eq:lambda estimate to prove} into several pieces. We will estimate each of them separately.

\begin{lemma}\label{lem:estimate for A4}
	For every $k\in K$ we have
	\begin{equation*}
	\int_{B_k}\int_0^{\eta^2d(x)}|\Lambda_{\nu}(x,r)|^2\ \frac{dr}{r}d\sigma(x)\lesssim_{A,\tau} \varepsilon_0 r_k^n.
	\end{equation*}
\end{lemma}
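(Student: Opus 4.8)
The plan is to dominate $\Lambda_\nu(x,r)$, in the range $0<r<\eta^2 d(x)$, by an $\alpha$ coefficient of $\nu$ which at these scales agrees with the $\alpha$ coefficient of a measure of the form $g\,\sigma$ with $g$ essentially constant and centered on a small piece of the Lipschitz graph, and then to quote \lemref{lem:small alphas of fsigma}.

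First I would sort out the scales. Fix $i\in I$ with $k\in K_i$, so $r_k=\eta\ell(J_i)$. For $x\in B_k$ and $0<r<\eta^2 d(x)$, the bound $d(x)\le\eta^{-3/2}r_k$ from \eqref{eq:rk comparable to d(x)} gives $r<\eta^{1/2}r_k$; since $\eta<1/1000$ this yields $B(x,6r)\subset 3B_k$. Next, by \eqref{eq:Bk subset Ji} and \lemref{lem:properties of Ji}(d) we have $\Pi_0(3B_k)\subset 2J_i\subset\bigcup_j 15J_j=L_0\setminus\Pi_0(R_G)$, so $3B_k\cap R_G=\varnothing$ (recall $R_G\subset\Gamma$). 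Hence $\mu_G\restriction 3B_k=0$, and writing $\nu=\mu_G+\nu_B$ with $\nu_B=\sum_j c_j h_j\sigma=:g\,\sigma$, we get $\restr{\nu}{3B_k}=\restr{(g\sigma)}{3B_k}$. Since for $x\in B_k$ and $0<r<\eta^2 d(x)$ all the balls entering the definition of $\alpha_\nu(x,2r)$ are contained in $3B_k$, it follows that $\alpha_\nu(x,2r)=\alpha_{g\sigma}(x,2r)$, and then \eqref{eq:lambdanu bdd by alpha} gives
\[
\Lambda_\nu(x,r)\lesssim_{A,\tau}\alpha_{g\sigma}(x,2r),\qquad x\in B_k,\ 0<r<\eta^2 d(x).
\]

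Next I would check that the hypotheses of \lemref{lem:small alphas of fsigma} hold for $g\sigma$ on $B_k$. By \eqref{eq:Bk cover Gamma minus RG}, $\Gamma\cap 3B_k\subset\Gamma\setminus R_G=\bigcup_j B_j\cap\Gamma\subset\bigcup_j 2B_j$, so $\sum_j h_j\equiv 1$ on $\Gamma\cap 3B_k$ by \eqref{eq:definition of h}; thus for $x\in\Gamma\cap 3B_k$ the value $g(x)$ is a convex combination of those $c_j$ with $x\in 3B_j$, and every such $j$ has $3B_j\cap 3B_k\neq\varnothing$, whence $|c_j-c_k|\lesssim_{A,\tau}\sqrt{\varepsilon_0}$ by \lemref{lem:ck close to cj for close Bk and Bj}. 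Consequently $\lVert g-g(z_k)\rVert_{L^\infty(\Gamma\cap 3B_k)}\lesssim_{A,\tau}\sqrt{\varepsilon_0}$ and, by \eqref{eq:ck comparable to 1}, $g\approx_{A,\tau}1$ on $\Gamma\cap 3B_k$. Together with the facts that $B_k$ is centered at $z_k\in\Gamma$ and that $\Gamma\cap 3B_k$ is a $C\sqrt{\varepsilon_0}$-Lipschitz graph over $L_k$ by \lemref{lem:locally a good graph}, this lets me apply \lemref{lem:small alphas of fsigma} with $B=B_k$, $f=g$ and $\varepsilon\approx_{A,\tau}\sqrt{\varepsilon_0}$ — the same estimates being available on any fixed dilate of $B_k$ that lemma might require — to obtain
\[
\int_{B_k}\int_0^{r_k}\alpha_{g\sigma}(x,r)^2\,\frac{dr}{r}\,d\sigma(x)\lesssim_{A,\tau}\varepsilon_0\,r_k^n.
\]
Finally, for $x\in B_k$ the substitution $s=2r$ together with $2\eta^2 d(x)\le 2\eta^{1/2}r_k\le r_k$ gives
\[
\int_0^{\eta^2 d(x)}\Lambda_\nu(x,r)^2\,\frac{dr}{r}\lesssim_{A,\tau}\int_0^{2\eta^2 d(x)}\alpha_{g\sigma}(x,s)^2\,\frac{ds}{s}\le\int_0^{r_k}\alpha_{g\sigma}(x,s)^2\,\frac{ds}{s},
\]
and integrating over $x\in B_k$ and combining with the previous display yields the claim.

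I expect the only genuinely delicate point to be the identification $\alpha_\nu(x,2r)=\alpha_{g\sigma}(x,2r)$ together with checking that \lemref{lem:small alphas of fsigma} really applies: one has to make sure that throughout the range $0<r<\eta^2 d(x)$ every ball involved stays inside the region $3B_k$ where $\mu_G$ vanishes and where $g$ and $\Gamma$ are well controlled. All of this is bought by the smallness of the dimensional constant $\eta$; the remaining estimates are routine.
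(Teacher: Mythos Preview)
Your proposal is correct and follows essentially the same approach as the paper: reduce to $r\le\eta^{1/2}r_k$, identify $\restr{\nu}{3B_k}$ with $g\sigma$ where $g=\sum_j c_j h_j$, verify that $g$ oscillates by at most $C\sqrt{\varepsilon_0}$ on $\Gamma\cap 3B_k$ via \lemref{lem:ck close to cj for close Bk and Bj}, and then apply \lemref{lem:small alphas of fsigma} on the local $C\sqrt{\varepsilon_0}$-Lipschitz graph from \lemref{lem:locally a good graph}. If anything, you supply a bit more justification than the paper does for the identity $\restr{\nu}{3B_k}=\restr{(g\sigma)}{3B_k}$, namely your observation that $\Pi_0(3B_k)\subset 2J_i\subset L_0\setminus\Pi_0(R_G)$ forces $3B_k\cap R_G=\varnothing$.
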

\begin{proof}
	By \lemref{lem:properties of Bk} (c) we know that for $x\in B_k$ we have $\eta^{2}d(x)\le \eta^{1/2}r_k$. Hence,
	\begin{equation*}
	\int_{B_k}\int_0^{\eta^2d(x)}|\Lambda_{\nu}(x,r)|^2\ \frac{dr}{r}d\sigma(x) \le \int_{B_k}\int_0^{\eta^{1/2}r_k}|\Lambda_{\nu}(x,r)|^2\ \frac{dr}{r}d\sigma(x).
	\end{equation*}
	
	Let $g(x)=\sum_{j\in K} c_{j} h_{j}(x)$. Note that for $x\in 3B_k\cap\Gamma$ we have $h(x)=1$, due to \eqref{eq:Bk cover Gamma minus RG} and the definition of $h$ \eqref{eq:definition of h}. Thus, by \lemref{lem:ck close to cj for close Bk and Bj},
	\begin{equation*}
	|g(x)-c_{k}| = \big| \sum_{j\in K} (c_j-c_k)h_j(x) \big| \lesssim_{A,\tau}\sqrt{\varepsilon_0}\sum_{j\in K}h_j(x) = \sqrt{\varepsilon_0}.
	\end{equation*}
	Hence, by \eqref{eq:ck comparable to 1}, $g(x)\approx_{A,\tau} 1$. Since $\restr{\nu}{3B_k} = g\restr{\sigma}{3B_k}$, and $\Gamma\cap 3B_k$ is a $C\sqrt{\varepsilon_0}$-Lipschitz graph by \lemref{lem:locally a good graph}, we can apply \lemref{lem:small alphas of fsigma} and get
	\begin{equation}\label{eq:alphanu small on Bk}
	\int_{B_k}\int_0^{\eta^{1/2}r_k}|\Lambda_{\nu}(x,r)|^2\ \frac{dr}{r}d\sigma(x)\overset{\eqref{eq:lambdanu bdd by alpha}}{\lesssim_{A,\tau}}\int_{B_k}\int_0^{\eta^{1/2}r_k}|\alpha_{\nu}(x,2r)|^2\ \frac{dr}{r}d\sigma(x)\lesssim_{A,\tau} \varepsilon_0 r_k^n.
	\end{equation}
\end{proof}

Let $M(\R^d)$ denote the space of finite Borel measures on $\R^d$.
\begin{lemma}[{\cite[Lemma 8.2]{azzam2018characterization}}]\label{lem:boundedness of T}
	For $\lambda\in M(\R^d)$ we define
	\begin{equation*}
	T\lambda(x) = \left(\int_0^{\infty}\Lambda_{\lambda}(x,r)^2\ \frac{dr}{r}\right)^{1/2},
	\end{equation*}
	and for $f\in L^2(\sigma)$ set $T_{\sigma}f = T(f\sigma)$. Then $T_{\sigma}$ is bounded in $L^p(\sigma)$ for $1<p<\infty$, and T is bounded from $M(\R^d)$ to $L^{1,\infty}(\sigma)$. Furthermore, the norms $\lVert T_{\sigma}\rVert_{L^p(\sigma)\rightarrow L^p(\sigma)}$ and $\lVert T\rVert_{M(\R^d)\rightarrow L^{1,\infty}(\sigma)}$ are bounded above by some absolute constants depending only on $p,n$ and $d$.
\end{lemma}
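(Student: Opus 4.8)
The plan is to recognize $T$ as an $L^2\big((0,\infty),\tfrac{dr}{r}\big)$-valued Calder\'on--Zygmund operator associated with the $n$-AD-regular measure $\sigma$ (the surface measure on the Lipschitz graph $\Gamma$, which has constant $\le C\theta\le 1$), and to carry out the two standard ingredients: boundedness on $L^2(\sigma)$, and verification of the usual size and smoothness estimates for the kernel. Once these are in hand, the $L^p(\sigma)$-bounds for $1<p<\infty$ and the weak $(1,1)$ bound for $T$ on finite measures follow from the general theory of singular integrals whose underlying measure is doubling on its support; see \cite{tolsa2014analytic}. Since the AD-regularity constants of $\sigma$ and the graph constant of $\Gamma$ are absolute, all constants produced in this way depend only on $n,d$ (and $p$).

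\emph{Step 1: $L^2(\sigma)$-boundedness of $T_\sigma$.} For $f\in L^2(\sigma)$ the measure $f\sigma$ is supported on $\Gamma$, so the cancellation identity $\widetilde{\psi}_r(x-y)=\psi_r(\Pi_0x-\Pi_0y)$ for $x,y\in\Gamma$ (the same one proved just above) gives $\Lambda_{f\sigma}(x,r)=|\psi_r\ast\Pi_*(f\sigma)(\Pi_0x)|$ for $x\in\Gamma$. As $\Pi_0\colon\Gamma\to L_0$ is bi-Lipschitz with constant close to $1$, we may write $\Pi_*(f\sigma)=g\,\Hn{L_0}$ with $\lVert g\rVert_{L^2(\Hn{L_0})}\approx\lVert f\rVert_{L^2(\sigma)}$, and changing variables $x\mapsto\Pi_0x$ we get
\begin{equation*}
\int_\Gamma T_\sigma f(x)^2\,d\sigma(x)\approx\int_{L_0}\int_0^\infty\bigl|\psi_r\ast(g\,\Hn{L_0})(z)\bigr|^2\,\frac{dr}{r}\,d\Hn{L_0}(z).
\end{equation*}
Since $\psi_r$ restricted to $L_0$ has vanishing integral over $L_0\cong\R^n$, the right-hand side is $\lesssim\lVert g\rVert_{L^2(\Hn{L_0})}^2$ by the classical Littlewood--Paley inequality on $\R^n$ (one direction of the estimate underlying \eqref{eq:classical harmonic analysis result}), so $\lVert T_\sigma f\rVert_{L^2(\sigma)}\lesssim\lVert f\rVert_{L^2(\sigma)}$.

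\emph{Step 2: kernel estimates and conclusion.} Put $K_r(x,y)=\widetilde{\psi}_r(x-y)$ and view $r\mapsto K_r(x,y)$ as an element of $H=L^2((0,\infty),\tfrac{dr}{r})$, so that $T\lambda(x)=\bigl\lVert(K_r\ast\lambda)(x)\bigr\rVert_H$. From $\supp\widetilde{\psi}_r\subset B(0,5r)$ and $|\widetilde{\psi}_r|\lesssim r^{-n}$ one gets $\int_0^\infty|\widetilde{\psi}_r(x-y)|^2\,\tfrac{dr}{r}\lesssim\int_{|x-y|/5}^\infty r^{-2n}\,\tfrac{dr}{r}\lesssim|x-y|^{-2n}$, i.e. the size bound $\lesssim|x-y|^{-n}\approx\sigma(B(x,|x-y|))^{-1}$; and $|\nabla\widetilde{\psi}_r|\lesssim r^{-n-1}\one_{B(0,5r)}$ yields the H\"ormander-type estimate $\bigl(\int_0^\infty|\widetilde{\psi}_r(x-y)-\widetilde{\psi}_r(x'-y)|^2\,\tfrac{dr}{r}\bigr)^{1/2}\lesssim|x-x'|\,|x-y|^{-n-1}$ whenever $|x-x'|\le\tfrac12|x-y|$, and likewise in $y$. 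Thus $T$ is an $H$-valued CZ operator with respect to $\sigma$. Feeding Step 1 and these kernel bounds into the Calder\'on--Zygmund decomposition adapted to $\sigma$ (doubling on $\Gamma=\supp\sigma$) gives $T_\sigma\colon L^1(\sigma)\to L^{1,\infty}(\sigma)$, hence $T_\sigma$ is bounded on $L^p(\sigma)$ for $1<p<2$ by interpolation and for $2<p<\infty$ by duality; applying the same decomposition to an arbitrary $\lambda\in M(\R^d)$ at height $t$ — splitting off a part comparable to $t\sigma$, handled by Step 1, from a cancellative error, handled by the H\"ormander estimate — gives $\sigma(\{x\in\Gamma\colon T\lambda(x)>t\})\lesssim\lVert\lambda\rVert/t$, that is $T\colon M(\R^d)\to L^{1,\infty}(\sigma)$.

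The only genuinely substantial point is the $L^2(\sigma)$-bound of Step 1; the kernel estimates are a one-line computation, and deducing $L^p$-boundedness and the weak $(1,1)$ inequality from an $L^2$-bound together with standard kernel conditions is classical. The mild care needed is in running the CZ machinery with an input that is a general finite measure on $\R^d$ while the output is measured against $\sigma$; this is routine precisely because $\sigma$ is doubling on its support and only smoothness of the kernel in the first variable is used.
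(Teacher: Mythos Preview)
The paper does not prove this lemma; it is quoted from \cite[Lemma 8.2]{azzam2018characterization}. Your sketch follows the standard route---recognize $T$ as a Hilbert-space-valued Calder\'on--Zygmund operator, establish $L^2(\sigma)$-boundedness by transferring to $L_0$ via $\Pi_0$ and invoking the classical Littlewood--Paley inequality, verify the size and H\"ormander estimates for the kernel $(\widetilde{\psi}_r)_r$, and then appeal to general CZ theory---and this is correct and is essentially the argument in the cited reference.

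One small point worth sharpening: in the passage to $T:M(\R^d)\to L^{1,\infty}(\sigma)$ the input measure $\lambda$ need not be supported on $\Gamma$, so the Calder\'on--Zygmund decomposition cannot be carried out purely on the homogeneous space $(\Gamma,\sigma)$ as your last paragraph suggests. The clean way is to invoke the non-homogeneous CZ theory for measures with polynomial growth of degree $n$ (see \cite[Chapter 2]{tolsa2014analytic}), which yields exactly the bound $M(\R^d)\to L^{1,\infty}(\sigma)$ from the $L^2(\sigma)$-boundedness of $T_\sigma$ together with the kernel estimates you verified; since $\sigma$ is $n$-AD-regular it certainly has the required growth, and the vector-valued version goes through verbatim.
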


\begin{lemma}\label{lem:estimate for A3}
	We have
	\begin{equation*}
	\int_{\Gamma\cap 2.4B_0}\int_{\eta^2d(x)}^{\eta r_0}\Lambda_{\nu}(x,r)^2\ \frac{dr}{r} d\sigma(x)\lesssim_{A,\tau} \varepsilon_0^{1/8}\ell(R_0)^n.
	\end{equation*}
\end{lemma}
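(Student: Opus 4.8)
The plan is to estimate the contribution of medium-scale radii $\eta^2 d(x)\le r\le \eta r_0$ by comparing $\nu$ with $\mu$ and then using the smallness of the $\alpha_\mu$ square function on the tree. The starting observation is that for $x\in\Gamma\cap 2.4B_0$ and $r$ in this range we have $r\gtrsim d(x)$, so $B(x,5r)$ is contained in $3B_Q$ for some $Q\in\Tree$ with $\ell(Q)\approx_{A,\tau} r$. By the triangle inequality for $\Lambda$ (it is an absolute value of a convolution against the fixed kernel $\widetilde\psi_r$, which is Lipschitz with $\|\nabla\widetilde\psi_r\|_\infty\lesssim r^{-n-1}$ and supported in $B(x,5r)$), we split
\begin{equation*}
\Lambda_\nu(x,r)\le \Lambda_\mu(x,r) + \Lambda_{\nu-\mu}(x,r) \lesssim r^{-(n+1)}F_{B(x,5r)}(\nu,\mu) + \Lambda_\mu(x,r).
\end{equation*}
For the second term I would use \eqref{eq:lambdamu bdd by alpha}, which bounds $\Lambda_\mu(x,r)$ by $\alpha_\mu(3B_Q)$ for the corresponding tree cube $Q$. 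For the first term I would use the fact that $\nu$ approximates $\mu$ well: by \eqref{eq:nu close to mu}, $F_{B(x,5r)}(\nu,\mu)\lesssim_{A,\tau}\varepsilon_0^{1/4}r^{n+1}+\sqrt{\varepsilon_0}\sum_{3B_k\cap B(x,5r)\neq\varnothing}r_k^{n+1}$, and on the set where $r\gtrsim d(x)$ and $x\in 2.4B_0$ all the relevant $r_k$ satisfy $r_k\lesssim_{A,\tau} r$ with the $\Pi_0(3B_k)$ of bounded overlap (as in the $\HD$ argument), so $\sum r_k^{n}\lesssim_{A,\tau} r^n$ and hence $r^{-(n+1)}F_{B(x,5r)}(\nu,\mu)\lesssim_{A,\tau}\varepsilon_0^{1/4}$.

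Thus pointwise, for $x\in\Gamma\cap 2.4B_0$ and $\eta^2 d(x)\le r\le \eta r_0$,
\begin{equation*}
\Lambda_\nu(x,r)^2\lesssim_{A,\tau}\varepsilon_0^{1/2} + \sum_{\substack{Q\in\Tree\\ x\in 3B_Q,\ \ell(Q)\approx r}}\alpha_\mu(3B_Q)^2.
\end{equation*}
Now I would integrate $\tfrac{dr}{r}$ over the dyadic range and then integrate $d\sigma(x)$ over $\Gamma\cap 2.4B_0$. The constant term contributes $\varepsilon_0^{1/2}\log(\eta r_0/(\eta^2 d(x)))$ per point; here one needs to be a little careful since the logarithm blows up as $d(x)\to 0$, but integrating over $\sigma$ and using that $\sigma(\{x: d(x)\le t\})$ is controlled (e.g. via the Lipschitz-graph structure and $\restr\mu{R_G}\ll\sigma$, or simply by noting the $r$-integral starts at $\eta^2 d(x)$ and $d(x)$ has a positive lower bound away from $R_G$ while $R_G\subset\Gamma$ has $\sigma$-finite behaviour) one gets a bound $\lesssim_{A,\tau}\varepsilon_0^{1/2}\ell(R_0)^n\log(1/\varepsilon_0)$ or so — in any case dominated by $\varepsilon_0^{1/8}\ell(R_0)^n$ for $\varepsilon_0$ small. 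For the $\alpha_\mu$ term, I would swap the order of summation and integration: after integrating in $r$ each fixed $Q$ contributes $\alpha_\mu(3B_Q)^2\sigma(3B_Q\cap\Gamma)\lesssim_{A,\tau}\alpha_\mu(3B_Q)^2\ell(Q)^n$ (using $n$-AD-regularity of $\sigma$ on $\Gamma$, or of $\nu$, plus the comparability $\ell(Q)\approx r$), and then \eqref{eq:alpha sum estimate} from \lemref{lem:beta and alpha estimates} gives $\sum_{Q\in\Tree_0(R_0)}\alpha_\mu(3B_Q)^2\ell(Q)^n\lesssim_{A,\tau}\varepsilon_0^2\ell(R_0)^n$. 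Adding the two contributions yields the claimed bound $\lesssim_{A,\tau}\varepsilon_0^{1/8}\ell(R_0)^n$.

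The main obstacle I anticipate is the bookkeeping around the lower cutoff $\eta^2 d(x)$: one must ensure that the logarithmic factor from the trivial $\varepsilon_0^{1/2}$ term does not spoil the estimate, and that the cubes $Q$ one picks for each scale $r$ are genuinely in $\Tree$ (not just $\Tree_0$) and have $3B_Q\subset 3B_0$ so that \eqref{eq:notHD}, \eqref{eq:notF}, and the $\alpha$-sum estimates all apply; near the boundary of $2.4B_0$ one may need to replace $2.4B_0$ by a slightly larger ball inside $2.5B_0$ or invoke \lemref{lem:x arbitrary close to Gamma} to stay in the good region. A secondary technical point is justifying the claim $r_k\lesssim_{A,\tau} r$ and bounded overlap of $\Pi_0(3B_k)$ for the balls meeting $B(x,5r)$ — this repeats the argument already used in the proof of \eqref{eq:small measure of HD}, relying on \lemref{lem:properties of Bk} (b), (c) and on $d$ being $1$-Lipschitz, so it should go through without difficulty. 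Once these localizations are in place the estimate is a direct combination of \eqref{eq:nu close to mu}, \eqref{eq:lambdamu bdd by alpha}, $n$-AD-regularity, and \eqref{eq:alpha sum estimate}.
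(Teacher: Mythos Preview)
There is a genuine gap, and it is precisely the ``main obstacle'' you flag but do not resolve. Your pointwise bound $r^{-(n+1)}F_{B(x,5r)}(\nu,\mu)\lesssim_{A,\tau}\varepsilon_0^{1/4}$ (coming from the first term in \eqref{eq:nu close to mu}) is a \emph{constant} in $r$; after squaring and integrating against $\tfrac{dr}{r}$ from $\eta^2 d(x)$ to $\eta r_0$ you obtain $\varepsilon_0^{1/2}\log\bigl(\eta^{-1}r_0/d(x)\bigr)$. There is no $\varepsilon_0$ in that logarithm, so your ``$\log(1/\varepsilon_0)$'' is unjustified. Worse, on $R_G$ one has $d(x)=0$ and the inner integral is $+\infty$; since $\sigma(R_G)\approx_{A,\tau}\mu(R_G)$ by \lemref{lem:mu on RG absolutely continuous wrto Hn} (and in the regime of interest $\mu(R_G)\approx\mu(R_0)\approx\ell(R_0)^n$), the $\sigma$-integral of this quantity is $+\infty$. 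Even away from $R_G$, the resulting sum $\sum_{k}r_k^n\log(r_0/r_k)$ has no reason to be bounded by $\ell(R_0)^n$. The $\varepsilon_0^{1/4}r^{n+1}$ term in \eqref{eq:nu close to mu} comes from $\mu(B(x,r)\cap\RFar)\lesssim\varepsilon_0^{1/4}r^n$ via \eqref{eq:notF}, and this density bound genuinely does not improve as $r\to 0$. Your bound on the second term $\sqrt{\varepsilon_0}\sum r_k^{n+1}/r^{n+1}\lesssim\sqrt{\varepsilon_0}$ has the same defect.

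The paper circumvents this by a finer splitting $\nu=(\nu_B-h\mu)+(h\mu-\mu_B)+\mu$. The first piece carries only the $\sqrt{\varepsilon_0}\sum r_k^{n+1}$ contribution from \eqref{eq:nuB and hmu close} (no $\varepsilon_0^{1/4}$ term); after Cauchy--Schwarz this becomes $\varepsilon_0\sum r_k^{n+2}/r^{n+2}$, and swapping sum and integral each $k$ contributes only over $r\gtrsim r_k$, so the total is $\lesssim\varepsilon_0\sum_k r_k^n\lesssim\varepsilon_0\ell(R_0)^n$. The third piece is your $\alpha_\mu$ argument. The middle piece $h\mu-\mu_B=(h-1)\mu_B$ is, inside $2.5B_0$, supported on $\RFar$ with total mass $\lesssim\sqrt{\varepsilon_0}\ell(R_0)^n$; the paper does \emph{not} bound it pointwise but uses the weak $(1,1)$ inequality for the square-function operator $T\lambda=\bigl(\int_0^\infty\Lambda_\lambda(\cdot,r)^2\tfrac{dr}{r}\bigr)^{1/2}$ from \lemref{lem:boundedness of T} to confine large values of $T\bigl((h-1)\mu_B\bigr)$ to an exceptional set $H$ with $\sigma(H)\lesssim\varepsilon_0^{3/8}\ell(R_0)^n$, and on $H$ controls $\int_H\Lambda_\nu^2$ via the $L^4(\sigma)$-boundedness of $T_\sigma$ applied to the bounded density $q=d(\nu|_{2.5B_0})/d\sigma$. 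This Calder\'on--Zygmund step is the missing idea; without it the $\RFar$ contribution cannot be summed over scales.
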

\begin{proof}
	Since $\nu= (\nu_B -h\mu) + (h\mu -\mu_B) + \mu$, for each $x\in \Gamma\cap 2.4B_0$ we split
	\begin{multline}\label{eq:split int of lambda nu}
	\int_{\eta^2d(x)}^{\eta r_0}\Lambda_{\nu}(x,r)^2\ \frac{dr}{r}\\
	\lesssim \int_{\eta^2d(x)}^{\eta r_0}(\Lambda_{\nu_B}(x,r)-\Lambda_{h\mu}(x,r))^2\ \frac{dr}{r} + \int_{\eta^2d(x)}^{\eta r_0}\Lambda_{\mu_B-h\mu}(x,r)^2\ \frac{dr}{r} + \int_{\eta^2d(x)}^{\eta r_0}\Lambda_{\mu}(x,r)^2\ \frac{dr}{r}.
	\end{multline}
	Let
	\begin{equation*}
	H = \bigg\{ x\in \Gamma\cap 2.4B_0\ :\ \int_{\eta^2d(x)}^{\eta r_0}\Lambda_{\mu_B-h\mu}(x,r)^2\ \frac{dr}{r} > \varepsilon_0^{1/4} \bigg\}.
	\end{equation*}
	We divide our area of integration into two parts:
	\begin{multline}\label{eq:divide area of int to I1 I2}
	\int_{\Gamma\cap 2.4B_0}\int_{\eta^2d(x)}^{\eta r_0}\Lambda_{\nu}(x,r)^2\ \frac{dr}{r} d\sigma(x) =\\ \int_{H}\int_{\eta^2d(x)}^{\eta r_0}\Lambda_{\nu}(x,r)^2\ \frac{dr}{r} d\sigma(x) + \int_{\Gamma\cap 2.4B_0\setminus H}\int_{\eta^2d(x)}^{\eta r_0}\Lambda_{\nu}(x,r)^2\ \frac{dr}{r} d\sigma(x) \eqqcolon I_1 + I_2.
	\end{multline}
	In order to estimate $I_1$, note that for $x\in 2.4B_0$ and $r<\eta r_0$ we have $B(x,5r)\subset 2.5B_0$. Since $\supp\widetilde{\psi}_r\subset 5B(0,r)$ we get $\Lambda_{\mu_B-h\mu}(x,r) = \Lambda_{\restr{(\mu_B-h\mu)}{2.5B_0}}(x,r)$. Hence, by \lemref{lem:boundedness of T} applied to $\lambda= \restr{(\mu_B-h\mu)}{2.5B_0}$
	\begin{equation*}
	\sigma(H)\le \sigma\big(\big\{x\in\Gamma\ :\ T\big(\restr{(\mu_B-h\mu)}{2.5B_0}\big)>\varepsilon_0^{1/8} \big\} \big)\lesssim \varepsilon_0^{-1/8}(\mu_B-h\mu)(2.5B_0).
	\end{equation*}
	Since $h=1$ on $3B_0\setminus(\RFar\cup R_G)$ by \lemref{lem:h=1 outside RFar}, $\mu_B(R_G)=(h\mu)(R_G)=0$ by their definition and \eqref{eq:Bk cover Gamma minus RG}, and $\mu(\RFar)$ is small by \lemref{lem:small measure of RFar}, we have 
	\begin{equation*}
	(\mu_B-h\mu)(2.5B_0)\le\mu_B(\RFar)=\mu(\RFar)\lesssim_{A,\tau}\varepsilon_0^{1/2}\ell(R_0)^n.
	\end{equation*}
	Thus, for $\varepsilon_0$ small enough
	\begin{equation*}
	\sigma(H)\le C(A,\tau)\varepsilon_0^{-1/8}\varepsilon_0^{1/2}\ell(R_0)^n\le \varepsilon_0^{1/4} \ell(R_0)^n.
	\end{equation*}
	Now, consider the density $q=\frac{d\restr{\nu}{2.5B_0}}{d\sigma}$. Arguing as before we see that for $x\in 2.4B_0$ and $r<\eta r_0$ we have $\Lambda_{\nu}(x,r)=\Lambda_{q\sigma}(x,r)$. By $n$-AD-regularity of $\nu$ (\lemref{lem:nu is AD regular}) we get $\lVert q\rVert_{L^4(\sigma)}^4\lesssim_{A,\tau}\sigma(2.5B_0)\approx \ell(R_0)^n$. Using the $L^4(\sigma)$ boundedness of $T_{\sigma}$ yields
	\begin{equation}\label{eq:estimate I1}
	I_1\le \int_H |T_{\sigma}q(x)|^2\ d\sigma(x)\le \sigma(H)^{1/2}\lVert T_{\sigma}(q)\rVert_{L^4(\sigma)}^2\lesssim_{A,\tau} \varepsilon_0^{1/8} \ell(R_0)^n.
	\end{equation}
	
	We move on to estimating $I_2$. Observe that by the definition of $H$ we have
	\begin{equation*}
	\int_{\Gamma\cap 2.4B_0\setminus H}\int_{\eta^2d(x)}^{\eta r_0}\Lambda_{\mu_B-h\mu}(x,r)^2\ \frac{dr}{r} d\sigma(x)\lesssim \varepsilon_0^{1/4} \ell(R_0)^n.		
	\end{equation*}
	Thus, by \eqref{eq:split int of lambda nu},
	\begin{multline}\label{eq:divide I2}
	I_2 = \int_{\Gamma\cap 2.4B_0\setminus H}\int_{\eta^2d(x)}^{\eta r_0}\Lambda_{\nu}(x,r)^2\ \frac{dr}{r} d\sigma(x)\\
	\lesssim \int_{\Gamma\cap 2.4B_0}\int_{\eta^2d(x)}^{\eta r_0}(\Lambda_{\nu_B}(x,r)-\Lambda_{h\mu}(x,r))^2\ \frac{dr}{r} d\sigma(x)	+\varepsilon_0^{1/4}\ell(R_0)^n\\
	+ \int_{\Gamma\cap 2.4B_0}\int_{\eta^2d(x)}^{\eta r_0}\Lambda_{\mu}(x,r)^2\ \frac{dr}{r} d\sigma(x) =: I_{21} + \varepsilon_0^{1/4}\ell(R_0)^n+ I_{22}.
	\end{multline}
	
	To handle $I_{22}$, we use \eqref{eq:lambdamu bdd by alpha} to get for $x\in \Gamma\cap 2.4B_0$.
	\begin{equation*}
	\int_{\eta^2d(x)}^{\eta r_0}\Lambda_{\mu}(x,r)^2\ \frac{dr}{r} \lesssim_{\eta} \sum_{\substack{Q\in\Tree\\ x\in 3B_Q}}\alpha_{\mu}(3B_Q)^2\overset{\eqref{eq:sum of alphas containing x estimate}}{\lesssim_{A,\tau}}\varepsilon_0^2.
	\end{equation*}
	Hence, 
	\begin{equation}\label{eq:estimate I22}
	I_{22}= \int_{\Gamma\cap 2.4B_0}\int_{\eta^2d(x)}^{\eta r_0}\Lambda_{\mu}(x,r)^2\ \frac{dr}{r} d\sigma(x)\lesssim \varepsilon_0^2 \ell(R_0)^n.
	\end{equation}
	
	Finally, we deal with the integral $I_{21}$. Observe that, since $\Lambda_{\nu_B}(x,r)-\Lambda_{h\mu}(x,r)= \widetilde{\psi}_r\ast \nu_B(x) - \widetilde{\psi}_r\ast h\mu(x)$, and $|\nabla\widetilde{\psi}_r|\lesssim r^{-n-1}$, we have
	\begin{equation*}
	|\Lambda_{\nu_B}(x,r)-\Lambda_{h\mu}(x,r)|^2\lesssim \left(r^{-n-1}F_{B(x,5r)}(\nu_B, h\mu) \right)^2 \overset{\eqref{eq:nuB and hmu close}}{\lesssim_{A,\tau}}{\varepsilon_0}\Bigg(\sum_{3B_k\cap B(x,5r)\not=\varnothing} \frac{r_k^{n+1}}{r^{n+1}}\Bigg)^2.
	\end{equation*}
	Note that for $k\in K$ such that $3B_k\cap B(x,5r)\not=\varnothing, $ for $\eta^2d(x)<r<\eta r_0$, and for any $y\in 3B_k\cap B(x,5r)$, we have
	\begin{equation}\label{eq:rk smaller than r}
	r_k\overset{\eqref{eq:rk comparable to d(x)}}{\le}d(y)\le d(x) + 5r \le (\eta^{-2} + 5)r.
	\end{equation}
	Thus, $r_k\le \eta^{-3}r$, and for some big $C'=C'(A,\tau)$ we have $B_k\subset C'B_0$. It follows by the Cauchy-Schwarz inequality, the fact that $B_k$ are centered on $\Gamma$, and that they are of bounded intersection, that 
	\begin{align}\label{eq:schwarz applied to rk/r}
	\Bigg(\sum_{3B_k\cap B(x,5r)\not=\varnothing} \frac{r_k^{n+1}}{r^{n+1}}\Bigg)^2\le& \Bigg(\sum_{3B_k\cap B(x,5r)\not=\varnothing} \frac{r_k^{n+2}}{r^{n+2}}\Bigg)\Bigg(\sum_{3B_k\cap B(x,5r)\not=\varnothing} \frac{r_k^{n}}{r^{n}}\Bigg)\\
	&\lesssim \sum_{3B_k\cap B(x,5r)\not=\varnothing} \frac{r_k^{n+2}}{r^{n+2}}.
	\end{align}
	Together with the fact that $r_k\le \eta^{-3} r$ this implies
	\begin{align*}
	I_{21} &= \int_{\Gamma\cap 2.4B_0}\int_{\eta^2d(x)}^{\eta r_0}(\Lambda_{\nu_B}(x,r)-\Lambda_{h\mu}(x,r))^2\ \frac{dr}{r} d\sigma(x)\\
	&\lesssim \varepsilon_0 \int_{\Gamma\cap 2.4B_0}\int_{\eta^2d(x)}^{\eta r_0} \sum_{3B_k\cap B(x,5r)\not=\varnothing} r_k^{n+2} \frac{dr}{r^{n+3}}d\sigma(x)\\
	&= \varepsilon_0 \sum_{k\in K} r_k^{n+2}  \int_{\Gamma\cap 2.4B_0}\int_{\eta^2d(x)}^{\eta r_0}  \one_{B(x,5r)\cap 3B_k\not=\varnothing}(x) \frac{dr}{r^{n+3}}d\sigma(x)\\
	&\le \varepsilon_0\sum_{B_k\subset C'B_0}r_k^{n+2} \int_{\Gamma\cap 2.4B_0}\int_{\eta^3 r_k}^{\eta r_0}  \one_{B(x,5r)\cap 3B_k\not=\varnothing}(x) \frac{dr}{r^{n+3}}d\sigma(x).
	\end{align*}
	Now, note that if $B(x,5r)\cap 3B_k\not=\varnothing$, then
	\begin{equation*}
	x\in B(z_k, 5r+3r_k)\overset{\eqref{eq:rk smaller than r}}{\subset}B(z_k,\eta^{-3}r).
	\end{equation*}
	Hence,
	\begin{multline*}
	I_{21}\lesssim\varepsilon_0 \sum_{B_k\subset C'B_0}r_k^{n+2}\int_{\eta^3 r_k}^{\eta r_0}\int_{B(z_k,\eta^{-3}r)}\ d\sigma(x)  \frac{dr}{r^{n+3}}\\
	\lesssim \varepsilon_0 \sum_{B_k\subset C'B_0}r_k^{n+2}\int_{\eta^3 r_k}^{\eta r_0} \frac{dr}{r^{3}}\\
	\lesssim \varepsilon_0 \sum_{B_k\subset C'B_0}r_k^{n}\lesssim \varepsilon_0\sigma(C'B_0)\lesssim \varepsilon_0\ell(R_0)^n.
	\end{multline*}
	Together with \eqref{eq:divide area of int to I1 I2}, \eqref{eq:estimate I1}, \eqref{eq:divide I2}, and \eqref{eq:estimate I22}, this concludes the proof.
\end{proof}

We are finally ready to complete the proof of \eqref{eq:lambda estimate to prove}. Let us split the area of integration into four subsets:
\begin{align*}
A_1 &= \{(x,r)\ :\ B(x,2r)\cap 2.3B_0 = \varnothing\},\\
A_2 &= \{(x,r)\ :\ B(x,2r)\cap 2.3B_0 \not=\varnothing,\ r>\eta r_0  \},\\
A_3 &= \{(x,r)\ :\ B(x,2r)\cap 2.3B_0 \not=\varnothing,\ \eta^2d(x)<r\le \eta r_0\},\\
A_4 &= \{(x,r)\ :\ B(x,2r)\cap 2.3B_0 \not=\varnothing,\ 0< r\le \min(\eta^2d(x),\, \eta r_0)\},
\end{align*}
we also set
\begin{equation*}
I_i =\iint_{A_i} \Lambda_{\nu}(x,r)^2\ \frac{dr}{r}d\sigma(x).
\end{equation*}

Since $\restr{\nu}{(2.3B_0)^c} = c_0\Hn{L_0\cap (2.3B_0)^c}$ by \eqref{eq:nu flat outside 1.5B0}, for $(x,r)\in A_1$ we have
\begin{equation*}
\Lambda_{\nu}(x,r)=c_0\Lambda_{\Hn{L_0}}(x,r)=0,
\end{equation*}
and so $I_1=0$.

Now let $(x,r)\in A_2$. Since $B(x,2r)\cap 2.3B_0\not=\varnothing,\ r>\eta r_0$, we have
\begin{equation*}
|x-z_0|\le 2r + 2.3r_0<\eta^{-2}r,
\end{equation*}
so that $r\ge \max(\eta r_0,\, \eta^2|x-z_0|)$. It follows that 
\begin{align*}
I_2&\le \int_{\Gamma}\int_{\max(\eta r_0,\, \eta^2|x-z_0|)}^{\infty}	\Lambda_{\nu}(x,r)^2\ \frac{dr}{r}d\sigma(x)\\
&\overset{\eqref{eq:lambdanu bdd by alpha}}{\lesssim_{A,\tau}} \int_{\Gamma}\int_{\max(\eta r_0,\, \eta^2|x-z_0|)}^{\infty}	\alpha_{\nu}(x,2r)^2\ \frac{dr}{r}d\sigma(x)\\
&\overset{\eqref{eq:nu close to flat on big balls}}{\lesssim_{A,\tau}} \varepsilon_0^{1/2}\ell(R_0)^{2n}\int_{\Gamma}\int_{\max(\eta r_0,\, \eta^2|x-z_0|)}^{\infty}	\ \frac{dr}{r^{2n+1}}d\sigma(x)\\
&\approx \varepsilon_0^{1/2}\ell(R_0)^{2n}\int_{\Gamma}	\frac{1}{\max(r_0,\, \eta|x-z_0|)^{2n}}\ d\sigma(x)\\
& \approx \varepsilon_0^{1/2}\ell(R_0)^{2n}\left(\int_{\Gamma\cap 1.9B_0}	\frac{1}{r_0^{2n}}\ d\sigma(x) + \int_{\Gamma\setminus 1.9B_0}	\frac{1}{|x-z_0|^{2n}}\ d\sigma(x) \right) \approx \varepsilon_0^{1/2}\ell(R_0)^n,
\end{align*}
where we used in the last line that $\Gamma\setminus 1.9B_0 = L_0\setminus 1.9B_0$, see \lemref{lem:properties of F}.

Concerning $(x,r)\in A_3$, note that necessarily $x\in 2.4B_0$, and so by \lemref{lem:estimate for A3}
\begin{equation*}
I_3\le \int_{\Gamma\cap 2.4B_0}\int_{\eta^2d(x)}^{\eta r_0}\Lambda_{\nu}(x,r)^2\ \frac{dr}{r} d\sigma(x)\lesssim_{A,\tau} \varepsilon_0^{1/8}\ell(R_0)^n.
\end{equation*}

Finally, for $(x,r)\in A_4$, we only need to consider $x$ such that $d(x)>0$ and $x\in 2.4B_0\cap\Gamma$, and since all such $x$ are contained in some $B_k$ we get
\begin{multline*}
I_4\le \int_{\Gamma\cap 2.4B_0}\int_0^{\eta^2d(x)}\Lambda_{\nu}(x,r)^2\ \frac{dr}{r} d\sigma(x) \le \sum_{B_k\cap 2.4B_0\not=\varnothing} \int_{B_k}\int_0^{\eta^2d(x)}\Lambda_{\nu}(x,r)^2\ \frac{dr}{r} d\sigma(x)\\
\overset{\text{Lemma}\ \ref{lem:estimate for A4}}{\lesssim_{A,\tau}} \sum_{B_k\cap 2.4B_0\not=\varnothing} \varepsilon_0r_k^n\approx \sum_{B_k\cap 2.4B_0\not=\varnothing} \varepsilon_0 \sigma(B_k)\overset{\text{Lemma}\ \ref{lem:properties of Bk}}{\lesssim} \varepsilon_0\sigma(CB_0)\approx \varepsilon_0\ell(R_0)^n.
\end{multline*}
Putting together all the estimates above finishes the proof of \eqref{eq:lambda estimate to prove} and \lemref{lem:L2 norm estimate of nu density}.

\section{Small measure of cubes from \texorpdfstring{$\BA$}{BA}}\label{sec:BS}
We know by \lemref{lem:small measure of RFar} that $\mu(\RFar)\lesssim_{A,\tau} \sqrt{\varepsilon_0}\mu(R_0).$ Thus, in order to estimate the measure of $\bigcup_{Q\in\BA}Q$, it suffices to bound the measure of
\begin{equation*}
R_{\BA} = \bigcup_{Q\in\BA}Q\setminus \RFar.
\end{equation*}
\begin{lemma}\label{lem:estimating with grad F}
	We have
	\begin{equation*}
	\mu(R_{\BA})\lesssim_{A}\theta^{-2}\lVert\nabla F\rVert_{L^2}^2.
	\end{equation*}
\end{lemma}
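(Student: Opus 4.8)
The plan is to \emph{charge} to every big-angle cube $Q\in\BA$ a fixed proportion of the Dirichlet-type energy $\int|\nabla F|^2$ over a ball in $L_0$ near $\Pi_0(Q)$, and then to add up these local contributions. As a first step, since the cubes of $\BA$ are pairwise disjoint (they are maximal stopping cubes) and each lies in $\Tree_0$ (being maximal in $\Stop_0\cup\BA_0\cup\F_0$ and not in $\Stop_0$, no $Q\in\BA$ can be contained in a cube of $\Stop_0$), estimate \eqref{eq:notHD} gives
\[
\mu(R_{\BA})\le\sum_{Q\in\BA}\mu(Q)\lesssim_A\sum_{Q\in\BA}\ell(Q)^n,
\]
so it suffices to bound $\sum_{Q\in\BA}\ell(Q)^n$ by $\theta^{-2}\lVert\nabla F\rVert_{L^2}^2$.

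The core step is to produce, for each $Q\in\BA$, a ball $B^\ast_Q\subset L_0$ of radius $\approx\ell(Q)$, lying within $\lesssim\ell(Q)$ of $\Pi_0(Q)$, with $\int_{B^\ast_Q}|\nabla F|^2\,d\mathcal H^n\gtrsim\theta^2\ell(Q)^n$. To get it, I would pass to the parent $R\in\Tree$ of $Q$ (it lies in $\Tree$ since $Q$ is a maximal stopping cube), so $\ell(R)\approx\ell(Q)$. By \lemref{lem:planes close to each other for similar cubes} applied to $Q\subset R$ (both in $\Tree_0$) we have $\measuredangle(L_Q,L_R)\lesssim_{A,\tau}\varepsilon_0$, which together with $\measuredangle(L_Q,L_0)>\theta$ forces $\measuredangle(L_R,L_0)>\theta/2$ once $\varepsilon_0$ is small. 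Since $R\in\Tree$, Corollary~\ref{cor:doubling cubes close to Gamma} gives $\dist(R,\Gamma)\lesssim\ell(R)$, and \lemref{lem:x from Gamma close to LQ} shows that $\Gamma$ stays within $\lesssim_{A,\tau}\sqrt{\varepsilon_0}\,\ell(R)$ of the plane $L_R$ on $CB_R$. Writing $L_R$ as the graph over $L_0$ of an affine map $G_R$ (legitimate because $\measuredangle(L_R,L_0)\le\theta\ll1$ by \eqref{eq:notBS}) and choosing $B^\ast_Q\subset L_0$ of radius $\approx\ell(R)$ with $\Pi_0^{-1}(B^\ast_Q)\cap\Gamma\subset CB_R$, this closeness becomes $\lVert F-G_R\rVert_{L^\infty(B^\ast_Q)}\lesssim_{A,\tau}\sqrt{\varepsilon_0}\,\ell(Q)$. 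Now $|\nabla G_R|\approx\measuredangle(L_R,L_0)\gtrsim\theta$, so the affine function $G_R$ oscillates by $\gtrsim\theta\,\ell(Q)$ across $B^\ast_Q$; as $\sqrt{\varepsilon_0}\ll\theta$, the same holds for $F$, and the Poincaré inequality yields $\int_{B^\ast_Q}|\nabla F|^2\gtrsim\theta^2\ell(Q)^n$.

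To finish, I would sum over $Q\in\BA$: the cubes $\{Q\}_{Q\in\BA}$ are disjoint, each David--Mattila cube has boundedly many children, and each $B^\ast_Q$ is a ball of radius $\approx\ell(Q)$ anchored within $\lesssim\ell(Q)$ of $\Pi_0(Q)$, so by a Whitney-type count the family $\{B^\ast_Q\}_{Q\in\BA}$ has bounded overlap; hence $\sum_{Q\in\BA}\int_{B^\ast_Q}|\nabla F|^2\lesssim\int_{L_0}|\nabla F|^2=\lVert\nabla F\rVert_{L^2}^2$, and combining with the two displays above gives $\mu(R_{\BA})\lesssim_A\theta^{-2}\lVert\nabla F\rVert_{L^2}^2$.

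The hard part will be the bounded-overlap step: a priori, big-angle cubes at widely different scales stacked over a common point of $\Gamma$ could make the balls $B^\ast_Q$ overlap with unbounded multiplicity, and ruling this out — or replacing $B^\ast_Q$ by genuinely almost-disjoint ``shadows'' in $L_0$ — must exploit the stopping structure, in particular that the cubes of $\BA$ are disjoint with $\mu(Q)\approx_{A,\tau}\ell(Q)^n$ (so that $\sum_{Q\in\BA}\ell(Q)^n\approx_{A,\tau}\mu\big(\bigcup_{Q\in\BA}Q\big)<\infty$ and a Carleson-packing argument is available). A secondary technical point is that the closeness ``$\Gamma$ near $L_R$'' has to be invoked at the parent $R\in\Tree$ rather than at $Q$ itself, since $Q\in\BA$ need not enjoy the far-cube control \eqref{eq:notF}.
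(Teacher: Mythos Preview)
Your strategy is the right one and matches the paper's in spirit: near each big-angle cube the graph $\Gamma$ is $\sqrt{\varepsilon_0}$-close to a plane making angle $\gtrsim\theta$ with $L_0$, so $F$ oscillates by $\gtrsim\theta\ell(Q)$ over a ball of radius $\approx\ell(Q)$, and Poincar\'e converts this into Dirichlet energy $\gtrsim\theta^2\ell(Q)^n$. The issue is exactly the step you flag as hard --- bounded overlap of the balls $B^\ast_Q$ --- and the fix you propose does not work.

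First, the packing input you invoke is not available: for $Q\in\BA$ one only has $\mu(1.5B_Q)\ge\tau\ell(Q)^n$ from \eqref{eq:notLD}, not $\mu(Q)\gtrsim\ell(Q)^n$, and the balls $1.5B_Q$ for disjoint $Q$'s need not be disjoint. Second, even granting $\sum_{Q\in\BA}\ell(Q)^n\lesssim\mu(R_0)$, this does not give bounded overlap of the $B^\ast_Q$: nothing prevents many small $\BA$ cubes from sitting in $CB_{Q'}$ for one large $Q'\in\BA$ (in different branches of the David--Mattila tree), all projecting onto $B^\ast_{Q'}$. The ``Whitney-type count'' you allude to would require that overlapping $B^\ast_Q$, $B^\ast_{Q'}$ force $\ell(Q)\approx\ell(Q')$, and there is no such mechanism here.

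The paper sidesteps this entirely, and the key is precisely the information your first inequality $\mu(R_{\BA})\le\sum_{Q\in\BA}\mu(Q)$ throws away: that $R_{\BA}$ excludes $\RFar$. Instead of summing over cubes, the paper runs a $5r$-covering of $R_{\BA}$ by balls $B_{x_i}=B(x_i,r(Q_{x_i})/100)$ centered at points $x_i\in R_{\BA}$. Since $x_i\notin\RFar$, \lemref{lem:x outside RFar close to Gamma} gives $\dist(x_i,\Gamma)\lesssim_{A,\tau}\sqrt{\varepsilon_0}\,r(B_{x_i})$, so the disjoint balls $\tfrac12 B_{x_i}$ are essentially centered on the Lipschitz graph $\Gamma$ and hence have \emph{disjoint} projections to $L_0$. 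The Poincar\'e step is then run on these disjoint projected balls, and the summation is trivial. In short: replace ``sum over $Q\in\BA$'' by ``cover $R_{\BA}$ by balls at points outside $\RFar$''; the $\sqrt{\varepsilon_0}$-closeness to $\Gamma$ (rather than mere $\ell(Q)$-closeness) is what buys you disjoint shadows.
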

\begin{proof}
	For every $x\in R_{\BA}$ we define $B_x = B(x, r(Q_x)/100)$, where $Q_x\in \BA$ is such that $x\in Q_x$. We use the $5r$-covering theorem to choose $\{x_i\}_{i\in J}$ such that all $B_{x_i}$ are pairwise disjoint and $\bigcup_i 5B_{x_i}$ covers $\bigcup_{x\in R_{\BA}} B_x$. Observe that
	\begin{equation}\label{eq:5Bxi zawiera sie w 5BQxi}
	5B_{x_i}\subset 3B_{Q_{x_i}}.
	\end{equation}
	
	Set $B_i = \frac{1}{2}B_{x_i}, Q_i = Q_{x_i}$, and let $P_i\in\Tree$ be the parent of $Q_i$. We have $\ell(P_i)\approx\ell(Q_i)\approx r(B_i).$ Since $x_i\not\in\RFar$, we can use \lemref{lem:x outside RFar close to Gamma} to obtain
	\begin{equation*}
	\dist(x_i,\Gamma)\lesssim_{A,\tau}\sqrt{\varepsilon_0}d(x_i)\lesssim\sqrt{\varepsilon_0}\ell(P_i)\approx\sqrt{\varepsilon_0} r(B_i).
	\end{equation*}
	Hence, for small $\varepsilon_0$, we get that $\frac{1}{4}B_i\cap\Gamma\not=\varnothing$. It follows that for each $i\in J$ we can choose balls $B_{i,1},\, B_{i,2}\subset B_i$ centered at $\Gamma$, with $r(B_{i,1})\approx r(B_{i,2})\approx r(B_i)$, and such that $\dist(B_{i,1},B_{i,2})\gtrsim r(B_i)$. Then, for any points $y_k\in B_{i,k}\cap\Gamma,\, k=1,2,$ we have
	\begin{equation}\label{eq:yk well separated}
	r(B_i)\lesssim |y_1-y_2|\lesssim |\Pi_0(y_1) - \Pi_0(y_2)|.
	\end{equation}
	Since $y_1,y_2\in \Gamma\cap B_i\subset \Gamma\cap B_{P_i},$ we have by \lemref{lem:x from Gamma close to LQ}
	\begin{equation*}
	\dist(y_k, L_{P_i})\lesssim_{A,\tau}\sqrt{\varepsilon_0}\ell(P_i),\qquad k=1,2.
	\end{equation*}
	Let $w_k = \Pi_{L_{P_i}}(y_k)$. By the estimate above we have
	$
	|y_k-w_k|\lesssim_{A,\tau}\sqrt{\varepsilon_0}\ell(P_i).
	$
	Moreover, it is easy to see that $w_k\in B_{P_i}$. 
	
	Since $\measuredangle(L_{Q_i},L_0)>\theta$ and $Q_i\in \Tree_0$ by the definition of $\BA$ \eqref{eq:def of BS}, $\ell(Q_i)\approx\ell(P_i)$, and $\dist(Q_i,P_i)=0,$ we may use \lemref{lem:planes close to each other for similar cubes} with $Q_i, P_i$ to get
	\begin{equation*}
	\measuredangle(L_{P_i},L_0)\ge \measuredangle(L_{Q_i},L_0)-\measuredangle(L_{P_i},L_{Q_i})\ge \theta - C(A,\tau)\varepsilon_0 \gtrsim\theta.
	\end{equation*}
	Thus,
	\begin{multline*}
	|F(\Pi_0(y_1)) - F(\Pi_0(y_2))| = |\Pi_0^{\perp}(y_1) - \Pi_0^{\perp}(y_2)|\\
	\ge |\Pi_0^{\perp}(w_1) - \Pi_0^{\perp}(w_2)| - \sum_{k=1}^2 |y_k-w_k| \gtrsim  \theta|\Pi_0(w_1) - \Pi_0(w_2)| - \sum_{k=1}^2 |y_k-w_k|\\
	\ge \theta|\Pi_0(y_1) - \Pi_0(y_2)| - 2\sum_{k=1}^2 |y_k-w_k|\\ \overset{\eqref{eq:yk well separated}}{\gtrsim}\theta r(B_i) - c(A,\tau)\sqrt{\varepsilon_0}r(B_i)\gtrsim\theta r(B_i),
	\end{multline*}
	for $\varepsilon_0$ small enough. 
	
	Now, denoting by $m_i$ the mean of $F$ over the ball $\Pi_0(B_i)$, we have
	\begin{equation*}
	|F(\Pi_0(y_1)) - F(\Pi_0(y_2))|\le |F(\Pi_0(y_1)) -  m_i|+ |F(\Pi_0(y_2)) - m_i|\le 2\max_{k=1,2} |F(\Pi_0(y_k)) -  m_i|.
	\end{equation*}
	Hence, the estimates above give us for some $k\in\{1,2\}$
	\begin{equation}\label{eq:F far from the mean}
	|F(\Pi_0(y_k)) -  m_i|\gtrsim \theta r(B_i).
	\end{equation}	
	Since the estimate above holds for all points $y_k\in B_{i,k}\cap\Gamma$, and $\Pi_0(B_{i,k}\cap\Gamma)\approx r(B_i)^n$, we can use Poincaré's inequality to get
	\begin{equation*}
	r(B_i)^2\int_{\Pi_0(B_i)}|\nabla F(\xi)|^2\ d\mathcal{H}^n(\xi) \gtrsim \int_{\Pi_0(B_i)}|F(\xi) - m_i|^2\ d\mathcal{H}^n(\xi) \gtrsim\theta^2 r(B_i)^{n+2}
	\end{equation*}
	for all $i\in J$. 
	
	We claim that the $n$-dimensional balls $\{\Pi_0(B_i)\}_{i\in J}$ are pairwise disjoint. This follows easily by the fact that $2B_i = B_{x_i}$ are pairwise disjoint, $\frac{1}{4}B_i\cap\Gamma \not = \varnothing$, and $\Gamma$ is a graph of a Lipschitz function with a small Lipschitz constant.
	
	Hence, we may sum the inequality above over all $i\in J$ to finally get
	\begin{multline*}
	\lVert \nabla F\rVert_{L^2}^2 \ge \sum_{i\in J} \int_{\Pi_0(B_i)}|\nabla F|^2\ d\mathcal{H}^n \gtrsim \sum_{i\in J} \theta^2 r(B_i)^n\\
	\overset{\eqref{eq:notHD}}{\gtrsim} A^{-1}\theta^2 \sum_{i\in J} \mu(3B_{Q_i})\overset{\eqref{eq:5Bxi zawiera sie w 5BQxi}}{\gtrsim_A} \theta^2 \sum_{i\in J} \mu(5B_{x_i}) \ge \theta^2\mu(R_{\BA}).
	\end{multline*}
\end{proof}

To estimate $\lVert\nabla F\rVert_{L^2}$ we will use a well-known theorem due to Dorronsoro. We reformulate it slightly for the sake of convenience. In what follows, $\D_{\Gamma}$ denotes the dyadic grid on $\Gamma$, i.e. the image of the standard dyadic grid on $L_0$ under $f(x) = (x,F(x)).$ For $Q\in\D_{\Gamma}$ we set $B_Q=B(z_Q, \diam(Q))$ and $r_Q=\diam(Q)\approx \ell(Q)$.

\begin{theorem}[{\cite[Theorem 2]{dorronsoro1985characterization}}]\label{thm:dorronsoro}
	Let $F:\R^n\rightarrow\R^{d-n}$ be an $L$-Lipschitz function, with $L$ sufficiently small, and let $\Gamma\subset \R^d$ be the graph of $F$, and $\sigma=\Hn{\Gamma}$. Then
	\begin{equation*}
	\int_{\Gamma}\int_0^{\infty} \beta_{\sigma,1}(x,r)^2\ \frac{dr}{r}d\sigma \approx \lVert\nabla F\rVert_{L^2}^2.
	\end{equation*}
\end{theorem}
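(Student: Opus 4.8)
The plan is to reduce \thmref{thm:dorronsoro} to the classical Dorronsoro characterization of the homogeneous Sobolev space $\dot W^{1,2}$. For $1\le q<\infty$ and $g:\R^n\to\R^{d-n}$, write
\begin{equation*}
\Omega_q g(x,t)=\inf_{A}\left(\frac{1}{t^n}\int_{B(x,t)}\left(\frac{|g(y)-A(y)|}{t}\right)^q\, dy\right)^{1/q},
\end{equation*}
where the infimum runs over affine maps $A:\R^n\to\R^{d-n}$. The classical result (see \cite{dorronsoro1985characterization}) is that, whenever $1\le q\le\frac{2n}{n-2}$ -- in particular for $q=1$, and with no restriction on $q$ when $n\le2$ -- one has
\begin{equation}\label{eq:classical dorronsoro}
\int_{\R^n}\int_0^{\infty}\Omega_q g(x,t)^2\ \frac{dt}{t}\ dx\approx\lVert\nabla g\rVert_{L^2(\R^n)}^2
\end{equation}
for every $g$ (if the right-hand side is infinite, so is the left-hand side). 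I will apply \eqref{eq:classical dorronsoro} with $q=1$ and $g=F$.

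First I would fix the bi-Lipschitz parametrization $f(x)=(x,F(x))$ of $\Gamma$ over $\R^n$. Writing $P=\R^n\times\{0\}\subset\R^d$ and $\pi=\Pi_P$, the map $f$ and its inverse $\pi|_\Gamma$ have Lipschitz constants as close to $1$ as we wish (since $\lip(F)=L$ is small), $\sigma=\mathcal{H}^n|_\Gamma=f_\#\big(\sqrt{1+|\nabla F|^2}\,\mathcal{H}^n\big)$ with $1\le\sqrt{1+|\nabla F|^2}\le\sqrt{1+L^2}$, and $\sigma$ is $n$-AD-regular with constants near $1$, so that $\sigma(3B(f(x),r))\approx r^n$.

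The core step would be the pointwise comparison
\begin{equation}\label{eq:beta vs omega}
\beta_{\sigma,1}(f(x),r)\approx\Omega_1 F(x,r)\qquad\text{for all }x\in\R^n,\ r>0,
\end{equation}
with absolute implicit constants, provided $L$ is below an absolute threshold. For the inequality ``$\lesssim$'' I would take $A$ to be the best $L^2$-affine approximant of $F$ on $B(x,Cr)$; standard estimates give $\lVert\nabla A\rVert\lesssim L$ and $|A(x)-F(x)|\lesssim Lr$, so $\graph(A)$ is an almost-horizontal $n$-plane meeting $B(f(x),r)$, and using it as a competitor in the infimum defining $\beta_{\sigma,1}$, together with $\dist\big((y,F(y)),\graph(A)\big)\le|F(y)-A(y)|$ and the fact that $\Gamma\cap B(f(x),r)$ projects into $B(x,r)$, yields $\beta_{\sigma,1}(f(x),r)\lesssim\Omega_1F(x,Cr)$; the radius change is harmless once one integrates $\frac{dr}{r}$. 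For the inequality ``$\gtrsim$'' I would first note that both quantities are $\lesssim L$ (use the horizontal plane through $f(x)$, resp. the constant map $\equiv F(x)$), hence in particular $\Omega_1F\lesssim1$. Given a plane $L^*$ competing for $\beta_{\sigma,1}(f(x),r)$, I would distinguish three cases: if $L^*$ is not the graph of an affine map over $\R^n$, then $\pi(L^*)$ is a proper affine subspace of $P$ and $\dist\big((y,F(y)),L^*\big)\ge\dist\big((y,F(y)),\pi(L^*)\big)=\dist(y,\pi(L^*))$ forces the normalized $\sigma$-integral over $B(f(x),r)\cap\Gamma$ to be $\gtrsim1\gtrsim\Omega_1F$; if $L^*=\graph(A^*)$ with $\lVert\nabla A^*\rVert\gtrsim1$, a similar computation again gives $\gtrsim1$; and if $L^*=\graph(A^*)$ with $\lVert\nabla A^*\rVert\lesssim1$, then $\dist\big((y,F(y)),L^*\big)\gtrsim|F(y)-A^*(y)|$ and $A^*$ is admissible for $\Omega_1F(x,cr)$, whence $\beta_{\sigma,1}(f(x),r)\gtrsim\Omega_1F(x,cr)$. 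This establishes \eqref{eq:beta vs omega}.

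Finally, \eqref{eq:beta vs omega} together with the bi-Lipschitz change of variables $z=f(x)$ (which distorts $\sigma$ and the scale $r$ only by factors close to $1$) would give
\begin{equation*}
\int_\Gamma\int_0^{\infty}\beta_{\sigma,1}(z,r)^2\ \frac{dr}{r}\ d\sigma(z)\approx\int_{\R^n}\int_0^{\infty}\Omega_1F(x,t)^2\ \frac{dt}{t}\ dx\approx\lVert\nabla F\rVert_{L^2}^2,
\end{equation*}
the last step being \eqref{eq:classical dorronsoro} with $q=1$. I expect the main obstacle to be the comparison \eqref{eq:beta vs omega}, and within it the verification that, when $L$ is small, every $n$-plane that beats the horizontal one in the $\beta_{\sigma,1}$-infimum is automatically a nearly horizontal affine graph; this is elementary plane geometry, but it has to be carried out with some care, keeping track of exactly which configurations force $\beta_{\sigma,1}\gtrsim1$. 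A shortcut would be to invoke a ready-made Lipschitz-graph formulation of Dorronsoro's theorem (for instance from Tolsa's monograph), which packages \eqref{eq:beta vs omega} and \eqref{eq:classical dorronsoro} together.
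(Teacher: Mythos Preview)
The paper does not prove \thmref{thm:dorronsoro}: it is quoted as a known result (attributed to \cite{dorronsoro1985characterization}) and used as a black box, with no argument given. Your sketch --- reducing to the classical $\Omega_q$ characterization of $\dot W^{1,2}$ via the bi-Lipschitz parametrization $f(x)=(x,F(x))$ and the pointwise comparison $\beta_{\sigma,1}(f(x),r)\approx\Omega_1 F(x,r)$ --- is the standard way to obtain the graph formulation from Dorronsoro's original statement, and it is correct. The trichotomy you set up for the lower bound (non-graph planes, steep graphs, nearly horizontal graphs) is exactly the right case analysis; none of the cases hides a real difficulty once $L$ is small.
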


To estimate the integral above we split the area of integration into four subfamilies:
\begin{align*}
A_1 &= \{(x,r)\ :\  B(x,r)\cap 1.9B_0 =\varnothing\},\\
A_2 &=\{(x,r)\ :\ B(x,r)\cap 1.9B_0 \not=\varnothing,\ r>0.1 r_0 \},\\
A_3 &=\{(x,r)\ :\  B(x,r)\cap 1.9B_0 \not=\varnothing,\ \eta^2 d(x)\le r <0.1 r_0 \},\\
A_4 &= \{(x,r)\ :\ B(x,r)\cap 1.9B_0 \not=\varnothing,\ r< \min(\eta^2 d(x), 0.1 r_0) \},
\end{align*}
we also set
\begin{equation*}
I_i =\iint_{A_i} \beta_{\sigma,1}(x,r)^2\ \frac{dr}{r}d\sigma(x).
\end{equation*}
Firstly, note that for $(x,r)\in A_1$ we have $B(x,r)\cap \Gamma = B(x,r)\cap L_0$ because $\supp(F)\subset 1.9B_0$, and so
\begin{equation}\label{eq:sum of new D1 is 0}
I_1=0.
\end{equation}

\begin{lemma}\label{lem:sum of new D2}
	We have 
	\begin{equation*}
	I_2\lesssim_{A,\tau} \varepsilon_0^{1/2}\ell(R_0)^n.
	\end{equation*}
\end{lemma}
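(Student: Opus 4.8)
The plan is to bound $I_2$ directly, using the flat plane $L_0$ as a competitor in the infimum defining $\beta_{\sigma,1}(x,r)$ for every $(x,r)\in A_2$. First I would record the geometric consequences of $(x,r)\in A_2$: since $B(x,r)$ meets $1.9B_0$ and $r>0.1\,r_0$, the center obeys $|x-z_0|\le r+1.9\,r_0\lesssim r$, hence $r\gtrsim\max(r_0,|x-z_0|)$; moreover $L_0$ intersects $B(x,r)$ (if $x\notin 1.9B_0$ then $x\in L_0$, and otherwise $\dist(x,L_0)\lesssim_{A,\tau}\sqrt{\varepsilon_0}\,r_0<r$ by \lemref{lem:x from Gamma close to LQ} applied with $Q=R_0$), so $L_0$ is admissible.

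Next I would estimate the numerator of $\beta_{\sigma,1}(x,r)$. Because $F$ is supported on $L_0\cap 1.9B_0$ (\lemref{lem:properties of F}) we have $\Gamma\setminus 1.9B_0=L_0\setminus 1.9B_0$, so $\dist(y,L_0)=0$ for $y\in B(x,r)\cap\Gamma$ lying outside $1.9B_0$. For $y\in\Gamma\cap 1.9B_0\subset\Gamma\cap 1.9B_{R_0}$, \lemref{lem:x from Gamma close to LQ} applied with $Q=R_0$ (recall $L_{R_0}=L_0$) gives $\dist(y,L_0)\lesssim_{A,\tau}\sqrt{\varepsilon_0}\,\ell(R_0)\approx\sqrt{\varepsilon_0}\,r_0$. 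Combining this with $\sigma(\Gamma\cap 1.9B_0)\lesssim r_0^n$ (as $\Gamma$ is a Lipschitz graph with small constant) and the $n$-AD-regularity of $\sigma$, i.e. $\sigma(B(x,3r))\approx r^n$ for $x\in\Gamma$, yields
\[
\beta_{\sigma,1}(x,r)\le\frac{1}{\sigma(B(x,3r))}\int_{B(x,r)}\frac{\dist(y,L_0)}{r}\ d\sigma(y)\lesssim_{A,\tau}\frac{\sqrt{\varepsilon_0}\,r_0^{n+1}}{r^{n+1}}\qquad\text{for }(x,r)\in A_2.
\]

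Then I would integrate: using $r\gtrsim\max(r_0,|x-z_0|)$ and the displayed bound,
\[
I_2\lesssim_{A,\tau}\varepsilon_0\,r_0^{2n+2}\int_{\Gamma}\int_{c\max(r_0,|x-z_0|)}^{\infty}\frac{dr}{r^{2n+3}}\ d\sigma(x)\approx\varepsilon_0\,r_0^{2n+2}\int_{\Gamma}\frac{d\sigma(x)}{\max(r_0,|x-z_0|)^{2n+2}}.
\]
Splitting $\Gamma=(\Gamma\cap 1.9B_0)\cup(\Gamma\setminus 1.9B_0)$ and using $\sigma(\Gamma\cap 1.9B_0)\lesssim r_0^n$ together with $\Gamma\setminus 1.9B_0=L_0\setminus 1.9B_0$ (so that $\int_{\Gamma\setminus 1.9B_0}|x-z_0|^{-(2n+2)}\,d\sigma(x)\approx\int_{r_0}^{\infty}t^{-n-3}\,dt\approx r_0^{-n-2}$), the last integral is $\approx r_0^{-n-2}$, and hence $I_2\lesssim_{A,\tau}\varepsilon_0\,r_0^{2n+2}\cdot r_0^{-n-2}=\varepsilon_0\,r_0^n=\varepsilon_0\,\ell(R_0)^n$, which in particular gives the asserted bound $\varepsilon_0^{1/2}\ell(R_0)^n$.

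The argument is essentially routine. The only points that need a little care are: checking that $R_0\in\Tree$, so that \lemref{lem:x from Gamma close to LQ} may be invoked at the top scale — this follows from the normalization $\Theta_{\mu}(3B_0)=1$ (\remref{rem:WLOG density of BR0 is 1}) and from none of the stopping conditions being triggered at $R_0$; verifying that $L_0$ is admissible in the infimum defining $\beta_{\sigma,1}(x,r)$ for $(x,r)\in A_2$; and performing the convergent tail integration over the part of $\Gamma$ far from $z_0$, where $\Gamma$ coincides with the flat plane $L_0$ and the integrand decays like $|x-z_0|^{-(2n+2)}$.
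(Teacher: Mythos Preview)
Your proof is correct and in fact yields the stronger bound $I_2\lesssim_{A,\tau}\varepsilon_0\,\ell(R_0)^n$. It takes a more elementary route than the paper's. The paper passes through the approximating measure $\nu$: it uses $\sigma\approx_{A,\tau}\nu$ to replace $\beta_{\sigma,1}$ by $\beta_{\nu,1}$, then $\beta_{\nu,1}(x,r)\lesssim\alpha_{\nu}(x,2r)$ via \eqref{eq:beta1 alpha estimate}, and finally the flatness estimate \eqref{eq:nu close to flat on big balls} to obtain $\alpha_{\nu}(x,2r)\lesssim_{A,\tau}\varepsilon_0^{1/4}\ell(R_0)^n/r^n$; the integration is then carried out much as in your argument, giving the $\varepsilon_0^{1/2}$ exponent. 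Your approach bypasses $\nu$ entirely by taking $L_0$ directly as a competitor in $\beta_{\sigma,1}$ and invoking \lemref{lem:x from Gamma close to LQ} at the top scale $Q=R_0$ to control $\dist(\Gamma\cap 1.9B_0,L_0)$. This is cleaner and sharper for this particular lemma; the paper's detour through $\nu$ has the advantage of recycling machinery already developed for \lemref{lem:L2 norm estimate of nu density}, but is not needed here. Your side checks (that $R_0\in\Tree$, that $L_0$ meets $B(x,r)$, and the convergent tail integral over $L_0\setminus 1.9B_0$) are all in order.
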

\begin{proof}
	Let $(x,r)\in A_2$. Observe that since $\sigma\approx_{A,\tau}\nu$, we have
	\begin{equation*} 
	\beta_{\sigma,1}(x,r)\approx_{A,\tau}\beta_{\nu,1}(x,r)\overset{\eqref{eq:beta1 alpha estimate}}{\lesssim} \alpha_{\nu}(x,2r)\overset{\eqref{eq:nu close to flat on big balls}}{\lesssim_{A,\tau}}\varepsilon_0^{1/4}\frac{\ell(R_0)^n}{r^n}.
	\end{equation*}
	Note that if $B(x,r)\cap 1.9B_0\not=\varnothing$, then necessarily $x\in B(z_0,1.9r_0 + r)\subset B(z_0,20r)$. Hence,
	\begin{multline*}
	I_2 \le \int_{0.1 r_0}^{\infty} \int_{B(z_0,20r)} \beta_{\sigma,1}(x,r)^2\ d\sigma\frac{dr}{r} \lesssim_{A,\tau}\varepsilon_0^{1/2} \int_{0.1 r_0}^{\infty} \int_{B(z_0,20r)} \frac{\ell(R_0)^{2n}}{r^{2n+1}}\ d\sigma dr\\
	\lesssim\varepsilon_0^{1/2} \int_{0.1 r_0}^{\infty} \frac{\ell(R_0)^{2n}}{r^{n+1}}\ dr\approx \varepsilon_0^{1/2}\ell(R_0)^n.
	\end{multline*}
	
\end{proof}

\begin{lemma}\label{lem:sum of new D4}
	We have
	\begin{equation*}
	I_3\lesssim_{A,\tau} \varepsilon_0\ell(R_0)^n.
	\end{equation*}
\end{lemma}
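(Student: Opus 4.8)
\textbf{Proof plan for \lemref{lem:sum of new D4} (the $I_3$ estimate).}

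The plan is to reduce the $\beta_{\sigma,1}$ square function on the middle range $\eta^2 d(x)\le r<0.1 r_0$ to the already established estimates on $\alpha_\mu$ (through the $\Lambda$-machinery of Section \ref{sec:HD}), exactly mirroring the way \lemref{lem:estimate for A3} was handled. First I would use $\sigma\approx_{A,\tau}\nu$ together with \eqref{eq:beta1 beta2 estimate} and the comparison $\beta_{\nu,1}(x,r)\lesssim\alpha_\nu(x,2r)$ from \eqref{eq:beta1 alpha estimate} (valid since, by $n$-AD-regularity of $\nu$, the doubling hypothesis $\nu(3B)\approx\nu(6B)$ is automatic), so that for $(x,r)\in A_3$ one has $\beta_{\sigma,1}(x,r)\lesssim_{A,\tau}\alpha_\nu(x,2r)\approx_{A,\tau}\Lambda_\nu(x,r)$ by \eqref{eq:lambdanu bdd by alpha}. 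Hence
\begin{equation*}
I_3\lesssim_{A,\tau}\int_{\Gamma}\int_{\eta^2 d(x)}^{0.1 r_0}\Lambda_\nu(x,r)^2\ \frac{dr}{r}\,d\sigma(x).
\end{equation*}
The condition $B(x,r)\cap 1.9B_0\ne\varnothing$ with $r<0.1 r_0$ forces $x\in 2.4B_0$, so the integral is over $\Gamma\cap 2.4B_0$; and $0.1 r_0$ is comparable to $\eta r_0$ up to a harmless constant factor (one can enlarge the outer radius to $\eta r_0$ at the cost of an absolute constant, or redo the splitting with $\eta r_0$ in place of $0.1 r_0$).

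Then I would simply invoke \lemref{lem:estimate for A3}, which states precisely
\begin{equation*}
\int_{\Gamma\cap 2.4B_0}\int_{\eta^2 d(x)}^{\eta r_0}\Lambda_\nu(x,r)^2\ \frac{dr}{r}\,d\sigma(x)\lesssim_{A,\tau}\varepsilon_0^{1/8}\ell(R_0)^n.
\end{equation*}
This already gives $I_3\lesssim_{A,\tau}\varepsilon_0^{1/8}\ell(R_0)^n$, which is enough to close the argument; the slightly stronger bound $\varepsilon_0\ell(R_0)^n$ claimed in the lemma statement is obtained by noting that the $\varepsilon_0^{1/8}$-loss in \lemref{lem:estimate for A3} came only from the term $I_1$ there (the exceptional set $H$ where $\Lambda_{\mu_B-h\mu}$ is large), which is a phenomenon tied to the distinction between $\nu$ and $\mu$; for the $\beta_\sigma$ estimate one can instead split $\Lambda_\nu\le(\Lambda_{\nu_B}-\Lambda_{h\mu})+\Lambda_{\mu_B-h\mu}+\Lambda_\mu$ and estimate each piece directly on all of $\Gamma\cap 2.4B_0$ without passing to the bad set: the $\Lambda_{\nu_B}-\Lambda_{h\mu}$ term gives $\varepsilon_0\ell(R_0)^n$ via \eqref{eq:nuB and hmu close} and the geometric-series computation of $I_{21}$, the $\Lambda_\mu$ term gives $\varepsilon_0^2\ell(R_0)^n$ via \eqref{eq:lambdamu bdd by alpha} and \eqref{eq:sum of alphas containing x estimate}, and the $\Lambda_{\mu_B-h\mu}$ term gives $\varepsilon_0\ell(R_0)^n$ because $(\mu_B-h\mu)$ is supported on $\RFar$, $\mu(\RFar)\lesssim_{A,\tau}\sqrt{\varepsilon_0}\ell(R_0)^n$, and on the range $r\ge\eta^2 d(x)$ one can bound $\Lambda_{\mu_B-h\mu}(x,r)\le r^{-n}(\mu_B-h\mu)(B(x,5r))$ and integrate in $r$ using that the relevant $r$ are $\gtrsim$ the scale of the $\RFar$-cube containing the mass (a Fubini/geometric-series argument in the spirit of the $I_{21}$ computation).

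\textbf{Main obstacle.} The only genuinely delicate point is the very first reduction: justifying $\beta_{\sigma,1}(x,r)\lesssim_{A,\tau}\Lambda_\nu(x,r)$ uniformly for $(x,r)\in A_3$. The inequality $\beta_{\nu,1}(x,r)\lesssim\alpha_\nu(x,2r)$ needs $\nu(B(x,3r))\approx\nu(B(x,6r))$, which holds by \lemref{lem:nu is AD regular}, and $\alpha_\nu(x,2r)\lesssim_{A,\tau}\Lambda_\nu(x,r)$ is \eqref{eq:lambdanu bdd by alpha}; the passage from $\beta_\sigma$ to $\beta_\nu$ uses $\sigma\approx_{A,\tau}\nu$ but one must be slightly careful because the two measures differ by bounded multiplicative constants rather than being equal, so the best-approximating plane for one is only \emph{almost} optimal for the other — this costs a constant and is harmless. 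Everything else is bookkeeping identical to Section \ref{sec:HD}.
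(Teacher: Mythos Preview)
Your reduction fails at the very first step, and it is precisely the step you flag as the ``only genuinely delicate point.'' Inequality \eqref{eq:lambdanu bdd by alpha} reads
\[
\Lambda_{\nu}(x,r)\ \lesssim_{A,\tau}\ \alpha_{\nu}(x,2r),
\]
not the reverse. This is the natural direction: $\Lambda_{\nu}(x,r)=|\widetilde{\psi}_r\ast\nu(x)|$ tests $\nu$ against a \emph{single} mean-zero function, while $\alpha_{\nu}$ is a supremum over all $1$-Lipschitz test functions (after optimizing over flat measures). A pointwise bound $\alpha_{\nu}(x,2r)\lesssim\Lambda_{\nu}(x,r)$ is simply false, so the chain $\beta_{\sigma,1}\lesssim\alpha_{\nu}\lesssim\Lambda_{\nu}$ breaks, and the appeal to \lemref{lem:estimate for A3} is unjustified. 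Your ``Main obstacle'' paragraph quotes \eqref{eq:lambdanu bdd by alpha} with the inequality reversed; that is the gap.

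The paper's proof does not pass through $\Lambda_{\nu}$ at all. For $(x,r)\in A_3$ one chooses $P=P(x,r)\in\Tree$ with $B(x,2r)\subset 3B_P$ and $\ell(P)\approx r$, and estimates the average distance of $\Gamma\cap B(x,r)$ to the plane $L_P$ directly. On $R_G$ one uses $d\mu_G=g\,d\Hn{R_G}$ with $g\approx_{A,\tau}1$ (\lemref{lem:mu on RG absolutely continuous wrto Hn}) to get a $\beta_{\mu,2}(3B_P)$ bound; off $R_G$ one rewrites $\sigma$ in terms of $\nu_B$ via \eqref{eq:ck comparable to 1}, passes to $h\mu$ at the cost of the error \eqref{eq:nuB and hmu close}, and bounds the $h\mu$-piece again by $\beta_{\mu,2}(3B_P)$. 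This yields
\[
\beta_{\sigma,1}(x,r)^2\ \lesssim_{A,\tau}\ \beta_{\mu,2}(3B_P)^2\ +\ \varepsilon_0\Bigg(\sum_{3B_k\cap B(x,2r)\ne\varnothing}\frac{r_k^{n+1}}{r^{n+1}}\Bigg)^{2}.
\]
Integrating over $A_3$: the first term contributes $\lesssim_{A,\tau}\varepsilon_0^2\ell(R_0)^n$ by \eqref{eq:beta sum estimate}, and the second contributes $\lesssim_{A,\tau}\varepsilon_0\ell(R_0)^n$ by the same geometric-series/Fubini computation as $I_{21}$ in \lemref{lem:estimate for A3}. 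This gives the sharp $\varepsilon_0\ell(R_0)^n$ without any $\varepsilon_0^{1/8}$ loss and without invoking the bad set $H$ or the operator $T$.
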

	\begin{proof}
		Let $(x,r)\in A_3$. Since  $B(x,r)\cap 1.9B_0 \not=\varnothing$ and $\eta^2 d(x)\le r <0.1 r_0$, it is clear that $B(x,2r)\subset 2.1B_0$ and we may find a cube $P=P(x,r)\in\Tree$ such that $B(x,2r)\subset 3B_P$ and $r\approx \ell(P)$. We will estimate the average distance of $B(x,r)\cap\Gamma$ to $L_P$.
		
		Bounding the part corresponding to $B(x,r)\cap R_G\subset3B_P\cap R_G$ is straightforward: \lemref{lem:mu on RG absolutely continuous wrto Hn} states that $d\restr{\mu}{R_G}=g\,d\Hn{R_G}$ with $g\approx_{A,\tau} 1$, and so
		\begin{multline}\label{eq:estimate new on BQcapRG}
		\int_{B(x,r)\cap R_G}\frac{\dist(y,L_P)}{r}\ d\sigma(y) \lesssim_{A,\tau}\int_{3B_P\cap R_G}\frac{\dist(y,L_P)}{\ell(P)}\ d\mu(y)\\
		\lesssim_{A,\tau} \left(\int_{3B_P\cap R_G}\left(\frac{\dist(y,L_P)}{\ell(P)}\right)^2\ d\mu(y) \right)^{1/2}\ell(P)^{n/2}\lesssim_{\tau} \beta_{\mu,2}(3B_P)\ell(P)^{n}.
		\end{multline}	
		
		Dealing with the part outside of $R_G$ is a bit more delicate. By \eqref{eq:Bk cover Gamma minus RG} and the definition of functions $h_k$ \eqref{eq:definition of h},
		\begin{multline*}
		\int_{B(x,r)\setminus R_G}\frac{\dist(y,L_P)}{r}\ d\sigma(y) = \sum_{k\in K} \int_{B(x,r)}\frac{\dist(y,L_P)}{r}\ h_k(y)\ d\sigma(y)\\
		\overset{\eqref{eq:ck comparable to 1}}{\approx_{A,\tau}}\sum_{k\in K} \int_{B(x,r)}\frac{\dist(y,L_P)}{r}\ c_k h_k(y)\ d\sigma(y) =  \int_{B(x,r)}\frac{\dist(y,L_P)}{r}\ d\nu_B(y).
		\end{multline*}
		Consider the 1-Lipschitz function $\Phi(y) = \psi(y) \dist(y,L_P),$ where $\psi$ is $r^{-1}$-Lipschitz, $\psi\equiv 1$ on $B(x,r)$, $|\psi|\le1$, and $\supp(\psi)\subset B(x,2r)$.
		\begin{multline*}
		\int_{B(x,r)}\frac{\dist(y,L_P)}{r}\ d\nu_B(y)\le \int_{B(x,2r)}\frac{\psi(y) \dist(y,L_P)}{r}\ d\nu_B(y)
		\\\le \int_{B(x,2r)}\frac{\psi(y) \dist(y,L_P)}{r}\ h(y)\ d\mu(y) 
		+ r^{-1}\left|\int_{B(x,2r)}\Phi(y)\ d(\nu_B-h\mu)(y)\right|
		\end{multline*}
		
		Since $|\psi|,|h|\le 1$, the first term on the right hand side above can be bounded by $\beta_{\mu,2}(3B_P)\ell(P)^{n}$, just as in \eqref{eq:estimate new on BQcapRG}. Concerning the second term,
		\begin{equation*}
		r^{-1}\left|\int_{B(x,2r)}\Phi(y)\ d(\nu_B-h\mu)(y)\right|\overset{\eqref{eq:nuB and hmu close}}{\lesssim_{A,\tau}}\sqrt{\varepsilon_0}r^{-1}\sum_{3B_k\cap B(x,2r)\not=\varnothing} r_k^{n+1}
		\end{equation*}
		
		Gathering all the calculations above we get that
		\begin{equation}\label{eq:estimating beta1sigma with beta2mu and a sum}
		\beta_{\sigma,1}(x,r)^2\lesssim_{A,\tau} \beta_{\mu,2}(3B_P)^2 + \varepsilon_0\left(\sum_{3B_k\cap B(x,2r)\not=\varnothing} \frac{r_k^{n+1}}{r^{n+1}}\right)^2.
		\end{equation}
		Integrating the first term over $A_3$, since each $P(x,r)$ has sidelength comparable to $r$ and $\dist(P(x,r),x)\lesssim_{A,\tau} r$, it is easy to see that 
		\begin{equation*}
		\iint_{A_3} \beta_{\mu,2}(3B_{P(x,r)})^2\ \frac{dr}{r}d\sigma \lesssim_{A,\tau} \sum_{P\in\Tree} \beta_{\mu,2}(3B_{P})^2\ell(P)^n\overset{\eqref{eq:beta sum estimate}}{\lesssim_{A,\tau}}\varepsilon_0^2\ell(R_0)^n.
		\end{equation*}
		
		Moving on to the second term from \eqref{eq:estimating beta1sigma with beta2mu and a sum}, note that if $y\in 3B_k\cap B(x,2r)\not=\varnothing$, then by \eqref{eq:rk comparable to d(x)} we have $r_k\le d(y)\le 2r+d(x)\le (2+\eta^2) r.$ Thus, following calculations from the proof of \lemref{lem:estimate for A3} (more precisely \eqref{eq:rk smaller than r} and onwards), we get that 
		\begin{equation*}
		\varepsilon_0\iint_{A_3} \left(\sum_{3B_k\cap B(x,2r)\not=\varnothing} \frac{r_k^{n+1}}{r^{n+1}}\right)^2 \frac{dr}{r}d\sigma \lesssim_{A,\tau} \varepsilon_0\ell(R_0)^n.
		\end{equation*}
		Hence, $I_3\lesssim_{A,\tau}  \varepsilon_0\ell(R_0)^n$. 
	\end{proof}
	
	\begin{lemma}\label{lem:sum of new D3}
		We have
		\begin{equation*}
		I_4\lesssim \varepsilon_0\ell(R_0)^n.
		\end{equation*}
	\end{lemma}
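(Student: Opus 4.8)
The plan is to exploit that, on $A_4$, one works at scales $r<\eta^2 d(x)$, far below the size of the tree cubes near $x$, where $\Gamma$ coincides with a smooth, almost affine piece of the graph of $F$ and the second derivative bound of \lemref{lem:properties of F} is available. First I would reduce $I_4$ to a sum over the balls $B_k$: if $(x,r)\in A_4$ then $r>0$ forces $d(x)>0$, while $B(x,r)\cap 1.9B_0\neq\varnothing$ together with $r<0.1r_0$ forces $x\in 2B_0$; hence $x\in\Gamma\cap 2B_0\setminus R_G$, so $x\in B_k\cap\Gamma$ for some $k\in K$ by \eqref{eq:Bk cover Gamma minus RG}. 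By \lemref{lem:properties of Bk} (c), $x\in B_k$ gives $d(x)\le\eta^{-3/2}r_k$, so $r<\eta^2d(x)\le\eta^{1/2}r_k$; since $\one_{\bigcup_k(B_k\cap\Gamma)}\le\sum_k\one_{B_k\cap\Gamma}$ this yields
\[
I_4\ \le\ \sum_{k:\,B_k\cap 2B_0\neq\varnothing}\int_{B_k}\int_0^{\eta^{1/2}r_k}\beta_{\sigma,1}(x,r)^2\,\frac{dr}{r}\,d\sigma(x).
\]

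The key point is the pointwise estimate: for $k\in K_i$, $x\in B_k\cap\Gamma$ and $0<r\le r_k$,
\[
\beta_{\sigma,1}(x,r)\ \lesssim\ \sqrt{\varepsilon_0}\,\frac{r}{r_k}.
\]
Indeed, $r\le r_k$ gives $B(x,r)\subset 2B_k$, so $\Pi_0(B(x,r))\subset\Pi_0(3B_k)\subset 2J_i\subset 15J_i$ by \eqref{eq:Bk subset Ji}; on $15J_i$ the function $F$ is given by the smooth partition-of-unity formula (here $15J_i\cap\Pi_0(R_G)=\varnothing$ by \lemref{lem:properties of Ji} (a)) and satisfies $|D^2F|\lesssim\sqrt{\varepsilon_0}/\ell(J_i)$ by \lemref{lem:properties of F}. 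Writing $x=(\xi_0,F(\xi_0))$, let $L$ be the affine $n$-plane through $x$ which is the graph over $L_0$ of $\zeta\mapsto F(\xi_0)+\nabla F(\xi_0)(\zeta-\xi_0)$; it meets $B(x,r)$. For $y=(\xi,F(\xi))\in\Gamma\cap B(x,r)$ the segment joining $\xi_0$ to $\xi$ lies in the convex set $\Pi_0(B(x,r))\subset 15J_i$, so Taylor's formula gives $\dist(y,L)\le|F(\xi)-F(\xi_0)-\nabla F(\xi_0)(\xi-\xi_0)|\lesssim(\sqrt{\varepsilon_0}/\ell(J_i))\,r^2$. Since $\Gamma$ is a Lipschitz graph with small constant, $\sigma(B(x,3r))\approx r^n$, and dividing the integral of $\dist(\cdot,L)/r$ over $B(x,r)\cap\Gamma$ by $\sigma(B(x,3r))$ yields $\beta_{\sigma,1}(x,r)\lesssim(\sqrt{\varepsilon_0}/\ell(J_i))\,r$; as $\ell(J_i)=\eta^{-1}r_k$ and $\eta<1$, the claim follows.

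Plugging the claim into the sum and integrating in $r$,
\[
\int_{B_k}\int_0^{\eta^{1/2}r_k}\beta_{\sigma,1}(x,r)^2\,\frac{dr}{r}\,d\sigma(x)\ \lesssim\ \varepsilon_0\,\sigma(B_k)\,\frac{1}{r_k^2}\int_0^{\eta^{1/2}r_k}r\,dr\ \lesssim\ \varepsilon_0\,\sigma(B_k).
\]
It remains to sum over $k$ with $B_k\cap 2B_0\neq\varnothing$. For $k\in K_0$ one has $3B_k\subset 2.3B_0$ by \eqref{eq:3Bk in 2.5B0}, and for $k\notin K_0$ with $B_k\cap 2B_0\neq\varnothing$ one has $r_k\approx\ell(R_0)$ by \eqref{eq:rk approx ell R0 for k not in K0}, so in all cases $3B_k\subset CB_0$ for an absolute constant $C$. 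By the bounded superposition of the $3B_k$ (\lemref{lem:properties of Bk} (b)) and the fact that $\sigma=\Hn{\Gamma}$ with $\Gamma$ a Lipschitz graph of small constant supported on $1.9B_0$, we get $\sum_{k:\,B_k\cap 2B_0\neq\varnothing}\sigma(B_k)\lesssim\sigma(CB_0)\approx\ell(R_0)^n$. Combining, $I_4\lesssim\varepsilon_0\ell(R_0)^n$. The only genuinely nonroutine step is the pointwise $\beta_{\sigma,1}$-bound, and within it the only thing one must be careful about is that $x$ and the scale $r$ keep us inside the region where the $C^{1,1}$-estimate of \lemref{lem:properties of F} holds; this is precisely the content of the inclusion $\Pi_0(B(x,r))\subset 15J_i$ noted above, which reduces everything to a second-order Taylor expansion.
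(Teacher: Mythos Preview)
Your proof is correct. The reduction to the sum over the balls $B_k$ meeting $2B_0$ and the upper limit $\eta^{1/2}r_k$ is the same as in the paper, but from that point on the two arguments diverge.

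The paper does not estimate $\beta_{\sigma,1}$ directly. Instead it passes through the approximating measure $\nu$: using $\sigma\approx_{A,\tau}\nu$ (\lemref{lem:nu is AD regular}) it replaces $\beta_{\sigma,1}$ by $\beta_{\nu,1}$, then invokes $\beta_{\nu,1}(x,r)\lesssim\alpha_\nu(x,2r)$ \eqref{eq:beta1 alpha estimate}, and finally appeals to the estimate \eqref{eq:alphanu small on Bk} that was already established (via the borrowed \lemref{lem:small alphas of fsigma}) in the proof of \lemref{lem:estimate for A4}. You instead bypass $\nu$ entirely and obtain the sharper pointwise bound $\beta_{\sigma,1}(x,r)\lesssim\sqrt{\varepsilon_0}\,r/r_k$ straight from the second-derivative estimate $|D^2F|\lesssim\sqrt{\varepsilon_0}/\ell(J_i)$ of \lemref{lem:properties of F} and a Taylor expansion on $15J_i$. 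Your argument is more elementary and self-contained (no $\nu$, no $\alpha$-numbers, no external lemma), and as a bonus it yields the estimate with implicit constants independent of $A,\tau$, matching the statement as written; the paper's route is shorter on the page only because it recycles machinery already built for the $\HD$ estimate, at the cost of picking up $A,\tau$-dependence in the constants.
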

	\begin{proof}
		Let $(x,r)\in A_4$, so that $\eta^2d(x)\ge r>0$. It follows by \eqref{eq:Bk cover Gamma minus RG} that $x\in B_k$ for some $k\in K$. Then,
		\begin{equation*}
		r\le \eta^2d(x)\overset{\eqref{eq:rk comparable to d(x)}}{\le}\eta^{1/2}r_k.
		\end{equation*}
		Note also that $x\in 2B_0$. Thus,
		\begin{multline*}
		I_3 \le \sum_{B_k\cap 2B_0\not=\varnothing} \int_{B_k}\int_{0}^{\eta^{1/2}r_k} \beta_{\sigma,1}(x,r)^2\ \frac{dr}{r}d\sigma(x)\\
		\overset{\text{Lemma}\ \ref{lem:nu is AD regular}}{\approx_{A,\tau}}\sum_{B_k\cap 2B_0\not=\varnothing} \int_{B_k}\int_{0}^{\eta^{1/2}r_k} \beta_{\nu,1}(x,r)^2\
		\frac{dr}{r}d\sigma(x)\\
		\overset{\eqref{eq:beta1 alpha estimate}}{\lesssim} \sum_{B_k\cap 2B_0\not=\varnothing} \int_{B_k}\int_{0}^{\eta^{1/2}r_k} \alpha_{\nu}(x,2r)^2\
		\frac{dr}{r}d\sigma(x)
		\overset{\eqref{eq:alphanu small on Bk}}{\lesssim_{A,\tau}} \sum_{B_k\cap 2B_0\not=\varnothing}\varepsilon_0 r_k^n\lesssim\varepsilon_0\ell(R_0)^n.
		\end{multline*}
	\end{proof}	
Putting together the estimates for $I_1,\ I_2,\ I_3$ and $I_4$, we get that
\begin{equation*}
	\int_{\Gamma}\int_0^{\infty} \beta_{\sigma,1}(x,r)^2\ \frac{dr}{r}d\sigma\lesssim_{A,\tau}\sqrt{\varepsilon_0}\ell(R_0)^n\approx \sqrt{\varepsilon_0}\mu(R_0).
\end{equation*}
Thus, \lemref{lem:estimating with grad F} and \thmref{thm:dorronsoro} give us
\begin{equation*}
\mu(R_{\BA})\lesssim_{A,\tau,\theta}\sqrt{\varepsilon_0}\mu(R_0).
\end{equation*}
Taking into account the estimates for other stopping cubes, we arrive at
\begin{equation*}
\mu\left(\bigcup_{Q\in\Stop}Q\right)< \frac{\mu(R_0)}{2}.
\end{equation*}
Thus, $\mu(R_G)\ge 0.5\mu(R_0)$, and since $R_G$ is a subset of the Lipschitz graph $\Gamma$ and $\restr{\mu}{R_G}$ is $n$-rectifiable, the proof of \lemref{lem:main lemma} is finished.


\end{document}